\documentclass{amsart}

\usepackage[mathscr]{eucal}
\usepackage{amssymb}
\usepackage{upgreek}
\usepackage{graphicx}
\usepackage{psfrag} 
\usepackage[dvipsnames]{xcolor}
\usepackage[normalem]{ulem}
\usepackage{amsthm}
\usepackage{bbm}
\usepackage{enumerate}
\usepackage[color=yellow]{todonotes}
\usepackage{array}
\usepackage{amsmath}

\usepackage{hyperref}
\hypersetup{colorlinks=true,linkcolor={Brown},citecolor={Brown},urlcolor={Brown}}

\usepackage{cleveref}
\crefname{Thm}{Theorem}{Theorems}
\crefname{Rem}{Remark}{Remarks}
\crefname{Prop}{Proposition}{Propositions}
\crefname{Cor}{Corollary}{Corollaries}
\crefname{Cons}{Construction}{Constructions}
\crefname{Exa}{Example}{Examples}
\crefname{Lem}{Lemma}{Lemmas}
\crefname{Rec}{Recollection}{Recollections}
\crefname{subsection}{Subsection}{Subsections}

\numberwithin{equation}{section}
\makeindex

\usepackage[all]{xy}
\CompileMatrices

\newdir{ >}{{}*!/-10pt/\dir{>}}

\swapnumbers 

\newtheorem{Thm}[equation]{Theorem}
\newtheorem{Prop}[equation]{Proposition}
\newtheorem{Lem}[equation]{Lemma}
\newtheorem{Cor}[equation]{Corollary}

\theoremstyle{remark}
\newtheorem{Rem}[equation]{Remark}
\newtheorem{Def}[equation]{Definition}

\newtheorem{Not}[equation]{Notation}
\newtheorem{Exa}[equation]{Example}

\newtheorem{Cons}[equation]{Construction}
\newtheorem{Conv}[equation]{Convention}

\newtheorem{Rec}[equation]{Recollection}

\theoremstyle{definition}
\newtheorem*{Ack}{Acknowledgements}
\newtheorem*{AI}{Statement about use of A.I}

\newcommand{\nc}{\newcommand}
\nc{\dmo}{\DeclareMathOperator}

\dmo{\Ab}{Ab}
\dmo{\Aut}{Aut}
\dmo{\biMack}{\mathsf{Mack}} 
\dmo{\End}{End}
\dmo{\Fun}{\mathsf{Fun}} 
\dmo{\Hom}{Hom}
\dmo{\incl}{incl}
\dmo{\Ind}{Ind}
\dmo{\ind}{ind}
\dmo{\Mor}{Mor}%
\dmo{\Obj}{Obj}
\dmo{\pr}{pr}
\dmo{\Res}{Res}
\dmo{\Spanname}{{\sf Span}}
\dmo{\triv}{triv}
\dmo{\Kname}{K}
\dmo{\Kar}{Kar}
\dmo{\lename}{leq}
\dmo{\proj}{proj}
\dmo{\forget}{forget}
\dmo{\Sub}{Sub}
\dmo{\Mat}{M}
\dmo{\indec}{conn}

\nc\noloc{\nobreak\mspace{6mu plus 1mu}{:}\nonscript\mkern-\thinmuskip\mathpunct{}\mspace{2mu}}
\nc{\ababs}{{\sl ab absurdo}}
\nc{\ADD}{\mathsf{ADD}} 
\nc{\SADD}{\mathsf{SADD}} 
\nc{\adjto}{\rightleftarrows}
\nc{\adj}{\dashv\,}
\nc{\aka}{{a.\,k.\,a.}\ }
\nc{\bbN}{\mathbb{N}}
\nc{\bbZ}{\mathbb{Z}} 
\nc{\bbC}{\mathbb{C}} 
\nc{\cat}[1]{\mathcal{#1}}
\nc{\CAT}{\mathsf{CAT}}
\nc{\doublequot}[3]{#1\backslash #2/#3}
\nc{\Ecell}{\rotatebox[origin=c]{90}{$\Downarrow$}} 
\nc{\eg}{{\sl e.g.}\ } 
\nc{\eps}{\varepsilon}
\nc{\equalby}[1]{\overset{\textrm{#1}}{=}}
\nc{\Funadd}{\Fun_{\amalg}}
\nc{\GG}{\mathbb{G}}
\nc{\GGi}{\GG_{\approx}}
\nc{\GGiso}{\GG_{\simeq}}
\nc{\GGisoconn}{\GGiso^{\indec}}
\nc{\gpd}{\groupoid}
\nc{\groupoidf}{\groupoid{}^{\smallfaithful}}
\nc{\gpdf}{\groupoidf}%
\nc{\groupoid}{\mathsf{gpd}}
\nc{\HGK}{\doublequot HGK}
\nc{\KGH}{\doublequot KGH}
\nc{\hook}{\hookrightarrow}
\nc{\id}{\mathrm{id}}
\nc{\Id}{\mathrm{Id}}
\nc{\ie}{{\sl i.e.}\ }
\nc{\into}{\mathop{\rightarrowtail}}
\nc{\inv}{^{-1}}
\nc{\isocell}[1]{\underset{#1}{\isoTo}}
\nc{\lisocell}[1]{\underset{#1}{\overset{\sim}{\Leftarrow}}}
\nc{\isotoo}{\overset{\sim}{\longrightarrow}}
\nc{\isoto}{\overset{\scriptscriptstyle\sim}{\to}}
\nc{\leto}{\buildrel \approx\over\to}
\nc{\kk}{\Bbbk}
\nc{\leps}{{}^{\ell}\eps}
\nc{\leta}{{}^{\ell}\eta}
\nc{\etalp}{{}_{\scriptscriptstyle\oplus\!}\eta}
\nc{\epslp}{{}_{\scriptscriptstyle\oplus}\eps}
\nc{\etaup}{{}^{\scriptscriptstyle\oplus\!}\eta}
\nc{\epsup}{{}^{\scriptscriptstyle\oplus\!}\eps}
\nc{\appr}{\approx}
\nc{\apprto}{\xto{\scriptscriptstyle\appr}} 
\nc{\gH}{{{}^{{\scriptstyle g}\!}H}}
\nc{\gHsmall}{{{}^{{}^{g}\!\!}H}}

\nc{\mackfun}[1]{\Fun^{\mathsf{mack}}_{\scriptscriptstyle\ADD}(#1)}
\nc{\resfun}[1]{\Fun^\mathsf{res}_{\scriptscriptstyle\ADD}(#1)}
\nc{\conjfun}[1]{\Fun^\mathsf{conj}_{\scriptscriptstyle\ADD}(#1)}
\nc{\twofun}{2\textrm{-}\!\Fun}

\nc{\loccit}{{\sl loc.\ cit.}}
\nc{\lto}{\leftarrow}
\nc{\Mack}[1]{(Mack\,\ref{it:Mack-#1})}
\nc{\Mid}{\,\big|\,}
\nc{\mmod}{\textrm{-}\mathsf{mod}}%
\nc{\ffree}{\,\textrm{-}\mathsf{free}}
\nc{\cA}{\cat{A}}
\nc{\cB}{\cat{B}}
\nc{\cW}{\cat{W}}
\nc{\Ap}{\cat{A}^+}
\nc{\cM}{\cat{M}}
\nc{\NEcell}{\rotatebox[origin=c]{135}{$\Downarrow$}} 
\nc{\NWcell}{\rotatebox[origin=c]{-135}{$\Downarrow$}} 
\nc{\onto}{\mathop{\twoheadrightarrow}}
\nc{\op}{{\mathrm{op}}}
\nc{\qquadtext}[1]{\qquad\textrm{#1}\qquad}
\nc{\quadtext}[1]{\quad\textrm{#1}\quad}
\nc{\ra}{\rightarrow}
\nc{\reps}{{}^{r\!}\eps}
\nc{\restr}[1]{{|_{\scriptstyle #1}}}
\nc{\reta}{{}^{r\!}\eta}
\nc{\sbull}{{\scriptscriptstyle\bullet}}
\nc{\SEcell}{\rotatebox[origin=c]{45}{$\Downarrow$}} 
\nc{\SET}[2]{\big\{\,#1\Mid#2\,\big\}}
\nc{\smallfaithful}{\mathsf{f}}
\nc{\smat}[1]{\left(\begin{smallmatrix} #1 \end{smallmatrix}\right)}
\nc{\Spanfname}{\Spanname^{\smallfaithful}}
\nc{\sset}{\textrm{-}\mathsf{set}}
\nc{\SWcell}{\rotatebox[origin=c]{-45}{$\Downarrow$}} 
\nc{\too}{\mathop{\longrightarrow}\limits}
\nc{\undersett}[1]{\underset{\scriptstyle #1}}
\nc{\xto}[1]{\xrightarrow{#1}}
\nc{\xinto}[1]{\overset{#1}{\,\into\,}}
\nc{\xToo}[1]{\overset{#1}{\,\Too\,}}
\nc{\xlto}{\xleftarrow}
\nc{\GGf}{\GG^{\smallfaithful}}
\NewDocumentCommand{\naj}{o m}{\nabla^{\IfNoValueTF{#1}{j}{#1}}_{{\!#2}}}
\nc{\najH}{{\naj{H}}}
\nc{\najHp}{{\naj{j_!H'}}}
\nc{\najK}{{\naj{K}}}
\nc{\najY}{{\naj{Y}}}
\nc{\naiuY}{\naj[i]{u_!Y}}

\nc{\To}{\Rightarrow}
\nc{\xTo}[1]{\overset{#1}{\,\To\,}}
\nc{\isoTo}{\overset{\sim}{\To}}
\nc{\Too}{\Longrightarrow}

\nc{\Indzero}[1]{{#1}_\circ}
\nc{\pile}{\pi_0^{\lename}}

\nc{\sgn}{\operatorname{sgn}}
\nc{\ukk}{\underline{\kk}}

\nc{\ut}{\tilde{u}}
\nc{\vt}{\tilde{v}}
\nc{\ft}{\tilde{f}}
\nc{\Ptil}{\widetilde{P}}
\nc{\tr}{\mathrm{tr}}
\nc{\gammaqp}{\gamma_{_{QP}}}
\nc{\I}{\cat{I}}
\nc{\F}{\cat{F}}
\nc{\G}{\cat{G}}
\nc{\C}{\cat{C}}
\nc{\D}{\cat{D}}
\nc{\E}{\cat{E}}

\NewDocumentCommand{\SpanG}{o}{\Spanname_{\GG/\IfNoValueTF{#1}{G}{#1}}}
\NewDocumentCommand{\SpanfG}{o}{\Spanfname_{\GG/\IfNoValueTF{#1}{G}{#1}}}

\NewDocumentCommand{\Alp}{o}{\IfNoValueTF{#1}{\cA}{#1}_{\scriptscriptstyle\oplus}}
\nc{\Alpp}{\Alp[\cA']}
\nc{\Alpt}{\Alp[t]}

\NewDocumentCommand{\Bup}{o}{\IfNoValueTF{#1}{\cB}{#1}^{\scriptscriptstyle\oplus}}
\nc{\Bupp}{\Bup[{\cB'}]}
\nc{\Bupt}{\Bup[t]}
\nc{\Bupr}{\Bup[r]}
\nc{\rup}{\Bup[r]}

\NewDocumentCommand{\Burn}{o}{\Omega_{\IfNoValueTF{#1}{\kk}{#1}}} 
\nc{\BurnG}{\Burn(G)} 

\nc{\Aup}{\Bup[\cA]}
\dmo{\mh}{m}
\nc{\KZ}{{\Kname_0}}
\nc{\cast}{\circledast}
\nc{\Ak}{\cA_{\kk}}
\nc{\Alpk}{\Alp[(\Ak)]}
\nc{\Aupk}{\Bup[\Ak]}
\nc{\Rk}{\cR_{\kk}}

\nc{\stname}{\S}
\nc{\st}[1]{\stname_{#1}}
\nc{\stH}{\st{H}}
\nc{\stHp}{\st{H'}}
\nc{\stK}{\st{K}}
\nc{\stL}{\st{L}}
\nc{\stP}{\st{P}}
\nc{\stX}{\st{X}}
\nc{\stY}{\st{Y}}
\nc{\stT}{\st{T}}
\nc{\stQ}{\st{Q}}

\nc{\gray}[1]{{\color{gray}#1}}
\nc{\green}[1]{{\color{ForestGreen}{#1}}}

\nc{\Hatice}[1]{{\color{RoyalBlue}#1}}
\nc{\Hatout}[1]{\Hatice{\sout{#1}}}
\nc{\Paul}[1]{{\color{ForestGreen}#1}}
\nc{\Pout}[1]{\Paul{\sout{#1}}}

\begin{document}


\title{Mackeyfication of equivariant categories~I}
\author{Paul Balmer}
\author{Hatice Mutlu}
\date{\today}

\address{\ \vfill
\noindent UCLA Mathematics Department, Los Angeles, CA 90095-1555, USA}

\urladdr{https://www.math.ucla.edu/~balmer}

\urladdr{https://mutluhtc.github.io}

\begin{abstract} \normalsize
Colloquially speaking, `equivariant categories' refer to families of additive categories~$\mathcal{A}(G)$ depending 2-functorially on a finite group~$G$.
We construct approximations of equivariant categories by Mackey 2-functors, both on the left and on the right.
The idea is to enlarge~$\mathcal{A}$ in a minimal way to make induction appear.
These `mackey\-fications' are inspired by Boltje's work with ordinary Mackey 1-functors.
We also relate our left and right mackeyfications via a mark transformation. Finally we discuss examples.
\end{abstract}

\thanks{}

\subjclass[2020]{20J05}
\keywords{Mackey 2-functor, local equivalence, groupoids, mark transformation}

\maketitle


\vskip-\baselineskip\vskip-\baselineskip\vskip-\baselineskip
\tableofcontents
\vskip-\baselineskip\vskip-\baselineskip
\vskip-\baselineskip


%
\begin{center}
\textsc{Preamble}
\end{center}
\label{sec:preamble}%
In the context of ordinary Mackey functors of abelian groups~\cite{Green71,Dress73}, building on work of Deligne~\cite{Deligne73}, Dress~\cite{Dress71} and Th\'evenaz~\cite{Thevenaz88}, Robert Boltje~\cite{Boltje98} proposed two constructions of Mackey functors~$A_+$ and~$B^+$ associated respectively to `restriction' functors~$A$ and to `conjugation' functors~$B$.

In this paper, we propose a categorification of these ideas using Mackey 2-functors of additive categories, in the sense of~\cite{BalmerDellAmbrogio20}.
While we focus exclusively on the 2-categorical story, the reader familiar with Boltje~\cite{Boltje98} will easily see the parallels.
A precise comparison, under $K$-theoretic decategorification and under $\Hom$-decategorification \`a la~\cite{BalmerDellAmbrogio24}, will appear in the upcoming~\cite{BalmerMutlu27pp}.

Expertise in 2-category theory is not necessary for this paper. All our 2-categories are close cousins of~$\CAT$, the domestic 2-category of categories, functors and natural transformations, for instance the 2-subcategory $\ADD\subseteq \CAT$ of additive categories and additive functors, or the full 2-subcategory $\gpd\subseteq \CAT$ of finite groupoids.

%
\section{Introduction}
\label{sec:introduction}%
%

Let~$\GG$ be a category of finite groups of interest, \eg all finite groups (for a `global' theory), or only the subgroups of a fixed group~$\Gamma$ (for a `local' theory).
We view $\GG$ as a 2-category whose 2-cells encode conjugation (\Cref{Exa:gps}).
The term `equivariant category' refers broadly to categories~$\cA(G)$ varying 2-functorially with~$G$ in~$\GG$.
More precisely, we use three species of equivariant categories.

First, we have \emph{restriction 2-functors}, \ie the plain 2-functors $\cA\colon \GG^\op\to \ADD$. They consist of an additive category $\cA(G)$ for every group~$G$ in~$\GG$, a restriction $u^*\colon \cA(G)\to \cA(H)$ for every morphism $u\colon H\to G$ in~$\GG$, and a natural isomorphism $\alpha^*\colon u^*\isoTo v^*$ for every 2-cell~$\alpha\colon u\isoTo v$, with the usual functoriality conditions.

Second and most beloved, we have the \emph{Mackey 2-functors} $\cM\colon \GG^{\op}\to\ADD$, namely the restriction 2-functors that admit induction: For every injective morphism $i\colon H\into G$, we require functors $i_*\colon \cM(H)\to \cM(G)$ that are two-sided adjoint to restriction~$i^*$ and that satisfy base-change Mackey formulas. The axiomatization introduced with Dell'Ambrogio in~\cite{BalmerDellAmbrogio20} is recalled in~\Cref{sec:Mackey}.
An entire chapter of~\cite{BalmerDellAmbrogio20} is dedicated to the many standard examples of Mackey 2-functors.

The third species, called \emph{conjugation 2-functors}, will be discussed shortly.

Consider the 2-category $\resfun{\GG}:=\twofun(\GG^{\op},\ADD)$ of restriction 2-functors on~$\GG$, with transformations compatible with restriction as morphisms, and modifications as 2-cells (\Cref{Rec:2-functors}).
Its 2-subcategory $\mackfun{\GG}$ of Mackey 2-functors has the same 2-cells but fewer morphisms: only those that preserve induction (\Cref{Rec:Mack(G)}).
The 2-functor $\mackfun{\GG}\hook \resfun{\GG}$, which forgets induction, admits a left biadjoint that is our first mackeyfication~$\Alp[(-)]$.
As usual, the unit~$\eta\colon \cA\to \Alp$ goes out to the right. \emph{Left} adjoint but \emph{right} mackeyfication.

\begin{Thm}[{\Cref{Thm:Alp-UP}}]
\label{Thm:Alp-UP-intro}%
Let $\cA$ be a restriction 2-functor on~$\GG$. There exists a Mackey 2-functor $\Alp$ and a morphism $\eta\colon \cA\to \Alp$ of restriction 2-functors such that every such morphism $t\colon \cA\to \cM$ into a Mackey 2-functor~$\cM$ factors
uniquely\,${}^{(}$\footnote{\label{footnote:unique}\,Uniqueness is `up to unique invertible modification' in a sense made precise in the text.}${}^{)}$
as $t\cong \hat{t}\circ\eta$ for a morphism of Mackey 2-functors $\hat{t}\colon \Alp\to\cM$:
\[
\kern8em\xymatrix@C=2em@R=.3em{
*+[F]{\cA} \ar[rr]^-{\eta}_-{} \ar[rd]_-{\forall\,t}^(.55){}
&
& \Alp \ar@{-->}[ld]^(.4){\exists!\,\hat{t}}_{}
\\
& \cM
}
\]
\end{Thm}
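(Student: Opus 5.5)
The plan is to construct $\Alp$ explicitly as an `induced-from-below' category, in the spirit of Boltje's $A_+$. For $G$ in~$\GG$, let $\Alp(G)$ be the additive hull of the category whose objects are pairs $(H\xinto{i}G,\,x)$ with $i$ injective and $x\in\cA(H)$. We think of such a pair as the formal induction $i_*x$. Morphisms $(H,i,x)\to(K,j,y)$ are dictated by the Mackey formula. They are finite sums, indexed by double cosets $[g]\in\doublequot{i(H)}{G}{j(K)}$ (more invariantly, by the components of the iso-comma groupoid $(i/j)$), of morphisms in~$\cA$ of the form
\[
p^*x \to q^*y,
\]
where $p$ and $q$ are the two projections from the corresponding component $H\cap{}^gK$. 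Composition comes from the Mackey decomposition of iso-comma squares, using the 2-functoriality of~$\cA$ to identify the restrictions involved. Equivalently, I would present $\Alp$ as the composite of $\cA$ with the span bicategory: $\Alp(G)$ is the additive hull of the Grothendieck-type construction over $\groupoidf$ of faithful functors to~$G$, with morphisms given by spans of groupoids faithful on one leg. I would use whichever of these matches the groupoid language of \Cref{sec:Mackey}.

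With this definition the remaining structure is as follows.
\begin{itemize}
\item Restriction $u^*\colon\Alp(G)\to\Alp(G')$ along any $u\colon G'\to G$ is given by pulling back $(H\into G)$ along~$u$ via the iso-comma, then decomposing into components and restricting~$x$.
\item Induction along $k\colon G\into L$ is simply $(H,i,x)\mapsto(H,k i,x)$.
\item The unit $\eta\colon\cA\to\Alp$ sends $x\mapsto(G,\id,x)$.
\end{itemize}
I would then verify the Mackey 2-functor axioms of \Cref{sec:Mackey}: 2-functoriality, additivity (disjoint unions go to products), the Mackey base-change formula, and ambidexterity. The base-change formula should hold essentially by construction, since restricting an induced object is defined by the iso-comma decomposition. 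For ambidexterity of $k_*\dashv k^*\dashv k_*$, I would write both units and counits explicitly, using the identity component of the iso-comma square $G\times_L G$ together with its complement. The triangle identities then reduce to bookkeeping of components.

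For the universal property, let $t\colon\cA\to\cM$ be a morphism of restriction 2-functors. Define $\hat t(H,i,x):=i_*t(x)$. A morphism component $p^*x\to q^*y$ is sent to the composite
\[
i_*t(x)\to i_*p_*p^*t(x)\cong j_*q_*p^*t(x)\to j_*q_*q^*t(y)\to j_*t(y),
\]
built from the unit and counit of $(p^*,p_*)$ and $(q^*,q_*)$ in~$\cM$ and from the 2-cell of the iso-comma square. Functoriality of $\hat t$ on $\Alp(G)$ is exactly the statement that composition in~$\Alp$ was modeled on the Mackey formula in~$\cM$. Compatibility of $\hat t$ with restriction and induction, and the isomorphism $\hat t\eta\cong t$, are then direct. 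For uniqueness, any Mackey morphism $s$ with $s\eta\cong t$ must satisfy $s(H,i,x)=s(i_*\eta x)\cong i_*s\eta x\cong i_*tx$, because every object of $\Alp$ is a sum of inductions of objects in the image of~$\eta$. Morphisms are likewise generated by units, counits and images of $\eta$, and $s$ preserves these. This gives a unique invertible modification $s\cong\hat t$.

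The main obstacle is the verification of the Mackey 2-functor structure on~$\Alp$. Specifically, one must check that composition of morphisms is associative and compatible with the iso-comma decompositions, and that restriction is strictly enough pseudo-functorial, including the 2-cells~$\alpha^*$. To keep this manageable, I would first try to deduce everything from a general principle. Namely, I would identify $\Alp(G)$ with the additive hull of a category of `$\cA$-decorated spans', and use that the composition of spans of groupoids with faithful legs is coherently associative in the span bicategory. This lets associativity and coherence be inherited rather than checked by hand with double cosets.
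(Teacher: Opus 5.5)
There is a genuine gap in the construction of $\Alp(G)$, and everything else rests on it. Your first description sets
\[
\Hom((H,i,x),(K,j,y))=\bigoplus_{C\in\pi_0(i/j)}\Hom_{\cA(C)}(p^*x,q^*y)=\Hom_{\cA(i/j)}(\pr_1^*x,\pr_2^*y).
\]
This is what adjunction plus the Mackey formula would give if $\cA$ already had induction. In $\Alp$, the same argument produces morphisms in $\Alp(i/j)$, and $\eta\colon\cA(i/j)\to\Alp(i/j)$ is far from full. This causes two problems.

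First, composition cannot be defined. Compose a piece supported on a component $C$ of $H\times_GK$ with a piece on a component $D$ of $K\times_GL$. You get a morphism in $\cA(C\times_KD)$. The map $C\times_KD\to H\times_GL$ is faithful but in general not a local equivalence; for $G=H=L=C_2$ and $K=1$ it is $1\into C_2$. So no restriction or 2-cell of $\cA$ moves the composite into $\cA(H\times_GL)$; you would need induction along $1\into C_2$, which $\cA$ lacks.

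Second, whatever composition you choose, these hom-groups are too small for the universal property. Take the constant $\Ak$ and $t=\beta\colon\Ak\to\Burn$ with $\beta(\kk)=C_2/C_2$. Consider the endomorphisms $\id$ and $\leps\circ\reta$ (for $1\into C_2$) of $\eta(\kk)\in\Alp(C_2)$. Up to conjugation by an isomorphism, $\hat\beta$ sends them to the identity span and to the span $C_2/C_2\lto C_2/1\to C_2/C_2$. These are linearly independent, whereas your formula gives $\End(\eta\kk)=\kk$.

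Your span description is not equivalent to this; it is the right one, and it is what the paper uses. Morphisms are pairs $(P,f)$ with $P\in\SpanfG(H,K)$ arbitrary, so $P\to H\times_GK$ is merely faithful, which supplies the missing smaller-subgroup terms, and $f\colon p_1^*x\to p_2^*y$ in $\cA(P)$. But it needs an ingredient your proposal never names. You must quotient by the relation generated by $(P,f)\appr(P',\tr_s(f^\sigma))$, for morphisms of spans $s\colon P\to P'$ that are local equivalences. Here $f^\sigma$ is $f$ adjusted by the wing cells, and $\tr_s$ is the trace for the folding induction that an additive $\cA$ already has along local equivalences.

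Without this relation $\eta$ is not even additive: $\eta(f)+\eta(g)=[G\sqcup G,\nabla,\nabla;(f,g)]$ while $\eta(f+g)=[G,f+g]$. Taking an additive hull or a group completion does not fix this. With the relation, you must check two things. Composition must respect it; this needs base change of traces, together with the cube lemma showing that the induced map of iso-commas is again a local equivalence. And your $\hat t$ must be constant on equivalence classes, which is exactly where the relation is forced.

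Given that construction, the rest of your outline matches the paper: restriction via iso-commas, induction by composing structure maps, $\eta(x)=(G,x)$, and $\hat t$ as the unit--pushforward--counit composite. The paper obtains uniqueness from strict triangle identities for the biadjunction $\Alp[(-)]\adj U$, rather than from a generation argument.
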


As with Boltje's $(-)^+$, our second construction $\Bup[(-)]$ is more delicate to state.
Here enter the `conjugation 2-functors', those equivariant categories without induction but also without restriction.
To formalize this, let $\GGiso$ be the 2-full subcategory of~$\GG$ with only isomorphisms as morphisms.
We call $\conjfun{\GG}:=\twofun(\GGiso^\op,\ADD)$ the 2-category of \emph{conjugation 2-functors} on~$\GG$.
Precomposing with the inclusion $\GGiso\hook\GG$ yields a faithful 2-functor $\resfun{\GG}\hook\conjfun{\GG}$ that forgets restrictions.
Its right biadjoint yields our second mackeyfication $\Bup[(-)]$.

\begin{Thm}[{\Cref{Thm:Bup-UP}}]
\label{Thm:Bup-UP-intro}%
Let $\cB$ be a conjugation 2-functor on~$\GG$. There exists a restriction 2-functor $\Bup$ and a morphism $\eps\colon \Bup\to \cB$ of conjugation 2-functors, such that every such morphism $t\colon \cA\to \cB$ from a restriction 2-functor~$\cA$ factors
uniquely\,${}^{(\textrm{\rm\ref{footnote:unique}})}$
as $t\cong \eps\circ\tilde{t}$ for a  morphism $\tilde{t}\colon \cA\to\Bup$ of restriction 2-functors:
\[
\xymatrix@C=2em@R=.3em{
\Bup \ar[rr]^-{\eps}_-{}
&& *+[F]{\cB}
\\
& \cA \ar[ru]_-{\forall\,t}^(.55){} \ar@{-->}[lu]^(.6){\exists!\,\tilde{t}}_{}
}\kern8em
\]
\end{Thm}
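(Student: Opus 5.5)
We will construct $\Bup$ as a right Kan extension (coinduction) along the inclusion $\iota\colon\GGiso\hook\GG$, made explicit and additive. For $G$ in~$\GG$, the natural candidate is
\[
\Bup(G):=\prod_{[u\colon H\to G]}\cB(H)^{\text{equivariant}},
\]
that is, the category of families indexed by morphisms $u\colon H\to G$ in~$\GG$. Concretely, an object of $\Bup(G)$ will be a choice of object $x_u\in\cB(H)$ for every morphism $u\colon H\to G$, together with coherent isomorphisms. For each isomorphism $f\colon H'\isoto H$ we need $x_{uf}\cong f^*x_u$ in~$\cB(H')$. For each 2-cell $\alpha\colon u\isoTo v$ (conjugation by some $g\in G$, so $v=c_g u$) we need a compatibility, via $\cB$ of the induced isomorphism, between $x_u$ and $x_v$. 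These isomorphisms must satisfy the obvious cocycle conditions. Morphisms are families of morphisms compatible with these structure isomorphisms.

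To keep things finite and additive, we will reduce to a skeleton. Since $\GG$ has finitely many morphisms into~$G$ up to the relevant equivalence, $\Bup(G)$ is equivalent to a finite product of categories of the form $\cB(H)$ twisted by stabilizers, hence additive. Restriction along $w\colon G'\to G$ is precomposition: $(w^*x)_{u'}:=x_{wu'}$. The 2-cells $\beta\colon w\isoTo w'$ act via the 2-cells $\beta u'$. Functoriality then holds strictly or up to canonical isomorphism, which gives the restriction 2-functor~$\Bup$. The counit $\eps_G\colon\Bup(G)\to\cB(G)$ is evaluation at $u=\id_G$. This is compatible with isomorphisms $f\colon G'\isoto G$ by the structure isomorphism $x_{f}\cong f^*x_{\id}$, so $\eps$ is a morphism of conjugation 2-functors.

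For the universal property, given $t\colon\cA\to\cB$ on~$\GGiso$, define $\tilde t_G(a):=\big(t_H(u^*a)\big)_{u\colon H\to G}$. The structure isomorphisms come from the pseudo-functoriality of~$\cA$ and from the isomorphisms $t_{H'}f^*\cong f^*t_H$ that are part of~$t$. The isomorphism $\eps\tilde t\cong t$ is the unitor $\id^*\cong\Id$. For uniqueness, we will show that any $s\colon\cA\to\Bup$ with $\eps s\cong t$ satisfies
\[
s_G(a)_u\cong (u^*s_G(a))_{\id}=\eps_H(s_H(u^*a))\cong t_H(u^*a).
\]
The middle identity uses that $s$ commutes with restriction; this builds an invertible modification $s\cong\tilde t$. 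Its uniqueness follows because a modification into~$\Bup$ is determined by its $\id$-components, by the same formula. Equivalently, we will check that $\eps\circ-$ induces an equivalence of Hom-categories $\resfun{\GG}(\cA,\Bup)\to\conjfun{\GG}(\cA,\cB)$, which is the precise form of the footnoted uniqueness.

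We expect the main obstacle to be bookkeeping of coherence. We must define the objects of $\Bup(G)$ with exactly the right compatibility data: enough to make evaluation at $\id$ faithful on the data, but no more. We must also verify that $\Bup$ satisfies the 2-functor axioms and that the modifications above satisfy the modification axioms. We will handle this by first proving that $\Bup(G)$ is equivalent to the pseudo-limit of $\cB$ over the comma 2-category $\iota/G$, so that the universal property becomes the standard pointwise formula for right biKan extensions. Then $\id_G$ is a weakly terminal object in each fiber, and this is what drives the uniqueness argument.
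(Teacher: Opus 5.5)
Your overall route is the paper's. You take $\Bup(G)$ to be the pseudo-limit of $\cB$ over objects $u\colon H\to G$ of the comma 2-category, with restriction given by precomposition (the paper's $j_!$ in \Cref{Cons:Bup-restriction}) and $\eps_G$ given by evaluation at $\id_G$ (\Cref{Cons:Bup-counit}). Your $\tilde t_G(a)=(t_H(u^*a))_u$ is exactly the paper's $\Bupt\circ\etaup_{\cA}$. The paper packages the universal property as a biadjunction with strict triangle identities (\Cref{Prop:Bup-unit-counit}), whereas you check directly that $\eps\circ-$ is an equivalence of Hom-categories. Your computation $s_G(a)_u=(u^*s_G(a))_{\id_H}\cong\eps_H(s_H(u^*a))\cong t_H(u^*a)$ is that triangle identity in disguise, so this difference is cosmetic. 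One small correction: $(G,\id_G)$ is not weakly terminal in the indexing 2-category, whose morphisms are restricted. What drives uniqueness is the identity $(u^*x)_{\id_H}=x_u$, which you already use.

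The genuine gap is that you index the coherence isomorphisms by isomorphisms over $G$ only. The Introduction's statement is shorthand (\Cref{Rem:additive=>groups}) for \Cref{Thm:Bup-UP}. There, $\GG$ consists of finite groupoids and restriction 2-functors must be \emph{additive}, meaning $\cA(G_1\sqcup G_2)\simeq\cA(G_1)\oplus\cA(G_2)$. Moreover, conjugation 2-functors and their morphisms live on $\GGi$, i.e.\ along all local equivalences, including the inclusions $\incl_{H_i}\colon H_i\hook H_1\sqcup H_2$ and foldings $H\sqcup H\to H$.

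With isomorphisms only, the value of $x$ at a disconnected level $H_1\sqcup H_2$ is unrelated to its values at $H_1$ and $H_2$. Consequently $(\incl_1^*,\incl_2^*)\colon\Bup(G_1\sqcup G_2)\to\Bup(G_1)\oplus\Bup(G_2)$ discards all data at levels $H\to G_1\sqcup G_2$ meeting both components, so it is not an equivalence and $\Bup$ is not a restriction 2-functor. Likewise $\eps$ has no compatibility isomorphisms along component inclusions or foldings, so it is not a morphism in $\conjfun{\GG}$. The Hom-category you compare against is therefore not $\conjfun{\GG}(\cA,\cB)$. Your remark that $\Bup(G)$ is a product of additive categories only shows that each \emph{category} $\Bup(G)$ is additive, which is a different property.

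The paper avoids this in \Cref{Cons:Bup-category} by requiring a coherence isomorphism $x_s$ for every morphism $s$ of $\GG/G$ whose underlying morphism is a local equivalence, together with Condition~(1). Equivalently, you could index only by connected $H$, as in the proof of \Cref{Thm:Bup-cst}, and define $\eps_G$ componentwise; with either change your argument goes through. In the Introduction's groups-only reading, where additivity is vacuous and local equivalences between groups are isomorphisms, your version is already fine. Your finiteness claim still fails for $\GG=\gpd$, since every finite group maps trivially to $G$. This is harmless, because arbitrary products of additive categories are additive.
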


As the word `Mackey' does not appear in~\Cref{Thm:Bup-UP-intro}, we seem to encounter a tongue-twisting `left restrictification' of~$\cB$.
By chance, $\Bup$ is always \emph{Mackey}:
\begin{Thm}[{\Cref{Thm:Bup-Mackey}}]
\label{Thm:Bup-Mackey-intro}%
The above construction~$\cB\mapsto \Bup$ yields a 2-functor $\conjfun{\GG}\to \mackfun{\GG}$ from conjugation 2-functors to \emph{Mackey} 2-functors.
\end{Thm}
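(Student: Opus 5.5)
We first make $\Bup$ explicit. As with Boltje's $B^+$, the right biadjoint of the forgetful 2-functor $\resfun{\GG}\hook\conjfun{\GG}$ should be a kind of right Kan extension along $\GGiso\hook\GG$. Concretely, for a group $G$ in~$\GG$ we take
\[
\Bup(G):=\Big(\prod_{H\le G}\cB(H)\Big)^{G},
\]
the category of $G$-equivariant families. An object is a family $(x_H)_{H\le G}$ with $x_H\in\cB(H)$, together with coherent isomorphisms $x_{{}^gH}\cong (c_g)^*x_H$ for $g\in G$. This is the limit over the comma 2-category of $\GGiso$ under~$G$. Since subgroups $H\le G$ correspond to injections $H\into G$ up to isomorphism in~$\GGiso$, this comma category is equivalent to the groupoid of subgroups of~$G$ with conjugations. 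Restriction along $u\colon K\to G$ sends $(x_H)$ to $(x_{u(L)})_{L\le K}$, transported along the isomorphisms $L\isoto u(L)$ when $u$ is injective. For general $u$ one must be careful, and we would adopt whatever the text used in \Cref{Thm:Bup-UP}. The key point is that $\Bup(G)$ is a limit (an equivariant product) of additive categories. So $\Bup$ is additive, and 2-functoriality of $\cB\mapsto\Bup$ is formal from 2-functoriality of limits.

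The Mackey property then has two parts.

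\textbf{Step 1: induction.} For $i\colon K\into G$ injective, define $i_*\colon\Bup(K)\to\Bup(G)$ by a finite product:
\[
(i_*y)_H:=\bigoplus_{[g]\in \doublequot{H}{G}{K}\text{ with } H^g\le K}(c_g)^*\, y_{H^g}.
\]
More invariantly, $(i_*y)_H$ is the sum over $H$-fixed points of $G/K$, i.e.\ over cosets $gK$ with $H\le {}^gK$, of the conjugate of $y_{H^g}$, with the equivariance isomorphisms permuting the summands. This is the categorified version of the ``fixed points'' formula for Boltje's $B^+$.

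\textbf{Step 2: ambidexterity.} We show $i_*$ is both left and right adjoint to $i^*$. Because all summands are finite direct sums, which are biproducts in additive categories, the unit and counit are built componentwise from inclusions and projections of summands. The fixed coset $gK=K$ yields the map $y\to i^*i_*y$, and the $G$-equivariance of the family yields $i^*$-adjunctions on both sides. We would verify the triangle identities componentwise, where they reduce to the biproduct identities.

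\textbf{Step 3: base-change formula.} For $i\colon K\into G$ and $j\colon L\into G$ we compare $j^*i_*$ with $\bigoplus_{[g]\in\doublequot LGK}(\text{incl})_*(c_g)^*(\text{incl})^*$. Evaluated at $H\le L$, both sides are indexed by the $H$-fixed points of $G/K$. These decompose as the $H$-fixed points of the $L$-orbits of $G/K$, i.e.\ by the double cosets $LgK$. So the canonical iso is a reindexing of biproducts. We also need the additivity axiom $\Bup(G_1\sqcup G_2)$ only if $\GG$ involves groupoids. If so, we treat it as a product decomposition of the subgroup data.

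\textbf{Step 4: morphisms and the main obstacle.} We show that for $f\colon\cB\to\cB'$ the induced $\Bup[f]$ commutes with $i_*$ up to the obvious isomorphism, since $f$ is additive and preserves direct sums. Hence $\Bup[f]$ is a morphism of Mackey 2-functors and modifications carry over. The main obstacle is coherence. The axioms of \cite{BalmerDellAmbrogio20} require that the Mackey isomorphisms and the unit/counit be compatible with the 2-cells $\alpha^*$, and that the two adjunctions be the specific ones satisfying the strict Mackey formula compatibility. To control this, we would first try to reduce to the standard example in \cite{BalmerDellAmbrogio20}: the Mackey 2-functor of $G$-equivariant objects $G\mapsto\Fun(\text{groupoid of subgroups of }G,\ \cdot)$. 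That is, we would exhibit $\Bup$ as a ``representable-type'' construction $G\mapsto\mathrm{Hom}(\GGiso/G,\cB)$ over finite groupoids, where induction is right (equivalently left) Kan extension along faithful functors of finite groupoids. Its Mackey property is then inherited from the general result that Kan extension along faithful functors of groupoids into additive categories satisfies the Mackey axioms.
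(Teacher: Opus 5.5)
\textbf{Where you match the paper.} When every morphism of $\GG$ is faithful ($\GG=\gpdf$ or $\gpdf_{/\Gamma}$), your Steps~1--3 are the paper's proof written with subgroups and coset representatives:
\begin{enumerate}[(i)]
\item Your $H$-fixed cosets $gK\in G/K$ index the components of the $\approx$-locus $j^\approx H$ of $\pr_2\colon G'\times_G H\to H$. This is the locus used to define $(j_*x')_H=(\najH)_*(x'_{j^\approx H})$.
\item Your inclusion/projection of the summand $g=1$ is the paper's section $\eta_{H'}$ of $\najHp$. Your fold/diagonal maps are the folding (co)units.
\item Your double-coset reindexing is the paper's equivalence $e_Y$ restricted to $\approx$-loci.
\item Compatibility of $\Bup[f]$ with induction is \Cref{Lem:leq-transfo-induction}.
\end{enumerate}

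\textbf{The gap: your model of $\Bup(G)$.} The comma 2-category over which $\Bup(G)$ is a limit has as objects \emph{all} morphisms $H\to G$ of $\GG$, with local equivalences over $G$ as morphisms; the paper stresses this right after \Cref{Cons:Bup-category}. Identifying it with the groupoid of subgroups of $G$ is only correct when $\GG$ has only faithful morphisms.

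For $\GG=\gpd$, which the statement covers, the non-faithful levels are indispensable. They are what lets restriction along a non-faithful $u\colon K\to G$ exist at all, simply as $(u^*x)_L=x_{u_!L}$. In your model this would require $\cB$ along $L\onto u(L)$, which is not a local equivalence, so no such functor exists. The point you defer (``for general $u$ one must be careful'') is exactly the missing idea. Without it, your $\Bup$ is neither a restriction 2-functor on $\gpd$ nor the paper's $\Bup$.

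Once levels range over all of $\GG/G$, two further steps must change:
\begin{enumerate}[(i)]
\item Step~1 must define $(i_*y)_H$ also at non-faithful $\stH$. Your fixed-point formula survives if $H$ acts on $G/K$ through $\stH$, which is what $j^\approx H$ encodes.
\item Step~3 must cover every Mackey square with $i,j$ faithful but $u,v$ arbitrary, not just the double-coset square of two subgroups. The paper does this uniformly with $e_Y\colon L\times_K Y\isoto H\times_G Y$. Since $\pr_2 e_Y=\pr_2$, it restricts to $j^\approx Y\isoto i^\approx u_!Y$.
\end{enumerate}

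Your Step~4 fallback repairs neither this nor the coherence. $\Bup(G)$ is not $\Fun(G,-)$, and for non-constant $\cB$ it is not a functor category at all. Deducing \Mack{3} from Kan extensions would require $G\mapsto\tau_1(\GGisoconn/G)$ to send isocommas along faithful maps to isocommas. That is exactly the $e_Y^\approx$ comparison, which has to be proved directly.
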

Hence we call this right biadjoint~$\Bup[(-)]$ the \emph{left mackeyfication} but emphasize that it is \emph{not} right biadjoint to~$\mackfun{\GG}\hook \resfun{\GG}$, nor to the composite $\mackfun{\GG}\hook\conjfun{\GG}$. See~\Cref{Rem:Bup-not-THAT-adjoint}.
We did warn the reader that~$\Bup[(-)]$ would be more delicate! In summary, we have the following picture:
\begin{equation}
\label{eq:big-picture}%
\vcenter{\xymatrix@C=1.2em@M=.6em@R=1.5em{
\epslp_{\cM}\colon \Alp[\cM]\to \cM
& \Alp 
& \mackfun{\GG} \ar@{^(->}[d]_-{\adj\ }^-{\forget}
& \Bup
\\
\eta=\etalp_{_{\cA}}\colon \cA\to \Alp \ar@{}[u]|(.5){\textrm{(Thm\,\ref{Thm:Alp-UP-intro})}}
& \cA \ar@{|->}[u]
& \resfun{\GG} \ar@{^(->}[d]^-{\ \adj}_-{\forget} \ar@<1em>@/^1em/[u]^-{\Alp[(-)]}
&& \etaup_{_{\cA}}\colon \cA\to \Aup
\\
&& \conjfun{\GG} \ar@<-1em>@/_1em/[u]_-{\Bup[(-)]}
& \cB \ar@{|->}[uu]_(.75){\textrm{(Thm.\,\ref{Thm:Bup-Mackey-intro})}}
& \eps=\epsup_{_{\cB}}\colon \Bup\to \cB  \ar@{}[u]|(.5){\textrm{(Thm\,\ref{Thm:Bup-UP-intro})}}
}}\kern-1.2em
\end{equation}
In the center we displayed vertically the two forgetful functors, from Mackey 2-functors to restriction 2-functors, and from restriction 2-functors to conjugation 2-functors. These forgetful 2-functors have biadjoints,~$\Alp[(-)]$ and~$\Bup[(-)]$ respectively.
Hence we have units and counits, decorated with~${}^\oplus$ and~${}_\oplus$ to distinguish the two cases.
The unit~$\etalp_{\cA}$ is the~$\eta$ of~\Cref{Thm:Alp-UP-intro}, and the counit~$\epsup_{\cB}$ is~$\eps$ in~\Cref{Thm:Bup-UP-intro}. The unit $\etaup_{\cA}$ will play a role below.
For completeness, we mention the counit $\epslp_{\cM}\colon\Alp[\cM]\to \cM$ in~$\mackfun{\GG}$ for every Mackey 2-functor~$\cM$.

The \emph{existence} of our biadjoints may surely be obtained by general theory: Kan extensions of bicategories~\cite{Street74,Kelly05} and `swallowtail equations' in tricategories~\cite{GordonPowerStreet95}.
Instead of unpacking all this machinery in our setting, we give the \emph{explicit} \Cref{Cons:Alp-category,Cons:Bup-category}, for~$\Alp$ and~$\Bup$ respectively.
We pay the price by having to prove the biadjunctions, mostly by constructing the (co)units. Having explicit formulas turns out to be useful for computations anyway.

Another thing that is not provided by the abstract machinery is the little miracle of~\Cref{Thm:Bup-Mackey-intro}: The right biadjoint $\Bup[(-)]$ lands in Mackey 2-functors.
We use this fact to construct a comparison between the two mackeyfications~$\Alp$ and~$\Aup$, when they both make sense, \ie for~$\cA$ a restriction 2-functor.
Indeed, we already mentioned in~\eqref{eq:big-picture} the unit for the second adjunction~$\etaup_{_{\cA}}\colon \cA\to \Aup$ in~$\resfun{\GG}$.
Since the 2-functor $\Aup$ is kind enough to be Mackey, we can apply~\Cref{Thm:Alp-UP-intro} to~$\cM=\Aup$ and~$t=\etaup_{_{\cA}}$ to get a~$\hat{t}\colon \Alp\to \cM=\Aup$.
\begin{Thm}[{\Cref{sec:mark}}]
\label{Thm:mark-intro}%
For every restriction 2-functor~$\cA$, there exists a canonical morphism~$\mu_{\cA}\colon \Alp\to \Aup$ of Mackey 2-functors:
\[
\mu_{\cA}\,\overset{\textrm{\rm def}}{=}\,\widehat{\big(\etaup_{_{\cA}}\big)}\colon \Alp \to \Aup
\]
such that the following composite in $\conjfun{\GG}$ is the identity of~$\cA$:
\[
\xymatrix{
\cA \ar[r]^-{\etalp_{\cA}} & \Alp \ar[r]^-{\mu_{\cA}} & \Aup \ar[r]^-{\epsup_{\cA}} & \cA.
}
\]
\end{Thm}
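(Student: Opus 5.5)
The argument is formal and uses only the three earlier theorems. First, by \Cref{Thm:Bup-UP-intro} applied to the conjugation 2-functor underlying~$\cA$, we have the restriction 2-functor~$\Aup$ together with the unit $\etaup_{_{\cA}}\colon \cA\to\Aup$ in~$\resfun{\GG}$. This unit is the factorization $\tilde{t}$ of $t=\id_{\cA}$ (seen in~$\conjfun{\GG}$) through $\epsup_{\cA}\colon\Aup\to\cA$. Hence, by construction, we have an invertible modification
\[
\epsup_{\cA}\circ\etaup_{_{\cA}}\cong \id_{\cA}\quad\textrm{in }\conjfun{\GG}.
\]
This is the triangle identity, or simply the defining property of~$\tilde{t}$.

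Next, \Cref{Thm:Bup-Mackey-intro} tells us that~$\Aup$ is a Mackey 2-functor, so it is a legitimate target for \Cref{Thm:Alp-UP-intro}. Applying that theorem with $\cM=\Aup$ and $t=\etaup_{_{\cA}}$ produces a morphism of Mackey 2-functors $\mu_{\cA}:=\hat{t}\colon\Alp\to\Aup$ with $\mu_{\cA}\circ\etalp_{\cA}\cong\etaup_{_{\cA}}$ in~$\resfun{\GG}$. Canonicity follows from the uniqueness clause, namely uniqueness up to unique invertible modification. Naturality in~$\cA$ also follows: both $\etaup$ and $\etalp$ are pseudonatural, so for $f\colon\cA\to\cA'$ both $\mu_{\cA'}\circ\Alp[f]$ and $\Bup[f]\circ\mu_{\cA}$ factor the same morphism $\etaup_{\cA'}\circ f$ through~$\etalp_{\cA}$.

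Finally, we compose the two invertible modifications after forgetting to~$\conjfun{\GG}$:
\[
\epsup_{\cA}\circ\mu_{\cA}\circ\etalp_{\cA}\cong\epsup_{\cA}\circ\etaup_{_{\cA}}\cong\id_{\cA}.
\]
The statement says ``is the identity'', so I expect the main point to check is whether the composite is strictly equal to~$\id_{\cA}$ and not merely isomorphic. To settle this I would use the explicit \Cref{Cons:Alp-category,Cons:Bup-category}. I expect $\etalp_{\cA}$ to include $\cA(G)$ as the summands indexed by the trivial subgroup datum $(G,\id)$. I expect $\etaup_{_{\cA}}$ to send an object $x$ to the family of its restrictions $(u^*x)_u$, and $\epsup_{\cA}$ to evaluate that family at the identity. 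On objects coming from~$\cA(G)$, the explicit $\hat{t}$ can be defined by the formula $t$ itself, since $\hat{t}$ is built by inducing~$t$. With that choice, evaluation at~$\id_G$ returns $\id^*x=x$ on the nose, because restriction 2-functors are strict along identities, or can be normalized to be. If strictness fails in the paper's conventions, I would instead read ``identity'' as ``canonically isomorphic to the identity'', and the formal argument above already gives that.
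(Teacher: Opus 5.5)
Your proposal is correct and follows the paper's route. You obtain $\mu_{\cA}$ from \Cref{Thm:Alp-UP} applied to $t=\etaup_{\cA}$, which gives $\mu_{\cA}\circ\etalp_{\cA}=\etaup_{\cA}$, and the triangle identity $\epsup_{\cA}\circ\etaup_{\cA}=\id_{\cA}$ of \Cref{Prop:Bup-unit-counit} then finishes the argument.

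Your strictness worry is settled more directly than by your explicit check. The paper's triangle identities (\Cref{Prop:Alp-unit-counit,Prop:Bup-unit-counit}) hold as equalities, and $\etalp$ is strictly 2-natural, so both steps are on-the-nose equalities and the fallback reading ``canonically isomorphic'' is not needed.
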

We give an explicit formula for $\mu_{\cA}$ in~\Cref{Thm:mark-explicit}. Since that formula resembles the classical `mark homomorphism' of~\cite[Section~2]{Boltje98} we call $\mu_{\cA}\colon \Alp\to \Aup$ the \emph{mark transformation}.
It is usually not an equivalence (\Cref{Exa:mark-noniso}).

To appreciate these results, let us pick the simplest restriction 2-functor~$\cA$ we can think of, namely a constant one. Even with this rather dull input, the output is quite interesting. The following statement is a summary of~\Cref{sec:examples}.

\begin{Thm}
\label{Thm:Burnside-intro}%
Let $\GG$ be the 2-category of finite groups, \emph{faithful} homomorphisms, and conjugations as 2-cells. Let $\kk$ be a field and $\cA(G)$ the additive category of finite-dimensional $\kk$-vector spaces, independently of~$G$ in~$\GG$, with identity restrictions.
\begin{enumerate}[\rm(a)]
\item
\label{it:Burnside-lp}%
The category~$\Alp(G)$ is equivalent to the ordinary $\kk$-linear \emph{Burnside category} $\Burn(G)$ of~$G$, whose objects are finite $G$-sets~$X$, and whose morphisms from~$X$ to~$Y$ are  $\kk$-linear combinations of spans~$X\lto Z\to Y$; see~\Cref{Rec:Burnside-cat}.
\smallbreak
\item
\label{it:Burnside-up}%
The category $\Aup(G)$ is equivalent to the sum $\bigoplus_{(H)}\kk \big(N_G(H)/H\big)\mmod$, over any choice of representatives~$H$ of subgroups of~$G$ up to conjugation, of the categories of $\kk$-linear representations of the Weyl group of~$H$ in~$G$.
\smallbreak
\item
\label{it:Burnside-mark}%
The mark transformation~$\mu_{\cA,G}\colon \Alp(G)\to \Aup(G)$ sends a $G$-set~$X$ to the tuple of $\kk(N_G(H)/H)$-modules, whose $H$-th entry is the permutation module~$\kk(X^H)$.
\end{enumerate}
\end{Thm}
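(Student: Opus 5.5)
The proof reduces all three parts to the explicit constructions, applied to the constant restriction 2-functor $\cA=\kk\mmod$ on groups with faithful homomorphisms. I will use \Cref{Cons:Alp-category} for $\Alp$, \Cref{Cons:Bup-category} for $\Bup$, and \Cref{Thm:mark-explicit} for the mark transformation. I expect these constructions to be built on finite groupoids over~$G$, i.e. on objects of $\SpanG$, or equivalently finite $G$-sets via the Grothendieck construction.

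For~\eqref{it:Burnside-lp}, I expect $\Alp(G)$ to be (the additive, possibly idempotent-completed, hull of) a category with the following data:
\begin{itemize}
\item objects are pairs $(i\colon H\into G,\ a\in\cA(H))$;
\item morphisms are spans of faithful maps $H\xlto{} L\to K$, decorated by morphisms in $\cA(L)$ between the restricted objects, modulo 2-cells.
\end{itemize}
With $\cA$ constant equal to $\kk\mmod$, any object $(H,\kk^n)$ is a direct sum of $n$ copies of $(H,\kk)$. A decoration on a span is then just a $\kk$-matrix, so we may replace it by a $\kk$-linear combination of spans. Identifying $(H,\kk)$ with the $G$-set $G/H$, and a span of groupoids over~$G$ with a span of $G$-sets $G/H\lto G/L\to G/K$, we get a fully faithful functor from the additive hull to~$\Burn(G)$ (\Cref{Rec:Burnside-cat}). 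Composition in $\Alp$ is given by iso-comma squares, and these match pullbacks of $G$-sets. Essential surjectivity holds because every finite $G$-set is a disjoint union of orbits. The one thing to check carefully is that composition agrees, including the multiplicities coming from the double cosets in the Mackey formula.

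For~\eqref{it:Burnside-up}, I expect $\Bup(G)$ to be given as a 2-limit. Concretely, I expect it to be a family of objects $b_H\in\cB(H)$ indexed by subgroups $H\le G$ (or by injections into~$G$), together with coherent isomorphisms along conjugations $c_g\colon H\isoto {}^gH$ induced by the 2-cells. For constant $\cB=\kk\mmod$ with trivial conjugation action, such a family is the following data:
\begin{itemize}
\item for each subgroup $H$, a vector space $V_H$;
\item for each $g\in G$, isomorphisms $V_H\to V_{{}^gH}$, functorial in $g$, and with $h\in H$ acting trivially because conjugation by $h$ is a 2-cell isomorphic to the identity.
\end{itemize}
This is the data of a functor from the action groupoid of $G$ acting on its subgroups by conjugation, with $H$ acting trivially on the fibre over~$H$. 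That groupoid splits over conjugacy classes $(H)$, and each component is equivalent to the one-object groupoid $N_G(H)$. The trivial $H$-action forces the functor to factor through $N_G(H)/H$, which gives $\bigoplus_{(H)}\kk(N_G(H)/H)\mmod$.

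The main obstacle is this last step. Checking that exactly the inner part $H$ acts trivially, and not the centralizer or all of $N_G(H)$, requires unwinding how the 2-cells of~$\GG$ enter the construction of~$\Bup$. I would check it first on $G$ cyclic of order~$2$.

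Part~\eqref{it:Burnside-mark} then follows from the explicit formula in \Cref{Thm:mark-explicit}. I expect the $H$-component of $\mu(X)$ to be computed by restricting along $H\into G$ and taking the part fixed by $H$. For $X=G/K$, this yields a sum over double cosets $HgK$ with $H\le {}^gK$, so the result is $\kk(X^H)$ with its natural $N_G(H)/H$-action. The general case follows by additivity in~$X$.
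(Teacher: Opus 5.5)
Your outline follows the paper's: transporter groupoids $G\ltimes-$ for (a), cutting $\Bup(G)$ down to functors on $\tau_1(\GGisoconn/G)$ for (b), and \Cref{Thm:mark-explicit} for (c). There is a genuine gap, however: part (a) rests on a wrong description of $\Alp(G)$, and this hides the actual content of the proof.

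\textbf{Part (a).} Morphisms in $\Alp(G)$ are not decorated spans ``modulo 2-cells''; that only gives the strong $\appr$-equivalence. They are taken modulo the relation generated by \emph{local equivalences} $s\colon P\apprto P'$ of spans, with $f'=\tr_s(f^\sigma)$, as in \Cref{Cons:Alp-category}.
\begin{itemize}
\item Under your description, the step ``replace the decoration by a $\kk$-linear combination of spans'' is not available. Already for $G=1$, the endomorphisms of $(1,\kk)$ would be finite multisets of scalars rather than $\kk=\Hom_{\Burn(1)}(\ast,\ast)$.
\item What turns decorations into coefficients is the folding $\nabla\colon P\sqcup P\to P$ with $\tr_\nabla(f,g)=f+g$ (\Cref{Exa:trace-sum}), together with $[P,0]=0$.
\item So the real check for full faithfulness is not composition. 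That part is routine: $G\ltimes-$ sends pullbacks of $G$-sets to Mackey squares, and composition in $\Alp$ may be computed with any Mackey square, so no double-coset bookkeeping arises.
\item The real check is that $[P,\lambda]\mapsto\lambda\cdot[X\leftarrow W\to Y]$ is constant on $\appr$-classes. By additivity one reduces to $s$ an equivalence. One then needs $\tr_s(f^\sigma)=f$, i.e.\ that the wing-cell adjustment $f^\sigma=\sigma_2^*\,f\,(\sigma_1^*)^{-1}$ is trivial.
\item This holds because $\cA$ is constant, so every $\cA(\alpha)$ is an identity. That use of constancy is absent from your plan.
\end{itemize}
No idempotent completion enters: $\Alp(G)$ is already additive, and $\Burn(G)$ is not idempotent complete.

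\textbf{Part (b).} You predict the right answer but leave open exactly the point that carries the proof. It is settled by remembering that the levels of $\Bup(G)$ are objects of $\GG/G$, whose 1-morphisms carry structure 2-cells (\Cref{Exa:end(H)-in-gpdf/G}).
\begin{itemize}
\item The assignment $g\mapsto(c_g,\gamma_g)$ identifies $N_G(H)$ with $\End_{\gpdf/G}(H)$.
\item A 2-cell $(c_g,\gamma_g)\To(c_{g'},\gamma_{g'})$ exists if and only if $Hg=Hg'$.
\item Since the input is constant, condition~(3) of \Cref{Cons:Bup-category} gives $x_s=x_t$ for 2-isomorphic $s,t$. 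Condition~(2) becomes plain functoriality, and condition~(1) reduces everything to connected levels.
\item Hence $H$ acts trivially, but $g\in C_G(H)\smallsetminus H$ does not: $c_g=\id_H$, yet its structure 2-cell $\gamma_g$ is nontrivial.
\item You also need that every connected object of $\gpdf/G$ is equivalent over $G$ to a subgroup.
\end{itemize}

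\textbf{Part (c).} Your count of the underlying space is correct. Double cosets $HgK$ with $H\le{}^gK$ are single cosets and correspond to $(G/K)^H$, matching the paper's $\approx$-locus $\bigsqcup_{X^H}H$ of $H\ltimes X\to H$. The $N_G(H)/H$-module structure, however, is only asserted.
\begin{itemize}
\item That structure is the coherence isomorphism $\big(\mu_{\cA,G}(G\ltimes X,\ukk)\big)_{c_g}$. By \Cref{Cons:Bup-induction} it is a folding push-forward corrected by base change, and it must be computed.
\item The paper transports $c_g$ to the automorphism $c_g\ltimes(g\cdot-)$ of $H\ltimes X$. This permutes the components of $\bigsqcup_{X^H}H$ via $a\mapsto ga$, so the push-forward permutes the summands of $\bigoplus_{X^H}\kk$.
\end{itemize}
Without this step you have identified the right vector spaces, but not the right modules.
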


It is remarkable that our abstract~\Cref{Thm:Alp-UP-intro,Thm:Bup-UP-intro} turn a trivial~$\cA$ into two beautiful equivariant categories~$\Alp$ and~$\Aup$ and that~\Cref{Thm:mark-intro} recovers arguably the most interesting connection $\mu_{\cA}\colon \Alp\to \Aup$ between them.

Our long-term project is to categorify Boltje's canonical induction.
So far, we present in this paper the two mackeyfications and the mark transformation.
We prove universal properties that justify the constructions conceptually.
Our answers are not some tricategorical abstractions but are described explicitly.
And they recover interesting categories in examples.
In view of the current page-length, we make a cut at this point and defer further investigation to subsequent work.

The published literature closest to our theme seems to be the works of Boltje, Raggi-C\'ardenas and Valero-Elizondo~\cite{BoltjeEtAl19} and of Calde\-r\'on~\cite{Calderon25}.
These papers generalize~\cite{Boltje98} to (fibered) biset functors. Yet, they do not invoke Mackey 2-functors and remain mostly 1-categorical, treating linear functors from specific biset categories to categories of modules.

The outline of this article is now easy to read off the table of contents. Let us only add a few comments.
As in~\cite{BalmerDellAmbrogio20}, we use the language of finite groupoids to streamline Mackey formulas; see~\Cref{sec:basics}.
Two notions will play an important role in the construction of~$\Alp$. First we have `local equivalences' of groupoids, the smallest class of morphisms that is compatible with additivity and contains group isomorphisms; see~\Cref{ssec:leq}.
Secondly, we have `traces' in the context of Frobenius adjunctions; see~\Cref{sec:trace}.

\begin{AI}
None of the creative work in this paper involved artificial intelligence. Most proofs consist in guessing appropriate constructions, very often units and counits of adjunctions and biadjunctions.
Such guesses create a lot of drudge work: The constructions must be well-defined, functorial, natural, and compatible with the relevant 2-categorical structures, including comma categories and span categories. Those verifications are proverbially `left to the reader'. Nowadays they can be replicated by generative artificial intelligence.
We did test ChatGPT on such verifications, with great success.

In any case, the authors retain full responsibility for mathematical correctness.
\end{AI}

\begin{Ack}
We thank Ivo Dell'Ambrogio for precious comments.
\end{Ack}

\goodbreak

%
\section{Recollections and preparations}
\label{sec:basics}%
%
\subsection{Basics on 2-categories}
\label{ssec:2-categories}%
%
\begin{Conv}
We refer to 0-cells as `objects', we refer to 1-cells as `morphisms', and we let 2-cells be 2-cells.
As already said, the only 2-categories that we employ here are variations on the basic example of the 2-category $\CAT$ of categories, whose objects are categories, whose morphisms are functors and whose 2-cells are natural transformations.
For instance:
\begin{enumerate}[\rm(1)]
\item
We denote by~$\gpd$ the 2-category of finite groupoids (with finitely many objects and finitely many morphisms), functors and natural transformations.
As every 2-cell is invertible, $\gpd$ is a so-called (2,1)-category.
\smallbreak
\item
We already used in the Introduction the 2-category~$\ADD$ of (large) additive categories, additive functors and natural transformations.
\end{enumerate}
\end{Conv}
\begin{Conv}
\label{Conv:isos}%
We denote isomorphisms by~$\cong$, be they invertible morphisms or invertible 2-cells. We denote (1-cell) equivalences by~$\simeq$.
To lighten notation, we follow standard practice and parsimoniously denote by~$=$ a few canonical isomorphisms that can safely be treated as identities.
\end{Conv}

\begin{Not}
\label{Not:cast}%
In a 2-category, it is common to write only~$\alpha$ for the 2-cell~$\alpha$ suitably whiskered, when the whiskering is clear from context. When composing 2-cells that only match after whiskering, we shall write~$\beta\circledast\alpha$ instead of~$\beta\circ\alpha$, to indicate that $\alpha$ and~$\beta$ are adjusted and then composed. For instance, in the diagram
\[
\beta\cast\alpha\colon\kern5em
\vcenter{
\xymatrix@C=2em{
\sbull \ar[r]_-{q}
&
\sbull \ar[r]_-{s} \ar@/^1.3em/[rr]^-{r}_-{\Downarrow{\alpha}}
&
\sbull \ar[r]_-{t} \ar@/_1.6em/[rr]_-{v}^-{\Downarrow{\beta}}
&
\sbull \ar[r]_-{u}
&
\sbull \ar[r]_-{w}
&
\sbull
}}
\]
the 2-cells $\alpha\colon r\To t s$ and~$\beta\colon u t\To v$ cannot be composed but $\beta\cast\alpha$ has the obvious meaning~$(\beta s)\circ(u\alpha)\colon u r\To v s$. Going the full length, we shall often denote by~$\beta\cast\alpha$ the obvious 2-cell~$(w\beta s q)\circ(w u\alpha q)\colon w u r q\To w v s q$ in the above picture.
\end{Not}

\begin{Rec}
\label{Rec:2-functors}%
For every 2-category~$\GG$ (for instance $\GG=\gpd$) we denote by
\[
\twofun({\GG}^{\op},{\ADD})
\]
the 2-category of all contravariant 2-functors from~$\GG$ to~$\ADD$.
Details can be found in~\cite[Appendix~A.1]{BalmerDellAmbrogio20}.
An object~$\cA$ in~$\twofun({\GG}^{\op},{\ADD})$ consists of an additive category~$\cA(G)$ for every object~$G$ in~$\GG$, an additive restriction functor $u^*=\cA(u)\colon \cA(G)\to \cA(H)$ for every morphism~$u\colon H\to G$ in~$\GG$, and a natural transformation $\alpha^*=\cA(\alpha)\colon$ $u^*\To (u')^*$ for every 2-cell~$\alpha\colon u\To u'$. This data is required to be strictly compatible with identities and compositions.
(There is a somewhat cumbersome extension of the theory to \emph{pseudo}-functors~$\cA$.
Note that pseudo-functors can be strictified, by~\cite[Section~4.2]{Power89}.)

The morphisms in~$\twofun({\GG}^{\op},{\ADD})$ are always \emph{pseudo-natural transformations} $t\colon \cA\to \cA'$; they consist of an additive functor $t_{G}\colon \cA(G)\to \cA'(G)$ for every object~$G$ in~$\GG$ and a natural isomorphism $t_u\colon \cA'(u)\circ t_{G}\isoTo t_{H}\circ\cA(u)\colon \cA(G)\to \cA'(H)$ for every morphism~$u\colon H\to G$ in~$\GG$, expressing the compatibility of~$t$ with the two restrictions in~$\cA$ and~$\cA'$, these are subject to `obvious' axioms, see~\cite[A.1.15]{BalmerDellAmbrogio20}.
We call such a morphism~$t$ \emph{strictly 2-natural} if the compatibility isomorphisms~$t_u$ are identities.

The 2-cells $m\colon t\To t'\colon \cA\to \cA'$ are the \emph{modifications}; they consist of 2-cells in~$\ADD$ (natural transformations) $m_G\colon t_G\To t'_{G}\colon \cA(G)\to \cA'(G)$ for every object~$G$ in~$\GG$ compatible with the isomorphisms~$t_u$ in the `obvious' sense, see~\loccit
\end{Rec}

\begin{Rec}
\label{Rec:biadjunction}%
Let $\Phi\colon  \mathcal{K} \to \mathcal{L}$ and $\Psi\colon  \mathcal{L} \to \mathcal{K}$ be two 2-functors between 2-categories, or more generally pseudo-functors between bicategories. A \emph{biadjunction} $\Phi\adj \Psi$ consists of an equivalence of categories
\begin{equation}
\label{eq:biadjunction}%
\mathcal{L}(\Phi X,Y)\simeq\mathcal{K}(X,\Psi Y)
\end{equation}
that is pseudo-natural in both variables. We shall construct such biadjunctions by giving a pair of pseudo-natural transformations, the unit $\eta\colon  \Id_{\mathcal{K}} \To \Psi\Phi$ and the counit $\eps\colon  \Phi\Psi \To \Id_{\mathcal{L}}$, and two invertible modifications $\alpha\colon \Id_{\Phi}\Rrightarrow(\eps\Phi)\circ (\Phi\eta)$ and $\beta\colon \Id_{\Psi}\Rrightarrow(\Psi\eps)\circ(\eta\Psi)$. These must satisfy coherence conditions, called the `swallowtail equations', see~\cite{GordonPowerStreet95}. In our applications, the two triangle composites will be equal to the identity, as pseudo-natural transformations, and therefore the modifications~$\alpha$ and~$\beta$ may be chosen to be identities. Furthermore, in both applications (\Cref{Prop:Alp-unit-counit,Prop:Bup-unit-counit}), one of the unit and counit is strictly 2-natural. In such a case, the swallowtail equations are automatic.
\end{Rec}

\subsection{Basics on groupoids and isocommas}\
\label{ssec:groupoids}%
%
\begin{Not}
\label{Not:gpd}%
We denote finite groupoids by letters such as $G,H,K,\ldots$.
We use the symbol $\into$ to indicate faithfulness.
We reserve $\hook$ to emphasize inclusion.
We write $\pi_0(G)$ for the finite set of connected components of~$G$.
We have a decomposition $G=\sqcup_{H\in\pi_0(G)}H $ and denote the fully
faithful inclusions by~$\incl_H\colon H\hook G$.

\end{Not}

\begin{Exa}
\label{Exa:gps}%
Groups are viewed as one-object groupoids. Functors between them are group homomorphisms.
Given two group homomorphisms $f_1,f_2\colon H\to G$, the 2-cells $f_1\To f_2$ are the conjugations $\gamma_{g}\colon f_1\isoTo f_2$, where $g\in G$ satisfies $f_2={}^{g\!}f_1$.
A homomorphism is an equivalence in~$\gpd$ if and only if it is an isomorphism.

Conjugation by an element $g\in G$ provides both the morphisms~$c_g:={}^g(-)$, say, from~$H\le G$ to~$K\le G$ when ${}^g H\le K$, as well as the above 2-cells~$\gamma_g$.
Note that even when $f_1=f_2$ and ${}^g f_1=f_1$ the 2-cell is trivial~$\gamma_g=\id_{f_1}$ only for~$g=1$.

For instance, suppose that $g\in N_G(H)$ normalizes~$H$. Then we have an automorphism $c_g\colon H\isoto H$, which is trivial $c_g=\id_H$ only if~$g\in C_G(H)$ centralizes~$H$. This~$c_g$ is related to the identity by the 2-cell $\gamma_g\colon \id_H\isoTo c_g$ only when~$g\in H$, and even when $g\in Z(H)$ is central, the 2-cell $\gamma_g\colon \id_H\isoTo \id_H$ is non-trivial unless~$g=1$.
\end{Exa}

\begin{Rem}
Let us make a heuristic comment on the role that 2-cells play in 2-functors~$\cA\colon \gpd^\op\to \ADD$, in particular over the 2-subcategory of groups, as in~\Cref{Exa:gps}.
Functoriality yields restrictions~$u^*=\cA(u)\colon \cA(G)\to \cA(H)$ for all group homomorphisms~$u\colon H\to G$, so that $(u_1u_2)^*=u_2^* u_1^*$.
For 2-cells~$\gamma_g\colon u\isoTo v$, we obtain isomorphisms $\gamma_g^*=\cA(\gamma_g)\colon u^*\isoTo v^*$.
Formally, these $\gamma_g^*$ can be thought of as more variance (`higher restrictions') but they are perhaps better understood as \emph{relations}.
For instance $\gamma_g\colon\id_{G}\isoTo c_g\colon G\to G$ provides an isomorphism $c_g^*\cong \id_{\cA(G)}$, thus trivializing the conjugation action~$c_g^*$ of~$G$ on~$\cA(G)$.
If we stripped~$\gpd$ from all non-identity 2-cells, we could still consider 1-functors from group(oid)s to additive categories, with no imposed relations between~$u^*$ and~$v^*$ when~$u\neq v$. Remembering 2-cells forces $G$-conjugate homomorphisms~$H\to G$ to yield isomorphic restrictions~$\cA(G)\to \cA(H)$.
\end{Rem}

\begin{Rec}\label{Def:isocomma}
Let $i\colon H\to G$ and $u\colon K\to G$ be morphisms in~$\gpd$ with common target, also known as a \emph{cospan}~$H\xto{i}G\xlto{u}K$.
The \emph{isocomma groupoid} $(i/u)$
\begin{equation}
\label{eq:isocomma}
\vcenter{
\xymatrix@C=1em@R=1em{
& (i/u) \ar@{ ->}[rd]^{\pr_2} \ar@{ ->}[ld]_{\pr_1} \ar@{}[dd]|{\isocell{\gamma_{i/u}}} & \\
H \ar@{ ->}[rd]_i &  & K \ar@{ ->}[ld]^u \\
& G &
}}
\end{equation}
has for objects the triples $(a,b,g)$, where $a\in \Obj(H)$ and $b\in \Obj(K)$, and $g\colon i(a)\isoto u(b)$
is an isomorphism in~$G$. Its morphisms $(a_1,b_1,g_1)\to (a_2,b_2,g_2)$ are pairs of morphisms~$(h\colon a_1\to a_2\,,\,k\colon b_1\to b_2)$ such that $g_2 i(h)=u(k)g_1$ in~$G$.
The functors $\pr_1\colon (i/u)\to H$ and $\pr_2\colon (i/u)\to K$ are the obvious projections, and the natural isomorphism $\gamma_{i/u}\colon i\pr_1\To u\pr_2$ is given by~$g$ at each object~$(a,b,g)$.
Sometimes the emphasis is on~$H$ and~$K$ and we shall write
\[
H\times_G K:=(i/u)
\]
when the `structure morphisms' $i\colon H\to G$ and~$u\colon K\to G$ are clear from context.
\end{Rec}

\begin{Rem}
\label{Rem:why-gpd}%
The main reason for using groupoids in~\cite{BalmerDellAmbrogio20} is that the above groupoid~$H\times_G K=(i/u)$ is usually not a group even if $G$ is, and if $H$ and~$K$
are subgroups. Indeed, in that case,
$\pi_0((i/u))$ is in canonical bijection with~$\KGH$. For each double coset~$C\in\KGH$, the component of~$(i/u)$
corresponding to~$C$ is non-canonically equivalent to $H\cap K^g$ for any choice of~$g\in C$.

Mackey formulas relate on one side induction from $H$ to~$G$ followed by restriction to~$K$, and on the other side the sum of the restrictions to the various~$H\cap K^g$ followed by induction along~${}^g(-)\colon H\cap K^g\into K$. With the isocomma~\eqref{eq:isocomma} in hand, this Mackey relation becomes the much simpler $u^*\circ i_*\cong (\pr_2)_*\circ \pr_1^*$.
The language of groupoids avoids unhealthy choices of coset representatives.

The small price to pay for using finite groupoids instead of finite groups is to make sure that all (contravariant) 2-functors $\cA\colon \gpd^\op\to \ADD$ that we use are \emph{additive}. See~\Cref{Rem:additive=>groups}. In the above situation, the composite $(\pr_2)_*\circ \pr_1^*$ then becomes a sum of~$(\pr_{2,C})_*\circ (\pr_{1,C})^*$ indexed by~$C\in\KGH$, as one expects.
\end{Rem}

\begin{Not}
\label{Not:<>}%
Consider an isocomma~\eqref{eq:isocomma}.
For a groupoid~$T$, every morphism $w\colon T\to (i/u)$ yields two morphisms $w_1:=\pr_1 w\colon T\to H$, $w_2:=\pr_2 w\colon T\to K$ and a 2-cell $\alpha:=\gamma_{i/u}w\colon i w_1\isoTo u w_2$. Conversely, every such triple~$(w_1,w_2,\alpha)$ defines a unique morphism~$w\colon T\to (i/u)$ given by~$w(a)=(w_1(a),w_2(a),\alpha_a)$ for every object~$a\in T$ and $w(f)=(w_1(f),w_2(f))$ on morphisms. We denote it by
\[
\langle w_1,w_2,\alpha \rangle\colon T\to (i/u).
\]
For 2-cells, given two morphisms $w,w'\colon T\to (i/u)$, say $w=\langle w_1,w_2,\alpha\rangle$ and $w'=\langle w_1',w_2',\alpha'\rangle$ and a pair of transformations~$\theta_i\colon w_i\To w_i'$ for $i=1,2$, such that $\theta_2\cast\alpha=\alpha'\cast\theta_1\colon i w_1\To u w_2'$, there exists a unique natural transformation
\[
\langle\theta_1,\theta_2\rangle\colon w\To w'
\]
such that $\pr_i\langle\theta_1,\theta_2\rangle=\theta_i\colon w_i\To w_i'$ for $i=1,2$.
In summary, we get an isomorphism of groupoids
\begin{equation}
\label{eq:isocomma-translate}%
\gpd\big(T,(i/u)\big) \isoto \big(\gpd(T,i)/\gpd(T,u)\big)
\end{equation}
where $\gpd(G_1,G_2)$ is the groupoid of functors from~$G_1$ to~$G_2$ and natural transformations and $\gpd(G_1,f)$ is post-composition by~$f$ as usual.
\end{Not}

\subsection{Local equivalences}\
\label{ssec:leq}%

We now highlight an elementary but important class of morphisms in~$\gpd$.
\begin{Def}
\label{Def:loc-equiv}%
A morphism~$s\colon H\to G$ in~$\gpd$ is called a \emph{local equivalence} if for every component $L\in\pi_0(H)$ the composite $L\into H\xto{s} G$ is fully faithful. In other words, every connected component of~$H$ is equivalent via~$s$ to some connected component of~$G$.
We write $s\colon H\apprto G$ to say that $s$ is a local equivalence. Local equivalences are closed under composition and 2-isomorphism.
\end{Def}

\begin{Rem}
\label{Rem:local-equiv-iso}%
In $\gpd$, a functor $s\colon P\to Q$ is a local equivalence if and only if
$
\End_P(a)\isoto\End_Q(s(a))
$
is an isomorphism for every object $a\in P$.
In particular, the only local equivalences between groups (one-object groupoids) are isomorphisms.
\end{Rem}

Let us give a more abstract description, that shall work beyond~$\gpd$.

\begin{Exa}
\label{Exa:folding}%
Let $K\in \gpd$ and $n\ge 0$. Then the canonical \emph{folding} functor $\nabla_K^{(n)}=(\id_K,\ldots,\id_K)\colon K^{\sqcup n}=K\sqcup\cdots\sqcup K\too K$ is a local equivalence.
For $K\neq\varnothing$, the morphism~$\nabla_K^{(n)}$ is only fully faithful for $n\le 1$ and an equivalence for~$n=1$.
\end{Exa}

These foldings are essentially the only examples of local equivalences:

\begin{Lem}
\label{Lem:leq-folding}%
Let $s\colon H\apprto G$ be a local equivalence in~$\gpd$. Then there exists a unique
function $n\colon \pi_0(G)\to \bbN=\bbZ_{\ge0}$ and a (strict) factorization of~$s$ as
\begin{equation}
\label{eq:factor-locequiv}%
\vcenter{
\xymatrix@C=1em{
H \ar[rr]^-{\tilde{s}}_-{\simeq}
&& {\displaystyle\bigsqcup_{K\in\pi_0(G)}K^{\sqcup n(K)}} \ar[rrrr]^-{\bigsqcup_K\ \nabla_K^{(n(K))}}
&&&& {\displaystyle\bigsqcup_{K\in\pi_0(G)}K} \ \ar@{=}[r] & \ G
}}
\end{equation}
where $\tilde{s}$ is an equivalence and each~$\nabla_K^{(n)}\colon K^{\sqcup n}\apprto K$ is a folding (\Cref{Exa:folding}),
and this factorization is unique up to permutation of the factors~$ K^{\sqcup n}$.
\end{Lem}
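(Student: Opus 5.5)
The plan is to build the factorization one component at a time and then read off uniqueness from $\pi_0$.

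Since $s$ is a functor, it induces a map $\pi_0(s)\colon\pi_0(H)\to\pi_0(G)$. For each $K\in\pi_0(G)$ we set
\[
n(K):=\#\,\pi_0(s)^{-1}(K).
\]
This is finite because $H$ is a finite groupoid. For each component $L\in\pi_0(H)$ we write $K_L$ for the component of $G$ containing $s(L)$. By \Cref{Def:loc-equiv}, the restriction $s|_L\colon L\to G$ is fully faithful, and its image lies in the component $K_L$. Its essential image is all of $K_L$: any object of $K_L$ is isomorphic in $G$ to some $s(a)$ with $a\in L$. Hence $s|_L\colon L\to K_L$ is an equivalence.

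Next we choose, for each $K$, a bijection between $\pi_0(s)^{-1}(K)$ and $\{1,\ldots,n(K)\}$. We then define $\tilde{s}$ on each component $L$ as $s|_L$, landing in the copy of $K_L$ indexed by the chosen number. This gives a functor
\[
\tilde{s}\colon H=\bigsqcup_L L\ \longrightarrow\ \bigsqcup_K K^{\sqcup n(K)}.
\]
It is a disjoint union of equivalences $L\simeq K_L$, and it is bijective on components, so $\tilde{s}$ is an equivalence. Composing with the foldings $\nabla_K^{(n(K))}$ just forgets which copy we landed in. On every component $L$ this composite is literally $s|_L$, so the factorization is strict.

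For uniqueness, suppose we are given any factorization $s=(\bigsqcup\nabla)\circ\tilde{s}'$ with $\tilde{s}'$ an equivalence into $\bigsqcup_K K^{\sqcup n'(K)}$. Applying $\pi_0$, the map $\pi_0(\tilde{s}')$ is a bijection. The foldings send the $n'(K)$ copies of $K$ onto $K$. Hence $n'(K)=\#\,\pi_0(s)^{-1}(K)=n(K)$, so the function $n$ is forced.

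Moreover, since $\nabla$ is the identity on each copy, $\tilde{s}'$ restricted to $L$ must equal $s|_L$ followed by inclusion into some copy of $K_L$. So $\tilde{s}'$ is determined entirely by the bijection of components it induces, which identifies $\pi_0(s)^{-1}(K)$ with the $n(K)$ copies of $K$. Two such bijections differ by a permutation of the copies of each $K$, which is exactly the claimed uniqueness up to permutation of the factors.

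The only mildly delicate point is interpreting ``unique up to permutation''. One must check that $\tilde{s}'$ carries no freedom beyond the choice of bijection of components; this holds because the factorization is required to be strict, not merely up to 2-isomorphism. Everything else is routine bookkeeping on $\pi_0$.
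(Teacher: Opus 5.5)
Your proof is correct and follows the paper's argument. Both define $n(K)$ as the number of components of $H$ mapping into $K$, observe that $s|_L$ is an equivalence onto its target component, and assemble $\tilde{s}$ from these equivalences. Your explicit $\pi_0$-argument for uniqueness spells out what the paper compresses into the remark that the numbering $\{1,\ldots,n(K)\}\cong\pi_0(s)^{-1}(K)$ is unique up to permutation.
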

\begin{proof}
Let $n(K)=|\pi_0(s)\inv(K)|$ be the number of components $L$ of~$H$ for which $s$ factors via $\incl_K\colon K\into G$ (in $\gpd$ this simply means $s(L)\subseteq K$ on objects), in which case $s$ restricts to an equivalence between~$L$ and~$K$. These equivalences define~$\tilde{s}$.
The numbering of $\{1,\ldots,n(K)\}\isoto \pi_0(s)\inv(K)$ is unique up to permutation.
\end{proof}

\begin{Rem}
When $s$ is not essentially surjective, one could separate the components~$K$ in the essential image of~$s$, for which $n(K)>0$, from those with~$n(K)=0$.
\end{Rem}

Local equivalences (\Cref{Def:loc-equiv}) are closed under pull-back:
\begin{Lem}
\label{Lem:leq-pb}%
\label{Lem:local-equiv-under-isocomma}%
Consider an isocomma in~$\gpd$ as in~\eqref{eq:isocomma}.
If $u$ is a local equivalence then~$\pr_1$ is a local equivalence.
\end{Lem}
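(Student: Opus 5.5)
We use the criterion of \Cref{Rem:local-equiv-iso}: $\pr_1\colon (i/u)\to H$ is a local equivalence if and only if for every object $x=(a,b,g)$ of $(i/u)$ the group homomorphism $\pr_1\colon \End_{(i/u)}(x)\to \End_H(a)$ is bijective. By the description of morphisms in \Cref{Def:isocomma}, $\End_{(i/u)}(x)$ consists of pairs $(h,k)$ with $h\in\End_H(a)$, $k\in\End_K(b)$ and $g\, i(h)=u(k)\,g$, that is, $u(k)=g\,i(h)\,g\inv$ in $\End_G(u(b))$. The map $\pr_1$ sends $(h,k)$ to~$h$.

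\emph{Injectivity.} Suppose $(h,k)$ and $(h,k')$ both lie in $\End_{(i/u)}(x)$. Then $u(k)=g\,i(h)\,g\inv=u(k')$. Since $u$ is a local equivalence, \Cref{Rem:local-equiv-iso} says that $u\colon\End_K(b)\to\End_G(u(b))$ is injective, so $k=k'$.

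\emph{Surjectivity.} Let $h\in\End_H(a)$. The element $g\,i(h)\,g\inv$ lies in $\End_G(u(b))$. Since $u\colon \End_K(b)\to\End_G(u(b))$ is surjective, again by \Cref{Rem:local-equiv-iso}, there is some $k$ with $u(k)=g\,i(h)\,g\inv$. Then $(h,k)$ is an endomorphism of~$x$ mapping to~$h$.

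Hence every endomorphism group of $(i/u)$ maps isomorphically under~$\pr_1$, and \Cref{Rem:local-equiv-iso} shows that $\pr_1$ is a local equivalence. No real obstacle arises; the only point to watch is that the criterion of \Cref{Rem:local-equiv-iso} is applied at every object of $(i/u)$, including all components lying over a given component of~$H$, which the pointwise argument above handles uniformly.
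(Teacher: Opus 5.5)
Your proof is correct and follows the paper's route: the paper notes that the square of endomorphism sets $\End_{(i/u)}(a,b,g)\to\End_H(a)$, $\End_K(b)\to\End_G(u(b))$ (with conjugation by~$g$) is a pullback and then applies \Cref{Rem:local-equiv-iso}. Your injectivity and surjectivity checks are exactly that pullback argument written out elementwise.
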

\begin{proof}
For any $(a,b,g)\in (i/u)$, the following diagram of sets is a pullback
\[
\vcenter{
\xymatrix@C=2em@R=1em{
\End_{(i/u)}(a,b,g) \ar@{ ->}[rrr]^-{\pr_2}\ar@{ ->}[d]_{\pr_1}
&&& \End_K(b) \ar@{ ->}[d]^{u}
\\
\End_H(a)\ar@{ ->}[r]^-{i}
& \End_G(i(a)) \ar@{ ->}[rr]^{g(-)g\inv}_-{\simeq}
&& \End_G(u(b))
}}
\]
by the definition of isocomma. The claim follows from
\Cref{Rem:local-equiv-iso}.
\end{proof}

Every morphism has a part that is a local equivalence. Let us isolate it.
\begin{Def}
\label{Def:approx}%
For every morphism $p\colon X\to Y$ in~$\gpd$, we denote by
\begin{equation}
\label{eq:approx}%
\xymatrix@C=5em{
X=X^{p\approx}\sqcup X^{p\not\approx} \ar[r]^-{\big(p^\approx\quad p^{\not\approx}\big)}
& \ Y
}
\end{equation}
the canonical decomposition of~$X$ where $X^{p\approx}$ is the coproduct of those components of~$X$ on which $p$ is fully faithful and $X^{p\not\approx}$ is the coproduct of all remaining components of~$X$. By construction, $p^\approx$ is a local equivalence $X^{p\approx}\apprto Y$.
It is convenient to refer to~$X^{p\approx}$ as the \emph{$\approx$-locus} of~$p$.
\end{Def}

\begin{Rem}
\label{Rem:approx-faithful}%
Consider two composable morphisms~$X\xto{p}Y\xto{q}Z$ in~$\gpd$. It is unfortunately
not true that $(q\circ p)^\approx=q^\approx \circ p^\approx$ in general.
For instance, it fails if~$q$ is a non-invertible group homomorphism that admits a section and $p$ is one of those sections, for then $X^{(qp)\approx}=X$ but~$X^{p\approx}=\varnothing$.
Yet, if~$q$ is \emph{faithful} then $X^{(qp)\approx}\subseteq X^{p\approx}$ and
$p^{\approx}(X^{(qp)\approx})\subseteq Y^{q\approx}$ and therefore
 $(q\circ p)^\approx=q^\approx \circ p^\approx$. (These relations are easy from~\Cref{Rem:local-equiv-iso}.)
Consequently, it is preferable to only use faithful morphisms when dealing
with~$(-)^\approx$ to avoid non-functoriality.
\end{Rem}

\subsection{The (2,1)-category~$\GG$}\
\label{ssec:GG}%

Although the reader can focus attention on~$\gpd$ for the rest of the paper, our results actually hold more generally, as in~\cite{BalmerDellAmbrogio20} and Dell'Ambrogio~\cite{DellAmbrogio22b}.
\begin{Conv}
\label{Conv:GG}%
We reset the notation of the introduction and denote by
\[
\GG
\]
a (2,1)-category of `finite groupoids of interest', from the following list.
\begin{enumerate}[\rm(1)]
\item
\label{it:exa-gpd}%
We can take $\GG=\gpd$, all finite groupoids, as in~\Cref{Not:gpd}.
\smallbreak
\item
\label{it:exa-gpdf}%
We can take $\GG=\gpdf$ the 2-full subcategory of~$\gpd$ with only faithful morphisms.
This context is useful with equivariant categories~$\cA(G)$ that only have restrictions to subgroups, but no restriction along general group homomorphisms, like $\cA(G)=\mathsf{stmod}(kG)$ the stable module category of $kG$-modules.
\smallbreak
\item
\label{it:exa-gpdf/G}%
We can take $\GG=\gpdf_{/\Gamma}$ the 2-category of groupoids faithfully embedded into a fixed `ambient group'~$\Gamma$ as in~\cite[Definition~B.0.6]{BalmerDellAmbrogio20}. This allows a `local' theory, only involving the subgroups of~$\Gamma$ and conjugation-inclusions, whereas the more `global' examples~\eqref{it:exa-gpd} and~\eqref{it:exa-gpdf} involve all finite groups.
\end{enumerate}
We believe that our results extend to \emph{spannable} (2,1)-categories~$\GG$ in the sense of Dell'Ambrogio~\cite[Definition~3.11]{DellAmbrogio22b}, possibly adding the hypothesis that every~$\pi_0(G)$ is finite, but we have not verified the details.
\end{Conv}

\begin{Rem}
Everything we said in this section about~$\gpd$, namely connected components (including finiteness of~$\pi_0$), faithfulness, isocommas, local equivalences, and the decomposition~\eqref{eq:approx}, also makes sense in any (2,1)-category~$\GG$ as in~\Cref{Conv:GG}, via the forgetful functor to~$\gpd$.
For instance, our~$\GG$ contains all local equivalences of its underlying groupoids.
For isocommas one can use that every object~$T$ defines a 2-functor $\GG(T,-)\colon \GG\to \gpd$, to translate statements between~$\gpd$ and~$\GG$ via the analogue of~\eqref{eq:isocomma-translate}; this is explained in~\cite[Remark~2.1.6]{BalmerDellAmbrogio20}.
\end{Rem}

If we need to restrict to local equivalences, we shall use the following notation.
\begin{Def}
\label{Def:GGi}%
We denote by~$\GGi$ the 2-full subcategory with the same objects as~$\GG$ but with only local equivalences (\Cref{Def:loc-equiv}) as morphisms.
\end{Def}
\begin{Rem}
\label{Rem:GGi-is-a-GG}%
In fact, $\GGi$ could be added to the list of~\Cref{Conv:GG}, as an admissible input (2,1)-category~$\GG$, although a somewhat dull one.
It is in some sense a minimal (2,1)-category containing all group isomorphisms (as the $\GGiso$ of the Introduction) as well as the inclusions $\incl_{H_i}\colon H_i\into H_1\sqcup H_2$ necessary to formulate additivity as in \Cref{Rem:why-gpd}. See also \Cref{Def:restriction-2-functor} below.
\end{Rem}

\subsection{Restriction and conjugation 2-functors}\
\label{ssec:res-conj}%

%
\begin{Def}
\label{Def:restriction-2-functor}%
A \emph{restriction 2-functor}~$\cA\colon \GG^\op\to \ADD$ is a 2-functor that is \emph{additive}, meaning that the canonical functor of additive categories
\[
(\incl_1^*\quad \incl_2^*)\colon \cA(G_1\sqcup G_2)\too\cA(G_1)\oplus\cA(G_2)
\]
is an equivalence for every $G_1,G_2\in\GG$, where $\incl_j\colon G_j\into G_1\sqcup G_2$ denotes the inclusion. This definition implies~$\cA(\varnothing)\simeq0$.
We denote by
\[
\resfun{\GG}
\]
the full 2-subcategory of~$\twofun({\GG}^{\op},{\ADD})$ of restriction 2-functors $\GG^\op\to \ADD$, pseudo-natural transformations and modifications as in~\Cref{Rec:2-functors}.
In~\cite{BalmerDellAmbrogio20} this $\resfun{\GG}$ was denoted $~2\Fun_{\sqcup}(\GG^\op,\ADD)$.
\end{Def}

\begin{Def}
\label{Def:conjugation-2-functor}%
A \emph{conjugation 2-functor}~$\cB$ on~$\GG$ is an additive 2-functor on~$\GGi$, that is,
a 2-functor~$\cB\colon \GGi^\op\to \ADD$ such that
the canonical functor
\[
(\incl_1^*\quad \incl_2^*)\colon
\cB(G_1\sqcup G_2)\too \cB(G_1)\oplus\cB(G_2)
\]
is an equivalence for all~$G_1,G_2\in\GG$ as above. We denote by
\[
\conjfun{\GG}
\]
the 2-subcategory of~$\twofun(\GGi^{\op},{\ADD})$ of conjugation 2-functors (\Cref{Rec:2-functors}) with the same morphisms and 2-cells.
\end{Def}

\begin{Rem}
\label{Rem:additive=>groups}%
Additivity of~$\cA$ means that $\cA(G)\simeq\oplus_{H\in\pi_0(G)}\cA(H)$ via the functors $\cA(\incl_{H})\colon \cA(G)\to \cA(H)$ given by the inclusion of components~$H\in\pi_0(G)$.
Hence additive 2-functors~$\cA$ on groupoids are essentially characterized by their values on groups (see \cite[Lemma~4.3.2]{BalmerDellAmbrogio20}).
And the same holds for conjugation 2-functors since the $\incl_{H}\colon H\apprto G$ are local equivalences, hence belong to~$\GGi$.
This explains the simplified language used in the Introduction, where $\GG$ only consisted of finite groups and where $\GGiso$ meant `isomorphism-only' (see~\Cref{Rem:local-equiv-iso}).

We do unpack some of our constructions $\Alp(G)$ and~$\Bup(G)$, for $G$ a group, when we discuss examples in~\Cref{sec:examples}. This is fine for one~$G$ at a time but the 2-functors $\Alp(-)$ and~$\Bup(-)$ will be much cleaner when we allow all finite groupoids as input, instead of insisting on finite groups only, especially when we study induction and Mackey formulas (see~\Cref{Rem:why-gpd}).
\end{Rem}

\subsection{Comma 2-categories}\
\label{ssec:commas}%

We recall a classical construction for the (2,1)-category~$\GG$ of~\Cref{Conv:GG}. See~\cite[Definition~A.1.21]{BalmerDellAmbrogio20} if necessary.

\begin{Def}
\label{Def:GG/G}%
Let $G\in \GG$ be a fixed object. The \emph{comma 2-category}
\[
\GG/G
\]
consists of objects of~$\GG$ \emph{over~$G$}, that is, morphisms~$(H\to G)$ in~$\GG$ with target~$G$.
We shall often just write~$H$ and call the tacit morphism $H\to G$ the \emph{structure morphism} of~$H\in\GG/G$. To invoke the structure morphism of~$H$, we write it
\[
\stH\colon H\to G.
\]
A 1-morphism $f\colon H\to K$ in~$\GG/G$ consists of a morphism~$f\colon H\to K$ in~$\GG$ together with a given compatibility isomorphism with the structure morphisms, namely a 2-cell $\stH\isoTo \stK f\colon H\to G$, called the \emph{structure 2-cell} of~$f$. Again, we usually keep it tacit and just write~$f\colon H\to K$. When needed, the structure 2-cell is denoted
\[
\st{f}\colon \stH\isoTo \stK f.
\]
The 2-cells~$f\To f'\colon H\to K$ in~$\GG/G$ are simply given by 2-cells~$\alpha\colon f\To f'$ in~$\GG$ compatible with the structure 2-cells, meaning $\alpha\cast\st{f}=\st{f'}$ as 2-cells~$\stH\To \stK f'$. (Recall~\Cref{Not:cast} for~$\cast$.)
There is no tacit information in 2-cells.

The compositions in~$\GG/G$ are the evident ones and make $\GG/G$ a (2,1)-category.
\end{Def}

\begin{Exa}
We already encountered an example in our~\Cref{Conv:GG}, with the 2-category $\gpdf_{/\Gamma}$ of groupoids faithfully embedded in a fixed `ambient group'~$\Gamma$, which is the $\GG/G$ of~\Cref{Def:GG/G}, for $\GG=\gpdf$ and $G$ the one-object groupoid~$\Gamma$.
\end{Exa}
\begin{Exa}
\label{Exa:end(H)-in-gpdf/G}%
Let $G$ be a finite group and $H,K\leq G$ be subgroups, viewed as objects of~$\gpdf/G$ with~$\stH=\incl_H\colon H\into G$ and similarly for~$K$.
A morphism from~$H$ to~$K$ in~$\gpdf/G$ is a group homomorphism~$f\colon H\to K$ with structure 2-cell~$\st{f}\colon \incl_H\isoTo \incl_K\circ f\colon H\into G$. The latter must be of the form~$\st{f}=\gamma_{g}$ for some~$g\in G$ such that $\incl_K\circ f={}^g \incl_{H}\colon H\into G$ (\Cref{Exa:gps}) which forces~$f=c_g={}^g(-)\colon H\into K$ and therefore $g\in N_G(H,K)$. In particular, the only (local) equivalences~$H\isoto K$ are the conjugations and all endomorphisms are automorphisms.
In particular, we have an isomorphism
\[
\xymatrix@C=.5em@R=.2em{
N_G(H) \ar[rr]^-{\cong}
&& \End_{\gpdf/G}(H) \ar@{=}[r]
& \Aut_{\gpdf/G}(H)
\\
g \ar@{|->}[rr]
&& \quad c_g\colon H\isoto H
& \textrm{with}\quad\st{c_g}=\gamma_g\,.
}
\]
Note that for~$g\in N_G(H)$ the condition $c_g=\id_{H}$ in~$\gpdf$ alone would mean that $g$ belongs to the centralizer of~$H$ in~$G$ but here we are considering~$c_g$ \emph{with its structure 2-cell~$\st{c_g}=\gamma_g$} and this pair~$(c_g,\gamma_g)$ is only equal to~$\id_{H\into G}$ in~$\gpdf/G$ if $g=1$.

Furthermore, given two such endomorphisms~$c_g,c_{g'}\colon H\to H$ in~$\gpdf/G$, a 2-cell between them corresponds to some $\gamma_h\colon c_g\isoTo c_{g'}$ for~$h\in H$ such that $c_{g'}={}^h c_{g}=c_{h g}$ \emph{and} $\gamma_{g'}=\gamma_{h g}\colon \incl_H\isoTo \incl_H\colon H\into G$, which forces~$g'=hg$.
In short, such a 2-cell exists if and only if~$[g]=[g']$ in~$N_G(H)/H$.
\end{Exa}

\begin{Conv}
\label{Conv:comma-isocomma}%
The 2-category $\GG/G$ inherits isocommas from~$\GG$, with a small subtlety. Indeed, given a cospan $H\xto{i}L\xlto{u}K$ in~$\GG/G$, one can form~$H\times_L K=(i/u)$ in~$\GG$ and this object is clearly `over'~$G$, actually in at least four (isomorphic) ways, via~$\st{H}\circ\pr_1$, or via~$\st{L}\circ i\circ\pr_1$, or via~$\st{L}\circ u\circ\pr_2$, or via~$\stK\circ\pr_2$:
\begin{equation}
\label{eq:isocomma-comma}%
\vcenter{
\xymatrix@C=1.5em@R=1em{
& H\times_L K =(i/u) \ar@{ ->}[ld]_(.65){\pr_1} \ar@{ ->}[rd]^(.65){\pr_2}
 \ar@{}[dd]|-{\isocell{\gamma_{i/u}}}
\\
H \ar@{ ->}@/_1em/[rdd]_{{\stH}} \ar[rd]^-{i}
 \ar@[gray]@{}[rrdd]_(.3){\isocell{\st{i}}}
&& K \ar@{ ->}@/^1em/[ldd]^{{\stK}} \ar[ld]_-{u}
 \ar@[gray]@{}[lldd]^(.3){\lisocell{\st{u}}}
\\
& L \ar[d]^-{\stL}
\\
& {G} &
}}
\end{equation}
We choose the structure morphism for~$H\times_L K$ to be the leftmost path: $\st{H\times_L K}:=\stH\circ \pr_1$.
This choice minimizes the inverses~$(-)\inv$ in the structure 2-cells for the projections~$\pr_i$, which are $\st{\pr_1}=\id_{\stH\pr_1}$ and $\st{\pr_2}=\st{u}\inv\cast\gamma_{i/u}\cast\st{i}$ the obvious composite `across'~\eqref{eq:isocomma-comma}.
In this way, $\gamma_{i/u}$ becomes an actual 2-cell in~$\GG/G$.
\end{Conv}

Let us say a word about how the comma 2-category~$\GG/G$ varies with~$G$.
\begin{Cons}
\label{Cons:j!j*-GG/}%
Fix a 1-morphism $j\colon G'\to G$ in~$\GG$.
We have two 2-functors
\[
j_!\colon \GG/G'\to \GG/G
\qquadtext{and}
j^*\colon \GG/G\to \GG/G'
\]
on the comma 2-categories. On objects, they are given by
\[
j_!H'=H'
\qquadtext{and}
j^*H=G'\times_G H=(j/\stH)
\]
with the obvious structure morphisms $\st{j_!H'}=j\,\st{H'}\colon H'\to G'\to G$ and $\st{j^*H}=\pr_1\colon G'\times_G H\to G'$ the first projection.
On morphisms, they are given by
\[
j_!f'=f'
\qquadtext{and}
j^*\!f=G'\times_G f=\langle\pr_1,f\pr_2,\st{f}\cast\gamma_{j/\stH}\rangle
\]
with structure 2-cell~$\st{j_!f'}=j\st{f'}$ and $\st{j^*f}$ the identity of~$\pr_1=\pr_1\circ \ j^*\!f$:
\[
\vcenter{\xymatrix@C=1em@R=3em{
H' \ar@[gray][d]_-{\gray{\st{H'}}} \ar@{=}[rd]
 \ar@{}[rrrd]|(.25){\ \gray{\isocell{\st{f'}}}}
 \ar[rr]^-{f'}
&& K' \ar@[lightgray][lld]^(.4){\gray{\st{K'}}\!\!\!}|(.65){\hole} \ar@{=}[rd]
&
\\
G' \ar[rd]_-{j}
& j_!H' \ar@[gray][d]_(.3){\gray{\st{j_!H'}}\!\!} \ar[rr]^-{j_!f'} \ar@{}[rd]|(.4){\gray{\isocell{\st{j_!f'}}}}
&& j_!K' \ar@[gray][lld]^-{\gray{\st{j_!K'}}}
&
\\
& G &
}}
\!\!\text{and}
\vcenter{\xymatrix@C=.7em@R=3em{
j^*H=G'\times_G H \ar@[gray][d]_-{\gray{\st{j^*H}=\pr_1}} \ar@<.2em>[rd]|(.5){\vphantom{I_{j_j}}\pr_2}
 \ar@{}[rrrd]|(.25){\ \gray{\quad\isocell{\st{j^*\!f}=\id}}}  \ar@{}[rdd]|(.25){\isocell{\gamma_{j/\stH}}}
 \ar[rr]^-{j^*\!f}
&& j^*K=G'\times_G K \ar@[lightgray][lld]^(.4){\gray{\pr_1}\!\!\!}|(.65){\hole} \ar[rd]^-{\pr_2} \ar@{}[ldd]|(.25){\isocell{\gamma_{j/\stK}}}
&
\\
G' \ar[rd]_-{j}
& H \ar@[gray][d]_-{\gray{\stH}} \ar[rr]^-{f} \ar@{}[rd]|(.3){\gray{\isocell{\st{f}}}}
&& K \ar@[gray][lld]^-{\gray{\stK}}
&
\\
& G &
}}
\]
In this picture, we grayed out all structure morphisms and 2-cells. Note that the top square on the right-hand side commutes on the nose: $\pr_2 \,j^*\!f=f\pr_2$.
Finally, on 2-cells the 2-functors $j_!$ and~$j^*$ are given as follows (recall~\Cref{Not:<>}):
\[
j_!(\alpha)=\alpha
\qquadtext{and}
j^*(\alpha)=G'\times_G\alpha=\langle\id_{\pr_1},\alpha\pr_2\rangle.
\]
These 2-functors are biadjoints $j_!\adj j^*$. The unit $\eta\colon \Id\To j^*j_!$ is the pseudo-natural transformation of 2-functors $\GG/G'\to \GG/G'$ given on every object~$H'\in\GG/G'$ by
\begin{equation}\label{eq:j!j*-eta}%
\eta_{H'}=\langle\st{H'},\id_{H'},\id_{j \st{H'}}\rangle\colon H'\to (j/j \st{H'})\quadtext{with structure cell}\id_{\st{H'}}
\end{equation}
and compatibility 2-isomorphism $j^*j_!(f')\circ \eta_{H'}\isoTo \eta_{K'}\circ f'$ for every $f'\colon H'\to K'$ in~$\GG/G'$ given by $<\st{f'},\id>$.
The counit $\eps\colon j_! j^*\To \Id$ is the pseudo-natural transformation of 2-functors $\GG/G\to \GG/G$ given on every object~$H\in\GG/G$ by
\begin{equation}\label{eq:j!j*-eps}%
\eps_{H}=\pr_2\colon (j/\stH)\to H\quadtext{with structure cell}\gamma_{j/\stH}
\end{equation}
and strict compatibility $f\circ \eps_{H}=\eps_{K}\circ j_!j^* f$ for every $f\colon H\to K$ in~$\GG/G$.
Direct computation gives the \emph{equalities} $(\eps j_!)\circ(j_!\eta)=\id_{j_!}$ and~$(j^*\eps)\circ(\eta j^*)=\id_{j^*}$.
\end{Cons}

\begin{Rem}
\label{Rem:eta-eps-j!j*}%
Observe for later use that if~$j$ is faithful then $\eta_{H'}\colon H'\to G'\times_G H'$ in~\eqref{eq:j!j*-eta} is fully faithful (see~\cite[Example~3.1.15]{BalmerDellAmbrogio20}). Indeed, in that case, $\pr_2$ is faithful and $\pr_2\circ \eta_{H'}=\id_{H'}$ gives the result.
\end{Rem}

\begin{Rem}
\label{Rem:j*-GG}%
The construction $\GG/-$ is strictly functorial covariantly, since $(jk)_!=j_!k_!$ for all composable morphisms $G''\xto{k}G'\xto{j}G$.
It follows that we get pseudofunctorial right adjoints, \ie $\GG/-$ is contravariantly \emph{pseudo}functorial, meaning that the composite $\GG/G\xto{j^*}\GG/G'\xto{k^*}\GG/G''$ only agrees with~$(jk)^*$ up to a coherent natural isomorphism $k^*\circ j^*\cong (jk)^*$. In telegraphic style this reads $G''\times_{G'}(G'\times_{G}-)\cong G''\times_{G}-$.
This canonical isomorphism is close enough to an identity and we treat it as such.
\end{Rem}
\begin{Cons}
\label{Cons:(j/-)=>(k/-)}%
The covariant and contravariant functors of~\Cref{Cons:j!j*-GG/} are 2-functorial in~$\GG$. Let $\alpha\colon j\isoTo k\colon G'\to G$ be a 2-cell. We have a natural transformation $\alpha_!\colon j_!\isoTo k_!$ given by $(\alpha_!)_{H'}=\id_{H'}\colon j_!H'\to k_!H'$ with~$\alpha$ hiding in the structure 2-cell as~$\alpha\st{H'}$. We also have a natural transformation $\alpha^*\colon j^*\isoTo k^*$ given by~$(\alpha^*)_{H}=\langle\pr_1,\pr_2,\gamma_{j/\stH}\cast\alpha\inv\rangle\colon j^*(H)=(j/\stH)\to (k/\stH)=k^*(H)$, with identity (of~$\pr_1$) as structure 2-cell. (Amusingly, $(-/\stH)$ tends more naturally to be \emph{contra}variant in the 2-cells. To stick with overall convention in the field, we have applied the harmless involution $\alpha\mapsto \alpha\inv$ on 2-cells.)
\end{Cons}

\begin{Def}
\label{Def:GGf}%
We denote by $\GGf/G$ the variant of the comma 2-category of~$\GG$ over~$G$, where all structure morphisms~$\st{H}$ are required to be faithful.
This forces all morphisms~$f\colon H\to K$ in~$\GGf/G$ to be faithful too, since $\stK\circ f\cong\stH$ is faithful.
The functor~$j^*$ of~\Cref{Cons:j!j*-GG/} restricts to this setting for any~$j$ but the functor~$j_!$ only makes sense if~$j\colon G'\into G$ is faithful.
In that case, we still have $j_!\adj j^*$.
\end{Def}

\subsection{Span 2-categories}\
\label{ssec:spans}%

We can now consider spans inside the comma 2-category~$\GG/G$. This resembles, but should not be confused with, the bicategory of spans~$\Spanname(\GG)$ considered in~\cite[Chapter~5]{BalmerDellAmbrogio20}. (For the specialists, we are going to consider a 2-category $\Spanname_{\GG/G}(H,K)$ whose 1-truncation is the 1-category of morphisms from~$H$ to~$K$ in the bicategory $\Spanname(\GG/G)$ of~\cite{BalmerDellAmbrogio20}. So we go up one level in the cell tree.)
\begin{Cons}
\label{Cons:Span}%
Fix $G\in \GG$ and two objects~$H,K\in \GG/G$ in the comma 2-category. So $H$ and~$K$ are objects of~$\GG$ with tacit structure morphisms~$\stH\colon H\to G$ and~$\stK\colon K\to G$.
We define the \emph{span 2-category}~$\SpanG(H,K)$ as follows.

An object $P=(P,p_1,p_2)$ in~$\SpanG(H,K)$ is an object $P\in\GG/G$ in the comma 2-category together with two morphisms $p_1\colon P\to H$ and~$p_2\colon P\to K$ in~$\GG/G$, that we shall call the left and right \emph{wing morphisms} of the span:
\begin{equation}
\label{eq:Span-object}%
\vcenter{
\xymatrix@C=4em@R=1em{
& P \ar@{ ->}@/_.5em/[ld]_-{p_1} \ar@{ ->}@/^.5em/[rd]^-{p_2} \ar@[gray]@{ ->}[dd]|-{\gray\stP}  \ar@{}[ldd]|(.45){\gray{\lisocell{\st{p_1}}}} \ar@{}[rdd]|(.45){\gray{\isocell{\st{p_2}}}} & \\
H \ar@[gray]@{ ->}[rd]_{\gray\stH} & & K \ar@[gray]@{ ->}[ld]^{\gray\stK} \\
& \gray{G} &
}}
\end{equation}
By definition of the comma 2-category~$\GG/G$, the data of~$(P,p_1,p_2)$ comes with tacit information~$(\st{})$, namely the structure morphism~$\stP\colon P\to G$ and the two structure 2-cells $\st{p_1}\colon \stP\isoTo \stH p_1$ and~$\st{p_2}\colon \stP\isoTo \stK p_2$ as pictured above, in gray.
These structure 2-cells~$\st{p_i}$ of the wings will be referred to as the \emph{structure 2-cells} of~$P$.

A morphism $s=(s,\sigma_1,\sigma_2)$ between objects $P=(P,p_1,p_2)$ and~$Q=(Q,q_1,q_2)$ in $\SpanG(H,K)$ consists of a morphism $s\colon P\to Q$ in~$\GG/G$ (with its structure 2-cell $\st{s}\colon \st{P} \isoTo \st{Q}s$) together with two \emph{wing cells} $\sigma_i\colon p_i\isoTo q_i s$ in~$\GG/G$ for $i=1,2$.
These `wing cells' do not involve the structure morphisms but must be compatible with structure 2-cells, as every 2-cell in~$\GG/G$.
In expanded form, the morphism~$s=(s,\sigma_1,\sigma_2)$ is a diagram in~$\GG$:
\begin{equation}
\label{eq:Span-morphism}%
\vcenter{
\xymatrix@C=6em@R=2em{
& P \ar@{ ->}@/_2em/[ldd]_-{p_1} \ar@{ ->}@/^2em/[rdd]^-{p_2} \ar@[gray]@/_1em/@{..>}[ddd]|-{\gray{\stP}} \ar@[gray]@{}[ldd]|(.2){\gray{\lisocell{\st{p_1}}}} \ar@{}[rdd]|(.2){\gray{\isocell{\st{p_2}}}}
 \ar[d]|-{\vphantom{I_I}s} \ar@{}[ldd]|(.4){\isocell{\sigma_1}} \ar@{}[rdd]|(.4){\lisocell{\sigma_2}}
\\
& Q \ar@{ ->}@/_.5em/[ld]_-{q_1} \ar@{ ->}@/^.5em/[rd]^-{q_2} \ar@[gray]@{..>}@/^1em/[dd]|-{\gray{\st{Q}}}
 \ar@[gray]@{}[ldd]|{\gray{\lisocell{\st{q_1}}}} \ar@[gray]@{}[rdd]|{\gray{\isocell{\st{q_2}}}}
 \ar@[gray]@{}[d]|(.6){\gray{\isocell{\st{s}}}}
& \\
H \ar@[gray]@{..>}[rd]_{\gray{\stH}}
&& K \ar@[gray]@{..>}[ld]^{\gray{\stK}} \\
& \gray{G} &
}}
\end{equation}
(spot~$\st{s}$ in the center) such that $\sigma_1\cast\st{p_1}=\st{q_1}\cast\st{s}$ as 2-cell~$\stP\To \stH q_1 s\colon P\to G$ and $\sigma_2\cast\st{p_2}=\st{q_2}\cast\st{s}$ as 2-cell~$\stP\To \stK q_2 s\colon P\to G$.

Finally, a 2-cell between two morphisms $s=(s,\sigma_1,\sigma_2)$ and~$s'=(s',\sigma'_1,\sigma'_2)$ from $P$ to~$Q$ in~$\SpanG(H,K)$ is just a 2-cell~$\alpha\colon s\isoTo s'$ in~$\GG$ with compatibility conditions, first with the structure 2-cells: $\st{s'}=\alpha\cast\st{s}\colon \stP\To \st{Q} s'$ (to be a 2-cell in~$\GG/G$) and then with the wing cells $\sigma'_i=\alpha\cast\sigma_i\colon p_i\To q_i s'$ for $i=1,2$.

For clarity, the reader may first examine~\eqref{eq:Span-object} and~\eqref{eq:Span-morphism} while temporarily disregarding the grayed-out structure data.
They could also mentally gray-out the $\sigma_i$.

All compositions in~$\SpanG(H,K)$ are the obvious ones, as in~$\GG/G$.
\end{Cons}

\begin{Def}
\label{Def:Spanf}%
Again, we have a variant $\SpanfG(H,K)=\Spanname_{\GGf/G}(H,K)$ where we only allow faithful morphisms everywhere (once the structure morphisms are faithful then all wing morphisms and all morphisms of spans must be faithful).
\end{Def}

\begin{Rem}
\label{Rem:j!j*-Span}%
One could define $\Spanname_{\mathbb{D}}(H,K)$ for any choice of objects~$H,K$ in a (2,1)-category~$\mathbb{D}$, the above being the cases $\mathbb{D}=\GG/G$ or~$\GGf/G$. This construction is natural in~$\mathbb{D}$ in the essentially straightforward way: a 2-functor $F\colon \mathbb{D}\to \mathbb{E}$ yields $\Spanname_{\mathbb{D}}(H,K)\to \Spanname_{\mathbb{E}}(F(H),F(K))$ by applying~$F$ to everything in sight.
For us, given $j\colon G'\to G$ in~$\GG$, the two 2-functors $j_!\colon \GG/G'\to \GG/G$ and~$j^*\colon \GG/G\to \GG/G'$ of~\Cref{Cons:j!j*-GG/} induce 2-functors still denoted
\[
j_! \colon \SpanG[G'](H',K')\to \SpanG(j_!H',j_!K')
\]
for every $H',K'\in \GG/G'$, and similarly for every $H,K\in \GG/G$
\[
j^* \colon \SpanG(H,K)\to \SpanG[G'](j^*H,j^*K).
\]
\end{Rem}

%
\section{Mackey business}
\label{sec:Mackey}%
%

Let us recall Mackey 2-functors from~\cite{BalmerDellAmbrogio20}.
See~\Cref{Conv:GG} for~$\GG$.

\subsection{Mackey squares}\
\label{ssec:Mackey-squares}%

First, we close the notion of isocomma square~\eqref{eq:isocomma} under equivalences.
\begin{Def}[{\cite[Definition~2.2.1]{BalmerDellAmbrogio20}}]
\label{Def:Mackey-square}%
Consider a 2-cell $\alpha\colon i\circ v\To u\circ j$ in~$\GG$
\begin{equation}
\label{eq:Mackey-square}
\vcenter{
\xymatrix@C=1em@R=.5em{
& L \ar@{ ->}[ld]_-{v} \ar@{ ->}[rd]^-{j} \ar@{}[dd]|{\isocell{\alpha}}
\\
H \ar@{ ->}[rd]_-{i}
&& K \ar@{ ->}[ld]^-{u}
\\
& G
}}
\end{equation}
By slight abuse of language, we refer to it as `the square~$(\alpha)$' especially in larger diagrams.
This square induces a morphism denoted $w=\langle v,j,\alpha\rangle \colon L\to (i/u)$ as in~\eqref{eq:isocomma-translate}.
If $w$ is an equivalence, the square~$(\alpha)$ is called a \emph{Mackey square}.
\end{Def}
\begin{Exa}
\label{Exa:equi-Mackey}%
If $i$ and~$j$ are equivalences then the square~\eqref{eq:Mackey-square} is Mackey. See~\cite[Example~2.2.5]{BalmerDellAmbrogio20}. In the same vein, if $i\colon H\to G$ is fully faithful then the square
\[
\vcenter{
\xymatrix@C=1em@R=.5em{
& H \ar@{=}[ld]_-{} \ar@{=}[rd]^-{} \ar@{}[dd]|{\isocell{\id}}
\\
H \ar@{ >->}[rd]_-{i}
&& H \ar@{ >->}[ld]^-{i}
\\
& G
}}
\]
is Mackey. Indeed $\Delta_i=\langle \id,\id,\id_i \rangle \colon H\to (i/i)$ is fully faithful because $i$ is faithful (\cite[Proposition~3.1.3]{BalmerDellAmbrogio20}) and $\Delta_i$ is essentially surjective because~$i$ is full.
\end{Exa}

\begin{Exa}
\label{Exa:Mackey-square}%
Let $G$ be a finite group and~$i\colon H\into G$ and~$u\colon K\into G$ be inclusions of subgroups (\Cref{Exa:gps}).
We write a double coset in~$\KGH$ as~$KgH$ to indicate that we \emph{choose} a representative~$g$ in it. Then the following square is Mackey
\[
\xymatrix@C=.5em@R=1em{
& {}\coprod\limits_{(KgH)\in\KGH} H\cap K^{g} \ar[ld]_-{v} \ar[rd]^-{j} \ar@{}[dd]|-{\isocell{\alpha}}
\\
H \ar[rd]_-{i}
&& K \ar[ld]^-{u}
\\
& G
}
\]
where, on each component $H\cap K^g$, the morphism~$v$ is given by the inclusion, the morphism~$j$ is given by the conjugation-inclusion~$c_g={}^g(-)\colon H\cap K^g\into K$ and the 2-cell~$\alpha$ is given by the conjugation~$\gamma_g={}^g(-)\colon i\,v\isoTo u\,j$. (See \Cref{Exa:gps}.)
Indeed, the comparison morphism $w=\langle v,j,\alpha\rangle\colon \coprod_{(KgH)} H\cap K^{g}\to (i/u)$ maps the single object~$\sbull$ of~$H\cap K^g$ to $(\sbull,\sbull,g)$ in~$(i/u)$. It is an equivalence.
\end{Exa}

\begin{Rem}
\label{Rem:Mackey-square}%
Elementary properties of isocommas and Mackey squares can be found in~\cite[Section~2.1]{BalmerDellAmbrogio20} and~\cite[Section~3]{BalmerDellAmbrogio21}.
In particular, we shall use additivity of isocommas in each variable (\cite[Lemma~3.8]{BalmerDellAmbrogio21}) and the following:
\end{Rem}
\begin{Lem}[{\cite[Lemma~3.9]{BalmerDellAmbrogio21}}]
\label{Lem:Mackey-square-arith}
In a configuration of $2$-cells in~$\GG$ as follows
\[
\xymatrix@C=2em@R=1.5em{
& \ar[ld] \ar[rd]
\\
\ar[rd] \ar@{}[rr]|-{\isocell{\alpha}}&&\ar[ld]\ar[rd]
\\
& \ar[rd] \ar@{}[rr]|-{\isocell{\beta}} &&\ar[ld]
\\
&&&
}
\]
suppose that the bottom square~{$(\beta)$} is Mackey. Then the top square~{$(\alpha)$} is Mackey if and only if the obvious composite square~$(\beta\cast\alpha)$ is Mackey.
\qed
\end{Lem}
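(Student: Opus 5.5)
The plan is to reduce both implications to a statement about comparison morphisms into isocommas, and then apply 2-out-of-3 for equivalences. Label the configuration as follows. The top square is $\alpha\colon i'\circ v\To u'\circ j$, with $L\xto{v}H'$ and $L\xto{j}K$, and with $H'\xto{i'}M\xlto{u'}K$. The bottom square is $\beta\colon i\circ v'\To u\circ u'$, with $M\xto{v'}H$, $M\xto{u'}K'$, and $H\xto{i}G\xlto{u}K'$. In this reading $M$ is the shared vertex, and the right edge $u'$ of the top square is the left edge of the bottom square (up to relabeling). The composite square is $\beta\cast\alpha$ on $L$, with legs $v'v\colon L\to H$ and $u'' j\colon L\to K'$, where $u''$ denotes the remaining edge.

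The main step is to construct the "pasting" comparison. Let $w_\beta=\langle\ldots\rangle\colon M\to (i/u)$ be the comparison morphism of the bottom square; by hypothesis it is an equivalence. The claim is that the isocomma of the top cospan can be identified, up to equivalence, with an iterated isocomma built from $(i/u)$. Concretely, one checks directly from \Cref{Def:isocomma} that the canonical square with vertex $(i'/\ldots)$ and its projection into $(i/u)$ is itself an isocomma square. This is the pasting lemma for isocommas, verified on objects and morphisms, or via~\eqref{eq:isocomma-translate} by reducing to the analogous fact for comma groupoids of sets of functors out of a test object~$T$.

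Granting that, one forms the commutative diagram relating the three comparison morphisms. The comparison for the composite square factors, up to a 2-isomorphism, as the comparison for the top square followed by the equivalence induced by $w_\beta$ on the relevant isocommas. Here one uses that isocommas are invariant under replacing a vertex by an equivalent one: an equivalence $w_\beta$ induces an equivalence of iterated isocommas, since isocommas are 2-functorial and preserve equivalences.

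Hence the comparison for $(\beta\cast\alpha)$ is isomorphic to (equivalence)$\circ w_\alpha$. Since equivalences in $\GG$ are stable under 2-isomorphism and satisfy 2-out-of-3, $w_\alpha$ is an equivalence if and only if $w_{\beta\cast\alpha}$ is. The expected obstacle is bookkeeping: checking that the 2-cell components match, that is, that the composite 2-cell $\beta\cast\alpha$ is exactly what the pasted comparison produces, with the correct whiskering conventions of \Cref{Not:cast}. I would handle this by testing against every $T$ via~\eqref{eq:isocomma-translate}, where everything becomes an elementary verification on groupoids of functors and natural transformations. This test-object reduction also makes the argument valid for every $\GG$ in \Cref{Conv:GG}.
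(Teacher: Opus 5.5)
Your strategy is right, and it is the standard argument behind this pasting lemma (the paper only cites it). You write the comparison morphism of the composite square as $w_\alpha$ followed by an equivalence built from $w_\beta$, then use 2-out-of-3. However, you have misread the configuration, and with your labels the key step does not typecheck.

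The two squares share an \emph{edge}, not just the vertex $M$. The apex of the bottom square is the right vertex $K$ of the top square, its left vertex is $M$, and its upper-left edge is the lower-right edge $u'\colon K\to M$ of the top square. So the bottom square reads $\beta\colon i\,u'\To u\,j'$ with $j'\colon K\to K'$, $i\colon M\to G$ and $u\colon K'\to G$. Its comparison morphism is therefore $w_\beta=\langle u',j',\beta\rangle\colon K\to (i/u)=M\times_G K'$, not a morphism out of $M$.

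Likewise, the composite square has legs $v\colon L\to H'$ (a single edge) and $j'j\colon L\to K'$, over the cospan $H'\xto{ii'}G\xlto{u}K'$. When two squares are pasted along an edge, exactly one leg out of the apex is a composite, so your $v'v$ and $u''j$ cannot both occur.

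This is not cosmetic. If $w_\beta$ sat at $M$, the \emph{base} of the cospan $H'\xto{i'}M\xlto{u'}K$, replacing $M$ by $(i/u)$ would never produce $H'\times_G K'$. The argument works precisely because $w_\beta$ replaces the \emph{leg} $K$ of that cospan.

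With the correct labels your plan goes through, as follows.
\begin{itemize}
\item Define $E\colon (i'/u')\to(ii'/u)$ by $(h,k,\varphi)\mapsto\big(h,\,j'(k),\,\beta_k\circ i(\varphi)\big)$ on objects and $(a,c)\mapsto(a,j'(c))$ on morphisms. It is well defined by naturality of $\beta$.
\item $E$ factors as $(i'/u')\to(i'/\pr_1)$, $(h,k,\varphi)\mapsto(h,w_\beta(k),\varphi)$, followed by the pasting functor $(i'/\pr_1)=H'\times_M(M\times_G K')\to(ii'/u)$, $(h,(m,k',g),\varphi)\mapsto(h,k',g\circ i(\varphi))$. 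The first functor uses $\pr_1 w_\beta=u'$ and is an equivalence because $w_\beta$ is.
\item The pasting functor is an equivalence, though not an isomorphism. This is the actual content of the ``pasting lemma for isocommas''; your description of a square being an isocomma by definition is tautological. It is essentially surjective via $m=i'(h)$ and $\varphi=\id$. It is fully faithful because the $M$-component of a morphism is forced to be $b=\varphi_2\,i'(a)\,\varphi_1\inv$, and this $b$ always satisfies the required relation in $(i/u)$.
\item Finally, $E\circ w_\alpha=w_{\beta\cast\alpha}$ on the nose, since $(\beta\cast\alpha)_\ell=\beta_{j(\ell)}\circ i(\alpha_\ell)$. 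You then conclude by 2-out-of-3 as you proposed.
\end{itemize}
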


Here is an immediate application.
\begin{Lem}
\label{Lem:pb-Mackey}%
Let $j\colon G'\to G$ in~$\GG$ and $f\colon H\to K$ in~$\GG/G$ (\Cref{Def:GG/G}). Then the following commutative square (\Cref{Cons:j!j*-GG/}) is Mackey:
\[
\xymatrix@C=1em@R=1em{& j^*H \ar[rd]^-{\pr_2} \ar[ld]_-{j^*f} \ar@{}[dd]|{=}
\\
j^*K \ar[rd]_-{\pr_2}
&& H \ar[ld]^-{f}
\\
& K.
}
\]
In particular, if $f$ is a local equivalence then so is~$j^*f$.
\end{Lem}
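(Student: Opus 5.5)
The plan is to apply \Cref{Lem:Mackey-square-arith}, using the isocomma squares defining~$j^*H$ and~$j^*K$, and then to read off the local-equivalence claim from \Cref{Lem:leq-pb}.

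Stack the given square on top of the isocomma square for~$j^*K=(j/\stK)$:
\[
\xymatrix@C=1.5em@R=1.2em{
& j^*H \ar[ld]_-{j^*f} \ar[rd]^-{\pr_2} \ar@{}[dd]|{=}
\\
j^*K \ar[rd]_-{\pr_2} \ar[d]_-{\pr_1} && H \ar[ld]^-{f}
\\
G' \ar[rd]_-{j} & K \ar[d]^-{\stK} \ar@{}[l]|(.45){\isocell{\gamma_{j/\stK}}}
\\
& G
}
\]
The bottom square $(\gamma_{j/\stK})$ is an isocomma, hence Mackey. The composite square has legs $\pr_1\circ j^*f=\pr_1\colon j^*H\to G'$ and $\pr_2\colon j^*H\to H$. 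It is compared with the cospan $G'\xto{j}G\xleftarrow{\stK f}H$, using the 2-cell $\gamma_{j/\stK}\pr_2'\cast\id = \gamma_{j/\stK}\,j^*f$.

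By the definition $j^*f=\langle\pr_1,f\pr_2,\st{f}\cast\gamma_{j/\stH}\rangle$ in \Cref{Cons:j!j*-GG/}, this whiskered cell is $\st{f}\cast\gamma_{j/\stH}\colon j\pr_1\To \stK f\pr_2$. Precomposing the cospan leg $\stK f$ with the isomorphism $\st{f}\inv\colon\stK f\isoTo\stH$ identifies the isocomma $(j/\stK f)$ with $(j/\stH)$. Under this identification, the induced comparison morphism $\langle \pr_1,\pr_2,\st{f}\cast\gamma_{j/\stH}\rangle\colon j^*H\to (j/\stK f)$ is exactly the canonical isomorphism coming from the 2-cell~$\st{f}$. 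I would check this directly on objects $(a,b,g)\mapsto(a,b,\st{f,b}\,g)$ and on morphisms, where the map is the identity on pairs $(h,k)$. Hence the composite square is Mackey.

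\Cref{Lem:Mackey-square-arith} then shows that the top square is Mackey. Since the top square is Mackey, it is equivalent to the isocomma $(\pr_2/f)$, with $j^*f$ corresponding to the first projection. \Cref{Lem:leq-pb} says local equivalences are stable under passing to~$\pr_1$ of an isocomma. Local equivalences are also closed under equivalence and 2-isomorphism (\Cref{Def:loc-equiv}). Therefore, if $f$ is a local equivalence, so is~$j^*f$. One should note that \Cref{Lem:leq-pb} is stated for~$\pr_1$ with~$u$ on the right; by the symmetry $(i/u)\cong(u/i)$, it applies equally with the roles swapped.

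The main obstacle is the bookkeeping in the second paragraph: matching the pasted 2-cell with the one defining~$j^*f$, including the direction of~$\st{f}$ and the convention in~$\langle-,-,-\rangle$. Everything else is formal. If the pasting becomes messy, I would fall back on computing objects and morphisms explicitly in~$\gpd$. The comparison map $j^*H\to(\pr_2/f)$ sends $(a,b,g)$ to $((a,\,f(b),\,\st{f,b}g),\,b,\,\id)$. I would then verify by hand that it is an equivalence, with inverse induced by the remaining data.
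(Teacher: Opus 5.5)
Your proof is correct and is essentially the paper's: you paste the given square onto the isocomma square of $j^*K$, identify the composite with the isocomma $(j/\stH)$ via $\st{f}$ and the identity $\gamma_{j/\stK}\,j^*f=\st{f}\cast\gamma_{j/\stH}$, conclude with \Cref{Lem:Mackey-square-arith}, and deduce the local-equivalence claim from \Cref{Lem:leq-pb}. Your closing symmetry remark is unnecessary, since $f$ already occupies the $u$-slot of $(\pr_2/f)$ and $j^*f$ corresponds to $\pr_1$ there.
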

\begin{proof}
By definition of~$j^*(-)=G'\times_G-=(j/-)$, we have the following diagram
\[
\xymatrix@C=1em@R=1em{
&& j^*H \ar[rd]^-{\pr_2} \ar[ld]_-{j^*f} \ar@{}[dd]|{=} \ar@/_3em/[lldd]^(.6){\ \ =}_(.7){\pr_1}
\\
& j^*K \ar[rd]^-{\pr_2} \ar[ld]_(.4){\pr_1} \ar@{}[dd]|-{\isocell{\gamma_{j/\stK}}}
&& H \ar[ld]_-{f} \ar@/^2.5em/[lldd]^(.4){\stH}_-{\lisocell{\st{f}}\!}
\\
G' \ar[rd]_-{j} && K \ar[ld]^(.4){\stK}
\\
& G
}
\]
whose lower-left square is an isocomma by definition of~$j^*K$. The composite square is isomorphic to the (outside) isocomma~$(j/\stH)$; indeed $f$ is a morphism in~$\GG/G$ (with $\st{f}\colon \stH\isoTo \stK\circ f$) and $j^*f:=\langle \pr_1,f\pr_2,\st{f}\cast\gamma_{j/\stH}\rangle$ satisfies the two equalities under~$\pr_i$ and whiskers~$\gamma_{j/\stK}$ into~$\st{f}\cast\gamma_{j/\stH}$. We conclude by~\Cref{Lem:Mackey-square-arith}.

The final statement about local equivalences follows by~\Cref{Lem:leq-pb}.
\end{proof}

\begin{Def}
\label{Def:cube}%
By a \emph{commutative cube of 2-cells in~$\GG$} we mean a diagram
\begin{equation}
\label{eq:cube}%
\vcenter{
\xymatrix@L=2pt@C=20pt@R=20pt{
	& P'
	\ar@{}[dd]|(.4){\isocell{\gamma'}}
	\ar@{->}[drrr]^-{r}
	\ar[dl]_-{p'}
    \ar[dr]^(.65){q'}|(.4){\hole}
    \ar@{}[rrrrdd]|(.4){\isocell{\kappa}}
    \ar@{}[rrdd]|(.2){\isocell{\delta}}
    \\
	H'
	\ar[drrr]^(.2){s}
	\ar[dr]_{u'}
    \ar@{}[rrrrdd]|-{\isocell{\alpha}}
    && K'
	\ar@{}[dr]|{\isocell{\beta}}
	\ar[dl]|(.53){\hole}_(.3){v'\!}
	\ar[drrr]|-{\hole}^(.3){t} &&
	P
	\ar@{}[dd]|(.6){\isocell{\gamma}}
	\ar[dr]^{q}
	\ar[dl]_(.3){p\!\!} & \\
	& G'
	\ar[drrr]_-{j} &&
	H
	\ar[dr]_<<<{u}
    && K
	\ar[dl]^{v}  \\
	&&&& G &
}}
\end{equation}
whose six faces
$
\alpha\colon j u'\To u s,\
\beta\colon j v'\To v t,\
\delta\colon s p'\To p r,\
\kappa\colon t q'\To q r,\
\gamma\colon u p\To v q$ and $
\gamma'\colon u' p'\To v' q'
$
satisfy $\kappa\cast \beta \cast \gamma'=\gamma\cast \delta\cast \alpha\colon j u' p' \To vqr$ (in~\Cref{Not:cast}).
\end{Def}

We can construct such a cube by pulling back any square~$(\gamma)$ via~\Cref{Cons:j!j*-GG/}.
To be precise, we view $P$ as an object over~$G$ via the leftmost path~$u\circ p$, as we did in~\Cref{Conv:comma-isocomma} for the isocomma. We immediately get:

\begin{Lem}
\label{Lem:cube-exists}%
Given a square~$(\gamma)$ in~$\GG$ as in the `front' of~\eqref{eq:cube} and given a morphism~$j\colon G'\to G$, define a cube by applying~$j^*=G'\times_G-$ to the square~$(\gamma)$ with $r,s,t$ the second projections~$\pr_2\colon G'\times_G(-)\to (-)$, so that $H'=G'\times_G H=(j/u)$ and~$K'=G'\times_G K=(j/v)$ and $P'=G'\times_G P=(j/up)$ and similarly $p',q',u',v'$ are the images of~$p,q,u,v$ under~$j^*$.
The faces~$(\alpha)$ and~$(\beta)$ are isocommas: $\alpha=\gamma_{j/u}$ and~$\beta=\gamma_{j/v}$ and the 2-cells $\delta=\id\colon s p'=p r$ and~$\kappa=\id\colon t q'=q r$ are identities. The back-cell~$\gamma'$ is~$j^*\gamma$. This yields a commutative cube~\eqref{eq:cube} such that all four `side' squares $(\alpha)$, $(\beta)$, $(\delta)$ and~$(\kappa)$ are Mackey squares.
\end{Lem}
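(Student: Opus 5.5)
The plan is to build the cube out of \Cref{Cons:j!j*-GG/} and the strict equalities recorded there, then obtain Mackey-ness of the side faces from the isocomma definition, \Cref{Lem:pb-Mackey} and \Cref{Lem:Mackey-square-arith}.

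\emph{Construction of the cube.} View $P$ over~$G$ via $\stP:=u\circ p$, $H$ via~$u$ and $K$ via~$v$. Then $p\colon P\to H$ is a morphism in~$\GG/G$ with identity structure 2-cell, and $q\colon P\to K$ has structure 2-cell $\gamma\colon up\isoTo vq$. Apply $j^*$: set $p'=j^*p=\langle\pr_1,p\pr_2,\gamma_{j/up}\rangle$ and $q'=j^*q=\langle\pr_1,q\pr_2,\gamma\cast\gamma_{j/up}\rangle$. Set $u'=\pr_1\colon (j/u)\to G'$ and $v'=\pr_1\colon (j/v)\to G'$; these are the structure morphisms of $j^*H$ and $j^*K$. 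Set $\gamma'=j^*\gamma$. Since $p'$ and $q'$ have identity structure 2-cells, $u'p'=\pr_1=v'q'$, and $\gamma'$ is in fact the identity of~$\pr_1$ (this is consistent because $\gamma$, as a 2-cell of $\GG/G$ between the composites to~$G$, pulls back to the trivial cell over~$G'$). The faces $\delta,\kappa$ are identities by the strict equalities $\pr_2\circ j^*f=f\circ\pr_2$ from \Cref{Cons:j!j*-GG/}.

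\emph{Cube condition.} We must check $\kappa\cast\beta\cast\gamma'=\gamma\cast\delta\cast\alpha$ as 2-cells $j\pr_1\To vq\pr_2$ on~$P'=(j/up)$. By the construction of~$q'$ one has $\gamma_{j/v}\,q'=\gamma\cast\gamma_{j/up}$, and likewise $\gamma_{j/u}\,p'=\gamma_{j/up}$. Hence the left side is $\gamma\cast\gamma_{j/up}$ and the right side is $\gamma\,\pr_2\circ\gamma_{j/u}p'=\gamma\cast\gamma_{j/up}$, so they agree. This is the one real verification. It amounts to a careful evaluation of the components at an object $(a,x,g)$ of~$P'$, where both sides equal $\gamma_x\circ g$.

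\emph{Mackey faces.} The faces $(\alpha)$ and $(\beta)$ are literally isocomma squares, so the comparison maps are identities and the squares are Mackey. The faces $(\delta)$ and $(\kappa)$ are exactly the squares of \Cref{Lem:pb-Mackey} applied to $f=p$ and $f=q$ in~$\GG/G$, so they are Mackey.

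The main obstacle is bookkeeping rather than mathematics: keeping the orientation of the 2-cells and the choice of structure morphism for~$P$ consistent (leftmost path, as in \Cref{Conv:comma-isocomma}), so that $\gamma'$ comes out as stated and the cube identity holds on the nose. If the conventions get muddled, I would fall back on the explicit component computation at objects $(a,x,g)$ described above.
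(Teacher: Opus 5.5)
Your proposal is correct and follows the paper's argument: $(\alpha)$ and $(\beta)$ are Mackey because they are isocomma squares, and $(\delta)$, $(\kappa)$ are Mackey by \Cref{Lem:pb-Mackey} applied to $p$ and $q$ in $\GG/G$, with $P$ viewed over $G$ via $up$. Your explicit check of the cube identity, where both sides reduce to $(\gamma\pr_2)\circ\gamma_{j/up}$ and $\gamma'$ becomes $\id_{\pr_1}$ once $j^*G$ is identified with $G'$, spells out what the paper treats as immediate from \Cref{Cons:j!j*-GG/}.
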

\begin{proof}
The squares $(\alpha)$ and~$(\beta)$ are isocomma squares, hence Mackey, while $(\delta)$ and~$(\kappa)$ are Mackey by~\Cref{Lem:pb-Mackey}.
\end{proof}

\begin{Lem}
\label{Lem:cube-pb}%
Consider a commutative cube of 2-cells as in~\eqref{eq:cube}, with an opposite pair of `side' squares being Mackey: say $(\alpha)$ and~$(\kappa)$.
If the front square~$(\gamma)$ is Mackey then so is the back square~$(\gamma')$.
\end{Lem}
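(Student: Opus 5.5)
The plan is to get the result from two applications of \Cref{Lem:Mackey-square-arith}. Both applications concern one `diagonal' square of the cube, namely the square with top vertex~$P'$, left vertex~$K'$, right vertex~$H$ and bottom vertex~$G$. I will show that this square can be written as a pasting in two different ways. One way uses the two Mackey faces~$(\kappa)$ and~$(\gamma)$. The other uses the Mackey face~$(\alpha)$ and the unknown back face~$(\gamma')$.

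\emph{First decomposition.} Paste the side square~$(\kappa)$ with the front square~$(\gamma)$ along their common edge $q\colon P\to K$. Concretely:
\begin{itemize}
\item $(\kappa)$ is the square with top~$P'$, sides $q'\colon P'\to K'$ and $r\colon P'\to P$, and bottom~$K$ via $t$ and~$q$;
\item $(\gamma)$ is the square with top~$P$, sides $p$ and~$q$, and bottom~$G$ via $u$ and~$v$.
\end{itemize}
After reflecting the squares as needed (replacing cells by their inverses, which does not affect being Mackey), these form the configuration of \Cref{Lem:Mackey-square-arith} with $(\gamma)$ as the bottom square. The composite $\gamma\cast\kappa$ is a square with top~$P'$, wings $q'\colon P'\to K'$ and $pr\colon P'\to H$, and bottom~$G$ via $vt$ and~$u$. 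Since $(\gamma)$ and~$(\kappa)$ are Mackey, \Cref{Lem:Mackey-square-arith} shows that this composite is Mackey.

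\emph{Second decomposition.} Paste the back square~$(\gamma')$ with the side square~$(\alpha)$ along their common edge $u'\colon H'\to G'$. The result has top~$P'$, wings $q'\colon P'\to K'$ and $sp'\colon P'\to H$, and bottom~$G$ via $jv'$ and~$u$. Its 2-cell is $\alpha\cast\gamma'$, suitably oriented.

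\emph{Comparing the two.} The two diagonal squares differ only by replacing wings and edges with isomorphic ones:
\begin{itemize}
\item the cell $\delta\colon sp'\isoTo pr$ relates the wings $sp'$ and~$pr$;
\item the cell $\beta\colon jv'\isoTo vt$ relates the edges $jv'$ and~$vt$.
\end{itemize}
The cube identity $\kappa\cast\beta\cast\gamma'=\gamma\cast\delta\cast\alpha$ says exactly that, once these isomorphisms are inserted, the cell $\alpha\cast\gamma'$ corresponds to $\gamma\cast\kappa$. Being Mackey is invariant under such modifications. Indeed, the comparison morphism to the isocomma changes only by composition with the isomorphism of isocommas induced by isomorphic cospans and by a 2-isomorphism of the map itself, as in the elementary properties recalled in \Cref{Rem:Mackey-square}. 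Hence the second diagonal square is Mackey as well. Finally, since $(\alpha)$ is Mackey, a second application of \Cref{Lem:Mackey-square-arith}, now with $(\alpha)$ as the bottom square, shows that $(\gamma')$ is Mackey.

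\emph{Expected main obstacle.} The only delicate step is bookkeeping: orienting the squares so that they literally fit the configuration of \Cref{Lem:Mackey-square-arith}, and checking that the whiskered composites match via the cube equation. I would first verify the invariance statement for a single square: if $(\alpha)$ is Mackey and one replaces one wing~$v$ by $v'\cong v$, adjusting the cell accordingly, the result is still Mackey. The comparison $\langle v',j,\alpha'\rangle$ is then 2-isomorphic to $\langle v,j,\alpha\rangle$ by \Cref{Not:<>}, so it is again an equivalence. With that in hand, the rest is a direct diagram chase.
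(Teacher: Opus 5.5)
Your proposal is correct and is essentially the paper's own argument: the paper observes that the pasting of $(\kappa)$ with $(\gamma)$, written $(\kappa\inv\cast\gamma)$, is Mackey by \Cref{Lem:Mackey-square-arith}. It then notes that, by the cube identity, this square is isomorphic to the pasting $(\gamma'\cast\alpha\inv)$ of $(\gamma')$ with $(\alpha)$, and concludes with a second application of \Cref{Lem:Mackey-square-arith}; your explicit check (via $\beta$ and $\delta$) that the comparison map to the isocomma changes only by an isomorphism of isocommas and a 2-isomorphism is precisely what the paper's word `isomorphic' abbreviates.
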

\begin{proof}
Apply~\Cref{Lem:Mackey-square-arith}: $(\kappa\inv\cast\gamma)$ is Mackey, hence so is the isomorphic~$(\gamma'\cast\alpha\inv)$, hence so is~$(\gamma')$.
\end{proof}

\begin{Lem}
\label{Lem:cube}%
Consider a commutative cube of 2-cells as in~\eqref{eq:cube} and suppose that the front and back squares~$(\gamma)$ and~$(\gamma')$ are Mackey squares.
\begin{enumerate}[\rm(a)]
\item
\label{it:cube-leq}%
If $s$, $t$ and~$j$ are local equivalences then so is~$r$.
\smallbreak
\item
\label{it:cube-2-of-3}%
If $(\beta)$ is a Mackey square then so is $(\delta)$.
\smallbreak
\item
\label{it:cube-tricky}%
If $t$ and~$j$ are equivalences then the square $(\delta)$ is Mackey.
\end{enumerate}
\end{Lem}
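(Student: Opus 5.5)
Throughout, the idea is to reduce each statement to an assertion about automorphism groups, using \Cref{Rem:local-equiv-iso}. We also use the fact that in a Mackey square the comparison $\langle p,q,\gamma\rangle\colon P\to(u/v)$ is an equivalence, hence induces isomorphisms on automorphism groups. Recall that for an object $(a,b,g)$ of $(u/v)$ the group $\Aut_{(u/v)}(a,b,g)$ is the pullback $\Aut_H(a)\times_{\Aut_G(u a)}\Aut_K(b)$, computed via conjugation by~$g$ (as in the proof of \Cref{Lem:leq-pb}). So for $x\in P$ we get $\Aut_P(x)\cong\Aut_H(px)\times_{\Aut_G(upx)}\Aut_K(qx)$, and likewise for $y\in P'$ using the back square~$(\gamma')$.

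For~\eqref{it:cube-leq}, fix $y\in P'$ and put $x=r(y)$. The commutative cube supplies isomorphisms $s p'(y)\cong p(x)$, $tq'(y)\cong q(x)$ and $ju'p'(y)\cong up(x)$ via $\delta,\kappa,\alpha$. The morphism $\Aut_{P'}(y)\to\Aut_P(x)$ induced by~$r$ is then identified with the map of pullbacks induced by $s$, $t$ and~$j$ on the three corners. The cube equation $\kappa\cast\beta\cast\gamma'=\gamma\cast\delta\cast\alpha$ is exactly what makes these identifications compatible with the conjugations defining the two pullbacks. Since $s$, $t$ and $j$ induce isomorphisms on the corners, $r$ induces an isomorphism, so it is a local equivalence.

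For~\eqref{it:cube-2-of-3}, I would use \Cref{Lem:Mackey-square-arith} twice. First, pasting $(\gamma')$ with $(\beta)$ along~$v'$ gives a composite square with vertex~$P'$ over the cospan $H'\to G'\to G\leftarrow K$. It is Mackey because both pieces are, after reorienting squares, which is harmless since flipping a Mackey square keeps it Mackey. By the cube equation this composite is isomorphic to the pasting of $(\delta)$ with the composite square $(\gamma\cast\alpha)$. Next, note that $(\gamma\cast\alpha)$ is Mackey if and only if the pasting of $(\alpha)$ with $(\gamma)$ is, and one checks that it is when $(\alpha)$ is. The obstacle is that $(\alpha)$ is not assumed Mackey. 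So I would rather compare directly with the isocomma $H'\times_H P$: the square $(\delta)$ is Mackey precisely when $P'\to(s/p)$ is an equivalence. I would check this with an automorphism-group computation as in~\eqref{it:cube-leq}, together with essential surjectivity and fullness checked at the level of $\pi_0$, using the description of $P$ and $P'$ as pullbacks and the fact that $K'\simeq(j/v)$. I expect this bookkeeping, and in particular the $\pi_0$ surjectivity, to be the main obstacle.

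For~\eqref{it:cube-tricky}, if $t$ and $j$ are equivalences then $(\beta)$ is Mackey by \Cref{Exa:equi-Mackey}, and \eqref{it:cube-2-of-3} applies.
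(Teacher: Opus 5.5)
Your treatment of (a) is fine: it is exactly the `explicit exercise' via \Cref{Rem:local-equiv-iso} that the paper leaves to the reader. Your reduction of (c) to (b) through \Cref{Exa:equi-Mackey} is also the paper's. The problem is (b), and therefore also (c), which depends on it. You abandon the route through \Cref{Lem:Mackey-square-arith} because you think the big square must be decomposed through $(\alpha)$, which is not assumed Mackey. Your replacement, a direct check that $P'\to(s/p)$ is an equivalence, is only announced, and you explicitly leave its essential-surjectivity step open. As written, (b) is not proved.

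The missing point is that $(\alpha)$ never has to be pasted. The right decomposition is $(\delta)$ stacked on top of $(\gamma)$ along $p$, and $\alpha$ enters only as an invertible 2-cell relating the cospan legs $ju'$ and $us$.

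Concretely, pasting $(\gamma')$ and $(\beta)$ along $v'$ gives the square $(\beta\cast\gamma')$. It has vertex $P'$, legs $p'$ and $tq'$, and lies over $H'\xto{ju'}G\xlto{v}K$; it is Mackey by \Cref{Lem:Mackey-square-arith}. Pasting $(\delta)$ on $(\gamma)$ along $p$ gives the square $(\gamma\cast\delta)$. It has vertex $P'$, legs $p'$ and $qr$, and lies over $H'\xto{us}G\xlto{v}K$.

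The 2-cells $\alpha\colon ju'\To us$ and $\kappa\colon tq'\To qr$ identify these two squares. The cube equation $\kappa\cast\beta\cast\gamma'=\gamma\cast\delta\cast\alpha$ says precisely that their 2-cells correspond. Being Mackey is invariant under such an identification: the comparison functor $\langle p',qr,\gamma\cast\delta\rangle\colon P'\to(us/v)$ is isomorphic, via $\langle\id_{p'},\kappa\rangle$, to $\langle p',tq',\beta\cast\gamma'\rangle\colon P'\to(ju'/v)$ followed by the isomorphism $(ju'/v)\cong(us/v)$ induced by $\alpha$.

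Hence $(\gamma\cast\delta)$ is Mackey. Since its lower piece $(\gamma)$ is Mackey, \Cref{Lem:Mackey-square-arith} gives that $(\delta)$ is Mackey. This is the paper's three-line argument. It needs no hypothesis on $(\alpha)$, and it avoids the $\pi_0$ bookkeeping you anticipated, which would amount to reproving \Cref{Lem:Mackey-square-arith} by hand in this special case.
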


\begin{proof}
Part~\eqref{it:cube-leq} is an explicit exercise, using~\Cref{Rem:local-equiv-iso} in~$\gpd$.
Part~\eqref{it:cube-2-of-3} follows from~\Cref{Lem:Mackey-square-arith}: $(\beta\cast\gamma')$ is Mackey, hence the isomorphic~$(\gamma\cast\delta)$ is Mackey, hence so is~$(\delta)$. Part~\eqref{it:cube-tricky} follows from~\eqref{it:cube-2-of-3} and~\Cref{Exa:equi-Mackey}.
\end{proof}

Let us combine Mackey squares with the $\approx$-loci of~\Cref{Def:approx}.

\begin{Lem}
\label{Lem:approx-Mackey}%
Consider a Mackey square~\eqref{eq:Mackey-square} with $u$ and~$v$ local equivalences
\[
\vcenter{
\xymatrix@C=1em@R=.5em{
& L \ar@{ ->}[ld]_v \ar@{ ->}[rd]^j \ar@{}[dd]|{\isocell{\alpha}} & \\
H \ar@{ ->}[rd]_i &  & K \ar@{ ->}[ld]^u \\
& G.\!\! &}}
\]
Then $v$ maps the $\approx$-locus $L^{j\approx}$ of~$j$ into the $\approx$-locus $H^{i\approx}$ of~$i$, it maps the complement $L^{j\not\approx}$ into the complement~$H^{i\not\approx}$, and the resulting two squares are Mackey
\begin{equation}
\label{eq:approx-Mackey}%
\vcenter{
\xymatrix@C=2em@R=1em{
& L^{j\approx} \ar@{ ->}[ld]_{v\restr{\approx}} \ar@{ ->}[rd]^{j^\approx} \ar@{}[dd]|{\isocell{{\alpha\restr{\approx}}}} & \\
H^{i\approx} \ar@{ ->}[rd]_{i^\approx} &  & K_{} \ar@{ ->}[ld]^{u_{}} \\
& G_{} &}}
\qquadtext{and}
\vcenter{
\xymatrix@C=2em@R=1em{
& L^{j\not\approx} \ar@{ ->}[ld]_{v\restr{\not\approx}} \ar@{ ->}[rd]^{j^{\not\approx}} \ar@{}[dd]|{\isocell{{\alpha\restr{\not\approx}}}} & \\
H^{i\not\approx} \ar@{ ->}[rd]_{i^{\not\approx}} &  & K_{} \ar@{ ->}[ld]^{u_{}} \\
& G_{} &}}
\end{equation}
where $v\restr{\approx}:=v\restr{L^{j\approx}}$ and $v\restr{\not\approx}:=v\restr{L^{j\not\approx}}$, and where ${\alpha\restr{\approx}}:=\alpha\restr{L^{j\approx}}$ and ${\alpha\restr{\not\approx}}:=\alpha\restr{L^{j\not\approx}}$.
\end{Lem}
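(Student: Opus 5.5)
The plan is to reduce both claims to the endomorphism-group criterion of \Cref{Rem:local-equiv-iso}, working in~$\gpd$ (the general~$\GG$ case follows through the forgetful functor). Since the square is Mackey, we may replace~$L$ by the isocomma~$(i/u)$ with $v=\pr_1$, $j=\pr_2$, $\alpha=\gamma_{i/u}$. This is harmless: being Mackey, the $\approx$-loci of~$j$ and the maps~$v\restr{\approx}$, $v\restr{\not\approx}$ are all invariant under precomposing with an equivalence $w\colon L\isoto(i/u)$ and inserting the evident 2-cells. The components of~$L$ split as $L^{j\approx}\sqcup L^{j\not\approx}$, and the two squares in~\eqref{eq:approx-Mackey} are just the restrictions of~$(\alpha)$ to these components.

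For the first claim, fix an object $x=(a,b,g)$ of~$(i/u)$. The proof of \Cref{Lem:leq-pb} gives a pullback of groups
\[
\End(x)\cong \End_H(a)\times_{\End_G(u(b))}\End_K(b),
\]
where $\End_H(a)$ maps via $h\mapsto g\,i(h)\,g\inv$. Since $u$ is a local equivalence, $u\colon\End_K(b)\to\End_G(u(b))$ is bijective, so $\pr_1\colon\End(x)\to\End_H(a)$ is an isomorphism, as in \Cref{Lem:leq-pb}. Under this identification, the map $\pr_2\colon \End(x)\to\End_K(b)$ becomes $u\inv\circ c_g\circ i\colon \End_H(a)\to \End_K(b)$. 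The map $u\inv\circ c_g$ is bijective, so $\pr_2$ is bijective at~$x$ if and only if $i\colon\End_H(a)\to\End_G(i(a))$ is bijective.

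By \Cref{Rem:local-equiv-iso}, this means that the component of~$x$ lies in~$L^{j\approx}$ exactly when the component of $a=v(x)$ lies in~$H^{i\approx}$. Hence $v$ maps $L^{j\approx}$ into $H^{i\approx}$ and $L^{j\not\approx}$ into~$H^{i\not\approx}$.

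For the Mackey property, use additivity of isocommas in the first variable (\Cref{Rem:Mackey-square}):
\[
(i/u)\cong(i^\approx/u)\sqcup(i^{\not\approx}/u).
\]
Here $(i^\approx/u)$ is precisely the full subgroupoid of objects $(a,b,g)$ with $a\in H^{i\approx}$. By the previous step, this is exactly the $\approx$-locus of~$\pr_2$ on the isocomma, i.e.\ (the image of) $L^{j\approx}$. Likewise the complement is $(i^{\not\approx}/u)$. The comparison morphism $w$ therefore restricts to equivalences $L^{j\approx}\isoto (i^\approx/u)$ and $L^{j\not\approx}\isoto(i^{\not\approx}/u)$, since an equivalence $L\to A\sqcup B$ that maps each summand of a decomposition of~$L$ into the corresponding summand restricts to equivalences on the summands. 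This is the Mackey property for both squares in~\eqref{eq:approx-Mackey}.

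The main obstacle is bookkeeping rather than content. We must check that the identification of the loci is invariant under the replacement of~$L$ by the isocomma. This holds because the condition is phrased via endomorphism groups, which are preserved by equivalences, together with the 2-cell relating~$j$ to~$\pr_2\circ w$.
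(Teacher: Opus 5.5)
Your proof is correct, but it takes a slightly different route from the paper's.

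The paper never passes to the isocomma for the locus statement. It works component by component on $L$ itself. For a component $M$ of $L$, with $v(M)\subseteq I$ and $j(M)\subseteq J$, the restriction $v\restr{M}\colon M\to I$ is an equivalence and $u\restr{J}$ is fully faithful. Since $\alpha$ gives $i\restr{I}\,v\restr{M}\cong u\restr{J}\,j\restr{M}$, it follows that $i\restr{I}$ is fully faithful if and only if $j\restr{M}$ is. The Mackey hypothesis enters only at the end, through additivity of isocommas, just as in your last step.

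You instead transport everything to $(i/u)$ first and read off the loci from the endomorphism pullback in \Cref{Lem:leq-pb}, via $\pr_2=u\inv\circ c_g\circ i$ on endomorphism groups. The two routes trade off as follows:
\begin{itemize}
\item The paper's argument shows the locus statement needs only the 2-cell $\alpha$ and the two local equivalences, not the Mackey property, and it avoids transporting loci along $w$.
\item Your argument shows the hypothesis on $v$ is redundant: $\pr_1$ is automatically a local equivalence once $u$ is. It also gives the explicit identification of the $\approx$-locus of $\pr_2$ with $(i^\approx/u)$.
\end{itemize}

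Your final bookkeeping concern is lighter than you suggest. By construction, $w=\langle v,j,\alpha\rangle$ satisfies $\pr_1\circ w=v$ and $\pr_2\circ w=j$ on the nose. So no 2-cell is needed to transport the loci along the equivalence $w$.
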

\begin{proof}
For every connected component $M$ of~$L$, let $I\into H$ be the connected component of~$v(M)$ and $J\into K$ the connected component of~$j(M)$. We get a square
\[
\vcenter{
\xymatrix@C=1em@R=.5em{
& M \ar@{ ->}[ld]_-{v\restr{M}} \ar@{ ->}[rd]^-{j\restr{M}} \ar@{}[dd]|{\isocell{\alpha\restr{M}}}
\\
I \ar@{ ->}[rd]_-{i\restr{I}}
&& J \ar@{ ->}[ld]^-{u\restr{J}}
\\
& G
}}
\]
Since $u$ and~$v$ are local equivalences, $u\restr{J}\colon J\to G$ is fully faithful and $v\restr{M}\colon M\to I$ is an equivalence. Hence $i\restr{I}$ is fully faithful if and only if~$j\restr{M}$ is fully faithful. This gives the decomposition $v$ as a coproduct (without crossed components $L^{j\not\approx}\to H^{i\approx}$ or $L^{j\approx}\to H^{i\not\approx}$) and the two resulting squares are Mackey by additivity of isocommas.
\end{proof}

\subsection{Mackey 2-functors}\
\label{ssec:Mackey 2-functors}%

We recall the fundamental notion of~\cite[Definition~2.3.5]{BalmerDellAmbrogio20}.
\begin{Def}
\label{Def:2-Mackey}%
A \emph{Mackey 2-functor} $\cM$ on our 2-category~$\GG$ (\Cref{Conv:GG}) is a contravariant 2-functor $\cM\colon \GG^\op\to \ADD$ that satisfies the following properties:
\begin{enumerate}[\rm({Mack}\,1)]
\item
\label{it:Mack-1}%
The 2-functor~$\cM$ is additive (\Cref{Def:restriction-2-functor}).
\smallbreak
\item
\label{it:Mack-2}%
For every faithful $i\colon H\into G$ in~$\GG$, restriction $i^*\colon \cM(G)\to \cM(H)$ admits a (special Frobenius) two-sided adjoint~$i_*\adj i^*\adj i_*$ called `induction'.
\smallbreak
\item
\label{it:Mack-3}%
For every Mackey square~\eqref{eq:Mackey-square} with $i$ and~$j$ faithful, the left mate $\alpha_!$ of~$\cM(\alpha)\colon v^*i^*\isoTo j^*u^*$ and the right mate $(\alpha\inv)_*$ of~$\cM(\alpha\inv)$
\[
\alpha_!\colon j_*\,v^* \To u^*\,i_*
\qquadtext{and}
(\alpha\inv)_*\colon u^*\,i_* \To j_*\,v^*
\]
are isomorphisms. These are called the \emph{Mackey base-change formulas}.
\end{enumerate}
\end{Def}
\begin{Rec}
\label{Rec:special-Frobenius}%
For $i_*\adj i^*$ and~$i^*\adj i_*$ we need units and counits
\begin{equation}
\label{eq:4-units}%
\vcenter{
\xymatrix@R=.5em@C=5em{
	\leta\colon \Id \To i^*i_*
	& \reta\colon \Id \To i_*i^*
	\\
	\leps\colon i_* i^*\To \Id
	& \reps\colon i^*i_*\To \Id.\!
}}
\end{equation}
satisfying (separate) unit-counit relations. We write $\leta^{(i)}$, etc, if we need to emphasize~$i$.
The two-sided adjunction is called `special Frobenius' when the composite of the left unit $\leta\colon\Id\To i^*i_*$ with the right counit~$\reps\colon i^*i_*\To \Id$ is the identity transformation.
See~\cite[Appendix~A.2]{BalmerDellAmbrogio20} for the mates~$\alpha_!=\leps^{(j)}\cast\alpha^*\cast\leta^{(i)}$ and~$(\alpha\inv)_*=\reps^{(i)}\cast(\alpha\inv)^*\cast\reta^{(j)}$ in~\Mack{3}, writing~$\cast$ as in~\Cref{Not:cast}.
\end{Rec}
\begin{Rem}
\label{Rem:rectify-Mackey}%
The above~\Cref{Def:2-Mackey} is a condensed version of~\cite[Definition~2.3.5]{BalmerDellAmbrogio20}, with Ambidexterity (Mack\,4) absorbed in~\Mack{2}.
By the Rectification Theorem~\cite[Theorem~3.4.3]{BalmerDellAmbrogio20} one can choose the units and counits in~\eqref{eq:4-units} so that several additional properties become true, for instance $i_*\adj i^*\adj i_*$ being special Frobenius (Mack\,9) as indicated in~\Mack{2} above.
We can also assume (Mack\,7) which says that $\alpha_!$ is the inverse of~$(\alpha\inv)_*$ in~\Mack{3}.
Furthermore, several `harmless' simplifications can be arranged, for instance $(\id)_*=\id$ and more generally when $i^*$ is an equivalence, $i_*$ is chosen to be its inverse. We tacitly assume that (co)\-units~\eqref{eq:4-units} have been chosen with those extended properties.
\end{Rem}

\begin{Rec}[{\cite[Definition~4.2.2]{BalmerDellAmbrogio20}}]
\label{Rec:Mack(G)}%
A \emph{morphism $t\colon \cM\to \cM'$ of Mackey 2-functors} on~$\GG$ is a pseudo-natural transformation compatible with restrictions (\ie a morphism in~$\resfun{\GG}$) that preserves induction in the following sense. For every $i\colon G'\into G$ the right mate $(t_i)_*$ of the compatibility isomorphism $t_i\colon i^*\circ t_{G}\isoTo t_{G'}\circ i^*$
\[
(t_i)_*\colon t_{G}\circ i_*\To i_* \circ t_{G'}
\]
is an isomorphism of functors~$\cM(G')\to \cM'(G)$.
(Equivalently, the left mate $(t_i\inv)_!$ is an isomorphism. In that case, by~\cite[Proposition~6.3.1\,(ii)]{BalmerDellAmbrogio20}, this $(t_i\inv)_!$ is the inverse of~$(t_i)_*$.)
We define the 2-category of Mackey 2-functors
\[
\mackfun{\GG}
\]
as the 2-full subcategory of the 2-category of restriction 2-functors~$\resfun{\GG}$ with those morphisms.
In~\cite{BalmerDellAmbrogio20} it is denoted $\mathsf{Mack}(\GG)$. The above notation is coherent with the other 2-categories that we considered:
\[
\mackfun{\GG}\hook \resfun{\GG}\hook \conjfun{\GG}.
\]
These forgetful 2-functors are faithful (meaning injective on 2-cells).
\end{Rec}

%
\section{Folding induction and traces}
\label{sec:trace}%
%
\subsection{Induction along local equivalences}\
\label{ssec:leq-induction}%

Our goal in this paper is to approximate restriction 2-functors~$\cA$ and conjugation 2-functors~$\cB$ by Mackey 2-functors that will therefore admit induction as in~\Mack{2}.
It is important to notice that even a mere restriction or even a conjugation 2-functor already has \emph{some} induction, thanks to additivity. The local equivalences of~\Cref{Def:loc-equiv} are the right notion for this. We sometimes refer to this type of easy induction as `folding induction' or `folding pushforward' to distinguish it from the hard-won induction of Mackey 2-functors.

\begin{Prop}[`Folding' induction]
\label{Prop:leq-adjoints}%
Let $\cB\colon \GGi^\op\to \ADD$ be a conjugation 2-functor on~$\GG$ (\eg a restriction 2-functor~$\cA$ restricted to local equivalences).
Let $s\colon H\to G$ be a local equivalence in~$\GG$. Then $s^*\colon \cB(G)\to \cB(H)$ admits a two-sided adjoint such that $s_*\adj s^*\adj s_*$ is special Frobenius.
\end{Prop}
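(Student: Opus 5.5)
By \Cref{Lem:leq-folding}, every local equivalence $s\colon H\apprto G$ factors strictly as $s=\nabla\circ\tilde s$, where $\tilde s$ is an equivalence in~$\GGi$ and $\nabla=\bigsqcup_K\nabla_K^{(n(K))}$ is a coproduct of foldings. I would therefore build $s_*$ in two steps, one for each factor, and then compose. Two-sided adjunctions compose. The special Frobenius property also composes: if $\leta$ and $\reps$ compose to the identity for each factor, then the composite left unit is $\tilde s^*\,\leta^{(\nabla)}\,\tilde s$ pasted with $\leta^{(\tilde s)}$, the composite right counit is the analogous pasting, and the composite of these two is the identity by interchange.

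For the equivalence factor $\tilde s$, the 2-functor $\cB$ sends equivalences of~$\GGi$ to equivalences of additive categories, since equivalences in~$\GGi$ have quasi-inverses and 2-cells within~$\GGi$. I would choose $\tilde s_*$ to be a quasi-inverse of~$\tilde s^*$, with unit and counit upgraded to an adjoint equivalence. The inverse isomorphisms then provide the other adjunction. With these choices $\reps=(\leta)\inv$, so the special Frobenius condition holds trivially.

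For the folding factor, additivity of~$\cB$ reduces the problem componentwise to a single folding $\nabla=\nabla_K^{(n)}\colon K^{\sqcup n}\to K$. Indeed, $\cB(\sqcup_K K^{\sqcup n(K)})\simeq\bigoplus_K\cB(K^{\sqcup n(K)})$ and $\nabla^*$ decomposes accordingly, since $\nabla\circ\incl_{K^{\sqcup n(K)}}=\incl_K\circ\nabla_K^{(n(K))}$. Using the equivalence $(\incl_1^*,\dots,\incl_n^*)\colon\cB(K^{\sqcup n})\isoto\cB(K)^{\oplus n}$ and $\nabla\circ\incl_k=\id_K$, the restriction $\nabla^*$ becomes the diagonal $\Delta\colon\cB(K)\to\cB(K)^{\oplus n}$, $x\mapsto(x,\dots,x)$. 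In an additive category the diagonal has the direct sum $(x_1,\dots,x_n)\mapsto\bigoplus_k x_k$ as a two-sided adjoint, because finite biproducts are both products and coproducts. The units and counits are the canonical ones. The left unit $x\to(\bigoplus_k x)$ is the diagonal map into the product, which componentwise is $(x_k\to\bigoplus_j x_j)_k$, and the right counit is given by the projections. Thus the composite $\reps\circ\leta$ on $(x_1,\dots,x_n)$ has $k$-th component $\pr_k\circ\incl_k=\id_{x_k}$, which is the special Frobenius condition. The case $n=0$ gives the zero functor and is consistent.

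The main obstacle is bookkeeping rather than mathematics. The transport of these adjunctions along the additivity equivalence must be set up correctly: I would fix an adjoint-equivalence inverse for $(\incl_k^*)_k$, then conjugate the biproduct adjunctions through it. Conjugating by adjoint equivalences preserves both two-sided adjointness and the special Frobenius identity.
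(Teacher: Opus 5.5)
Your proposal is correct and follows the paper's proof. You reduce via \Cref{Lem:leq-folding} and additivity to the diagonal $\Delta\colon\cat{C}\to\cat{C}^{n}$, take the biproduct as its two-sided adjoint with the obvious (co)units, and check special Frobenius componentwise as $\pr_k\circ\incl_k=\id$; you also make explicit the equivalence factor~$\tilde s$, the composition of special Frobenius adjunctions, and the transport along the additivity equivalence, which the paper leaves tacit.

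There are two cosmetic slips. First, the whiskering should read $\tilde s^*\,\leta^{(\nabla)}\,\tilde s_*$. Second, $\leta$ lives on $\cat{C}^n$ with components the coproduct inclusions, as your componentwise formula correctly says, whereas the diagonal $x\to x^{\oplus n}$ is $\reta$.
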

\begin{proof}
In view of~\Cref{Lem:leq-folding}, it really suffices to understand the adjoints to the diagonal functor
$\Delta\colon \cat{C}\to\cat{C}^{n}$
for any additive category~$\cat{C}$ and for~$n\ge 0$. Its two-sided adjoint is the biproduct $\bigoplus\colon \cat{C}^n\to \cat{C}$, $(c_1,\ldots,c_n)\mapsto c_1\oplus\cdots\oplus c_n$. Choosing the obvious units and counits, one verifies the special Frobenius property.
\end{proof}

\begin{Prop}[Base-change for folding induction]
\label{Prop:leq-base-change}%
Consider a Mackey square in~$\GG$ (\Cref{Def:Mackey-square}) with $s$ and~$t$ local equivalences
\[
\vcenter{
\xymatrix@C=1em@R=.5em{
& L \ar@{ ->}[ld]_v \ar@{ ->}[rd]^t \ar@{}[dd]|{\isocell{\alpha}} & \\
H \ar@{ ->}[rd]_s &  & K \ar@{ ->}[ld]^u \\
& G.\!\! &}}
\]
Let $s_*$ and~$t_*$ be two-sided adjoints of~$s^*$ and~$t^*$ respectively, as in~\Cref{Prop:leq-adjoints}.
\begin{enumerate}[\rm(a)]
\item
\label{it:leq-bc-1}%
Let $\cA\colon \GG^\op\to \ADD$ be a restriction 2-functor on~$\GG$.
Then the left and right mates of~$\cA(\alpha^{\pm1})$ yield inverse isomorphisms of functors $\cA(H)\to \cA(K)$:
\[
\alpha_!\colon t_*\,v^*\isoTo u^*\,s_*
\qquadtext{and}
(\alpha\inv)_*\colon u^*\,s_*\isoTo t_*\,v^*.
\]
\item
\label{it:leq-bc-2}%
Let $\cB\colon \GGi^\op\to \ADD$ be a conjugation 2-functor on~$\GG$. Then the above formulas also hold in~$\cB$ when they make sense, \ie if $u$ and~$v$ are local equivalences.
\end{enumerate}
\end{Prop}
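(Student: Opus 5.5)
The plan is to reduce, via the folding description of local equivalences, to an explicit statement about biproducts in additive categories. First I observe that in such a Mackey square with $s$ local equivalence, \Cref{Lem:leq-pb} (applied to the isocomma, then transported along the equivalence $L\simeq(s/u)$) shows that $t$ is automatically a local equivalence. So the hypothesis is symmetric, and in part~(b) all four morphisms lie in~$\GGi$; hence $\cB(\alpha)$ makes sense and both mates are defined. Thus (b) is literally the same computation as (a), performed inside $\GGi$. I treat both at once for a 2-functor $\cB$ defined on a 2-category containing the square.

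Next, reduction steps. The mates $\alpha_!$ and $(\alpha\inv)_*$ are compatible with pasting of squares (standard mate calculus, \cite[Appendix~A.2]{BalmerDellAmbrogio20}). Using \Cref{Lem:Mackey-square-arith}, I can replace the Mackey square by the isocomma square $(s/u)$ composed with the equivalence $w\colon L\isoto (s/u)$. For an equivalence~$w$ the chosen induction is inverse to~$w^*$, so the mates for the triangle involving $w$ are isomorphisms. This reduces the claim to isocomma squares.

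Additivity in each variable then lets me decompose. Restricting to components of $K$ (via the fully faithful inclusions $\incl_J$, whose base change is again an isocomma) and using \Cref{Lem:leq-folding}, I reduce to the case where $G$ is connected (components of $G$ not hit contribute zero), $s=\nabla^{(n)}_G\circ\tilde s$ with $\tilde s$ an equivalence, and $K$ connected. The equivalence $\tilde s$ is handled as before, so it remains to treat $s=\nabla\colon G^{\sqcup n}\to G$. Then $(\nabla/u)\cong K^{\sqcup n}$ canonically, with $t=\nabla_K$ and $v=u^{\sqcup n}$. The relevant functors are $u^*\circ\bigoplus$ and $\bigoplus\circ (u^*)^{n}$, and the mate $\alpha_!$ is the canonical comparison $\bigoplus_k u^*(x_k)\to u^*(\bigoplus_k x_k)$ built from the biproduct injections and projections. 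This is an isomorphism because $u^*$ is additive. Similarly, $(\alpha\inv)_*$ is the comparison built from projections and injections. The biproduct identities $\sum_k \iota_k\pi_k=\id$ and $\pi_k\iota_l=\delta_{kl}$, together with $\cB(\alpha)$ being identity-like in this strict case, show that the two comparisons are mutually inverse.

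The main obstacle is bookkeeping. I must check that the reductions (pasting with equivalences and restricting to components) transform the mates compatibly, and that the units and counits chosen in \Cref{Prop:leq-adjoints} (diagonal/sum with the obvious (co)units) really produce the explicit comparison maps above, so that the inverse relation survives. For the inverse claim in general I would instead argue directly: special Frobenius plus the explicit formulas give $(\alpha\inv)_*\circ\alpha_!=\id$ by a zig-zag computation. Since both maps are then isomorphisms, one-sided inverse suffices.
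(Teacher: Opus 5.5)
Your proposal is correct and follows essentially the paper's route: you reduce, via additivity of isocommas and the folding factorization of \Cref{Lem:leq-folding}, to the case where $s=\nabla^{(n)}$ is a folding, identify the pullback with $K^{\sqcup n}$ (so $t=\nabla_K$, $v=u^{\sqcup n}$, trivial 2-cell), and note that both mates are the canonical comparison maps saying $u^*$ commutes with biproducts, which works verbatim in~$\cB$ once $u,v$ are local equivalences. One small caution: your closing ``direct zig-zag'' alternative should not be presented that way, because $(\alpha\inv)_*\circ\alpha_!$ contains the composite $\reta^{(t)}\circ\leps^{(t)}$, which no triangle identity cancels (it is the all-ones matrix in the folding case); what actually makes the composite the identity is $\pi_l\iota_k=\delta_{lk}$ after the reduction to a folding, which your main argument already uses and which your pasting step carries back to the original square.
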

\begin{proof}
Using additivity of isocommas (\cite[Lemma~3.8]{BalmerDellAmbrogio21}) and~\Cref{Lem:leq-folding} again, one reduces to the case of an isocomma in which $s=\nabla^{(n)}_H$ is a folding:
\[
\vcenter{
\xymatrix@C=1em@R=1em{
& (s/u) \ar@{ ->}[rd]^{\pr_2} \ar@{ ->}[ld]_{\pr_1} \ar@{}[dd]|{\isocell{\gamma}} & \\
H^{\sqcup n} \ar@{ ->}[rd]_{s=\nabla^{(n)}_{H}} &  & K \ar@{ ->}[ld]^u \\
& H.\! &
}}
\]
In that case, one checks that $(s/u)\simeq K^{\sqcup n}$ in such a way that $\pr_1=u^{\sqcup n}$, that $\pr_2=\nabla^{(n)}_{K}$ is also a folding, for the same number~$n$, and that $\gamma$ is identity.
The formulas are then easily verified. They amount to $u^*$ commuting with biproduct~$\bigoplus$, which is true in~\eqref{it:leq-bc-1} as well as in~\eqref{it:leq-bc-2} as soon as $u^*$ exists in~$\ADD$.
\end{proof}

\begin{Rem}
\label{Rem:fake-Mackey}%
An alternate formulation of~\Cref{Prop:leq-adjoints} is to consider the 2-category~$\GGi$ of~\Cref{Def:GGi} as our input~$\GG$ in~\Cref{Conv:GG}. This is a (2,1)-category with isocommas (\Cref{Lem:leq-pb}) on which every additive 2-functor is Mackey (on~$\GGi$ itself, not on~$\GG$).
In other words
\begin{equation}
\label{eq:fake-Mackey}%
\mackfun{\GGi}=\resfun{\GGi}=\conjfun{\GGi}=\conjfun{\GG}.
\end{equation}
Moreover, every transformation between additive 2-functors preserves folding induction, since its components are additive and therefore preserve finite biproducts.
See \Cref{Lem:leq-transfo-induction}.
\textsl{\`A vaincre sans p\'eril, on triomphe sans gloire}...
\end{Rem}

\begin{Rem}
\label{Rem:rectified-leq}%
To be complete, when using the above folding induction, one could also arrange units and counits to satisfy the `harmless' axioms (Mack\,5)--(Mack\,10) of \cite[Rectification Theorem~3.4.3]{BalmerDellAmbrogio20}, as explained in~\Cref{Rem:rectify-Mackey} for Mackey 2-functors. This is an easier version of the Rectification Theorem in \loccit, that we leave to the reader, for instance using~\Cref{Rem:fake-Mackey}.
We will tacitly assume from now on that (co)units are chosen in this `rectified' way.
\end{Rem}

We can combine the folding induction with the $\approx$-locus of~\Cref{Def:approx}.
\begin{Cor}
\label{Cor:approx-basechange}%
Consider a Mackey square~$(\alpha)$ with $u$ and~$v$ local equivalences
\[
\vcenter{
\xymatrix@C=1em@R=.5em{
& L \ar@{ ->}[ld]_-{v} \ar@{ ->}[rd]^-{j} \ar@{}[dd]|{\isocell{\alpha}} & \\
H \ar@{ ->}[rd]_-{i} &  & K \ar@{ ->}[ld]^-{u} \\
& G}}
\qquad
\overset{\textrm{\Cref{Lem:approx-Mackey}}}{\Longrightarrow}
\qquad
\vcenter{
\xymatrix@C=1em@R=.5em{
& L^{j\approx} \ar@{ ->}[ld]_-{v\restr{\approx}} \ar@{ ->}[rd]^-{j^\approx} \ar@{}[dd]|{\isocell{{\alpha\restr{\approx}}}} & \\
H^{i\approx} \ar@{ ->}[rd]_-{i^\approx} &  & K \ar@{ ->}[ld]^u \\
& G}}
\]
and its restriction to the $\approx$-loci of~$i$ and~$j$, which remains Mackey by~\Cref{Lem:approx-Mackey}.
For every conjugation 2-functor~$\cB$, we have a base-change isomorphism~$({\alpha\restr{\approx}})_!$ between the functors $(j^\approx)_*\,(v\restr{\approx})^* \isoTo u^*\,(i^\approx)_*$ from $\cB(H^{i\approx})$ to~$\cB(K)$, whose inverse is given by~$(\alpha\inv\restr{\approx})_*$.
\end{Cor}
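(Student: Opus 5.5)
The corollary is essentially \Cref{Prop:leq-base-change}\eqref{it:leq-bc-2} applied to the restricted square produced by \Cref{Lem:approx-Mackey}, so I would prove it in three short steps.

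First, I invoke \Cref{Lem:approx-Mackey} for the given Mackey square~$(\alpha)$. Since $u$ and~$v$ are local equivalences, it tells us that $v$ maps $L^{j\approx}$ into~$H^{i\approx}$ and that the restricted square $(\alpha\restr{\approx})$ is again Mackey. I then record which arrows of this new square are local equivalences. By \Cref{Def:approx}, $i^\approx$ and~$j^\approx$ are local equivalences. The arrow $u$ is one by hypothesis. For $v\restr{\approx}$, note that $v$ is fully faithful on each component of~$L$, and restricting it to a union of components and corestricting to a union of components of~$H$ containing the image keeps this property; so $v\restr{\approx}$ is a local equivalence as well. Hence all four sides of $(\alpha\restr{\approx})$ lie in~$\GGi$, and the whole square lives in~$\GGi$. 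In particular the 2-cell $\alpha\restr{\approx}$ is a 2-cell between composites of local equivalences, so $\cB(\alpha\restr{\approx})$ makes sense for a conjugation 2-functor~$\cB$.

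Second, I apply \Cref{Prop:leq-base-change}\eqref{it:leq-bc-2} to the square $(\alpha\restr{\approx})$, taking $s:=i^\approx$ and $t:=j^\approx$ as the two local equivalences along which we induce. The folding inductions $(i^\approx)_*$ and $(j^\approx)_*$ are those provided by \Cref{Prop:leq-adjoints}. The other two sides, $u$ and~$v\restr{\approx}$, are local equivalences, which is exactly the extra hypothesis in part~\eqref{it:leq-bc-2}. That proposition then gives that the left mate
\[
(\alpha\restr{\approx})_!\colon (j^\approx)_*(v\restr{\approx})^*\To u^*(i^\approx)_*
\]
is an isomorphism, with inverse the right mate $((\alpha\restr{\approx})\inv)_*$.

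Third, I note that $(\alpha\inv)\restr{\approx}=(\alpha\restr{\approx})\inv$, since restriction to $L^{j\approx}$ is just whiskering by the inclusion. This yields the stated formula for the inverse.

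The only point needing real care is the middle claim that $v\restr{\approx}$ is a local equivalence (and that the restricted square is Mackey), but both are already contained in \Cref{Lem:approx-Mackey} and its proof. So I do not expect a serious obstacle; the argument is bookkeeping around results already established.
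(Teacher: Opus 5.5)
Your proposal is correct and matches the paper's proof, which simply applies \Cref{Prop:leq-base-change}\,(b) to the restricted Mackey square produced by \Cref{Lem:approx-Mackey}. Your extra check that $v\restr{\approx}$ is still a local equivalence, which is needed for part~(b) to apply, is the one detail the paper leaves implicit, and your argument for it is right.
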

\begin{proof}
This holds by base-change~\Cref{Prop:leq-base-change}\,\eqref{it:leq-bc-2} on the right-hand square.
\end{proof}

\subsection{Traces}\
\label{ssec:traces}%

Let us review the notion of trace that we shall use later on.

\begin{Def}
\label{Def:trace}%
Let $s_*\adj s^*\adj s_*$ be a special Frobenius two-sided adjunction between additive categories, say $s^*\colon \cat{C}\to \cat{D}$ and $s_*\colon \cat{D}\to \cat{C}$ (\Cref{Rec:special-Frobenius}). Given two objects $c_1,c_2$ in~$\cat{C}$ and a morphism $f\colon s^*(c_1)\to s^*(c_2)$ between their images in~$\cat{D}$, the \emph{trace of~$f$ along~$s_*$} is the morphism $\tr_{s_*}(f)\colon c_1\to c_2$ defined by applying~$s_*$ and composing with the relevant unit and counit on the sides:
\[
\xymatrix@C=2em{
\tr_{s_*}(f)\colon
& c_1 \ar[rr]^-{\reta}_-{(s^*\adj s_*)}
&& s_*s^*(c_1) \ar[rr]^-{s_*(f)}
&& s_*s^*(c_2) \ar[rr]^-{\leps}_-{(s_*\adj s^*)}
&& c_2\,.
}
\]
\end{Def}
\begin{Exa}
\label{Exa:trace-sum}%
Let $\cat{C}$ be an additive category and $\cat{D}=\cat{C}^n$.
Let $s^*=\Delta\colon \cat{C}\to \cat{D}$ be the diagonal functor and $s_*=\bigoplus\colon \cat{D}\to \cat{C}$ be its (special Frobenius) two-sided adjoint given by the biproduct and obvious (co)units. Then for every $c_1,c_2\in\cat{C}$ and every $f=(f_1,\ldots,f_n)\colon s^*c_1=(c_1,\ldots,c_1)\to s^*c_2=(c_2,\ldots,c_2)$ in~$\cat{C}^n$, the morphism $s_*(f)$ is $\smat{f_1&&0\\[-.5em]&\ddots&\\0&&f_n}\colon c_1^{\oplus n}\to c_2^{\oplus n}$ and $\tr_{\oplus}(f)=f_1+\cdots +f_n$.
\end{Exa}
\begin{Conv}
If $s^*$ is the image of some~$s$ in~$\GG$ via a 2-functor~$\GG^\op\to \ADD$ we simply write $\tr_s$ instead of $\tr_{s_*}$ and call it the \emph{trace along~$s$}.
\end{Conv}

\Cref{Exa:trace-sum} shows that the trace is not functorial, as in linear algebra $\tr(AB)\neq\tr(A)\tr(B)$. However for a constant~$\lambda$, we have~$\tr(\lambda A)=\lambda\tr(A)$. This becomes:
\begin{Lem}[Partial functoriality]
\label{Lem:trace-special-comp}%
In the situation of~\Cref{Def:trace}, we have
\[
\tr_s\big((s^*g)\circ f\circ(s^*e)\big)=g\circ\tr_s(f)\circ e
\]
for every $e\colon c_0\to c_1$ and~$g\colon c_2\to c_3$ in~$\cat{C}$ and every $f\colon s^*(c_1)\to s^*(c_2)$ in~$\cat{D}$.
\end{Lem}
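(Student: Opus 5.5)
The plan is to unfold the definition of the trace and use only naturality of the unit $\reta\colon \Id\To s_*s^*$ and of the counit $\leps\colon s_*s^*\To\Id$, together with functoriality of $s_*$. By \Cref{Def:trace},
\[
\tr_s\big((s^*g)\circ f\circ(s^*e)\big)=\leps_{c_3}\circ s_*(s^*g)\circ s_*(f)\circ s_*(s^*e)\circ\reta_{c_0},
\]
where we have already used that $s_*$ is a functor to split $s_*$ of the triple composite into three factors.

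Next we move $e$ and $g$ past the unit and counit. Naturality of $\reta$ applied to $e\colon c_0\to c_1$ gives $s_*s^*(e)\circ\reta_{c_0}=\reta_{c_1}\circ e$. Naturality of $\leps$ applied to $g\colon c_2\to c_3$ gives $\leps_{c_3}\circ s_*s^*(g)=g\circ\leps_{c_2}$. Substituting both identities turns the right-hand side into
\[
g\circ\leps_{c_2}\circ s_*(f)\circ\reta_{c_1}\circ e=g\circ\tr_s(f)\circ e,
\]
which is the claim.

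There is no real obstacle. The special Frobenius property and the additivity are not needed; only naturality of the two transformations is used. The one point to check is that $\reta$ and $\leps$ are genuine natural transformations $\Id_{\cat C}\To s_*s^*$ and $s_*s^*\To\Id_{\cat C}$, which holds because they are the unit of $s^*\adj s_*$ and the counit of $s_*\adj s^*$ respectively.
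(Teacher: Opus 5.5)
Your proposal is correct and is exactly the paper's argument, which simply says the identity follows from naturality of $\reta$ and $\leps$. You spell out that argument: functoriality of $s_*$ splits the composite, naturality of $\reta$ at $e$ moves $e$ past the unit, and naturality of $\leps$ at $g$ moves $g$ past the counit.
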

\begin{proof}
This follows from naturality of~$\reta$ and~$\leps$.
\end{proof}

\begin{Lem}
\label{Lem:trace-base-change}%
Consider a square in~$\ADD$ (for instance the image of a square~\eqref{eq:Mackey-square} of groupoids by a 2-functor~$\GG^\op\to\ADD$, hence the $(-)^*$ decorations)
\[\xymatrix@C=1em@R=.5em{
& \tilde{\cat{D}} \ar@{<-}[ld]_{v^*} \ar@{<-}[rd]^{t^*} \ar@{}[dd]|{\isocell{\alpha^*}} & \\
\cat{D} \ar@{<-}[rd]_{s^*} &  & \tilde{\cat{C}} \ar@{<-}[ld]^{u^*} \\
& \cat{C} &}
\]
Suppose that $s^*$ and~$t^*$ admit special Frobenius two-sided adjoints~$s_*$ and~$t_*$ respectively, and that the left and right mates of~$\alpha^*$ yield inverse isomorphisms $\alpha_!\colon t_*\,v^*\isoTo u^*\,s_*$ and~$(\alpha\inv)_*\colon u^*\,s_*\isoTo t_*\,v^*$. Then we have%
\begin{equation}
\label{eq:trace-base-change}%
u^*\circ \tr_{s}=\tr_{t}\circ_{\alpha} v^*.
\end{equation}
More precisely, given two objects~$c_1,c_2\in\cat{C}$ and a morphism $f\colon s^*(c_1)\to s^*(c_2)$ in~$\cat{D}$, consider the objects $\tilde{c}_1=u^*(c_1)$ and $\tilde{c}_2=u^*(c_2)$ in~$\tilde{\cat{C}}$ and consider the morphism $\tilde{f}\colon t^*(\tilde{c}_1)\to t^*(\tilde{c}_2)$ obtained by `adjusting' $v^*(f)$ with~$\alpha^*$ as follows:
\begin{equation}
\label{eq:tilde-f}%
\tilde{f}:=\alpha^*\,v^*(f)\,(\alpha^*)\inv\colon t^*(u^*(c_1))\to t^*(u^*(c_2)).
\end{equation}
Then~\eqref{eq:trace-base-change} means $u^*(\tr_{s}(f))=\tr_{t}(\tilde{f})$.
\end{Lem}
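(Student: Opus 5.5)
Unfold both sides and compare them as composites of units, counits and $\alpha^*$. By \Cref{Def:trace}, the left side is
\[
u^*(\tr_s f)=u^*(\leps^{(s)}_{c_2})\circ u^*s_*(f)\circ u^*(\reta^{(s)}_{c_1})
\]
and the right side is $\tr_t(\tilde f)=\leps^{(t)}_{u^*c_2}\circ t_*(\tilde f)\circ \reta^{(t)}_{u^*c_1}$. The goal is to insert the isomorphism $(\alpha\inv)_*\colon u^*s_*\isoTo t_*v^*$ and its inverse $\alpha_!$ in the middle of the left side and then handle the three pieces separately.

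The middle piece is naturality of $(\alpha\inv)_*$ applied to $f$:
\[
u^*s_*(f)=\alpha_!\circ t_*v^*(f)\circ(\alpha\inv)_*,
\]
using that $\alpha_!$ and $(\alpha\inv)_*$ are mutually inverse. Writing $t_*v^*(f)=t_*\big((\alpha^*)\inv\,\tilde f\,\alpha^*\big)$ by \eqref{eq:tilde-f}, the claim reduces to two unit/counit identities:
\begin{enumerate}[\rm(i)]
\item $(\alpha\inv)_*\circ u^*(\reta^{(s)})=t_*\big((\alpha^*)\inv\big)\circ\reta^{(t)}u^*$, as maps $u^*\To t_*v^*s^*$;
\item $u^*(\leps^{(s)})\circ\alpha_!=\leps^{(t)}u^*\circ t_*(\alpha^*)$, as maps $t_*v^*s^*\To u^*$.
\end{enumerate}
Granting these, the left side becomes $\leps^{(t)}\circ t_*(\alpha^*)\,t_*((\alpha^*)\inv)\,t_*(\tilde f)\,t_*(\alpha^*)\,t_*((\alpha^*)\inv)\circ\reta^{(t)}$, which collapses to $\tr_t(\tilde f)$ by functoriality of $t_*$.

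Identity (i) should follow by expanding the right mate $(\alpha\inv)_*=\reps^{(s)}\cast(\alpha\inv)^*\cast\reta^{(t)}$ (\Cref{Rec:special-Frobenius}) and precomposing with $u^*\reta^{(s)}$. Naturality of $\reta^{(t)}$ moves $u^*\reta^{(s)}$ past $\reta^{(t)}$. Naturality of $\alpha^*$ then turns $v^*s^*\reta^{(s)}$ into $s^*\reta^{(s)}$ whiskered appropriately, after which the triangle identity $\reps^{(s)}s^*\circ s^*\reta^{(s)}=\id$ cancels. What remains is exactly $t_*(\alpha\inv)^*\circ\reta^{(t)}$. Identity (ii) is the dual computation with the left mate $\alpha_!=\leps^{(t)}\cast\alpha^*\cast\leta^{(s)}$ and the other triangle identity, $\leps^{(s)}s_*\circ s_*\leta^{(s)}$ read appropriately, i.e.\ $s^*\leps\circ\leta s^*=\id$. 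Only the adjunctions $s^*\adj s_*$ for (i) and $s_*\adj s^*$ for (ii) are used, so the special Frobenius hypothesis serves only to make the definition of the trace sensible.

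The main obstacle will be bookkeeping: getting the direction of $\alpha^*$ versus $(\alpha^*)\inv$ right in (i) and (ii) so that it matches \eqref{eq:tilde-f}, and tracking the whiskerings. I would verify (i) on components by a string-diagram or pasting argument. The rest is formal.
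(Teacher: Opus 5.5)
Your proof is correct and is essentially the paper's argument. The paper draws the same three-row diagram: its middle squares come from naturality of $(\alpha\inv)_*=(\alpha_!)\inv$ and from applying $t_*$ to \eqref{eq:tilde-f}, and your identities (i) and (ii) are exactly its left and right side regions, checked by the same mate/naturality/triangle-identity computation (the paper writes out (ii), you sketch (i)). There are only cosmetic slips in your wording: in (i) it is naturality of $(\alpha^*)\inv$ that turns $t^*u^*\reta^{(s)}$ into $v^*s^*\reta^{(s)}$, and in (ii) the triangle identity needed is directly $s^*\leps^{(s)}\circ\leta^{(s)}s^*=\id$.
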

\begin{proof}
Applying $u^*$ to the morphism $\tr_{s}(f)$ of~\Cref{Def:trace} gives the top row below
\begin{equation}
\label{eq:aux-bc-trace}%
\vcenter{\xymatrix@C=2em{
u^*(c_1) \ar[rr]^-{u^*(\reta^{(s)})}_-{} \ar@{=}[dd]
&& u^*s_*s^*(c_1) \ar[rr]^-{u^*s_*(f)}
&& u^*s_*s^*(c_2) \ar[rr]^-{u^*(\leps^{(s)})}
&& u^*(c_2) \ar@{=}[dd]
\\
&& t_*v^*s^*(c_1) \ar@{<-}[u]^-{(\alpha\inv)_*s^*}_-{\simeq} \ar[rr]^-{t_*v^*(f)}
&& t_*v^*s^*(c_2) \ar[u]_-{\alpha_!\,s^*}^-{\simeq}
&&
\\
u^*(c_1) \ar[rr]^-{\reta^{(t)} u^*}
&& t_*t^*u^*(c_1) \ar@{<-}[u]_-{t_*(\alpha^*)}^-{\simeq} \ar[rr]^-{t_*(\tilde{f})}
&& t_*t^*u^*(c_2) \ar@{<-}[u]_-{t_*(\alpha^*)}^-{\simeq} \ar[rr]^-{\leps^{(t)}\,u^*}
&& u^*(c_2)
}}
\end{equation}
In the middle column, the top square commutes by naturality of $(\alpha\inv)_*=(\alpha_!)\inv$,
whereas the bottom square is obtained by applying $t_*$ to the definition~\eqref{eq:tilde-f} of~$\tilde{f}$.
The two side regions commute by definition of mates, by naturality and by the unit-counit relations for~$s_*\adj s^*\adj s_*$; for instance for the right-hand region:
\[
\xymatrix@C=1.5em{
t_*v^*s^* \ar[rrrrrr]^-{{\displaystyle\alpha_!}s^*} \ar[rrd]^-{t_*v^*\,{\displaystyle\leta^{(s)}}\,s^*} \ar@{=}@/_1em/[dd]
&&&\ar@{}[d]|-{(\textrm{def.\ of }\alpha_!)}&&& u^*s_*s^* \ar[rrd]^-{u^*\,{\displaystyle\leps^{(s)}}}
\\
\ar@{}[rr]|(.3){\textrm{(unit-counit)}} && t_*v^*s^*s_*s^* \ar[rr]_-{t_*{\displaystyle\alpha^*}s_*s^*} \ar[lld]^(.3){t_*v^*s^*\,{\displaystyle\leps^{(s)}}}
&\ar@{}[d]|-{(\textrm{nat.})}& t_*t^*u^*s_*s^* \ar[rru]^-{{\displaystyle\leps^{(t)}} u^*s_*s^*} \ar[rrd]^-{t_*t^*u^*\,{\displaystyle\leps^{(s)}}}
&&{\scriptstyle(\textrm{nat.})}&& u^*
\\
t_*v^*s^* \ar[rrrrrr]_-{t_*{\displaystyle\alpha^*}}
&&&&&& t_*t^*u^* \ar[rru]_-{{\displaystyle\leps^{(t)}} u^*}
}
\]
The left-hand region is similar.
The bottom row in the commutative diagram~\eqref{eq:aux-bc-trace} is the trace of~$\tilde{f}$ with respect to~$t$ and this gives the result.
\end{proof}

%
\section{Construction of~$\Alp[(-)]$}
\label{sec:Alp}%
%

Recall that $\GG$ is our 2-category of groupoids of interest as in~\Cref{Conv:GG}.
For most of this section \emph{we fix a restriction 2-functor}, \ie an additive 2-functor
\[
\cA\colon \GG^\op\to \ADD.
\]
We construct for every $G$ in~$\GG$ an additive category $\Alp(G)$ by adjoining induction data to $\cA$ along all faithful maps
into~$G$. Roughly speaking, an object of $\Alp(G)$ is an object in~$\cA(H)$, living
over a groupoid~$H\into G$ faithfully embedded in~$G$. A morphism in~$\Alp(G)$ involves spans over~$G$, modulo the trace relations that come from the few inductions already present in~$\cA$, as explained in~\Cref{sec:trace}.

The reader should recall the comma 2-categories and the span 2-categories of~\Cref{sec:basics}, specifically, the 2-category $\GGf/G$ of objects \emph{faithfully} mapped into~$G$ (\Cref{Def:GGf}) and the 2-category $\SpanfG(H,K)$ of spans in~$\GGf/G$ (\Cref{Def:Spanf}).

\begin{Cons}
\label{Cons:Alp-category}%
Let $G\in\GG$. We construct a category $\Alp(G)$ as follows.

\smallskip
\noindent
\textbf{Objects.}
An object of $\Alp(G)$ is a pair~$(H,x)$ where $H\in \GGf/G$ is a groupoid faithfully mapped into~$G$ and $x\in\cA(H)$ is an object in the additive category provided by~$\cA$ on~$H$. Explicitly, this amounts to a pair $(\stH\colon H\into G,\ x)$, where $\stH$ is a faithful functor in~$\GG$ (called the `structure morphism' for~$H$ and usually omitted) and $x\in \cA(H)$ is an object defined over~$H$.

\smallskip
\noindent
\textbf{Morphism representatives.}
Given two objects~$(H,x)$ and~$(K,y)$ in~$\Alp(G)$, a \emph{morphism representative} $(H,x)\to (K,y)$
consists of a pair $(P,f)$ whose first entry $P=(P,p_1,p_2)\in \SpanfG(H,K)$ is a span from~$H$ to~$K$ and whose second entry $f\colon p_1^*(x)\to p_2^*(y)$
is a morphism in~$\cA(P)$ between the pull-backs of~$x$ and~$y$ along the two `wings' $p_1\colon P\to H$ and~$p_2\colon P\to K$ of the span, see~\eqref{eq:Span-object}.
In picture:
\begin{equation}
\label{eq:Alp-morphism}
\qquad\vcenter{
\xymatrix@C=2em@R=.5em{
& P \ar@[gray]@{ >.>}[dd]|-{\gray\stP} \ar@{ >->}@/_.5em/[ld]_-{p_1} \ar@{ >->}@/^.5em/[rd]^-{p_2}
 \ar@{}[ldd]|(.45){\gray{\lisocell{\st{p_1}}}}
 \ar@{}[rdd]|(.45){\gray{\isocell{\st{p_2}}}}
\\
H \ar@[gray]@{ >->}[rd]_{\gray\stH} & & K \ar@[gray]@{ >->}[ld]^{\gray\stK} \\
& \gray G &
}}
\qquad\text{together with}\quad
f\colon p_1^*(x)\to p_2^*(y) \quad\textrm{in }\cA(P).
\end{equation}
Following the artistic direction taken in~\Cref{ssec:commas}, we gray-out the structure data, to emphasize in black  the `essential' part of the morphism, namely~$(P,p_1,p_2;f)$.

\smallskip
\noindent
\textbf{Relations.}
The equivalence relation~$\appr$ on morphism representatives is the one generated by the \emph{one-step $\appr$-equivalence}
$(P,f)\appr(P',f')$ defined by the existence of a morphism $s\colon (P,p_1,p_2)\to (P',p'_1,p'_2)$ in~$\SpanfG(H,K)$ such that, on the middle objects, $s$ is a \emph{local equivalence} $P\xto{\approx}P'$ in~$\GG$ (\Cref{Def:loc-equiv}) and such that $f'$ is the \emph{trace} of~$f$ along~$s$ (\Cref{Def:trace}).
Let us unpack this relation. The structure data for~$s=(s,\sigma_1,\sigma_2)$ is displayed in full in~\eqref{eq:Span-morphism}.
The essential part is
\begin{equation}
\label{eq:Alp-relation}
\vcenter{\xymatrix@C=3em@R=.5em{
& P \ar@{ >->}@/_1em/[lddd]_-{p_1} \ar@{ >->}@/^1em/[rddd]^-{p_2} \ar@{ >->}[dd]^-{s}_-{\approx}
\ar@{}[lddd]|{\isocell{\sigma_1}} \ar@{}[rddd]|{\lisocell{\sigma_2}} &
\\
\\
& P' \ar@{ >->}[ld]^-{p_1'} \ar@{ >->}[rd]_-{p_2'}
\\
H
&& K.\!
}}
\end{equation}
For the trace of~$f$ along~$s$, one first `adjusts' $f$ using the wing cells~$\sigma_1$ and~$\sigma_2$ of~$s$
\begin{equation}
\label{eq:f-adjusted}%
\vcenter{\xymatrix{
f^\sigma:=
& s^*({p_1'}^*(x)) \ar[r]^-{(\sigma_1^*)^{-1}}_-{\simeq}
& p_1^*(x) \ar[r]^-{f}
& p_2^*(y) \ar[r]^-{\sigma_2^*}_-{\simeq}
& s^*({p_2'}^*(y))
}}
\end{equation}
to get a morphism in~$\cA(P)$ between two objects that come via~$s^*\colon \cA(P')\to \cA(P)$ and this adjusted morphism~$f^\sigma$ admits a trace along~$s_*$ in the sense of~\Cref{Def:trace}; that trace is a morphism ${p_1'}^*(x)\to {p_2'}^*(y)$ in~$\cA(P')$ and we require it to be equal to the given~$f'$.
The definition of the trace $\tr_s$ uses that $s^*\colon \cA(P')\to \cA(P)$ admits a special Frobenius two-sided adjoint~$s_*$, even though $\cA$ is only a 2-functor, and this is the `folding pushforward' of~\Cref{Prop:leq-adjoints} using that $s$ is a local equivalence.

Note in particular the equivalence relations provided by~$s\colon P\overset{\simeq}\to P'$ an actual equivalence in~$\GG/G$, which even for~$s=\id$ can involve isomorphisms~$\sigma_i\colon p_i\isoTo p_i'$ that change the wings of~$P$ up to isomorphism. This yields a special case of our $\appr$-equivalence on morphisms that we shall refer to as the \emph{strong $\appr$-equivalence}.

We write $[P,f]$ for the equivalence class of $(P,f)$ with respect to~$\appr$, and we write~$[P,p_1,p_2;f]$ when we want to emphasize the wings of the span~$P$.

\smallskip
\noindent
\textbf{Composition.}
Let $[P,f]\colon (H,x)\to (K,y)$
and $[Q,g]\colon (K,y)\to (L,z)$
be morphisms in $\Alp(G)$, with chosen representatives.
\emph{Choose} $(T,u,v,\gamma)$ any Mackey square in~$\GG/G$ for~$P$ and~$Q$ over~$K$, for instance an isocomma (\Cref{Conv:comma-isocomma}):
\begin{equation}
\label{eq:alp-composition}%
\vcenter{\xymatrix@C=2em@R=.2em{
&& T\ar@{ >->}[ld]_u \ar@{ >->}[rd]^v
\\
& P \ar@{ >->}[ld]_-{p_1} \ar@{ >->}[rd]_-{p_2} \ar@[gray]@{..>}@/_1em/[rddd]_-{\gray\stP}
& {\isocell{\gamma}}
& Q \ar@{ >->}[ld]^-{q_1} \ar@{ >->}[rd]^-{q_2} \ar@[gray]@{..>}@/^1em/[lddd]^-{\gray\stQ}
\\
H \ar@[gray]@{..>}@/_1em/[rrdd]_-{\gray\stH}
&& K \ar@[gray]@{..>}[dd]_-{\gray\stK}
&& L \ar@[gray]@{..>}@/^1em/[lldd]^-{\gray\stL}
\\
&&&
\\
&& \gray{G} & &}
}
\end{equation}
We define the composite to be
\[
[Q,g]\circ [P,f]
:=
[T,\ g\odot_\gamma f]
\]
where $T=(T,p_1 u,q_2 v)\in\SpanfG(H,L)$ has wing morphisms $p_1 u$ and~$q_2 v$ obtained by composition in~$\GG/G$, \textsl{structurando structurandis},
and the morphism $g\odot_\gamma f$ in~$\cA(T)$ is the composition of the restrictions of~$f$ and~$g$, suitably `matched' via~$\gamma$
\begin{equation}
\label{eq:odot}%
g\odot_{\gamma} f \;:=\; (p_1 u)^*(x)\xrightarrow{\,\displaystyle u^*(f)\,}(p_2 u)^*(y)
\xrightarrow[\simeq]{\,\gamma^*\,}(q_1 v)^*(y)
\xrightarrow{\,\displaystyle v^*(g)\,}(q_2 v)^*(z).
\end{equation}
The identity of $(H,x)$ is represented by $(H,\id_H,\id_H;\id_x)$.
\end{Cons}

\begin{Rem}
It is convenient to call~$H$ the `$\GG$-part' of an object~$(H,x)$ in~$\Alp(G)$ and to call~$x$ the `$\cA$-part' of~$(H,x)$.
Similarly, $P$ is the $\GG$-part of a morphism representative~$(P,f)$ and~$f$ is its~$\cA$-part.
\end{Rem}

\begin{Prop}\label{Prop:Alp-category}
For every object $G\in\GG$, the above~\Cref{Cons:Alp-category} gives a well-defined additive category $\Alp(G)$.
\end{Prop}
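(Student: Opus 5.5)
We must show four things: composition is well defined on $\appr$-classes and independent of the chosen Mackey square, it is associative and unital, the Hom sets are abelian groups with bilinear composition, and finite biproducts exist.

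\textbf{Independence of the Mackey square.} Two Mackey squares $(T,u,v,\gamma)$ and $(T',u',v',\gamma')$ for the same cospan $P\to K\leftarrow Q$ both compare to the isocomma $(p_2/q_1)$ by equivalences $w,w'$. So it suffices to compare any Mackey square with the isocomma. The equivalence $w\colon T\to(p_2/q_1)$ is a morphism of spans over $H,L$, with wing cells coming from the components of~$w$. It is a local equivalence, and the trace along an equivalence is just transport by the inverse (we use the rectified choice $s_*=(s^*)^{-1}$ from \Cref{Rem:rectified-leq}). Checking that $\tr_w\big((g\odot_\gamma f)^\sigma\big)=g\odot_{\gamma_{p_2/q_1}}f$ is then a direct naturality computation with the 2-functoriality of~$\cA$. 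Hence the two composites are strongly $\appr$-equivalent.

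\textbf{Compatibility with~$\appr$.} By symmetry it suffices to treat a one-step relation $s\colon P\apprto P'$ with $f'=\tr_s(f^\sigma)$, and to compare $[Q,g]\circ[P,f]$ with $[Q,g]\circ[P',f']$. Form the isocomma $T'=(p_2'/q_1)$ and pull it back along $s$ to get $T=(p_2/q_1)$. Using $\sigma_2$ we obtain a cube in which the side square over $s$ is Mackey, by \Cref{Lem:pb-Mackey} and \Cref{Lem:Mackey-square-arith}. Then \Cref{Lem:leq-pb} shows that the induced $\tilde s\colon T\to T'$ is a local equivalence, and it is a morphism of spans.

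The key identity is
\[
\tr_{\tilde s}\big((g\odot f)^{\tilde\sigma}\big)=g\odot\tr_s(f^\sigma).
\]
It follows in two moves:
\begin{itemize}
\item by \Cref{Lem:trace-base-change}, applied to the Mackey square $(T\to P,\ T\to T',\ P\to P',\ T'\to P')$ with the base change of \Cref{Prop:leq-base-change}\,(a), restriction commutes with trace: $u'^*\tr_s=\tr_{\tilde s}\circ u^*$;
\item by partial functoriality (\Cref{Lem:trace-special-comp}), the factors $\gamma^*$ and $v^*(g)$ are restricted from $T'$ along $\tilde s$, so they pass through the trace.
\end{itemize}
The symmetric case of a relation on the $Q$-side is analogous. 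I expect this step to be the main obstacle: it requires tracking the wing cells and structure 2-cells through the cube so that the adjusted morphisms match exactly.

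\textbf{Associativity and units.} Given the independence of the Mackey square, compute a triple composite using iterated isocommas. The two bracketings give spans $(P\times_K Q)\times_{L}R$ and $P\times_K(Q\times_L R)$, which are related by the canonical equivalence of iterated isocommas; under it the two $\cA$-parts agree by 2-functoriality of~$\cA$. So the two composites are strongly equivalent. For units, use the Mackey square with $T=P$, the identity, and $u=\id$, which is Mackey by \Cref{Exa:equi-Mackey}; the composite with an identity is then literally $[P,f]$.

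\textbf{Additive structure.} Define the sum of $[P,f]$ and $[P',f']$ as $[P\sqcup P',(f,f')]$, using additivity of~$\cA$ on $P\sqcup P'$. The zero morphism is the empty span. To show that $[P,f]+[P,f']=[P,f+f']$, use the folding $\nabla\colon P\sqcup P\apprto P$: by \Cref{Exa:trace-sum} its trace is the sum. This forces the monoid of morphisms to coincide with addition in $\cA(P)$, so negatives exist and the Hom sets are abelian groups. Well-definedness of the sum and bilinearity of composition follow from additivity of isocommas. Finally, the biproduct of objects is $(H\sqcup H',(x,x'))$ over $(\stH,\st{H'})$, with inclusions and projections represented by the component inclusions and identity $\cA$-parts. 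The biproduct relations reduce to the Mackey square of \Cref{Exa:equi-Mackey}, which handles the fully faithful component inclusions, and to the disjointness $H\times_{H\sqcup H'}H'=\varnothing$.
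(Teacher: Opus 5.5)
Your plan is the paper's proof. Independence of the Mackey square comes from strong $\appr$-equivalences. Compatibility with a one-step relation $s\colon P\apprto P'$ comes from the induced $\tilde s\colon (p_2/q_1)\to(p_2'/q_1)$, using base-change of traces (\Cref{Lem:trace-base-change}) followed by partial functoriality (\Cref{Lem:trace-special-comp}). Associativity comes from associativity of isocommas, and the additive structure from disjoint unions and the folding $P\sqcup P\apprto P$. The only variation is how you get the Mackey side square. You apply \Cref{Lem:pb-Mackey} in $\GG/K$, pulling $s$ back along $q_1$ with $\sigma_2$ as structure 2-cell, whereas the paper invokes \Cref{Lem:cube}\,(c). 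Both work, and your route gives the local-equivalence property of $\tilde s$ directly from the last sentence of \Cref{Lem:pb-Mackey}.

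One step does not follow as written: the existence of negatives. The folding relation gives $[P,f]+[P,-f]=[P,0]$, so you still need $[P,0]$ to equal the zero morphism $[\varnothing,0]$. The folding relation alone only shows $[P,0]+[P,0]=[P,0]$, i.e.\ that $[P,0]$ is idempotent. That does not make it the neutral element, because you do not yet know the Hom-monoid of $\appr$-classes is cancellative. The missing ingredient is the one-step relation given by the unique morphism of spans $\varnothing\to P$. It is vacuously faithful and a local equivalence, and its trace map is zero because it factors through $\cA(\varnothing)\simeq 0$. Hence $(\varnothing,0)\appr(P,0)$, i.e.\ $[P,0]=0$ for every span $P$; this is the paper's \eqref{eq:Alp-zero}. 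With this added, your argument is complete.
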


\begin{proof}
Using `strong $\appr$-equivalences', we see that composition does not depend on the choice of the Mackey square~\eqref{eq:alp-composition}.
To check that composition $[Q,g]\circ[P,f]$ does not depend on the representatives of the two morphisms~$(H,x)\to (K,y)\to (L,z)$, it suffices to discuss what happens with the one-step $\appr$-equivalence on one of the morphisms, one at a time, say for instance the first one.
Consider a local equivalence~$s\colon P\leto P'$, where $P=(P,p_1,p_2)$ and~$P'=(P',p'_1,p'_2)$, and let $\sigma_1\colon p_1\isoTo p'_1s$ and $\sigma_2\colon p_2\isoTo p'_2s$ be the wing cells of~$s$ as in~\eqref{eq:Alp-relation}.
We form the isocommas~$T=(p_2/q_1)$ and~$T'=(p_2'/q_1)$ to compose each representative with~$(Q,q_1,q_2;g)$ and we want to show that $(T,g\odot f)$ is one-step $\appr$-equivalent to~$(T',g\odot f')$. To see this, we construct the morphism
$w=s\times_K \id_Q = \big\langle s u,\ v,\ \gamma\cast\sigma_2^{-1}\big\rangle \colon (p_2/q_1)\to (p_2'/q_1)$:
\[
\vcenter{\xymatrix@C=18pt@R=18pt{
&& T=(p_2/q_1) \ar@{ >->}[ld]_-{u} \ar@{ >->}[rd]_-{v} \ar@{~>}[rrrd]^-{w} \ar@{}[d]|(.7){\isocell{\gamma}}
\\
& P \ar@{ >->}[rd]_-{p_2} \ar@{~>}[rrrd]^(.8){s} \ar@{}[rrrrdd]|{\isocell{\sigma_2}\quad}
&& Q \ar@{..>}[ld]_(.4){q_1\!\!} \ar@{::}[rrrd]
&& T'=(p_2'/q_1) \ar@{ >->}[ld]^(.6){\!\!u'} \ar@{ >->}[rd]^-{v'} &&
\\
&& K \ar@{=}[rrrd]
&& P' \ar@{ >->}[rd]^-{p_2'} 
&\ar@{}[d]|(.2){\isocell{\gamma'}} & Q \ar@{ >->}[ld]^-{q_1} 
\\
&&& 
&&	K 
}}
\]
We are in the situation of~\Cref{Lem:cube}\,\eqref{it:cube-tricky}, which guarantees that the square $s u = u' w$ is Mackey and consequently~$w$ is also a local equivalence by~\Cref{Lem:leq-pb}. Hence we can apply base-change~\Cref{Lem:trace-base-change} to~$f$ to get $\tr_{w}(u^*(f))=u'^*(\tr_{s}(f))$; note that we do not need to `adjust' $u^*(f)$ as in~\Cref{Lem:trace-base-change} because the 2-cell $s u\isoTo u' w$ is the identity.
Let us switch to telegraphic style, mostly treating 2-cells as identities, to get the idea of the proof, and then restore the details afterwards. We have
\[
\begin{array}{rll}
\tr_{w}(g\odot_{\gamma} f) & = \tr_{w}\big(v^*(g)\circ u^*(f)\big) & \textrm{by~\eqref{eq:odot}, suppressing~$\gamma$}
\\[.1em]
& = \tr_{w}\big(w^*(v'^*(g))\circ u^*(f)\big) &\textrm{since }v=v' w
\\[.1em]
& = v'^*(g)\circ\tr_{w}(u^*(f)) & \textrm{by partial functoriality~\Cref{Lem:trace-special-comp}}
\\[.1em]
& = v'^*(g)\circ u'^*(\tr_{s}(f)) & \textrm{by base-change~\Cref{Lem:trace-base-change}}
\\[.1em]
& = v'^*(g) \circ u'^*(f') &\textrm{since $\tr_s(f)=f'$}
\\[.1em]
& = g \odot_{\gamma'} f' &\textrm{by~\eqref{eq:odot} again.}
\end{array}
\]
This indicates that $(T,g\odot_\gamma f)$ and $(T',g\odot_{\gamma'} f')$ are one-step $\appr$-equivalent.
Let us restore the 2-cells for accuracy. We do this once, as a `proof of concept' but will leave such details to the reader in the sequel. First clarify that the wing cells of~$w$ are the following obvious 2-cells, using $u' w=s u$ and $v' w=v$:
\begin{equation}
\label{eq:aux-w-wings}%
p_1 u \overset{\sigma_1 u}\To p'_1s u=p'_1u' w
\qquadtext{and}
q_2 v\overset{\id}\To q_2 v' w .
\end{equation}
The compatibility with the structure data over~$G$ is straightforward.
Remembering the sources and targets of $f\colon p_1^*(x)\to p_2^*(y)$ and~$g\colon q_1^*(y)\to q_2^*(z)$, the above `short' proof that $\tr_{w}(g\odot_{\gamma} f) = g \odot_{\gamma'} f'$ expands into
\[
\kern-.3em\begin{array}{rll}
	\tr_{w}\kern-1em&(g\odot_{\gamma} f) = \tr_{w}\big(\id (g\odot_{\gamma} f)((\sigma_1 u)^*)\inv\big) & \textrm{writing wings of~$w$~\eqref{eq:aux-w-wings} as in~\eqref{eq:f-adjusted}}
	\\[.1em]
	&
	= \tr_{w}\big(v^*(g)\,\gamma^*\, u^*(f)((\sigma_1 u)^*)\inv\big) & \textrm{by~\eqref{eq:odot} with explicit~$\gamma$}
	\\[.1em]
	& = \tr_{w}\big(w^*(v'^*(g))\, \gamma^* u^*(f){u^*((\sigma_1^*)\inv)}\big) &\textrm{since $v=v' w$ and $(\sigma_1 u)^*=u^*(\sigma_1^*)$}
	\\[.1em]
	& = \tr_{w}\big(w^*(v'^*(g) \gamma'^*)\, u^*(\sigma_2^* f{(\sigma_1^*)\inv})\big) &\textrm{since $\gamma\cast\sigma_2\inv=\gamma' w$ by def.\ of~$w$}
	\\[.1em]
	& = v'^*(g)\,\gamma'^* \,\tr_{w}\big(u^*(\sigma_2^* f{(\sigma_1^*)\inv})\big) & \textrm{by partial functoriality~\Cref{Lem:trace-special-comp}}
	\\[.1em]
	& = v'^*(g)\,\gamma'^*\,u'^*\big(\tr_{s}(\sigma_2^* f{(\sigma_1^*)\inv})\big) & \textrm{by base-change~\Cref{Lem:trace-base-change}}
	\\[.1em]
	& =  v'^*(g)\,\gamma'^*\,u'^*(f') &\textrm{by $\tr_{s}(f^\sigma)=f'$, expanded}
	\\[.1em]
	& = g \odot_{\gamma'} f' &\textrm{by~\eqref{eq:odot} with explicit~$\gamma'$.}
\end{array}
\]

From now on, we shall write the short versions of such proofs, as in the first version above, trusting that the reader can restore the `adjusting' 2-cells and the structure over~$G$, as needed.

It is easier to check that composition is unital and associative. For the latter, one can use associativity of isocommas as in~\cite[Remark~2.1.8]{BalmerDellAmbrogio20}.

Finally, the category $\Alp(G)$ is additive. The biproduct is
\begin{equation}
\label{eq:Alp-oplus}%
(H,x)\oplus(K,y)=(H\sqcup K,(x,y))
\end{equation}
with the obvious faithful morphism $\st{H\sqcup K}:=(\stH\ \stK)\colon H\sqcup K\into G$ and where $(x,y)\in \cA(H)\oplus \cA(K)$ is identified with an object of~$\cA(H\sqcup K)$ by additivity of the 2-functor~$\cA$.
The reader can verify that this produces biproducts in~$\Alp(G)$,
using additivity properties of local equivalences and their traces.

The zero object of~$\Alp(G)$ is~$(\varnothing,0)$ and the zero morphism is~$0=[\varnothing,0]$.
Using the local equivalence $\varnothing \to P$ whose trace map is zero, one gets that
\begin{equation}
\label{eq:Alp-zero}%
[P,0]=0
\end{equation}
for any span~$P\in\SpanfG(H,K)$.

Once it admits biproducts, our category~$\Alp(G)$ becomes semi-additive, \ie has an associative and commutative addition of morphisms, compatible with composition. In our case, say, between the objects~$(H,x)$ to~$(K,y)$, it reads as follows:
\begin{equation}
\label{eq:Alp-addition}%
[P,f]
+[Q,g]
=[P\sqcup Q,(f,g)]
\end{equation}
where $P\sqcup Q$ has the obvious wings~$p_i\sqcup q_i$ for~$i=1,2$ in~$\GG/G$ and where the morphism $(f,g)\colon (p_1^*x,q_1^*x)\to (p_2^*y,q_2^*y)$ in the category~$\cA(P\sqcup Q)\cong\cA(P)\oplus \cA(Q)$ is defined component-wise.
In the special case where both morphisms have the same span~$P=Q$, $p_1=q_1$, $p_2=q_2$, we actually have
\begin{equation}
\label{eq:Alp-special-addition}%
[P,f]
+[P,g]
=[P,f+g].
\end{equation}
This uses the local equivalence $s=\nabla_P^{(2)}=(\id_P\ \id_P)\colon P\sqcup P\to P$ (with trivial wing cells $\sigma_1=\id_{(p_1\ p_1)}$ and~$\sigma_2=\id_{(p_2\ p_2)}$) for which $s^*=\Delta\colon \cA(P)\to \cA(P)\oplus\cA(P)$ is the diagonal, and whose trace satisfies $\tr_{s}(f,g)=f+g$ by~\Cref{Exa:trace-sum}.

By~\eqref{eq:Alp-special-addition} and~\eqref{eq:Alp-zero}, addition of morphisms admits an opposite: $-[P,f]=[P,-f]$. Therefore $\Alp(G)$ is not only semi-additive but plain additive.
\end{proof}

\begin{Rem}
\label{Rem:Alp-add}%
We record for future use the special addition of morphisms~\eqref{eq:Alp-special-addition} in case they have the same $\GG$-part.
One proves in similar fashion the special addition of objects with same $\GG$-part (instead of~\eqref{eq:Alp-oplus} in general):
\[
(H,x_1)\oplus(H,x_2)\cong (H,x_1\oplus x_2).
\]
\end{Rem}

We now turn to the variance of~$\Alp(G)$ in~$G$ with respect to restriction.

\begin{Cons}\label{Cons:Alp-restriction}
Let $j\colon G'\to G$ be a morphism in~$\GG$. The restriction functor
\[
j^*\colon \Alp(G)\to \Alp(G')
\]
uses~\Cref{Cons:j!j*-GG/} on the groupoid part.
On an object $(H,x)$ we set
\[
j^*(H,x):=\big(j^*H,\ \pr_2^*(x)\big)
\]
where $j^*H=G'\times_G H$ is the pullback, which lives over~$G'$ via~$\pr_1$ (\Cref{Cons:j!j*-GG/}) and where the morphisms~$\pr_1$ and~$\pr_2$ come from the isocomma square
\begin{equation}
\label{eq:j^*H}%
\vcenter{
\xymatrix@C=1em@R=.5em{
& j^*H \ar@{ ->}[rd]^{\pr_2} \ar@{ >->}[ld]_{\pr_1} \ar@{}[dd]|{\isocell{\gamma_{j/\stH}}} & \\
G' \ar@{ ->}[rd]_j &  & H \ar@{ >->}[ld]^\stH \\
& G &}}
\end{equation}
so that $\pr_2^*(x)$ belongs to~$\cA(j^*H)$ as required.
To give~$j^*\colon \Alp(G)\to \Alp(G')$ on morphisms, let $(P,p_1,p_2;f)\colon (H,x)\to (K,y)$ be a morphism representative in~$\Alp(G)$. Applying~$j^*=(j/-)$ as in~\Cref{Lem:cube-exists} (or~\Cref{Rem:j!j*-Span}) we obtain a commutative cube of 2-cells (\Cref{Def:cube})
\begin{equation}\label{eq: restriction}
\vcenter{
\xymatrix@L=2pt@C=20pt@R=20pt{
& P'
	\ar@{->}@{ ->}[drrr]^-{p}
	\ar@{..>}@{ >->}[dr]^(.6){p'_2}
	\ar@{ >->}[dl]_-{p'_1}
    \\
j^*H
	\ar[drrr]_(.6){\pr_2}
	\ar@[gray]@{ >->}[dr]_{\gray{\pr_1}}
&& j^*K
	\ar@[lightgray]@{ >->}[dl]_(.3){\gray{\pr_1}}
	\ar@{ ->}[drrr]|(.53){\hole}^(.3){\pr_2}
&& P
	\ar@{}[dd]
	\ar[dr]^{p_2}
	\ar[dl]_(.3){p_1}
\\
& G'
	\ar@{->}[drrr]_-{j}
&& H
	\ar@[gray]@{ >->}[dr]_(.4){\gray{\stH}}
	\ar@{}[ll]
&& K
	\ar@[gray]@{ >->}[dl]^{\gray{\stK}}
\\
&&&& G
}}
\end{equation}
all of whose four side faces are Mackey squares
with moreover $\pr_2 p'_1=p_1 p\colon P'\to H$ and $\pr_2 p'_2=p_2 p\colon P'\to K$. Consequently $p^*(f)$ is a morphism from~$(p'_1)^*(\pr_2^*x)$ to~$(p'_2)^*(\pr_2^* y)$ in~$\cA(P')$ and we can define
\[
j^*(P,p_1,p_2;f)=\big(P',p'_1,p'_2;p^*(f)\big)
\]
as a morphism representative from $j^*(H,x)$ to~$j^*(K,y)$ in~$\Alp(G')$. This construction preserves $\appr$-equivalence of morphisms:
\end{Cons}
\begin{Prop}
\label{Prop:Alp-j*}%
For every $j\colon G'\to G$,~\Cref{Cons:Alp-restriction} gives a well-defined additive functor $j^*\colon \Alp(G)\to \Alp(G')$.
\end{Prop}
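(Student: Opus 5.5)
We must check four things in turn: that $j^*$ is well defined on $\appr$-classes, that it preserves identities, that it preserves composition, and that it is additive.

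\textbf{Well-definedness.} It suffices to treat a one-step $\appr$-equivalence $(P,f)\appr(P',f')$, witnessed by a local equivalence $s\colon P\apprto P'$ with wing cells $\sigma_1,\sigma_2$ and $f'=\tr_s(f^\sigma)$.
\begin{enumerate}[\rm(1)]
\item Apply $j^*=G'\times_G-$ to $s$ as in \Cref{Rem:j!j*-Span}. This gives a morphism of spans $j^*s\colon j^*P\to j^*P'$ in $\SpanfG[G'](j^*H,j^*K)$.
\item By \Cref{Lem:pb-Mackey}, the square $\pr_2\circ j^*s = s\circ\pr_2$ is Mackey. Hence $j^*s$ is again a local equivalence.
\item Base-change \Cref{Lem:trace-base-change} then applies to this Mackey square, using \Cref{Prop:leq-base-change}\,\eqref{it:leq-bc-1} for the inverse mates. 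It gives $\pr_2^*(\tr_s(f^\sigma))=\tr_{j^*s}(\pr_2^*(f^\sigma))$. No adjustment is needed, since the square commutes strictly.
\item The wing cells of $j^*s$ are $\langle\id,\sigma_i\pr_2\rangle$, and under $\pr_2$ they restrict to $\sigma_i\pr_2$. So $\pr_2^*(f^\sigma)=(\pr_2^*f)^{j^*\sigma}$.
\end{enumerate}
Together these show $(j^*P,\pr_2^*f)\appr(j^*P',\pr_2^*f')$. Strong $\appr$-equivalences, i.e.\ changes of chosen isocomma, are covered by the same argument.

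\textbf{Identities and composition.} Identities are immediate, since $j^*$ of the identity span is $(j^*H,\id,\id)$. For composition, take representatives $(P,f)$ and $(Q,g)$ with composite built on the isocomma $T=(p_2/q_1)$. Apply $j^*$ to the isocomma square $(\gamma)$. Using \Cref{Lem:cube-exists} and \Cref{Lem:cube-pb} (the side faces are Mackey), the square $j^*T$ over $j^*P$, $j^*Q$ with 2-cell $j^*\gamma$ is Mackey. Since composition may be computed with any Mackey square (\Cref{Prop:Alp-category}), $j^*[Q,g]\circ j^*[P,f]$ is represented by $j^*T$ together with
\[
(v')^*\pr_2^*(g)\circ(j^*\gamma)^*\circ(u')^*\pr_2^*(f).
\]
Because $\pr_2 u'=u\pr_2$, $\pr_2 v'=v\pr_2$ and $\pr_2(j^*\gamma)=\gamma\pr_2$ hold strictly, this morphism equals $\pr_2^*(g\odot_\gamma f)$, which is $j^*$ of the composite. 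The wings of $j^*T$ also agree with those of $j^*$ applied to the composite span. This uses the canonical identification $k^*j^*\cong(jk)^*$ of \Cref{Rem:j*-GG}, treated as an identity, which introduces only a strong $\appr$-equivalence.

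\textbf{Additivity.} Both $j^*(H\sqcup K)\cong j^*H\sqcup j^*K$ and $j^*(P\sqcup Q)\cong j^*P\sqcup j^*Q$ hold by additivity of isocommas, and $\pr_2^*$ respects the decomposition of $\cA(-\sqcup-)$. By \eqref{eq:Alp-oplus} and \eqref{eq:Alp-addition}, $j^*$ therefore preserves biproducts and sums of morphisms.

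\textbf{Main obstacle.} I expect the main difficulty to be the bookkeeping of structure 2-cells and wing cells in the well-definedness step: checking that the adjusted morphism $f^\sigma$ pulls back exactly to the adjusted morphism for $j^*s$.
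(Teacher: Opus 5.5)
Your proposal is correct and takes essentially the same route as the paper. For well-definedness you use that pull-backs of local equivalences remain local equivalences (\Cref{Lem:pb-Mackey}, \Cref{Lem:leq-pb}) together with base-change for traces (\Cref{Lem:trace-base-change}), and for composition you use that $j^*$ preserves Mackey squares (\Cref{Lem:cube-pb}), spelling out the wing-cell and 2-cell bookkeeping the paper leaves implicit (the appeal to $k^*j^*\cong(jk)^*$ is not needed, since $j^*$ is strictly functorial on morphisms of~$\GG/G$).
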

\begin{proof}
Straightforward by applying~$j^*=(j/-)$ to all groupoids (\Cref{Lem:cube-exists}) and pulling-back the data in~$\cA(-)$ accordingly.
For the $\appr$-equivalence of morphisms, use that pull-backs of local equivalences remain local equivalences (\Cref{Lem:leq-pb}) and the base-change for traces (\Cref{Lem:trace-base-change}). For $j^*$ preserving composition, use that $j^*\colon \GG/G\to \GG/G'$ preserves Mackey squares (\Cref{Lem:cube-pb}).
\end{proof}

\begin{Cons}
\label{Cons:Alp-alpha*}%
Let $\alpha\colon j \To k$ be a $2$-cell in~$\GG$, for $j,k\colon G'\to G$. We define a natural transformation
$\alpha^*\colon j^* \To k^*\colon \Alp(G)\to \Alp(G')$ as follows. Let $(H,x)\in \Alp(G)$ be an object. Recall that
$j^*(H,x) = \big(j^*H,\ \pr_2^*(x)\big)$
where
\[
\vcenter{
	\xymatrix@C=1em@R=1em{
		& j^*H\ar@{->}[rd]^{\pr_2} \ar@{->}[ld]_{\pr_1}
		\ar@{}[dd]|{\isocell{\gamma_{j/\stH}}}
		& \\
		G' \ar@{->}[rd]_j && H \ar@{ >->}[ld]^{\stH} \\
		& G &
}}
\]
is the isocomma square~$(j/\stH)$, and similarly $k^*(H,x)=\big(k^*H,\ \pr_2^*(x)\big)$.
The $2$-cell $\alpha$ induces a morphism $(\alpha/\stH)\colon j^*H=(j/\stH)\to (k/\stH)=k^*H$ (\Cref{Cons:(j/-)=>(k/-)}) defined by $(\alpha/\stH) =\langle \pr_1,\ \pr_2,\ \gamma_{j/\stH} \cast \alpha^{-1}\rangle$.
We define the morphism
\[
\alpha^*_{(H,x)}
:=
[j^*H,\id_{j^*H},(\alpha/\stH);\id_{\pr_2^*(x)}]
\colon j^*(H,x)\to k^*(H,x)
\]
in~$\Alp(G')$, which makes sense because $\pr_2\circ (\alpha/\stH)=\pr_2$.
This construction is natural in~$(H,x)$ and therefore defines a natural transformation
\[
\alpha^*\colon j^* \To k^*\colon \Alp(G)\to \Alp(G').
\]
\end{Cons}
\begin{Prop}
\label{Prop:Alp-2-functor}%
The above~\Cref{Cons:Alp-category,Cons:Alp-restriction,Cons:Alp-alpha*} define a 2-functor $\Alp\colon \GG^\op\to \ADD$.
\end{Prop}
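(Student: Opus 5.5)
The plan is to check the axioms of a (strict) 2-functor one by one. Each check reduces to a statement about the groupoid parts, handled by the pseudo-functoriality of $j\mapsto j^*=(j/-)$ on comma 2-categories (\Cref{Rem:j*-GG}, \Cref{Cons:(j/-)=>(k/-)}), together with strong $\appr$-equivalences. The latter let us replace a span by an equivalent one whenever the wings agree up to 2-isomorphism. We already know from \Cref{Prop:Alp-category,Prop:Alp-j*} that each $\Alp(G)$ is additive and each $j^*$ is an additive functor.

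First, \textbf{naturality of~$\alpha^*$} in~$(H,x)$. Given a representative $(P,p_1,p_2;f)\colon(H,x)\to(K,y)$, we compute both composites $k^*[P,f]\circ\alpha^*_{(H,x)}$ and $\alpha^*_{(K,y)}\circ j^*[P,f]$ using explicit Mackey squares. One of the two spans in $\alpha^*$ is an identity, so the square \[\xymatrix@C=1em@R=.5em{&j^*P\ar[ld]_{=}\ar[rd]^{(\alpha/\stP)}&\\ j^*P&&k^*P}\] serves as a Mackey square for composition; this is \Cref{Exa:equi-Mackey}, since $(\alpha/\stP)$ is an isomorphism. Both composites then reduce to spans with middle object $j^*P$, with wings $p'_1$ on the left and $(\alpha/\stK)\circ j^*p_2$ (respectively $k^*p_2\circ(\alpha/\stP)$) on the right, and with $\cA$-part the restriction of~$f$. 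These wings agree strictly or up to an obvious 2-cell, so a strong $\appr$-equivalence identifies the two classes.

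Second, \textbf{strict functoriality on 1-cells}: $(jk)^*=k^*j^*$ and $\id^*=\Id$. As in \Cref{Rem:j*-GG}, there is a canonical isomorphism $G''\times_{G'}(G'\times_G H)\cong G''\times_G H$ compatible with the second projections. On objects this gives an isomorphism of $\GG$-parts transporting $\pr_2^*x$. On morphisms, the cube of \Cref{Lem:cube-exists} for $jk$ is the composite of the cubes for $j$ and~$k$ up to these canonical isomorphisms, so the representatives agree up to strong $\appr$-equivalence. Here we follow the paper's convention of treating these canonical isomorphisms as identities; if one insists on strictness, one can choose the isocommas compatibly, or else strictify via \cite{Power89}. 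The identity case uses $\id^*H=(\id/\stH)\cong H$.

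Third, \textbf{functoriality on 2-cells}. We need $(\beta\alpha)^*=\beta^*\alpha^*$ for vertical composites, $\id^*=\id$, and compatibility with whiskering by 1-cells on either side. Vertical composition follows from $(\beta\alpha/\stH)=(\beta/\stH)\circ(\alpha/\stH)$, after composing the two spans via the trivial Mackey square of \Cref{Exa:equi-Mackey}. Whiskering $\alpha$ by a $\ell\colon G''\to G'$ again reduces to an identity of morphisms $\ell^*(j/\stH)\to(k/\stH)$ built from~$\alpha\ell$. Whiskering by a map into~$G$ is likewise a formal isocomma identity.

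I expect the \textbf{main obstacle} to be the second step: the strictness of $(jk)^*=k^*j^*$ on the nose, and the bookkeeping of the structure 2-cells over~$G$ in the cubes, in particular checking that the wing cells in the strong $\appr$-equivalences are compatible with structure data. I would handle the strictness issue by working up to canonical isomorphism as permitted by \Cref{Conv:isos}, and write out only the naturality verification in full. The remaining steps are routine once the composition of spans with identity-like spans is shown to reduce, via \Cref{Exa:equi-Mackey}, to modifying wings.
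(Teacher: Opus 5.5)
Your proposal is correct and follows the paper's approach, since the paper's proof just says the verifications are straightforward; the remark right after it records the same caveat you make, namely that $\Alp$ is only a pseudo-functor unless the canonical identifications of \Cref{Rem:j*-GG} are treated as equalities. One small slip: the span you display is not literally a composition square --- for $k^*[P,f]\circ\alpha^*_{(H,x)}$ the Mackey square has top $j^*P$ with legs $j^*p_1$ and $(\alpha/\stP)$ over $k^*H$, and for $\alpha^*_{(K,y)}\circ j^*[P,f]$ it has legs $\id$ and $j^*p_2$ over $j^*K$, both Mackey by \Cref{Exa:equi-Mackey} --- but the resulting spans you state are right, and the two right wings actually coincide on the nose.
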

\begin{proof}
The data has been made explicit and verifications are straightforward.
\end{proof}
\begin{Rem}
If the reader does not want to treat the canonical groupoid identification $G''\times_{G''}G'\cong G'$ as an equality, as we decided to do in~\Cref{Rem:j*-GG}, then the above~$\Alp$ is only a \emph{pseudo}-functor. See strictification in~\Cref{Rec:2-functors}.
\end{Rem}

We now turn to induction for~$\Alp$.
\begin{Cons}\label{Cons:Alp-induction}
Let $j\colon G'\into G$ be faithful. We define induction
\[
j_*\colon \Alp(G')\to \Alp(G)
\]
on objects by
\[
j_*(H',x'):=(j_!H',x')
\]
where $j_!\colon \GG/G'\to \GG/G$ is as in~\Cref{Cons:j!j*-GG/}, namely simply
$j_!H'=H'$ with structure morphism over~$G$ given by~$\st{j_!H'}=j\circ\stHp$.
Similarly on morphisms
\[
j_*[P',f']
:=
[j_!P',f'].
\]
\end{Cons}

\begin{Prop}
\label{Prop:Alp-induction-functor}%
For every $j\colon G'\into G$, the above defines an additive functor
\[
j_*\colon \Alp(G')\to \Alp(G).
\]
\end{Prop}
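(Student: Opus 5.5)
The plan is to check, in order, that $j_*$ is well defined on objects and on morphism representatives, that it respects $\appr$, that it preserves identities and composition, and finally that it is additive. All of this follows from the fact that $j_!\colon \GGf/G'\to \GGf/G$ is a strict 2-functor that only changes structure data and leaves the $\GG$- and $\cA$-parts untouched.

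\textbf{Objects and morphism representatives.} Since $j$ is faithful, the composite $j\circ\stHp$ is faithful, so $(j_!H',x')$ is an object of~$\Alp(G)$; here $x'\in\cA(H')=\cA(j_!H')$ because $\cA$ only sees the underlying groupoid. Next, given a representative $(P',p'_1,p'_2;f')$ in $\SpanfG[G'](H',K')$, \Cref{Rem:j!j*-Span} (restricted to faithful data as in \Cref{Def:GGf}) gives a span $j_!P'=(P',p'_1,p'_2)$ in $\SpanfG(j_!H',j_!K')$. It has the same wings, structure morphism $j\stP$, and structure 2-cells $j\st{p'_i}$. So $f'\colon p_1'^*x'\to p_2'^*y'$ is still a valid $\cA$-part.

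\textbf{Compatibility with $\appr$.} It suffices to treat a one-step equivalence given by $s=(s,\sigma_1,\sigma_2)\colon P'\to P''$ with $s$ a local equivalence. Then $j_!s=(s,\sigma_1,\sigma_2)$ has structure 2-cell $j\st{s}$, and it is a morphism in $\SpanfG(j_!H',j_!K')$. The compatibility equations $\sigma_i\cast j\st{p_i}=j\st{q_i}\cast j\st{s}$ are just $j$ whiskered onto the old ones. The underlying map is the same local equivalence~$s$ with the same wing cells, so the adjusted morphism $f^\sigma$ and its trace $\tr_s(f^\sigma)$ of \Cref{Def:trace} are literally unchanged. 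Hence the one-step relation is preserved, and therefore so is~$\appr$.

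\textbf{Functoriality.} Identities are clear: $j_!$ sends the identity span $(H',\id,\id)$ to $(j_!H',\id,\id)$. For composition, take representatives $(P',f')$ and $(Q',g')$ and a Mackey square $(T',u,v,\gamma)$ in $\GGf/G'$ used to compose them. The main point is that $j_!(T',u,v,\gamma)$ is still a Mackey square in~$\GGf/G$. This holds because Mackey-ness is a property of the underlying square in~$\GG$ (\Cref{Def:Mackey-square}), and $j_!$ leaves that square unchanged. Therefore the composite in $\Alp(G)$ may be computed with this same square (composition is independent of the chosen Mackey square, by \Cref{Prop:Alp-category}), and $g'\odot_\gamma f'$ in \eqref{eq:odot} is literally the same morphism in~$\cA(T')$. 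This gives $j_*([Q',g']\circ[P',f'])=j_*[Q',g']\circ j_*[P',f']$.

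\textbf{Additivity.} This step is immediate: $j_!(H'\sqcup K')=j_!H'\sqcup j_!K'$ with structure morphism $(j\stHp\ \ j\stK)$. So $j_*$ preserves the biproducts \eqref{eq:Alp-oplus} and the zero object, and it preserves the addition \eqref{eq:Alp-addition} on the nose.

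The only step needing any care is the composition step, and the care is purely bookkeeping. I would first check against \Cref{Conv:comma-isocomma} that the isocomma chosen over $G'$, once pushed by $j_!$, remains an admissible Mackey square over~$G$. In particular its structure 2-cells $j\st{\pr_2}$ must be the structure 2-cells required there, so that no hidden re-choice is needed.
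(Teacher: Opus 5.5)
Your proof is correct and follows the same route as the paper: $j_*$ only whiskers the structure data by $j$ and leaves the spans, the $\cA$-parts, the Mackey squares and the traces untouched. Hence compatibility with $\appr$, functoriality and additivity are immediate. Your version simply spells out the bookkeeping that the paper's short proof leaves implicit, namely that $j_!$ preserves one-step equivalences and Mackey squares and is compatible with the isocomma convention.
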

\begin{proof}
The functor $j_*$ leaves the spans and the morphisms in $\cA$ unchanged and
only composes the structural $2$-cells with $j$. Hence it preserves identities,
composition, and addition. Since the equivalence relation is generated by
traces along local equivalences and $j_*$ does not alter the data involved in
the trace construction, it respects this relation. It follows that $j_*$ is a
well-defined additive functor.
\end{proof}

\begin{Prop}\label{Prop:Alp-adjunction}%
For every faithful $j\colon G'\into G$, the functor $j_*\colon \Alp(G')\to \Alp(G)$ is a special Frobenius two-sided adjoint $j_*\adj j^*\adj j_*$ of $j^*\colon \Alp(G)\to \Alp(G')$.
\end{Prop}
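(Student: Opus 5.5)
The plan is to write down the four units and counits explicitly as classes of spans with identity $\cA$-parts, and then check the triangle identities and the special Frobenius condition directly in $\Alp$, using the relations of \Cref{Cons:Alp-category}.

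\textbf{Units and counits.} Fix $(H',x')\in\Alp(G')$ and $(H,x)\in\Alp(G)$. Then $j^*j_*(H',x')=(G'\times_G H',\pr_2^*x')$ and $j_*j^*(H,x)=(j^*H,\pr_2^*x)$, viewed over $G$ via $j\pr_1$.
\begin{enumerate}[\rm(1)]
\item For $\leta$ and $\reps$ between $(H',x')$ and $j^*j_*(H',x')$, use the morphism $\eta_{H'}=\langle\stHp,\id,\id\rangle\colon H'\to G'\times_G H'$ of \eqref{eq:j!j*-eta}. It is faithful, indeed fully faithful by \Cref{Rem:eta-eps-j!j*}. Since $\pr_2\eta_{H'}=\id$, we set
\[
\leta=[H',\id_{H'},\eta_{H'};\id_{x'}]\qquadtext{and}\reps=[H',\eta_{H'},\id_{H'};\id_{x'}].
\]
\item For $\reta$ and $\leps$ between $(H,x)$ and $j_*j^*(H,x)$, use the counit $\eps_H=\pr_2\colon j^*H\to H$ of \eqref{eq:j!j*-eps}, with structure cell $\gamma_{j/\stH}$. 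We set
\[
\leps=[j^*H,\pr_2,\id;\id_{\pr_2^*x}]\qquadtext{and}\reta=[j^*H,\id,\pr_2;\id_{\pr_2^*x}].
\]
\end{enumerate}
These are the span-theoretic shadows of the 2-adjunction $j_!\adj j^*$ of \Cref{Cons:j!j*-GG/}, placed on both sides. This reflects that spans can be read in both directions, which is exactly why the adjunction is two-sided.

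\textbf{Naturality.} Naturality in morphism representatives $[P,f]$ comes from the cube of \Cref{Lem:cube-exists} and the pseudo-naturality of $\eta,\eps$ in \Cref{Cons:j!j*-GG/}. Each composite is computed with an isocomma, and the two resulting spans are compared by a strong $\appr$-equivalence.

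\textbf{Unit-counit relations.} Each triangle composite, say $\leps_{j_*}\circ j_*(\leta)$ or $j^*(\leps)\circ\leta_{j^*}$, is computed by forming the composition isocomma \eqref{eq:alp-composition}. One then exhibits an equivalence in $\GGf/G$ from the resulting span to the identity span, which gives a strong $\appr$-equivalence. The main input is \Cref{Exa:equi-Mackey}: since $\eta_{H'}$ is fully faithful, the square with identities over it is Mackey. This lets us replace the isocomma by $H'$ itself. The equalities $(\eps j_!)(j_!\eta)=\id$ and $(j^*\eps)(\eta j^*)=\id$ from \Cref{Cons:j!j*-GG/} then identify the wings with identities.

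\textbf{Special Frobenius condition.} We must show that $\reps\circ\leta=\id_{(H',x')}$. The composite is the span obtained from the isocomma $(\eta_{H'}/\eta_{H'})$ over $G'\times_G H'$. Again by \Cref{Exa:equi-Mackey}, we may take $H'$ with identity wings, giving $[H',\id,\id;\id_{x'}]$.

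\textbf{Main obstacle.} The subtle point is the triangle identity involving $\reta$ and $\reps$, namely $j_*(\reps)\circ\reta_{j_*}=\id$. Here the composition isocomma is
\[
(j^*j_!H')\times_{G'\times_G H'}H'
\]
over $G$. This groupoid need not be equivalent to $H'$, because $(j/j)$ has extra components when $j$ is not full. The plan is to identify the relevant composite span as $G'\times_G H'$ with wings $(\pr_2,\ \pr_2)$ restricted along $\eta_{H'}$. Concretely, decompose $G'\times_G H'$ into the image of $\eta_{H'}$ (a union of components, since $\eta_{H'}$ is fully faithful) and its complement. Show that the composite isocomma is equivalent to $H'$ alone, because pulling back along the fully faithful $\eta_{H'}$ discards the complement. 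If that fails, the fallback is to check that $\leta$ and $\reps$ (respectively $\reta$ and $\leps$) satisfy their unit-counit relations separately and to derive the remaining identities from them.

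Additivity of all four transformations is automatic from their definitions.
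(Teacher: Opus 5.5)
Your approach is the paper's: the same four spans with identity $\cA$-parts, naturality via the $j_!\adj j^*$ data on comma categories together with \Cref{Lem:pb-Mackey}, triangle identities obtained by recognizing the composition square as Mackey and invoking $(\eps j_!)(j_!\eta)=\id$ and $(j^*\eps)(\eta j^*)=\id$, and special Frobenius from $(\eta_{H'}/\eta_{H'})\simeq H'$. Your $\leta$ and $\reps$ agree with the paper's. In (2), however, you have swapped the two spans. With the convention you use in (1) (first wing to the source, second wing to the target), $[j^*H,\pr_2,\id;\id_{\pr_2^*x}]$ is a morphism $(H,x)\to j_*j^*(H,x)$, so it is $\reta$. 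The counit must be $\leps=[j_!j^*H,\id,\eps_H;\id_{\pr_2^*x}]\colon j_*j^*(H,x)\to (H,x)$. As written, your $\leps$ and $\reta$ have the wrong source and target, and the triangle composites you list would not compose.

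Your ``main obstacle'' is not an obstacle, and the claim behind it is false. In $j_*(\reps)\circ\reta_{j_*(H',x')}$, the right wing of $\reta_{j_*(H',x')}$ is the identity of $j^*j_!H'=G'\times_G H'$. So the composition isocomma is $(\id/\eta_{H'})$, which is equivalent to $H'$ for any functor $\eta_{H'}$: the square has identities on two opposite sides, so it is Mackey by the first part of \Cref{Exa:equi-Mackey}. The composite span is then $(H',\pr_2\eta_{H'},\id)=(H',\id,\id)$.

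The same happens in all four triangle identities, because one leg of the composition cospan is always an identity. Hence full faithfulness of $\eta$ is not needed there, contrary to your unit--counit paragraph. Likewise, the extra components of $G'\times_G H'$ play no role; they only appear in $\leps\circ\reta=[j^*H,\pr_2,\pr_2;\id]$, which is not required to be the identity. The only composite in which two copies of $\eta_{H'}$ meet is $\reps\circ\leta$. That is exactly where faithfulness of $j$ enters, as you correctly say in your special Frobenius paragraph.

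Finally, your fallback is confused. The adjunction pairs are $(\leta,\leps)$ for $j_*\adj j^*$ and $(\reta,\reps)$ for $j^*\adj j_*$, not $(\leta,\reps)$. The triangle identities of one adjunction cannot be derived from those of the other.
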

\begin{proof}
For $j_*\adj j^*$ and~$j^*\adj j_*$ we need natural transformations as in~\eqref{eq:4-units}
\begin{equation}
\label{eq:4-units-j}%
\vcenter{
\xymatrix@R=.5em{
	\leta\colon \Id \To j^*j_*
	& \reta\colon \Id \To j_*j^*
	\\
	\leps\colon j_* j^*\To \Id
	& \reps\colon j^*j_*\To \Id.\!
}}
\end{equation}

We begin with~$\leta\colon \Id_{\Alp(G')}\To j^*j_*$. Let $(H',x')\in \Alp(G')$.  We have
\[
j^*j_*(H',x')
=
(j^*j_!H',\ \pr_2^*(x'))
\]
where $j^*j_!H'=(j/j \st{H'})=G'\times_G H'$ and $\pr_2\colon j^*j_!H'\to H'$ is the second projection.
\[
\vcenter{
\xymatrix@C=2em@R=1em{
& H' \ar@{ ->}@/_1em/[ldd]_-{\st{H'}} \ar@{=}@/^1em/[rdd]^-{\id} \ar@{ >->}[d]^(.6){\eta_{H'}}  \ar@{}[ldd] \ar@{}[rdd] & \\
& j^*j_!H' \ar@{ >->}[ld]^-{\pr_1} \ar@{ >->}[rd]_-{\pr_2} \ar@{}[dd]|{\isocell{\gamma_{j/j\st{H'}}}} & \\
G' \ar@{ >->}[rd]_{j} & & H' \ar@{ >->}[ld]^{j\st{H'}} \\
& G &
}}
\]
We already saw in~\eqref{eq:j!j*-eta} the unit $\eta_{H'}=<\st{H'},\id_{H'},\id_{j \st{H'}}>\colon H'\into j^*j_!H'$ for the $j_!\adj j^*$ adjunction on comma categories.
Define the morphism in~$\Alp(G')$
\begin{equation}
\label{eq:Alp-leta}%
\leta_{(H',x')}:=
[{H'},\id_{H'},\eta_{H'};\id_{x'}]\colon (H',x')\to j^*j_*(H',x')=(j^*j_!H',\pr_2^*(x'))
\end{equation}
using that $\pr_2\circ\eta_{H'}=\id_{H'}$. In our representation~\eqref{eq:Alp-morphism}, this $\leta_{(H',x')}$ is given by
\[
\vcenter{\xymatrix@C=2em@R=.5em{
& {H'} \ar@{=}[ld]_-{} \ar@{ ->}[rd]^-{\eta_{H'}} \ar@{}[dd] |{\gray{\isocell{\id}}} & \\
{H'} \ar@[gray]@{ >->}[rd]_{\gray{\st{H'}}} & & j^*j_!H' \ar@[gray]@{ >->}[ld]^{\gray {\pr_1}} \\
& \gray{G'} &
}}
\quadtext{with}
\id\colon x'\to \eta_{H'}^*(\pr_2^*(x'))=x'\textrm{ in }\cA(H').
\]
We must check that $\leta_{H'}$ is natural in~$H'$. We are not going to expand every single one of those verifications as proof of concept but we do provide the details in this first occurrence. Let $[P',f']\colon (H',x')\to(K',y')$ be a morphism in $\Alp(G')$, where $P'=(P',p'_1,p'_2)\in \SpanfG[G'](H',K')$ and where
$f'\colon {p'}_1^*(x')\to {p'}_2^*(y')$ is a morphism in~$\cA(P')$. We must prove that
\begin{equation}
\label{eq:leta-natural}%
j^*j_*([P',f'])\circ\leta_{(H',x')}
=
\leta_{(K',y')}\circ[P',f'].
\end{equation}
For the right-hand side, the composition diagram in~$\GGf/G'$ is
\[
\xymatrix@C=2em@R=.5em{
&& P'\ar@{ =}[ld] _{}\ar@{ >->}[rd]^{p'_2} &&\\
&P' \ar@{ >->}[ld]_-{p'_1} \ar@{ >->}[rd]_-{p'_2} & {\isocell{\id_{p'_2}}}	& K' \ar@{ =}[ld]^-{} \ar@{->}[rd]^-{\eta_{K'}}  &  \\
H'
&& K'
&& j^*j_! K'
}
\]
whose square is Mackey by~\Cref{Exa:equi-Mackey}. Since $\id\odot_{\id}f'=f'$, the right-hand side of~\eqref{eq:leta-natural} is~$[P',p'_1,\eta_{K'}p'_2;f']$.
For the left-hand side, the composition diagram is
\[
\xymatrix@C=2em@R=.5em{
&&P'\ar@{ ->}[ld] _{p'_1}\ar@{ >->}[rd]^{\eta_{P'}} &&\\
&H' \ar@{ =}[ld]_-{} \ar@{ >->}[rd]_-{\eta_{H'}} & {\isocell{\id}}	& j^*j_!P' \ar@{ ->}[ld]^-{j^*j_*p'_1} \ar@{ ->}[rd]^-{j^*j_*p'_2}  &  \\
H'
&& j^*j_!H'
&& j^*j_!K'
}
\]
where the 2-cell identity comes from the naturality of~$\eta$ in the adjunction $j_!\dashv j^*$, as in the left-hand square below:
\[
\xymatrix@C=3em@R=2em{
P' \ar[r]^-{\eta_{P'}} \ar[d]_{p'_1}
& j^*j_!P' \ar[r]^-{\pr_2} \ar[d]|-{j^*j_*p'_1=G'\times_G p'_1}
& P' \ar[d]^{p'_1}
\\
H' \ar[r]_-{\eta_{H'}}
&
j^*j_!H' \ar[r]_-{\pr_2}
& H'.\!\!
}
\]
In this commutative diagram the composite square is Mackey (\Cref{Exa:equi-Mackey}) for $\pr_2\eta=\id$, and the right-hand square is Mackey by~\Cref{Lem:pb-Mackey}. Hence the left-hand square is Mackey by~\Cref{Lem:Mackey-square-arith}. Using similarly that $j^*j_*p'_2\circ\eta_{P'}=\eta_{K'}\circ p'_2$ the left-hand side of~\eqref{eq:leta-natural} is $[P',p'_1,\eta_{K'}p'_2;f']$, matching the right-hand side.

\smallbreak

We now define~$\leps\colon j_*j^*\To\Id_{\Alp(G)}$ in~\eqref{eq:4-units-j}. Let $(H,x)\in \Alp(G)$. We have
\[
j_*j^*(H,x)=(j_!j^*H,\pr_2^*(x))
\]
where $j_!j^*H=G'\times_G H=(j/\stH)$
and $\pr_2\colon j^*H\into H$ is the projection.
We already saw in~\eqref{eq:j!j*-eps} the counit $\eps_{H}=\pr_2\colon j_!j^*H\to H$ and we can define the morphism
\begin{equation}
\label{eq:Alp-leps}%
\leps_{(H,x)}:=
[j_!j^*H,\id,\eps_H;\id_{\pr_2^*x}]\colon j_*j^*(H,x)=(j_!j^*H,\pr_2^*(x))\too (H,x)
\end{equation}
or represented in the style of~\eqref{eq:Alp-morphism}
\[
\vcenter{\xymatrix@C=3em@R=12pt{
		& j_!j^*H \ar@{=}[ld]_-{} \ar@{ >->}[rd]^-{\eps_H}
		\ar@[gray]@{ >->}[dd]|-{\gray{\st{j_!j^*H}}} \ar@{}[rdd]|-{\gray{\isocell{\gamma_{j/\stH}}}} \ar@{}[ldd]|-{\gray{=}}
		& \\
		j_!j^*H \ar@[gray]@/_.5em/@{ >->}[rd]_{\gray{j\pr_1}} & & H \ar@[gray]@/^.5em/@{ >->}[ld]^{\gray{\stH}} \\
		& \gray{G} &
}}
\quadtext{with}
\id_{\pr_2^*(x)}\colon \pr_2^*(x) \to \eps_H^*(x)=\pr_2^*(x).
\]
The proof of the naturality of~$\leps_{(H,x)}$ in~$(H,x)\in\Alp(G)$ is similar to the above one, using naturality of~$\eps$ and~\Cref{Lem:pb-Mackey} again. It is omitted.
In particular, it does not use any trace relation.

\smallbreak

We verify the triangle identities for $j_*\dashv j^*$.  Let
$(H',x')\in\Alp(G')$. To see that $\leps_{j_*(H',x')}\circ j_*(\leta_{(H',x')})=\id_{j_*(H',x')}$, we compute
\[
j_*(\leta_{(H',x')})=[H',\id_{H'},\eta_{H'};\id]
\quadtext{and}
\leps_{j_*(H',x')}= [j_!j^*j_!H',\id,\eps_{j_!H'};\id]
\]
and compose the spans as in the following diagram
\[
\xymatrix@C=2em@R=.5em{
&& H' \ar@{ =}[ld] _{}\ar@{ >->}[rd]^{\eta_{H'}} &&\\
&H' \ar@{ =}[ld]_-{} \ar@{ >->}[rd]_-{\eta_{H'}}
& {\isocell{\id}}
& j^*j_!H' \ar@{ =}[ld]^-{} \ar@{ >->}[rd]^-{\eps_{j_!H'}=\pr_2}
&  \\
H'
&& j^*j_!H'
&& H'
}
\]
whose square is Mackey by~\Cref{Exa:equi-Mackey}. The morphism part is the identity. Since $\pr_2\circ\eta_{H'}=\id$, we get the result. For the other unit-counit relation, let ${(H,x)}\in\Alp(G)$ and let us check $j^*(\leps_{(H,x)})\circ \leta_{j^*{(H,x)}}=\id_{j^*{(H,x)}}$. Direct computation gives
\[
\leta_{j^*{(H,x)}}
=
[j^*H,\id_{j^*H},\eta_{j^*H};\id]
\quadtext{and}
j^*(\leps_{(H,x)})
=
[j^*j_!j^*H,\id_{j^*j_!j^*H},j^*(\eps_H);\id].
\]
The composition diagram is
\[
\xymatrix@C=2em@R=.5em{
&& j^*H \ar@{ =}[ld] _{}\ar@{ >->}[rd]^{\eta_{j^*H}} &&\\
&j^*H \ar@{ =}[ld]_-{} \ar@{ ->}[rd]_-{\eta_{j^*H}}
& {\isocell{\id}}
& j^*j_!j^*H \ar@{ =}[ld]^-{} \ar@{ ->}[rd]^-{j^*(\eps_H)}
&  \\
j^*H
&& j^*j_!j^*H
&& j^*H
}
\]
whereas the morphism part is again the identity. We conclude from the triangle identity for $j_!\dashv j^*$ on comma categories,
namely $j^*(\eps_H)\circ\eta_{j^*H}=\id_{j^*H}$.
\smallbreak

Thus $j_*\dashv j^*$.

\smallbreak

Similarly, we can give the unit and counit~$\reta$ and~$\reps$ in~\eqref{eq:4-units-j} for the right adjunction $j^*\dashv j_*$.
In fact they can be described from the left ones~\eqref{eq:Alp-leta} and~\eqref{eq:Alp-leps} by applying an obvious transposition on the spans $\Spanname(H,K)\to \Spanname(K,H)$, swapping the legs, and by keeping the identities in the morphism-parts, giving:
\begin{equation}
\label{eq:Alp-reta}%
\reta_{(H,x)}
:=
[j_!j^*H,\eps_H,\id_{j_!j^*H};\id_{\pr_2^*x}]\colon
(H,x)\to j_*j^*(H,x)
\end{equation}
for every $(H,x)\in \Alp(G)$ -- compare~\eqref{eq:Alp-leps} -- and
\begin{equation}
\label{eq:Alp-reps}%
\reps_{(H',x')}
:=
[H',\eta_{H'},\id_{H'};\id_{x'}]\colon
j^*j_*(H',x')\to (H',x')
\end{equation}
for every $(H',x')\in\Alp(G')$ -- compare~\eqref{eq:Alp-leta}.
The reader will verify naturality of~$\reta$ and~$\reps$ as well as the unit-counit relations for~$j^*\adj j_*$.

\smallbreak

It remains to check the special Frobenius property. For
$(H',x')\in\Alp(G')$, the composite of~$\leta_{(H',x')}=[H',\id_{H'},\eta_{H'};\id_{x'}]$ in~\eqref{eq:Alp-leta} followed by
$\reps_{(H',x')}=[H',\eta_{H'},\id_{H'};\id_{x'}]$ in~\eqref{eq:Alp-reps} is easily seen to be the identity~$[H',\id]$ once we observe that the commutative square
\[
\xymatrix@C=2em@R=.5em{
& H' \ar[rd]^-{\id_{H'}} \ar[ld]_{\id_{H'}} \ar@{}[dd]|-{=}
\\
H' \ar[rd]_-{\eta_{H'}}
&&
H' \ar[ld]^-{\eta_{H'}}
\\
& j^*j_!H'
}
\]
is Mackey by~\Cref{Exa:equi-Mackey}, since $\eta_{H'}$ is fully faithful (\Cref{Rem:eta-eps-j!j*}).
\end{proof}

\begin{Prop}[Base-change for $\Alp$]
\label{Prop:Alp-base-change}%
Let
\[
\xymatrix@C=2em@R=.5em{
& L \ar@{ >->}[rd]^{j} \ar@{ ->}[ld]_{v} \ar@{}[dd]|{\isocell{\alpha}} & \\
H \ar@{ >->}[rd]_i &  & K \ar@{ ->}[ld]^u \\
& G &
}
\]
be a Mackey square in $\GG$, with $i$ and $j$ faithful. Then the left and right mates of~$\alpha^*$
\[
(\alpha^{-1})_*\colon u^*i_* \isoTo j_*v^*
\qquad\text{and}\qquad
\alpha_!\colon j_*v^*\isoTo u^*i_*
\]
are inverse isomorphisms between functors from~$\Alp(H)$ to~$\Alp(K)$.
\end{Prop}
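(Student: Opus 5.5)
The plan is to compute both mates explicitly on objects of~$\Alp(H)$, using the units and counits \eqref{eq:Alp-leta}, \eqref{eq:Alp-leps}, \eqref{eq:Alp-reta} and~\eqref{eq:Alp-reps}. We then show that the two resulting morphisms are mutually inverse in~$\Alp(K)$. Since both are natural transformations by construction, it suffices to check that each composite is the identity on every object~$(M,x)\in\Alp(H)$.

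\textbf{Step 1: identify source and target.} Let $(M,x)\in\Alp(H)$, with $\st{M}\colon M\into H$. The target object is
\[
u^*i_*(M,x)=\big(K\times_G M,\ \pr_2^*x\big),
\]
where $K\times_G M=(u/i\st{M})$. The source object is
\[
j_*v^*(M,x)=\big(L\times_H M,\ \pr_2^*x\big),
\]
where $L\times_H M=(v/\st{M})$ sits over~$K$ via~$j\pr_1$. The Mackey square~$(\alpha)$ and pasting (\Cref{Lem:Mackey-square-arith}, as in \Cref{Lem:cube-exists}) give a comparison morphism
\[
w_M=\langle j\pr_1,\ \pr_2,\ \gamma\cast\alpha\rangle\colon L\times_H M\to K\times_G M
\]
over~$K$. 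This $w_M$ is an \emph{equivalence}: the composite square $(v/\st{M})$ pasted with~$(\alpha)$ is Mackey, and $K\times_G M$ is the isocomma. It also satisfies $\pr_2 w_M=\pr_2$.

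\textbf{Step 2: compute the mates.} Unwinding $\alpha_!=\leps^{(j)}\cast\alpha^*\cast\leta^{(i)}$, we have three ingredients:
\begin{enumerate}[\rm(i)]
\item $\leta^{(i)}$ is the span $(M,\id,\eta_M;\id)$;
\item $\alpha^*$ is $[\,\id,(\alpha/-);\id\,]$ from \Cref{Cons:Alp-alpha*};
\item $\leps^{(j)}$ is $(j_!j^*(-),\id,\eps;\id)$.
\end{enumerate}
Composing these spans, and choosing Mackey squares that are equivalences or identities as in \Cref{Exa:equi-Mackey}, I expect $\alpha_!$ at~$(M,x)$ to be the class $[L\times_H M,\ \id,\ w_M;\ \id]$, after absorbing identities via strong $\appr$-equivalences. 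Dually, $(\alpha\inv)_*$ should be the transposed span $[L\times_H M,\ w_M,\ \id;\ \id]$, by the leg-swapping symmetry noted for~$\reta,\reps$.

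\textbf{Step 3: compose the two spans.} To compute $(\alpha\inv)_*\circ\alpha_!$, we pull back $w_M$ along itself. Since $w_M$ is an equivalence (in particular fully faithful), the square with identity legs over~$w_M$ is Mackey by \Cref{Exa:equi-Mackey}, so the composite is $[L\times_H M,\id,\id;\id]=\id$. For $\alpha_!\circ(\alpha\inv)_*$, we obtain the span $(L\times_H M,\ w_M,\ w_M;\id)$. This is strongly $\appr$-equivalent to $(K\times_G M,\id,\id;\id)$ via the equivalence~$s=w_M$, whose trace is the identity because the folding pushforward along an equivalence is its inverse (\Cref{Rem:rectified-leq}).

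\textbf{Main obstacle.} The hard part is Step~2: faithfully tracking the 2-cells, namely $\gamma$, $\alpha$ and the structure cells over~$K$ and~$G$, through three successive span compositions, to confirm that the composite really has wings $(\id,w_M)$ with identity $\cA$-part rather than some twisted version. I would first do the computation treating 2-cells as identities, in the telegraphic style used in \Cref{Prop:Alp-category}. Then I would check the remaining adjusting cells using the cube lemmas \Cref{Lem:cube-pb} and~\Cref{Lem:cube}, which guarantee that the intermediate squares are Mackey so that arbitrary choices of isocommas are permitted.
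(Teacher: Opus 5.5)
Your proposal is correct and follows the paper's proof. Like you, the paper identifies $u^*i_*(X,x)$ and $j_*v^*(X,x)$ via the equivalence $e_X\colon L\times_H X\isoto K\times_G X$, states by direct verification from the (co)unit formulas that $\alpha_!=[L\times_H X,\id,e_X;\id]$ and $(\alpha\inv)_*=[L\times_H X,e_X,\id;\id]$, and checks that these are mutually inverse using Mackey squares over equivalences. One small slip: the 2-cell in your $w_M$ should be $\gamma\cast\alpha\inv$, not $\gamma\cast\alpha$, because $\alpha\colon iv\To uj$ while the isocomma $(u/i\st{M})$ requires a 2-cell $uj\pr_1\To i\st{M}\pr_2$.
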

\begin{proof}
We compute the two mates on $(X,x)\in\Alp(H)$, with~$\st{X}\colon X\into H$.
We have
\begin{equation}
\label{eq:aux-Alp-BC}%
u^*i_*(X,x)=\big(u^*i_!X, \pr_2^*x\big)
\quadtext{and}
j_*v^*(X,x)=\big(j_!v^*X,\pr_2^*x\big)
\end{equation}
where $u^*i_!X=(u/i\stX)=K\times_G X$ and $j_!v^*X=(v/\stX)=L\times_H X$.
These two objects of~$\GG$ appear in the following diagram (at the top):
\begin{equation}
\label{eq:aux-Alp-BC-big}%
\vcenter{\xymatrix@C=2em@R=1em{
&& K\times_G X \ar@/_2em/[llddd]_-{\pr_1} \ar@/^2em/[rdd]^-{\pr_2} \ar@{<-}[d]_-{\cong}^-{e_X}
\\
&& L\times_H X \ar[ld]_-{\pr_1} \ar[rd]^-{\pr_2} \ar@{}[dd]|{\isocell{\gamma_{v/\stX}}}
& \\
& L \ar[rd]_v \ar[ld]_-{j} \ar@{}[dd]|-{\isocell{\alpha\inv}}
&& X \ar[ld]^{\stX}  \ar@/^2em/[lldd]^(.3){i\st{X}}
\\
\ K \ \ar[rd]_-{u}
&& H \ar[ld]^-{i}
&
\\
& G &
}}
\end{equation}
The outer square is an isocomma and the juxtaposition of the two inner isocommas is a Mackey square. Hence we have an equivalence
\[
e_X=
\left\langle
j\pr_1,\ \pr_2,\
\gamma_{v/\stX}\cast\alpha^{-1}
\right\rangle\colon L\times_H X\isoto K\times_G X.
\]
Note in particular that $e_X^*(\pr_2^*x)=\pr_2^*x$.
We claim that the two base-change mates are given as follows:
\[
((\alpha^{-1})_*)_{(X,x)}=[L\times_H X,\ e_X,\ \id;\ \id_{\pr_2^*x}]\colon u^*i_*(X,x)\to j_*v^*(X,x)
\]
where $u^*i_*(X,x)$ and $j_*v^*(X,x)$ are described in~\eqref{eq:aux-Alp-BC}, while
\[
(\alpha_!)_{(X,x)}=[L\times_H X,\ \id,\ e_X;\ \id_{\pr_2^*x}]\colon j_*v^*(X,x)\to u^*i_*(X,x).
\]
This is a direct verification using $\alpha_!=\leps^{(j)}\cast\alpha^*\cast\leta^{(i)}$ and~$(\alpha^{-1})_*=\reps^{(i)}\cast(\alpha^{-1})^*\cast\reta^{(j)}$  and the formulas for those (co)units in~\eqref{eq:Alp-leta},~\eqref{eq:Alp-leps},~\eqref{eq:Alp-reta} and~\eqref{eq:Alp-reps}.
Even easier is the verification that the above two morphisms are inverse to each other, using the Mackey squares over equivalences provided by~\Cref{Exa:equi-Mackey}.
\end{proof}

Finally, we allow the restriction 2-functor~$\cA$, that was fixed so far, to vary.

\begin{Cons}
\label{Cons:Alp-transfos}%
Let $t\colon \cA\to \cA'$ be a morphism in~$\resfun{\GG}$ and let us define $\Alpt\colon \Alp\to \Alpp$. For every $G\in \GG$ define the additive functor $(\Alpt)_G\colon \Alp(G)\to \Alpp(G)$ on objects by $(H,x)\mapsto (H,t_H(x))$ and on morphisms by $(P,f)\mapsto (P,t_P(f))$, where $t_P(f)$ is `adjusted' by the restriction-compatibility isomorphisms of~$t$:
\[
\xymatrix@C=4em{
p_1^*t_H(x) \ar[r]^-{(t_{p_1})_x}_-{\cong}
& t_P(p_1^*x) \ar[r]^-{\displaystyle t_P(f)}
& t_P(p_2^*y) \ar[r]^-{(t_{p_2})_y^{-1}}_-{\cong}
& p_2^*t_K(y)
}
\]
For every~$j\colon G'\to G$ in~$\GG$, the compatibility isomorphism~$(\Alpt)_j\colon \Alpp(j)\circ(\Alpt)_G\isoTo(\Alpt)_{G'}\circ\Alp(j)$ is given on~$(H,x)\in\Alp(G)$ and with the notation of~\eqref{eq:j^*H} by
\[
[j^*H,\id,\id;t_{\pr_2}]\colon \big(j^*H,\pr_2^*(t_H(x))\big)\isoto \big(j^*H,t_{j^*H}(\pr_2^*(x))\big)
\]
in~$\Alpp(G')$, which is trivial on the `$\GG$-part' and~$t$ from the $\cA$-part to the~$\cA'$-part.
Finally, let $m\colon t\To t'\colon \cA\to\cA'$ be a modification. We define a modification
$\Alp[m]\colon \Alp[t]\To\Alp[t']$ as follows. For every~$G\in\GG$, the component
$ {(\Alp[m])}_G\colon(\Alpt)_G \To (\Alp[t'])_{G}$ is the natural transformation whose component at $(H,x)\in\Alp(G)$ is \[ (\Alp[m])_{G,(H,x)} := [H,\id_H,\id_H;m_{H,x}] \colon (H,t_H(x)) \to (H,t'_H(x)) \] in~$\Alpp(G)$.
One verifies that $\cA\mapsto \Alp$ is a 2-functor $\resfun{\GG}\to \resfun{\GG}$.
\end{Cons}

\begin{Thm}
\label{Thm:Alp-Mackey}%
For every restriction 2-functor~$\cA$, the 2-functor $\Alp\colon \GG^\op\to \ADD$ is a Mackey 2-functor.
Together with~\Cref{Cons:Alp-transfos} we obtain a 2-functor
\[
\Alp[(-)]\colon \resfun{\GG}\too \mackfun{\GG}
\]
meaning that for every transformation~$t\colon \cA\to \cA'$ in~$\resfun{\GG}$, the associated transformation~$\Alpt\colon \Alp\to \Alpp$ is compatible with induction.
\end{Thm}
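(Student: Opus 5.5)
The plan is to check the three axioms of \Cref{Def:2-Mackey} for~$\Alp$ one at a time, and then to check that~$\Alpt$ preserves induction.

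For~\Mack{1}, we need that $(\incl_1^*\ \incl_2^*)\colon\Alp(G_1\sqcup G_2)\to\Alp(G_1)\oplus\Alp(G_2)$ is an equivalence. An object $(H,x)$ over $G_1\sqcup G_2$ splits canonically as $H=H_1\sqcup H_2$ with $H_k=\incl_k^*H$, since the isocomma $G_k\times_{G_1\sqcup G_2}H$ is just the part of~$H$ lying over~$G_k$. By additivity of~$\cA$ we also get $x=(x_1,x_2)$. A span between objects over $G_1\sqcup G_2$ splits in the same way. There are no cross terms, because a span from $H_1$ to $K_2$ would have to lie over both $G_1$ and~$G_2$, hence is empty and gives~$0$ by~\eqref{eq:Alp-zero}. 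The trace relation also splits componentwise. Hence the functor is fully faithful, and it is essentially surjective via $(H_1,x_1),(H_2,x_2)\mapsto(H_1\sqcup H_2,(x_1,x_2))$ using~\eqref{eq:Alp-oplus}.

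\Mack{2} is exactly \Cref{Prop:Alp-adjunction}, and \Mack{3} is exactly \Cref{Prop:Alp-base-change}. This proves that~$\Alp$ is a Mackey 2-functor.

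For the second statement, let $t\colon\cA\to\cA'$ and let $i\colon G'\into G$ be faithful. We must show that the right mate
\[(t_i)_*=\reps^{(i)}\cast (\Alpt)_i\cast\reta^{(i)}\colon(\Alpt)_G\,i_*\To i_*\,(\Alpt)_{G'}\]
is invertible. We evaluate it at $(H',x')\in\Alp(G')$ using the explicit formulas. We have $i_*$ equal to $i_!$ on the $\GG$-part, $\reta$ as in~\eqref{eq:Alp-reta}, $\reps$ as in~\eqref{eq:Alp-reps}, and $(\Alpt)_i=[i^*H,\id,\id;t_{\pr_2}]$ from \Cref{Cons:Alp-transfos}. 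Composing these three spans gives a span from~$H'$ to~$H'$ whose apex maps to $H'$ by a morphism that is an equivalence. This follows from~\Cref{Exa:equi-Mackey}, since $\eta_{H'}$ is fully faithful by~\Cref{Rem:eta-eps-j!j*} and $\pr_2\eta_{H'}=\id$. Its $\cA'$-part is built from the invertible cells $t_{\pr_2}$ and $t_{\eta_{H'}}$. Using the pseudo-naturality axiom for $t$, applied to $\pr_2\circ\eta_{H'}=\id$, these combine to the identity. So the mate should simply be $[H',\id,\id;\id]$ up to strong $\appr$-equivalence, in particular an isomorphism.

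Finally, 2-functoriality of $\cA\mapsto\Alp$ is already recorded in~\Cref{Cons:Alp-transfos}. Modifications impose no extra condition, because $\mackfun{\GG}$ is 2-full in $\resfun{\GG}$.

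I expect the main obstacle to be this mate computation. The composition of three spans requires choosing Mackey squares. One also has to track the structure cells and check that the $t$-adjustments cancel, rather than leaving some nontrivial automorphism. If the direct computation gets messy, I would instead reduce to the case where $H'=G'$ via additivity and the special-Frobenius normalizations of~\Cref{Rem:rectify-Mackey}.
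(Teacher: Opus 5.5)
Your proof is correct. For \Mack{1}--\Mack{3} it follows the paper: the paper also cites \Cref{Prop:Alp-adjunction} and \Cref{Prop:Alp-base-change}, and treats additivity as a straightforward consequence of additivity of~$\cA$. Your splitting of objects, spans and $\appr$-relations over $G_1\sqcup G_2$ just spells that out, with the harmless caveat that $\incl_k^*H$ is only \emph{equivalent} to the part of $H$ over~$G_k$.

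The real difference is in compatibility with induction.

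\textbf{The paper's route.} It only observes that $(\Alpt)_G\circ i_*=i_*\circ(\Alpt)_{G'}$ holds on the nose: both send $(H',x')\mapsto(i_!H',t_{H'}(x'))$, and similarly on morphisms, because $i_*$ leaves the $\cA$-part untouched. It does not say that the right mate $(t_i)_*$ required in \Cref{Rec:Mack(G)} coincides with this equality. Strict commutation alone only shows that \emph{some} isomorphism $t_G i_*\cong i_* t_{G'}$ exists.

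\textbf{Your route.} You compute the mate itself from \eqref{eq:Alp-reta}, \eqref{eq:Alp-reps} and $(\Alpt)_i$, which checks the condition of \Cref{Rec:Mack(G)} literally. The computation goes through as you describe:
\begin{enumerate}[\rm(a)]
\item The three spans compose through Mackey squares whose two parallel sides are identities, so these squares are Mackey by the first sentence of \Cref{Exa:equi-Mackey}.
\item The resulting apex is $H'$, with wings $\eps_{i_!H'}\circ\eta_{H'}=\pr_2\eta_{H'}=\id$ and~$\id$.
\item The structure cells are trivial, because $\gamma_{i/i\st{H'}}$ whiskered by $\eta_{H'}=\langle\st{H'},\id,\id\rangle$ is an identity.
\item The $\cA'$-part $(t_{\eta_{H'}})\circ\eta_{H'}^*(t_{\pr_2})$ equals $t_{\pr_2\eta_{H'}}=t_{\id}=\id$ by pseudo-naturality of~$t$.
\end{enumerate}
So the mate is exactly the identity $[H',\id,\id;\id]$. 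The paper's one-liner is shorter, but your computation is what actually justifies it.

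Two minor remarks. First, full faithfulness of $\eta_{H'}$ (\Cref{Rem:eta-eps-j!j*}) plays no role in this composition; it matters for the special Frobenius property, not here. Second, your fallback of reducing to $H'=G'$ is unnecessary. It also would not follow from additivity alone, since a connected $H'\into G'$ need not be~$G'$.
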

\begin{proof}
We already have all ingredients to prove that $\Alp$ is a Mackey 2-functor. Additivity~\Mack{1} is a straightforward consequence of additivity of~$\cA$. The two-sided special Frobenius adjunction~\Mack{2} is~\Cref{Prop:Alp-adjunction}.
The base-change~\Mack{3} is~\Cref{Prop:Alp-base-change}.
To see that $\Alpt$ commutes with induction of~\Cref{Cons:Alp-induction}, we use here that nothing at all happens on the $\cA$-part, and hardly anything to the $\GG/G$-part as a matter of fact:
\[
\xymatrix@C=4em{
	{\big(} H',x'\in\cA(H')\big) \ar@{|->}[r]^-{(\Alpt)_{G'}} \ar@{|->}[d]_-{j_*}^-{\textrm{ in }\Alp}
	& {\big(} H',t_{H'}(x')\in\cA'(H')\big) \ar@{|->}[d]_-{j_*}^-{\textrm{ in }\Alpp}
	\\
	{\big(} j_!H',x'\in\cA(H')\big) \ar@{|->}[r]^-{(\Alpt)_G}
	& {\big(} j_!H',t_{H'}(x')\in\cA'(H')\big).
}
\]
Similarly for morphisms, which gives the result.
\end{proof}

%
\section{Biadjunction for~$\Alp[(-)]$}
\label{sec:Alp-Thm}%
%

We write $U:=\forget\colon \mackfun{\GG}\to \resfun{\GG}$ for the forgetful $2$-functor from Mackey $2$-functors to restriction $2$-functors.
We prove the universal property of the 2-functor $\Alp[(-)]$ constructed in~\Cref{sec:Alp}, as announced in~\Cref{Thm:Alp-UP-intro}.
In fact, we prove slightly more, namely we establish that $\Alp[(-)]$ is a left biadjoint to~$U$.
To this end, we construct a morphism of restriction $2$-functors $\etalp_{\cA}\colon \cA\to U(\Alp)$ for every $\cA\in \resfun{\GG}$ and a morphism of Mackey 2-functors
$\epslp_{\cM}\colon \Alp[(U\cM)]\to \cM$ for every~$\cM\in\mackfun{\GG}$.

\begin{Cons}
\label{Cons:Alp-unit}%
Let $\cA\in\resfun{\GG}$ be a restriction 2-functor (\Cref{Def:restriction-2-functor}). We define a morphism of restriction $2$-functors
\[
\etalp_{\cA}\colon \cA\to U(\Alp).
\]
Let $G\in\GG$. On an object~$x\in\cA(G)$, set
\[
\etalp_{\cA,G}(x):=(G,x)=(G\xto{\id_G}G,\ x)\in\Alp(G).
\]
On a morphism $f\colon x\to y$ in $\cA(G)$, set
\[
\etalp_{\cA,G}(f):=[G,\id_G,\id_G;f]
\colon
(G,x)\to(G,y).
\]
\end{Cons}

\begin{Prop}
\label{Prop:Alp-unit}
\Cref{Cons:Alp-unit} yields a morphism of $2$-functors
\[
\etalp_{\cA}\colon \cA\to U(\Alp).
\]
\end{Prop}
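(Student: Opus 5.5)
We must show three things: that each $\etalp_{\cA,G}$ is an additive functor $\cA(G)\to\Alp(G)$, that there are compatibility isomorphisms $(\etalp_{\cA})_j\colon j^*\circ\etalp_{\cA,G}\isoTo\etalp_{\cA,G'}\circ j^*$ for every $j\colon G'\to G$ in~$\GG$, and that these satisfy the axioms of a pseudo-natural transformation with respect to composition of morphisms and 2-cells of~$\GG$.

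First, functoriality. For $f\colon x\to y$ and $g\colon y\to z$ in~$\cA(G)$, we compose $[G,\id,\id;g]\circ[G,\id,\id;f]$ using the square with $T=G$, $u=v=\id_G$ and $\gamma=\id$. This square is Mackey by~\Cref{Exa:equi-Mackey}, since $\id_G$ is fully faithful. By~\eqref{eq:odot} the composite is $[G,\id,\id;g\circ f]$. Identities are clearly preserved, and additivity follows from~\eqref{eq:Alp-special-addition}.

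Next, the compatibility isomorphism. For $x\in\cA(G)$ we have $j^*(G,x)=(j^*G,\pr_2^*x)$, where $j^*G=(j/\id_G)$ lives over~$G'$ via $\pr_1\colon j^*G\to G'$. Since $\id_G$ is an equivalence, $\pr_1$ is an equivalence in $\GGf/G'$, with quasi-inverse $\eta_{G'}=\langle\id_{G'},j,\id_j\rangle$ (compare~\eqref{eq:j!j*-eta}), and $\pr_2\circ\eta_{G'}=j$. We therefore define
\[
(\etalp_{\cA})_{j,x}:=[G',\eta_{G'},\id_{G'};\id_{j^*x}]\colon (j^*G,\pr_2^*x)\to(G',j^*x),
\]
which makes sense because $\eta_{G'}^*\pr_2^*x=j^*x$. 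Its inverse is $[G',\id_{G'},\eta_{G'};\id]$. Checking that these two are mutually inverse reduces to the fact that the square formed by $\eta_{G'}$ with itself is Mackey. This holds by~\Cref{Exa:equi-Mackey}, because $\eta_{G'}$ is an equivalence, together with a strong $\appr$-equivalence to replace a span by an equivalent one.

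Naturality of this isomorphism in~$x$ is a direct composition: for $f\colon x\to y$, both ways around the square yield $[G',\eta_{G'},\id;j^*f]$, once we observe that $j^*[G,\id,\id;f]=[j^*G,\id,\id;\pr_2^*f]$ and use the naturality square of~$\eta$.

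The main bookkeeping obstacle is the coherence axioms. For composable $G''\xto{k}G'\xto{j}G$ we must check that $(\etalp_{\cA})_{jk}$ equals the pasting of $k^*(\etalp_{\cA})_j$ with $(\etalp_{\cA})_k$. Under the identification $k^*j^*\cong(jk)^*$ of~\Cref{Rem:j*-GG}, both sides are spans from $(jk)^*G$ to~$G''$ with identity $\cA$-part. The two $\GG$-parts are related by a canonical equivalence over~$G''$, so they agree up to strong $\appr$-equivalence.

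For a 2-cell $\alpha\colon j\To k$ we need compatibility with $\alpha^*$ of~\Cref{Cons:Alp-alpha*}. Composing $[j^*G,\id,(\alpha/\id_G);\id]$ with $(\etalp_{\cA})_k$ gives a span whose wing differs from the one in $\etalp_{\cA,G'}(\alpha^*_x)\circ(\etalp_{\cA})_j$ by the 2-cell~$\alpha$. The $\cA$-part $\cA(\alpha)_x$ is absorbed precisely through the wing cells, as in the adjustment~\eqref{eq:f-adjusted}, so the two sides are strongly $\appr$-equivalent. Here the 2-cells must be tracked carefully, and I expect this step to need the most care; it involves no traces.
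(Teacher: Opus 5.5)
Your proposal is correct and follows the paper's approach: additivity comes from the special addition in $\Alp(G)$, the compatibility isomorphism is an explicit span-class, and the pseudo-naturality checks (which the paper leaves to the reader) reduce to composing spans over Mackey squares along equivalences. The only difference is the representative you chose. The paper uses $[G'\times_G G,\id,\pr_1;(\gamma^{-1})^*]$, while you use $[G',\eta_{G'},\id_{G'};\id]$; the two agree via the strong $\appr$-equivalence $s=\eta_{G'}$, whose trace sends $\id_{j^*x}$ to $(\gamma^{-1})^*_x$.
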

\begin{proof}
First, for every $G\in\GG$, the functor $\etalp_{\cA,G}\colon \cA(G)\to\Alp(G)$ is additive by~\Cref{Rem:Alp-add}.
For the compatibility with restriction, we need for every~$j\colon G'\to G$ a natural isomorphism $\etalp_{\cA,j}\colon j^*\circ \etalp_{\cA,G}\isoTo\etalp_{\cA,G'}\circ j^*$ of functors $\cA(G)\to \Alp(G')$. For every $x\in\cA(G)$, we have $j^*(\etalp_{\cA,G}(x))=(G'\times_G G,\pr_2^*x)$ and $\etalp_{\cA,G'}(j^*(x))=(G',j^*x)$. We define our isomorphism $(\etalp_{\cA,j})_x$ by
\[
\vcenter{
\xymatrix@C=3em@R=1em{
& G'\times_G G
\ar[ld]_-{\id}
\ar[rd]^-{\pr_1}
\ar@[gray]@{->}[dd]|-{\gray{\pr_1}}
\ar@{}[ldd]|(.45){\gray{\isocell{\id}}}
\ar@{}[rdd]|(.45){\gray{\isocell{\id}}}
& \\
G'\times_G G
\ar@[gray]@{->}[rd]_{\gray{\pr_1}}
&&
G'
\ar@[gray]@{->}[ld]^{\gray{\id_{G'}}}
\\
& \gray{G'} &
}}
\quadtext{with}
(\gamma_{G'\times_G G}\inv)^*\colon\pr_2^*x\to \pr_1^*j^*x
\]
where $\gamma_{G'\times_G G}\colon j\pr_1\isoto \pr_2$ is the 2-cell in the definition of~$G'\times_G G=(j/\id_G)$.
We now have all the necessary data and the reader can verify that they assemble in a morphism of $2$-functors $\etalp_{\cA}\colon \cA\to U(\Alp)$, as claimed.
\end{proof}

\begin{Cons}
\label{Cons:Alp-counit}%
Let $\cM$ be a Mackey $2$-functor (\Cref{Def:2-Mackey}). We define a morphism of Mackey 2-functors
\[
\epslp_{\cM}\colon \Alp[(U\cM)]\to \cM.
\]
Let $G\in\GG$. On an object~$(H,x)\in \Alp[(U\cM)](G)$ with~$H\in\GGf/G$ and~$x\in \cM(H)$,~set
\[
\epslp_{\cM,G}(H,x):=(\stH)_*(x)\in\cM(G)
\]
where $\stH\colon H\into G$ is the (tacit) structure morphism of~$H$ over~$G$ and~$(\stH)_*$ is induction in~$\cM$.
On a morphism $(P,f)\colon (H,x)\to (K,y)$ in $\Alp[(U\cM)](G)$, given by a span~$P=(P,p_1,p_2)\in \SpanfG(H,K)$ and $f\colon p_1^*(x)\to p_2^*(y)$ in~$\cM(P)$, define $\epslp_{\cM,G}([P,f])$ to be the push-forward of~$f$ along $\stP\colon P\into G$ in~$\cM$, suitably corrected by units and counits (as in the definition of the trace~\Cref{Def:trace}):
\begin{equation}
\label{eq:Alp-counit-f}%
\vcenter{
\xymatrix@C=3em{
{\stH}_*(x) \ar[d]_-{\reta}^-{({p_1}^*\adj {p_1}_*)} \ar[rrr]^-{\epslp_{\cM,G}([P,f])}_-{:=}
&&&{\stK}_*(y)
\\
{\stH}_*{p_1}_*p_1^*(x) \ar@{<-}[r]^-{(\st{p_1})_*}_-{\cong}
& {\stP}_*p_1^*(x)
 \ar[r]^-{\,{\stP}_*(f)\,}
& {\stP}_*p_2^*(y) \ar@{->}[r]^-{(\st{p_2})_*}_-{\cong}
& {\stK}_*{p_2}_*p_2^*(y)
 \ar[u]_-{\leps}^-{({p_2}_*\adj {p_2}^*)}
}}
\end{equation}
The vertical arrows use induction in the Mackey 2-functor~$\cM$.
The horizontal isomorphisms $(\st{p_i})_*$ come from pseudofunctoriality of~$(-)_*$ in~$\cM$ on the structure 2-cells $\st{p_1}\colon\stP\isoTo \stH p_1$ and $\st{p_2}\colon\stP\isoTo \stK p_2$ of the wings of~$P$, see~\eqref{eq:Span-object}.
\end{Cons}
\begin{Prop}
\label{Prop:Alp-counit}
\Cref{Cons:Alp-counit} yields a morphism of Mackey 2-functors
\[
\epslp_{\cM}\colon \Alp[(U\cM)]\to \cM.
\]
\end{Prop}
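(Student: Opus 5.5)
The plan has four steps, in order: well-definedness on morphism classes, functoriality and additivity of each component $\epslp_{\cM,G}$, the restriction-compatibility isomorphisms $(\epslp_{\cM})_j$, and finally preservation of induction.

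\textbf{Well-definedness.} First I reformulate~\eqref{eq:Alp-counit-f} more conceptually. Using $(\st{p_1})_*$ and $(\st{p_2})_*$, the value $\epslp_{\cM,G}([P,f])$ is the composite
\[
{\stH}_*(x)\xto{{\stH}_*(\reta)}{\stH}_*{p_1}_*p_1^*(x)\cong{\stP}_*p_1^*(x)\xto{{\stP}_*(f)}{\stP}_*p_2^*(y)\cong{\stK}_*{p_2}_*p_2^*(y)\xto{{\stK}_*(\leps)}{\stK}_*(y).
\]
It suffices to check invariance under a one-step $\appr$-equivalence given by $s\colon P\apprto P'$ with wing cells $\sigma_i$. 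Since $s$ is faithful in $\GGf/G$, we can write ${\stP}_*\cong{\st{P'}}_*s_*$. Here I need that induction in $\cM$ along a local equivalence agrees, up to the canonical identification, with the folding induction of \Cref{Prop:leq-adjoints}. This holds by uniqueness of adjoints, together with the rectified choice of (co)units in \Cref{Rem:rectify-Mackey,Rem:rectified-leq} making the units and counits match.

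With this in hand, ${\stP}_*(f)$ can be rewritten using ${\st{P'}}_*$ applied to $s_*(f^\sigma)$. The outer units and counits for $p_i=p'_i s$ factor, by pseudo-functoriality of the adjunctions $(p_i' s)_*\adj(p_i's)^*$, through $\reta^{(s)}$ and $\leps^{(s)}$. This produces exactly ${\st{P'}}_*\tr_s(f^\sigma)={\st{P'}}_*(f')$ sandwiched between the $P'$-units and counits. The wing cells $\sigma_i$ are absorbed via the compatibility $\sigma_i\cast\st{p_i}=\st{p'_i}\cast\st{s}$.

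\textbf{Functoriality.} For composition with an isocomma $T=(p_2/q_1)$, the middle portion $\leps^{(p_2)}$ followed by $\reta^{(q_1)}$ (pushed forward along $\stK$) must be identified with the map obtained from the Mackey base-change isomorphism $q_1^*{p_2}_*\cong v_*u^*$ of \Mack{3}. I plan the standard argument: the composite ${p_2}_*p_2^*\To\Id\To{q_1}_*q_1^*$ equals ${p_2}_*(\gamma\text{-mate})$, a known formula in \cite{BalmerDellAmbrogio20}. I expect this to be the main obstacle. It is the categorified ``composition of spans equals composition of induce--restrict'' and requires careful bookkeeping of the rectified mates (Mack\,7). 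I would first try to reduce it to the identity $\reta^{(q_1)}\circ\leps^{(p_2)}={q_1}_*(\gamma\text{-mate})\,p_2^*$ via the mate calculus of \cite[Appendix~A.2]{BalmerDellAmbrogio20}.

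Identities are immediate from the special Frobenius property of \Cref{Rec:special-Frobenius}. Additivity follows from $\cM$ being additive together with~\eqref{eq:Alp-addition}.

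\textbf{Restriction compatibility.} For $j\colon G'\to G$, define $(\epslp_{\cM})_j$ on $(H,x)$ as the base-change isomorphism $j^*{\stH}_*\isoTo{\pr_1}_*\pr_2^*$ for the isocomma~\eqref{eq:j^*H}. Naturality with respect to morphisms $[P,f]$ follows from the cube of \Cref{Lem:cube-exists}, using that each side face is Mackey and base-change isomorphisms compose across pasted squares. The pseudo-naturality axioms reduce to coherence of mates. Compatibility with the 2-cells $\alpha^*$ of \Cref{Cons:Alp-alpha*} is similar.

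\textbf{Preservation of induction.} For faithful $j$, the functor $j_*$ on $\Alp[(U\cM)]$ only changes the structure morphism to $j\st{H'}$. So $(\epslp)_{G}j_*(H',x')=(j\st{H'})_*x'\cong j_*{\st{H'}}_*x'$, and the right mate $((\epslp)_j)_*$ is identified with this canonical pseudo-functoriality isomorphism, hence is invertible. I would verify this by plugging the explicit (co)units~\eqref{eq:Alp-reta},~\eqref{eq:Alp-reps} into the mate formula.
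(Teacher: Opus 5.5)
Your proposal is correct and follows the paper's proof essentially step by step: invariance under a one-step $\appr$-equivalence by factoring the outer (co)units through $\reta^{(s)}$ and $\leps^{(s)}$ and applying ${\st{P'}}_*$ to $\tr_s(f^\sigma)=f'$, the base-change isomorphism of \Mack{3} as the restriction comparison, and the pseudo-functoriality isomorphism $(j\st{H'})_*\cong j_*{\st{H'}}_*$ for induction. You are more explicit than the paper on two points it leaves tacit: the identification of $\cM$-induction along local equivalences with folding induction, which is harmless since any two special Frobenius two-sided adjoints of a folding give the same trace, and the mate computation behind functoriality. However, your displayed identities for the latter are not well-typed; the intended one is $\reta^{(q_1)}\circ\leps^{(p_2)}=({q_1}_*\leps^{(v)}q_1^*)\circ\Gamma\circ({p_2}_*\reta^{(u)}p_2^*)$ as maps ${p_2}_*p_2^*\To{q_1}_*q_1^*$, where $\Gamma\colon{p_2}_*u_*u^*p_2^*\cong{q_1}_*v_*v^*q_1^*$ is induced by~$\gamma$.
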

\begin{proof}
Let $G\in \GG$.
We first prove that $\epslp_{\cM,G}\colon \Alp[(U\cM)](G)\to \cM(G)$ is a well-defined functor.
Let us check that the formula for~$\epslp_{\cM,G}(P,f)$ in~\eqref{eq:Alp-counit-f} is independent of the chosen morphism representative~$(P,f)\colon (H,x)\to (K,y)$.
This is not immediate, as it explains our equivalence relation on morphisms.
Suppose that $(P,f)\appr(P',f')$ is a one-step $\appr$-equivalence given by a local equivalence $s\colon P\to P'$ in $\SpanfG(H,K)$, with wing cells~$\sigma_i\colon p_i\isoTo p'_i s$ for~$i=1,2$, such that $f'=\tr_s(f^\sigma)$, where $f^\sigma\colon s^*{p'_1}^*(x)\to s^*{p'_2}^*(y)$ is the adjusted morphism of~\eqref{eq:f-adjusted}.
With this notation, we can verify that the following diagram commutes in~$\cM(G)$:

\[
\xymatrix@C=4em@R=3em{
	{\stH}_*(x) \ar[r]^-{{\st{p_1}\inv}_*\,\reta^{(p_1)}} \ar@{=}[dd]^-{}
	& {\stP}_*p_1^*(x) \ar[r]^-{}^{{\stP}_*(f)} \ar[d]^-{{\st{s}}_*\sigma_1^*}_-{\cong}
	& {\stP}_*p_2^*(y) \ar[r]^-{{\st{p_2}}_*\leps^{(p_2)}} \ar[d]^-{{\st{s}}_*\sigma_2^*}_-{\cong}
	& {\stK}_*(y) \ar@{=}[dd]^-{}
	\\
	& {\st{P'}}_*s_*s^*{p'_1}^*(x) \ar[r]^-{}^{{\st{P'}}_*s_*(f^\sigma)}
	& {\st{P'}}_*s_*s^*{p'_2}^*(y)
	\\
	{\stH}_*(x) \ar[r]^-{{\st{p'_1}\inv}_*\,\reta^{(p'_1)}}
	& {\st{P'}}_*{p'_1}^*(x) \ar[u]^-{}_{\reta^{(s)}} \ar[r]^-{}^{{\st{P'}}_*(f')}
	& {\st{P'}}_*{p'_2}^*(y) \ar@{<-}[u]^-{}^{\leps^{(s)}} \ar[r]^-{{\st{p'_2}}_*\leps^{(p'_2)}}
	& {\stK}_*(y)
}
\]
The crucial square is the bottom middle one, which commutes by applying the functor $(\st{P'})_*\colon \cM(P')\to \cM(G)$ to the relation~$\tr_{s}(f^\sigma)=f'$. The top middle square is essentially the definition of~$f^\sigma$, combined with the structure 2-cell $\st{s}\colon \stP\isoTo \st{P'}s$.
The sides of the diagram commute by general compatibility of (co)units and push-forward in the (rectified) Mackey 2-functor~$\cM$, using that the wing 2-cells $\sigma_i\colon p_i\isoTo p_i' s$ are compatible with the structure 2-cells $\st{p_i}$, $\st{p_i'}$ and~$\st{s}$, as in~\eqref{eq:Span-morphism}.
The composite in the top row is $\epslp_{\cM,G}(P,f)$ while the composite in the bottom row is $\epslp_{\cM,G}(P',f')$.

The reader can verify that $\epslp_{\cM,G}\colon \Alp[(U\cM)](G)\to \cM(G)$ is functorial and additive. (This does not use traces.)

For compatibility with restriction, we need for every $j\colon G'\to G$ an isomorphism
\[
\epslp_{\cM,j}\colon j^*\circ \epslp_{\cM,G}\isoTo \epslp_{\cM,G'}\circ j^*
\]
of functors~$\Alp[(U\cM)](G)\to \cM(G')$.
For $(H,x)\in\Alp[(U\cM)](G)$ form the isocomma
\[
\vcenter{
\xymatrix@C=1.5em@R=.5em{
& j^*H \ar@{ >->}[ld]_-{\pr_1} \ar[rd]^-{\pr_2}
\ar@{}[dd]|{\isocell{\gamma}}
\\
G' \ar[rd]_-j && H \ar@{ >->}[ld]^-{\stH}
\\
& G
}}
\]
and note that $j^*\epslp_{\cM,G}(H,x)=j^*{\stH}_*(x)$, whereas $\epslp_{\cM,G'}j^*(H,x)={\pr_1}_*\pr_2^*(x)$. We can relate these two objects by the isomorphism
\begin{equation}
\label{eq:epslp_j}%
(\epslp_{\cM,j})_{(H,x)}:=\big(\gamma_*\big)_{x}\ \colon\
j^*{\stH}_*(x)\isoto {\pr_1}_*\pr_2^*(x)
\end{equation}
given by the Mackey base-change isomorphism of Axiom~\Mack{3} for~$\cM$.
We omit the proof that this data defines a pseudo-natural transformation $\epslp_{\cM}\colon \Alp[(U\cM)]\to \cM$ of restriction 2-functors. (Again, it holds without using the trace.)

It remains to see compatibility with induction. Let $j\colon G'\into G$ be
faithful. For an object $(H',x')\in\Alp[(U\cM)](G')$, we have
$\epslp_{\cM,G}\circ j_*(H',x')=(j\st{H'})_*(x')$, whereas
$j_*\epslp_{\cM,G'}(H',x')=j_*{\st{H'}}_*(x')$. They are related by the canonical pseudofunctoriality isomorphism for induction in the (rectified) Mackey 2-functor~$\cM$
\[
(j\st{H'})_*\isoTo j_*{\st{H'}}_*\colon \cM(H')\to \cM(G)
\]
evaluated at~$x'\in\cM(H')$.
The reader will verify that this isomorphism is natural in~$(H',x')\in \Alp[(U\cM)](G')$ and that it is indeed the mate of the restriction-comparison isomorphism~$\epslp_{\cM,j}$ of~\eqref{eq:epslp_j}.
Thus $\epslp_{\cM}$ is compatible with induction, \ie it is a morphism
of Mackey $2$-functors.
\end{proof}

\begin{Prop}
\label{Prop:Alp-unit-counit}%
For every Mackey 2-functor $\cM\in\mackfun{\GG}$ and for every restriction 2-functor $\cA\in\resfun{\GG}$ we have equalities of pseudo-natural transformations
\[
U(\epslp_{\cM})\circ \etalp_{U(\cM)}=\Id_{U(\cM)}
\qquadtext{and}
\epslp_{(\Alp)}\circ \Alp[(\etalp_{\cA})]=\Id_{\Alp}.
\]
\end{Prop}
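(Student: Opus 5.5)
We verify both triangle identities directly from the explicit formulas of \Cref{Cons:Alp-unit,Cons:Alp-counit,Cons:Alp-transfos}. Each side is a pseudo-natural transformation, so we must compare its components $G\mapsto(\ldots)_G$ on objects and morphisms, and then its compatibility isomorphisms for every $j\colon G'\to G$.

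\textbf{First identity: $U(\epslp_{\cM})\circ\etalp_{U\cM}=\Id_{U\cM}$.} On an object $x\in\cM(G)$ we get $\etalp(x)=(G,x)$ with structure morphism $\id_G$, and then $\epslp_{\cM,G}(G,x)=(\id_G)_*(x)=x$ by the rectification convention $(\id)_*=\id$ (\Cref{Rem:rectify-Mackey}). On a morphism $f$, the element $[G,\id_G,\id_G;f]$ is sent by~\eqref{eq:Alp-counit-f} to $\leps\circ(\id)_*(f)\circ\reta$ with all (co)units along identities. These (co)units are identities in the rectified $\cM$, so the result is $f$. For the compatibility isomorphisms, we compose $\epslp_{\cM,G'}\big((\etalp_{j})_x\big)$ with $(\epslp_{\cM,j})_{(G,x)}$. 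The first is the image of $[G'\times_G G,\id,\pr_1;(\gamma\inv)^*]$, namely $(\pr_1)_*$ of $(\gamma\inv)^*$ corrected by $\leps^{(\pr_1)}$. The second is the base-change $\gamma_*\colon j^*(\id_G)_*x\isoto(\pr_1)_*\pr_2^*x$ for the isocomma $(j/\id_G)$. Here $\pr_1\colon G'\times_G G\to G'$ is an equivalence, so its induction is the inverse of its restriction. We must show that the composite equals $\id_{j^*x}$. This reduces to checking that the mate $\gamma_*$ is exactly inverse to $(\gamma\inv)^*$ transported along that equivalence, which is a formal mate computation.

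\textbf{Second identity: $\epslp_{(\Alp)}\circ\Alp[(\etalp_{\cA})]=\Id_{\Alp}$.} An object $(H,x)$ maps under $\Alp[(\etalp)]$ to $(H,(H,x))\in\Alp[(U\Alp)](G)$. Then $\epslp$ sends this to $(\stH)_*(H,x)=(\stH{}_!H,x)=(H,x)$ by \Cref{Cons:Alp-induction}, strictly. For a morphism $[P,f]$, the image is $[P,\etalp_P(f)]$, adjusted by the isomorphisms $\etalp_{p_i}$, and $\epslp$ turns it into the composite~\eqref{eq:Alp-counit-f} computed in $\Alp$. We compute it with the explicit (co)units~\eqref{eq:Alp-reta} and~\eqref{eq:Alp-leps}:
\[
\reta^{(p_1)}=[p_1^*p_1{}_!H',\eps,\id;\id],\qquad \leps^{(p_2)}=[\ldots,\id,\eps;\id].
\]
Composing these via isocommas should collapse, thanks to the Mackey squares of \Cref{Exa:equi-Mackey} and \Cref{Lem:pb-Mackey}, to the span $(P,p_1,p_2)$. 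The $\cA$-part will be $f$ conjugated by $\etalp_{p_i}$ and by the 2-cells $\gamma^{\pm1}$ of the isocommas $P\times_H H$ and $P\times_K K$, and these cancel. The identification uses only strong $\appr$-equivalences, so no genuine trace relation is needed. Finally, the restriction-compatibility 2-cells are composites of $[j^*H,\id,\id;\ldots]$ and the base-change isomorphisms of \Cref{Prop:Alp-base-change}, which are given by the equivalences $e_X$. We check that they combine to the identity span with identity morphism part.

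\textbf{Main obstacle.} The hardest step is the morphism-level computation in the second identity. It requires composing three spans in $\Alp$ and tracking the 2-cells $\st{p_i}$, $\gamma_{j/\stH}$ and $(\etalp_{p_i})$ until they cancel. We first do the telegraphic version, treating 2-cells as identities, as in the proof of \Cref{Prop:Alp-category}, and then restore the adjusting cells.
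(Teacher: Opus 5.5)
Your proposal is correct and follows the paper's route. You unwind \Cref{Cons:Alp-unit,Cons:Alp-transfos,Cons:Alp-counit}, use the rectified $(\id)_*=\id$ for the first identity, and use the strict induction $(\stH)_*(H,x)=(H,x)$ in $\Alp$ for the second. You in fact give more detail than the paper: the collapse of the (co)unit spans via strong $\appr$-equivalences, and the compatibility isomorphisms that the paper leaves to the reader.

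Two small remarks. By~\eqref{eq:Alp-reta} the unit should read $\reta^{(p_1)}_{(H,x)}=[p_{1!}p_1^*H,\eps_H,\id;\id]$. Also, your first compatibility check reduces exactly to $\leps^{(\pr_1)}\circ\reta^{(\pr_1)}=\id$; this holds because, for an equivalence~$\pr_1$, the special Frobenius condition forces $\reps=(\leta)^{-1}$ and hence $\reta=(\leps)^{-1}$.
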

\begin{proof}
For the first one, let $G\in\GG$ and~$x\in \cM(G)$. Using~\Cref{Cons:Alp-unit,Cons:Alp-counit}, the composite $\epslp_{\cM,G}\circ \etalp_{U(\cM),G}\colon \cM(G)\to \cM(G)$ maps~$x$ to~$(\id_G)_*(x)=x$ by our tacit choice of `rectified' induction in~$\cM$ (see~\Cref{Rem:rectify-Mackey}). Same holds on morphisms.

For the second one, let $G\in \GG$ and~$(H,x)\in\Alp(G)$, for~$H\in\GGf/G$ and~$x\in\cA(H)$. Using~\Cref{Cons:Alp-unit,Cons:Alp-transfos,Cons:Alp-counit}, the composite $\epslp_{(\Alp),G}\circ (\Alp[(\etalp_{\cA})])_{G}\colon$ $\Alp(G)\to \Alp(G)$ maps~$(H,x)$ to~$(H,(\id_H)_*(x))$, using that $\id_H$ is a local equivalence. Again, we chose $\id_*=\id$ (\Cref{Rem:rectified-leq}) so we get the result.

The same holds on morphisms. Indeed, let $[P,f]\colon (H,x)\to (K,y)$ be represented by $P=(P,p_1,p_2)\in\SpanfG(H,K)$ and $f\colon p_1^*(x)\to p_2^*(y)$ in~$\cA(P)$. Applying $\Alp[(\etalp_{\cA})]$ replaces the $\cA$-part~$f$ by $\etalp_{\cA,P}(f)=[P,\id_P,\id_P;f]$ as a morphism in~$\Alp(P)$. Then applying $\epslp_{(\Alp),G}$ pushes this morphism forward along the structure morphism $\stP\colon P\into G$. By the definition of induction in~$\Alp$, and using again the rectified choice $\id_*=\id$, this gives precisely the original representative $[P,f]$. Hence the composite is the identity also on morphisms.

We leave the agreement of compatibility isomorphisms to the reader.
\end{proof}

\begin{Rem}
\label{Rem:Alp-almost!}%
\Cref{Prop:Alp-unit-counit} tells us that the unit-counit relations hold strictly. The reader might then ask why $U\adj \Alp[(-)]$ is not a strict 2-adjunction.
The reason is that although $\etalp_{\cA}$ is strictly 2-natural in~$\cA$, the counit~$\epslp_{\cM}$ is only pseudo-natural in~$\cM$.
Indeed, for a morphism $t\colon \cM\to \cM'$ of Mackey 2-functors, we have an invertible modification
\[
\epslp_{t}\colon t\circ \epslp_{\cM}\isoTo \epslp_{\cM'}\circ \Alpt\colon \Alp[(U\cM)]\to \cM'
\]
(writing~$\Alpt$ for~$\Alp[U(t)]$), given for every $G\in\GG$ by the natural transformation
\[
\epslp_{t,G}\colon t_G\circ \epslp_{\cM,G}\isoTo \epslp_{\cM',G}\circ (\Alpt)_{G}\colon \Alp[(U\cM)]\to \cM'
\]
which on an object~$(H,x)\in\Alp[(U\cM)]$ is the induction-compatibility isomorphism of~$t$ with respect to~$\stH\colon H\into G$, evaluated at~$x\in\cM(H)$:
\[
(\epslp_{t,G})_{(H,x)}\colon
t_G({\stH}_*(x))
\xto{((t_{\stH})_*)_x}
{\stH}_*(t_H(x))\,.
\]
Further verifications are left to the reader.
\end{Rem}

\begin{Thm}
\label{Thm:Alp-biadjunction}%
We have a biadjunction between the 2-category of (rectified) Mackey 2-functors and that of restriction 2-functors on~$\GG$
\[
\xymatrix@M=.5em{
\mackfun{\GG} \ar@{->}@/^1em/[d]^-{U=\forget} \ar@{}[d]|-{\adj}
\\
\resfun{\GG} \ar@/^1em/[u]^-{\Alp[(-)]}
}
\]
with unit~$\etalp$ and counit~$\epslp$ as in~\Cref{Cons:Alp-unit,Cons:Alp-counit}.
\end{Thm}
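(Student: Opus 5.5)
The biadjunction will be assembled from the data already in hand, using the recipe of \Cref{Rec:biadjunction}. The unit $\etalp\colon \Id_{\resfun{\GG}}\To U\circ\Alp[(-)]$ and the counit $\epslp\colon \Alp[(-)]\circ U\To \Id_{\mackfun{\GG}}$ are available, and \Cref{Prop:Alp-unit-counit} gives both triangle composites as strict identities of pseudo-natural transformations. We may therefore take the modifications $\alpha$ and $\beta$ to be identities. Since one of unit or counit is strictly 2-natural, the swallowtail equations then hold automatically. The remaining work is to check that $\etalp$ and $\epslp$ really are 1-cells in the appropriate 2-categories of 2-functors: pseudo-natural transformations between the 2-functors $\Id$, $U\Alp[(-)]$, $\Alp[(U-)]$ and $\Id$, with the correct naturality cells.

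\textbf{The unit.} For each $t\colon \cA\to\cA'$ in $\resfun{\GG}$ we compare $U(\Alpt)\circ\etalp_{\cA}$ with $\etalp_{\cA'}\circ t$. Both send $x\in\cA(G)$ to $(G,t_G(x))$. On a morphism $f$, both give $[G,\id,\id;t_G(f)]$, because the adjusting isomorphisms $t_{\id_G}$ are identities. They therefore agree strictly. The restriction-compatibility cells also coincide: one side is $[j^*G,\id,\id;t_{\pr_2}]$ composed with $(\gamma\inv)^*$, and the other uses the pseudo-naturality axiom of $t$ for the 2-cell $\gamma_{G'\times_G G}$. Finally, for a modification $m$, the component $\Alp[m]_G$ at $(G,x)$ is $[G,\id,\id;m_{G,x}]=\etalp_{\cA'}(m_{G,x})$. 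Hence $\etalp$ is a strict 2-natural transformation.

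\textbf{The counit.} Take the invertible modifications $\epslp_t$ of \Cref{Rem:Alp-almost!}. We must check three things. (i) $\epslp_t$ is a modification, i.e.\ compatible with the restriction cells $\epslp_{\cM,j}$ from~\eqref{eq:epslp_j}. This reduces to the known compatibility between the induction mate $(t_{\stH})_*$ of a Mackey morphism and the base-change isomorphisms $\gamma_*$. It should follow from the calculus of mates in \cite[Appendix~A.2]{BalmerDellAmbrogio20}, together with \cite[Proposition~6.3.1]{BalmerDellAmbrogio20}. (ii) $\epslp_t$ is natural in morphisms $[P,f]$. For this we apply $t_G$ to diagram~\eqref{eq:Alp-counit-f} and use that $t$ commutes with the units and counits of induction up to its mates. (iii) The assignment is functorial, meaning $\epslp_{t't}$ is the pasting of $\epslp_{t'}$ and $\epslp_t$, and $\epslp$ is natural in modifications $m\colon t\To t'$. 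Both follow from the functoriality of mates.

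The main obstacle is step (i) together with the verification that the identity modifications are legitimate. The latter requires the two triangle composites to be equal as pseudo-natural transformations, including their compatibility cells, not merely as families of functors. This is the part left open in \Cref{Prop:Alp-unit-counit}. My first attempt would be to compute the compatibility cell of $U(\epslp_{\cM})\circ\etalp_{U\cM}$ at $j\colon G'\to G$. It is $(\epslp_{\cM,j})_{(G,x)}=\gamma_*$ for the isocomma $(j/\id_G)$, followed by $\epslp_{\cM,G'}$ applied to $(\gamma\inv)^*$. Using the rectification choices $(\id)_*=\id$ and the fact that $\gamma_*$ for an isocomma along an identity is the canonical isomorphism $j^*\cong \pr_{1*}\pr_2^*$, these should cancel to the identity. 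An analogous but easier computation handles $\epslp_{\Alp}\circ\Alp[(\etalp_{\cA})]$, where all inductions are along structure morphisms and the cells are trivial on the $\cA$-part. Once this is done, \Cref{Rec:biadjunction} yields the equivalences~\eqref{eq:biadjunction}, $\hat t:=\epslp_{\cM}\circ\Alpt$, which proves the theorem.
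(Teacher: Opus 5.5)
Your proposal is correct and follows the paper's proof. Both use the strictly 2-natural unit $\etalp$ and the counit $\epslp$, made pseudo-natural by the modifications of \Cref{Rem:Alp-almost!}. Both then use the strict triangle equalities of \Cref{Prop:Alp-unit-counit} to take identity triangulators and conclude via \Cref{Rec:biadjunction}. The extra checks you sketch are exactly the details the paper leaves to the reader: strict 2-naturality of the unit, the modification axioms for $\epslp_t$, and agreement of the compatibility cells of the triangle composites.
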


\begin{proof}
By~\Cref{Thm:Alp-Mackey}, the construction
$\Alp[(-)]\colon \resfun{\GG}\to\mackfun{\GG}$
is a $2$-functor, while $U\colon\mackfun{\GG}\to\resfun{\GG}$
is the forgetful $2$-functor.
By~\Cref{Prop:Alp-unit}, for every restriction $2$-functor~$\cA$
we have a morphism $\etalp_{\cA}\colon\cA\to U(\Alp).$
These morphisms are strictly $2$-natural in~$\cA$.
By~\Cref{Prop:Alp-counit}, for every Mackey $2$-functor~$\cM$
we have a morphism of Mackey $2$-functors
$\epslp_{\cM}\colon\Alp[(U\cM)]\to\cM.$
These morphisms are pseudo-natural in~$\cM$ as explained in~\Cref{Rem:Alp-almost!}.
Triangle equalities hold strictly by \Cref{Prop:Alp-unit-counit}, \ie the corresponding invertible modifications of \Cref{Rec:biadjunction} may be chosen to be identity
modifications. With this choice, the coherence conditions for the triangle modifications are automatic.
\end{proof}

We can now clarify \Cref{Thm:Alp-UP-intro} of the Introduction:
\begin{Thm}
\label{Thm:Alp-UP}%
Let $\cA\colon \GG^{\op}\to \ADD$ be a restriction 2-functor and $\cM$ be a Mackey 2-functor.
We have an equivalence of categories between the category of morphisms of restriction 2-functors $t\colon \cA\to \cM$ (and modifications) and the category of morphisms of Mackey 2-functors $s\colon \Alp\to \cM$ (and modifications), given by
\[
t\mapsto \hat{t}=\epslp_{\cM}\circ \Alpt
\]
with inverse equivalence given by
\[
s\mapsto U(s)\circ\etalp_{\cA}.
\]
\end{Thm}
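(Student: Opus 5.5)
The plan is to deduce this from the biadjunction of \Cref{Thm:Alp-biadjunction}: a biadjunction $\Alp[(-)]\adj U$ is precisely an equivalence of hom-categories $\mackfun{\GG}(\Alp,\cM)\simeq\resfun{\GG}(\cA,U\cM)$ as in~\eqref{eq:biadjunction}. I will write down the two functors explicitly and check directly that they are mutually inverse. Define
\[
\Phi\colon \resfun{\GG}(\cA,U\cM)\to \mackfun{\GG}(\Alp,\cM),\qquad t\mapsto \epslp_{\cM}\circ\Alpt ,
\]
and, on modifications, $m\mapsto \epslp_{\cM}\,\Alp[m]$ (whiskering). This lands in Mackey morphisms because of two facts: $\Alpt$ is compatible with induction by \Cref{Thm:Alp-Mackey}, and $\epslp_{\cM}$ is a morphism of Mackey 2-functors by \Cref{Prop:Alp-counit}. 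The reverse functor is
\[
\Psi\colon s\mapsto U(s)\circ\etalp_{\cA},\qquad n\mapsto U(n)\,\etalp_{\cA}.
\]
Functoriality of $\Phi$ and $\Psi$ on modifications comes from the 2-functoriality of $\Alp[(-)]$ and of $U$.

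First I compute $\Psi\Phi(t)=U(\epslp_{\cM})\circ U(\Alpt)\circ\etalp_{\cA}$. Since $\etalp$ is strictly 2-natural in~$\cA$ (\Cref{Cons:Alp-unit} commutes on the nose with \Cref{Cons:Alp-transfos}), we have $U(\Alpt)\circ\etalp_{\cA}=\etalp_{U\cM}\circ t$. Then $U(\epslp_{\cM})\circ\etalp_{U\cM}=\Id$ by the first triangle equality of \Cref{Prop:Alp-unit-counit}. Hence $\Psi\Phi(t)=t$ strictly, and the same argument on modifications gives $\Psi\Phi=\Id$.

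Next I compute $\Phi\Psi(s)=\epslp_{\cM}\circ\Alp[(U s)]\circ\Alp[(\etalp_{\cA})]$. The counit is only pseudo-natural, and \Cref{Rem:Alp-almost!} supplies the invertible modification
\[
\epslp_s\colon s\circ\epslp_{\Alp}\isoTo\epslp_{\cM}\circ\Alp[(Us)].
\]
Whiskering this with $\Alp[(\etalp_{\cA})]$ and applying the second triangle equality $\epslp_{\Alp}\circ\Alp[(\etalp_{\cA})]=\Id$ yields an invertible modification $s\isoTo\Phi\Psi(s)$.

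It remains to check that $s\mapsto(\epslp_s\ast\Alp[(\etalp_{\cA})])$ is natural in~$s$ with respect to modifications $n\colon s\To s'$. This follows from the modification-naturality of $\epslp_{(-)}$, which is part of the pseudo-naturality of the counit (the data of \Cref{Rem:Alp-almost!}). Concretely, on $(H,x)$ the component is the induction-compatibility isomorphism $((s_{\stH})_*)_x$. Naturality in $n$ reduces to the compatibility of a modification with the mates of the $s_u$, which holds because modifications between Mackey morphisms commute with the mates $(s_i)_*$. That is a routine mate calculus following \cite[Proposition~6.3.1]{BalmerDellAmbrogio20}.

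The main obstacle is this last point: making sure that $\epslp_s$ genuinely defines a modification, so that it is compatible with the restriction isomorphisms $s_j$ and $\epslp_{\cM,j}$ of~\eqref{eq:epslp_j}. If that proves delicate, I would instead observe that every object of $\Alp(G)$ is canonically $(\stH)_*\etalp_{\cA,H}(x)$. Any Mackey morphism $s$ is then determined up to unique isomorphism by $U(s)\circ\etalp_{\cA}$ through its induction-compatibility, and this gives the isomorphism $s\cong\Phi\Psi(s)$ componentwise. Full faithfulness of $\Phi$ then follows, since we also have $\Psi\Phi=\Id$.
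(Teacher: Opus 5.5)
Your proposal is correct and takes the same route as the paper, which simply deduces the statement from the biadjunction of \Cref{Thm:Alp-biadjunction} via \Cref{Rec:biadjunction}. You unpack that standard argument: $\Psi\Phi=\Id$ holds strictly by the strict 2-naturality of $\etalp$ and the first triangle equality of \Cref{Prop:Alp-unit-counit}, and $\Phi\Psi\cong\Id$ follows from the pseudo-naturality cells $\epslp_s$ of \Cref{Rem:Alp-almost!} together with the second triangle equality, so your fallback argument is not needed.
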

\begin{proof}
This is a direct consequence of \Cref{Thm:Alp-biadjunction}. See \Cref{Rec:biadjunction}.
\end{proof}

\begin{Rem}
\label{Rem:etalp-split}%
Let~$\cA\colon \GG^\op\to \ADD$ be a restriction 2-functor. The unit morphism $\etalp_{\cA}\colon \cA\to \Alp$ of~\Cref{Cons:Alp-unit} is `split injective' in the following sense. We can define an additive $\sigma_{\cA}\colon \Alp\to \cA$ characterized by the property that for every $G$ \emph{indecomposable} (connected) and for every object $(H,x)\in\Alp(G)$ with $H$ \emph{indecomposable} as well, we have in~$\cA(G)$
\[
\sigma_{\cA,G}(H,x)=\left\{\begin{array}{cl}
(\stH^*)\inv(x)
&\textrm{if $\stH\colon H\isoto G$ is an equivalence}
\\
0&\textrm{otherwise}.
\end{array}\right.
\]

More precisely, for every~$G$ and every $(H,x)\in\Alp(G)$, with structure morphism $\stH\colon H\into G$ and~$x\in\cA(H)$, we consider $H^\approx:=H^{\stH\approx}$ the union of the components of~$H$ on which~$\stH$ is full, as in~\Cref{Def:approx} for~$p=\stH$. Then we define
\begin{equation}
\label{eq:sigma}%
\sigma_{\cA,G}(H,x)=(\stH^\approx)_*(x\restr{H^\approx})
\end{equation}
in~$\cA(G)$, where $x\restr{H^\approx}=\incl_{H^\approx}^*(x)\in\cA(H^\approx)$ for the inclusion~$H^\approx\hook H$, and where~$(\stH^\approx)_*\colon \cA(H^\approx)\to \cA(G)$ is the folding induction in~$\cA$ of~\Cref{Prop:leq-adjoints} for the local equivalence~$s=\stH^\approx$.

We define $\sigma_{\cA,G}$ on morphisms similarly. Let $[P,f]\colon (H,x)\to (K,y)$
be represented by a span $P\in\SpanfG(H,K)$ and a morphism $f\colon p_1^*(x)\to p_2^*(y)$
in~$\cA(P)$. Set $P^\approx:=P^{\stP\approx}$, where $\stP\colon P\into G$ is the structure morphism of~$P$ over~$G$ and $f^\approx:=f\restr{P^\approx}$. We can then define~$\sigma_{\cA,G}([P,f])$ as $(\stP^\approx)_*(f^\approx)$ suitably `adjusted'.
We leave the details to the reader. (Note that checking that this construction is well-defined with respect to~$\approx$-equivalence requires the compatibility of traces with base-change established in~\Cref{Lem:trace-base-change}.)

For instance, $\sigma_{\cA,G}(G,x)=x$ for every $x\in\cA(G)$ from which it follows that
\[
\sigma_{\cA}\circ \etalp_{\cA}=\id_{\cA}.
\]
The reader can verify that $\sigma_{\cA}\colon \Alp\to \cA$ is a morphism of conjugation 2-functors (only). It need not to be compatible with restrictions.
\end{Rem}

%
\section{Construction and biadjunction for~{$\Bup[(-)]$}}
\label{sec:Bup}%
%

Recall our 2-category $\GG$ of groupoids of interest (\Cref{Conv:GG}) and the 2-subcategory $\GGi$ with only local equivalences as morphisms (\Cref{Def:GGi}).

In this section, we construct $\Bup[(-)]\colon \conjfun{\GG}\to \resfun{\GG}$ and show that it is a right biadjoint to the forgetful functor. This will be substantially simpler than what we did for~$\Alp[(-)]$ in~\Cref{sec:Alp,sec:Alp-Thm} because we do not need to discuss induction. The latter will appear in~\Cref{sec:Bup-Mackey}, when we establish that $\Bup$ is actually always a Mackey 2-functor.

Let us temporarily \emph{fix a conjugation 2-functor}, \ie an additive 2-functor
\[
\cB\colon \GGi^\op\to \ADD.
\]
Thus~$\cB$ has only restriction along local equivalences.
Associated to~$\cB$, we construct for every $G$ in~$\GG$ an additive category $\Bup(G)$ by adjoining restriction data essentially via a Grothendieck construction.
For comma categories~$\GG/G$ see~\Cref{Def:GG/G}.

\begin{Cons}
\label{Cons:Bup-category}%
Let $G\in\GG$. We construct a category $\Bup(G)$ as follows.

\smallskip
\noindent
\textbf{Objects.}
An object $x=x_\sbull$ in~$\Bup(G)$ is the data of an object~$x_H$ in the category~$\cB(H)$ for every groupoid $H\in\GG/G$ over~$G$ together with an isomorphism
\[
x_{s}\colon x_H\isoto s^*(x_K)
\]
in~$\cB(H)$ for every morphism~$s\colon H \to K$ in~$\GG/G$
for which the underlying morphism $s\colon H\to K$ is a \emph{local equivalence} in~$\GG$ (\Cref{Def:loc-equiv}), hence $s\in\GGi$ and the functor $s^*=\cB(s)\colon \cB(K)\to \cB(H)$ makes sense. We refer to $x_H$ as the value of~$x_\sbull$ at \emph{level}~$H$. We refer to $x_s$ as the `coherence isomorphisms' of~$x_\sbull$.
These coherence isomorphisms are subject to three conditions:
\begin{enumerate}[\rm(1)]
\item
\label{it:x-trivial}%
For all~$H_1,H_2$, under the additivity equivalence $(\incl_{H_1}^*,\incl_{H_2}^*)\colon \cB(H_1\sqcup H_2)\isoto \cB(H_1)\oplus\cB(H_2),$ the object~$x_{H_1\sqcup H_2}$ is identified with $(x_{H_1},x_{H_2})$ via the pair of coherence isomorphisms
\[
(x_{\incl_{H_1}},x_{\incl_{H_2}})\colon (x_{H_1},x_{H_2}) \isoto \big( \incl_{H_1}^*(x_{H_1\sqcup H_2}), \incl_{H_2}^*(x_{H_1\sqcup H_2}) \big).
\]
For the identity morphism, we require $x_{\id_H}=\id_{x_H}$ for every~$H$.

\smallbreak
\item
\label{it:x-ts}%
For every two composable morphisms $s\colon H\to K$ and~$t\colon K\to L$ in~$\GG/G$, with~$s$ and~$t$ local equivalences in~$\GG$, the following triangle commutes in~$\cB(H)$
\[
\xymatrix{
x_H \ar[rr]^-{ x_{t\circ s}}_-{\simeq} \ar[rd]_-{ x_{s}}^-{\simeq}
&& s^*t^*(x_L).\!\!
\\
& s^*(x_K) \ar[ru]_-{s^*( x_{t})}^-{\simeq}
}
\]
\smallbreak
\item
\label{it:x-alpha}%
For every 2-cell $\alpha\colon s\isoTo t\colon H\to K$ in~$\GG/G$ for which $s,t\colon H\to K$ are local equivalences, the two isomorphisms $x_s\colon x_H\isoto s^*(x_K)$ and~$x_t\colon x_H\isoto t^*(x_K)$ are compatible with the natural isomorphism $\alpha^*=\cB(\alpha)\colon s^*\isoTo t^*\colon \cB(K)\to \cB(H)$ provided by~$\cB$, namely we require $\alpha^*_{x_{K}}\circ x_s= x_t$ in~$\cB(H)$.
\end{enumerate}

\smallskip
\noindent
\textbf{Morphisms.}
A morphism~$f=f_\sbull$ from~$x=x_\sbull$ to~$y=y_\sbull$ is a collection of morphisms $f_H\colon x_H\to y_H$ in~$\cB(H)$ for every~$H\in\GG/G$, compatible with the isomorphisms~$x_s$ and~$y_s$, in the sense that for every $s\colon H\to K$ in~$\GG/G$ whose underlying morphism is a local equivalence, the following square commutes in~$\cB(H)$:
\begin{equation}
\label{eq:Bup-morphisms}%
\vcenter{\xymatrix@R=2em{
x_H \ar[r]_-{\simeq}^-{ x_{s}} \ar[d]_-{f_H}
& s^*(x_K) \ar[d]^-{s^*(f_K)}
\\
y_H \ar[r]_-{\simeq}^-{ y_{s}}
& s^*(y_K).
}}
\end{equation}

\smallskip
\noindent
\textbf{Composition}
is levelwise $(g_\sbull)\circ(f_\sbull)=(g_\sbull\circ f_\sbull)$ with obvious identity~$(\id_{\sbull})$.
\end{Cons}
\begin{Rem}
When we write~$x_H$ the index~$H$ really refers to the structure morphism~$\stH\colon H\to G$ since $H$ is short for~$(H\xto{\stH}G)\in\GG/G$. Note also that, unlike what we did in~\Cref{sec:Alp}, we are now allowing arbitrary morphisms~$H\to G$ not only faithful ones (in case~$\GG$ has any non-faithful morphism, of course).
\end{Rem}

The following is much simpler than the counterpart~\Cref{Prop:Alp-category}:
\begin{Prop}\label{Prop:Bup-category}
For every groupoid $G\in\GG$, the above~\Cref{Cons:Bup-category} defines an additive category $\Bup(G)$.
\end{Prop}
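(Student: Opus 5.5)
The plan is to see $\Bup(G)$ as a full subcategory of a product of additive categories, closed under finite biproducts. Consider the category $\prod_{H\in\GG/G}\cB(H)$, or, to keep set-theoretic issues harmless, the evident category whose objects are families $(x_H)_H$ together with the coherence isomorphisms $x_s$, and whose morphisms are families $(f_H)_H$ making the squares~\eqref{eq:Bup-morphisms} commute. With this description, $\Bup(G)$ is a category of `pseudo-cones' (a lax limit with invertible structure) of the diagram $H\mapsto\cB(H)$ indexed by the 2-category of objects of~$\GG/G$ and local equivalences between them.

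Composition and identities are levelwise. The compatibility squares~\eqref{eq:Bup-morphisms} are stable under composition because each $s^*$ is a functor. Associativity and unitality are then inherited from each~$\cB(H)$, so $\Bup(G)$ is a category.

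For additivity, I would first define the additive structure on morphisms levelwise: $(f+g)_H:=f_H+g_H$. This is compatible with~\eqref{eq:Bup-morphisms} because each $s^*$ is additive, so $s^*(f_K+g_K)=s^*f_K+s^*g_K$. Bilinearity of composition again holds levelwise.

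The zero object is the family $0_H=0$, with coherence isomorphisms the unique maps $0\to s^*(0)=0$. Conditions~\eqref{it:x-trivial}--\eqref{it:x-alpha} hold trivially between zero objects. Note that $\cB(\varnothing)\simeq0$, so the additivity condition~\eqref{it:x-trivial} is fine there too.

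For biproducts, given $x_\sbull$ and $y_\sbull$, I define $(x\oplus y)_H:=x_H\oplus y_H$, choosing a biproduct in each $\cB(H)$. The coherence isomorphism $(x\oplus y)_s$ is the composite
\[
x_H\oplus y_H\xto{x_s\oplus y_s}s^*x_K\oplus s^*y_K\cong s^*(x_K\oplus y_K),
\]
where the last map is the canonical isomorphism, since additive functors preserve biproducts. The inclusions and projections are taken levelwise.

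The main verification, and the only mildly delicate step, is that $x\oplus y$ satisfies conditions~\eqref{it:x-trivial}--\eqref{it:x-alpha}. For~\eqref{it:x-ts}, one uses the equality $(ts)^*=s^*t^*$ strictly, and the fact that the canonical comparison isomorphisms $s^*(a\oplus b)\cong s^*a\oplus s^*b$ are compatible with composition of additive functors. The latter holds by uniqueness of maps determined by biproduct components. For~\eqref{it:x-alpha}, one uses naturality of $\alpha^*$ together with the fact that a natural transformation between additive functors commutes with the biproduct comparison maps. Condition~\eqref{it:x-trivial} follows because $\incl_{H_i}^*$ is additive and $x_{\id}=\id$.

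Every check reduces to the principle that morphisms into or out of a biproduct are determined by their components. Once this is in place, the levelwise structure maps satisfy~\eqref{eq:Bup-morphisms}, and the biproduct identities hold levelwise. Hence $\Bup(G)$ is additive.
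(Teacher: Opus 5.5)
Your proof is correct and follows the paper's argument: composition, addition, the zero object and biproducts are all defined levelwise, with $(x\oplus y)_s$ the diagonal matrix $\smat{x_s&0\\0&y_s}$. You also make explicit the canonical identification $s^*(x_K\oplus y_K)\cong s^*x_K\oplus s^*y_K$, which the paper leaves tacit. One minor inaccuracy: $\Bup(G)$ is not a full subcategory of $\prod_H\cB(H)$, since objects carry the extra data $x_s$ and morphisms must satisfy the squares~\eqref{eq:Bup-morphisms}; however, your argument works directly with the evident category and never uses that claim.
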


\begin{proof}
Addition of morphisms and objects is performed levelwise: $(x_\sbull\oplus y_\sbull)_H=x_H\oplus y_H$ and~$(x_\sbull\oplus y_\sbull)_s=\smat{x_{s}&0\\0& y_{s}}$.
\end{proof}

We now turn to the variance of~$\Bup(G)$ in~$G$ with respect to restriction.
\begin{Cons}
\label{Cons:Bup-restriction}%
Let $j\colon G'\to G$ be a morphism in~$\GG$. \emph{Restriction} along~$j$
\[
j^*\colon \Bup(G)\to \Bup(G')
\]
is given by levelwise restriction along~$j_!\colon \GG/G'\to \GG/G$, that is, the objects are $(j^*x)_{H'}=x_{j_!H'}$ (essentially $(j^*x)_{H'}$ is just~$x_{H'}$ remembering that the structure morphism changes $\st{j_!H'}=j\circ \st{H'}$) and the isomorphisms are $(j^*x)_{s'}=x_{j_!s'}$. Similarly, $j^*$ is defined on morphisms by applying~$j_!$ levelwise, that is, $(j^*f)_{H'}=f_{j_!H'}$.
This defines an additive functor $j^*\colon \Bup(G)\to \Bup(G')$.
\end{Cons}

The 2-functoriality is also defined by applying $\alpha_!$ levelwise. In detail:
\begin{Cons}
\label{Cons:Bup-alpha*}%
Let $j,k\colon G'\to G$ be morphisms in~$\GG$ and $\alpha\colon j \isoTo k$ be a $2$-cell. We define the natural transformation
$\alpha^*\colon j^* \isoTo k^*\colon \Bup(G)\to \Bup(G')$ by
\[
((\alpha^*)_{x_\sbull})_{H'}=
x_{((\alpha_!)_{H'})}
\]
for every~$x_\sbull\in\Bup(G)$ and every~$H'\in\GG/G'$, where $\alpha_!\colon j_!\isoTo k_!\colon \GG/G'\to \GG/G$ is as in~\Cref{Cons:(j/-)=>(k/-)}. The isomorphism~$(\alpha_!)_{H'}\colon j_!H'=(H',j\stHp)\isoTo k_!H'=(H',k\stH')$ in~$\GG/G$ is given by $\id_{H'}$ on the object and by~$\alpha\st{H'}$ as structure 2-cell.
As part of the data of~$x_\sbull$ we have an isomorphism $x_s$ in~$\cB(H')$ for $s=(\alpha_!)_{H'}$
\[
x_{((\alpha_!)_{H'})}\colon
x_{j_!H'}
\isoto
\id_{H'}^*(x_{k_!H'})
=
x_{k_!H'}.
\]
The left-hand $x_{j_!H'}$ is~$j^*(x_\sbull)$ at~$H'$ and the right-hand $x_{k_!H'}$ is~$k^*(x_\sbull)$ at~$H'$.
\end{Cons}

\begin{Prop}
\label{Prop:Bup-2-functor}%
The above~\Cref{Cons:Bup-category,Cons:Bup-restriction,Cons:Bup-alpha*} define a 2-functor $\Bup\colon \GG^\op\to \ADD$, which is additive. In other words, $\Bup$ is a restriction 2-functor on~$\GG$ (\Cref{Def:restriction-2-functor}).
\end{Prop}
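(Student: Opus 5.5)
The plan is to transport everything through the strict 2-functoriality of $G\mapsto \GG/G$ under $j\mapsto j_!$ (\Cref{Rem:j*-GG}). Since $\Bup(G)$ is, in effect, a category of `pseudo-cones' of $\cB$ over the subcategory of $\GG/G$ consisting of local equivalences, restriction $j^*$ is precomposition with $j_!$. Strict functoriality then comes for free. The identity $(\id_G)_!=\Id$ gives $(\id_G)^*=\Id$, and $(jk)_!=j_!k_!$ gives $(jk)^*=k^*j^*$ on the nose, on objects, on coherence isomorphisms and on morphisms.

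Before that, I will check well-definedness of \Cref{Cons:Bup-restriction}. The key observation is that $j_!$ preserves the three ingredients used in the definition of $\Bup$. First, $j_!$ sends a morphism $s'$ of $\GG/G'$ whose underlying morphism is a local equivalence to a morphism $j_!s'$ with the same underlying local equivalence. Second, $j_!$ preserves composition and 2-cells. Third, $j_!$ preserves coproducts $H_1'\sqcup H_2'$ and their inclusions, with the same underlying groupoids. Hence conditions \eqref{it:x-trivial}--\eqref{it:x-alpha} for $j^*x$ are literally instances of the same conditions for $x$, and the squares~\eqref{eq:Bup-morphisms} for $j^*f$ are instances of those for $f$. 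Additivity of $j^*$ is clear, since sums in $\Bup$ are computed levelwise (\Cref{Prop:Bup-category}).

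Next I will treat the 2-cells of \Cref{Cons:Bup-alpha*}. For each $x_\sbull$, I must show that $(\alpha^*_{x})_{H'}=x_{(\alpha_!)_{H'}}$ is a morphism in $\Bup(G')$. This amounts to checking, for every local equivalence $s'\colon H'\to K'$ in $\GG/G'$, that $x_{(\alpha_!)_{H'}}$ commutes with $x_{j_!s'}$ and $x_{k_!s'}$. In $\GG/G$ we have the equality $(\alpha_!)_{K'}\circ j_!s'=k_!s'\circ(\alpha_!)_{H'}$, because both underlying maps are $s'$ and the structure 2-cells agree. Applying the cocycle condition \eqref{it:x-ts} to both factorizations gives the required square. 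The same computation with condition \eqref{it:x-alpha} absorbs any mismatch of structure 2-cells.

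Naturality of $\alpha^*$ in $x$ is exactly~\eqref{eq:Bup-morphisms} for $f$ at $s=(\alpha_!)_{H'}$. Invertibility holds since every $x_s$ is invertible. Functoriality in 2-cells follows from $(\beta\alpha)_!=\beta_!\alpha_!$ together with \eqref{it:x-ts}, while $(\id_j)_!=\id$ together with $x_{\id}=\id$ handles identities. Compatibility with whiskering, i.e.\ $(\alpha k)^*=k^*\alpha^*$ and $(i\alpha)^*=\alpha^*i^*$, again reduces to identities among the morphisms $(\alpha_!)_{H'}$ in comma categories.

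The main obstacle is additivity: I must show that $(\incl_1^*,\incl_2^*)\colon\Bup(G_1\sqcup G_2)\to\Bup(G_1)\oplus\Bup(G_2)$ is an equivalence. Here I will decompose every $H\in\GG/(G_1\sqcup G_2)$ canonically as $H=H_1\sqcup H_2$ with $H_i=\stH^{-1}(G_i)$, so that $H\cong (\incl_1)_!H_1\sqcup(\incl_2)_!H_2$ in $\GG/(G_1\sqcup G_2)$. Local equivalences over $G_1\sqcup G_2$ respect this splitting, and so do 2-cells, because no morphism of the base crosses components.

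With this decomposition, I will construct a quasi-inverse. Given $(x^1,x^2)$, I set $x_H$ to be the object of $\cB(H)$ corresponding to $(x^1_{H_1},x^2_{H_2})$ under additivity of $\cB$. The coherence isomorphisms $x_s$ are assembled componentwise from $x^1_{s_1}$ and $x^2_{s_2}$. Condition \eqref{it:x-trivial} for $x$ will then come from condition \eqref{it:x-trivial} for $x^1$ and $x^2$, combined with the fact that coproduct inclusions within $H$ either stay inside one $G_i$ or split further.

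For the two composites, the relation $\incl_i^*\circ(\text{this})\cong\id$ is essentially immediate. The relation $(\text{this})\circ(\incl_1^*,\incl_2^*)\cong\id$ is witnessed by the isomorphisms $(x_{\incl_{H_1}},x_{\incl_{H_2}})$ of condition \eqref{it:x-trivial}. Their naturality and compatibility with the coherences $x_s$ will be checked using \eqref{it:x-ts}. This bookkeeping is where I expect the most care to be needed, though no new idea should be required.
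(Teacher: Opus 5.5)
Your proposal is correct and follows the same route as the paper. Strict 2-functoriality comes from $(jk)_!=j_!k_!$ and $(\id_G)_!=\Id$, with the 2-cells handled through the morphisms $(\alpha_!)_{H'}$, and additivity comes from the splitting $\GG/(G_1\sqcup G_2)\cong \GG/G_1\times\GG/G_2$ together with the coherence isomorphisms $x_{\incl_{H_i}}$ of Condition~(1). You are spelling out the verifications the paper leaves to the reader; just note that the assembled object must also satisfy Conditions~(2) and~(3), which hold componentwise, and that splitting 2-cells uses full faithfulness of $\incl_i\colon G_i\to G_1\sqcup G_2$.
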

\begin{proof}
Verifications are left to the reader. Additivity follows from $\GG/(G_1\sqcup G_2)\cong (\GG/G_1)\times (\GG/G_2)$ and Condition~\eqref{it:x-trivial} in~\Cref{Cons:Bup-category}.
\end{proof}

We now allow~$\cB$ to vary in~$\conjfun{\GG}$.
\begin{Cons}
\label{Cons:Bup-transfos}%
Let $t\colon \cB\to \cB'$ be a morphism of conjugation 2-functors.
We define a strict morphism of restriction 2-functors $\Bup[t]\colon \Bup\to \Bupp$ by applying $t$ levelwise.
More precisely, for every $G\in\GG$ we define~$\Bupt_G\colon \Bup(G)\to \Bupp(G)$ on objects $x_\sbull\in\Bup(G)$ by
\[
(\Bupt_G(x))_H=t_H(x_H)
\]
at level~$H\in\GG/G$ and on local equivalences $s\colon H\apprto K$ in~$\GG/G$ by
\[
(\Bupt_G(x))_s
:=
(t_s)^{-1}_{x_K}\circ t_H(x_s)
\colon
t_H(x_H)\isoto s^*t_K(x_K),
\]
essentially just $t_H(x_s)$, but adjusted by the restriction-compatibility isomorphism $t_s\colon s^*\circ t_K\isoto t_H\circ s^*$ provided with~$t$.
For every morphism $f\colon x_\sbull\to y_\sbull$ define $(\Bupt_G(f))_H=t_H(f_H)$.
These functors are strictly compatible with restriction:
For every $j\colon G'\to G$ both functors $j^*\circ \Bupt_G$ and~$\Bupt_{G'}\circ j^*$ from~$\Bup(G)$ to~$\Bup(G')$ send every object~$x_\sbull\in\Bup(G)$ to $t_{H'}(x_{j\stHp})$ at every level~$H'\in\GG/G'$.
For modifications $m\colon t\To t'\colon \cB\to \cB'$, we also apply the levelwise formula, namely $\Bup[m]\colon \Bup[t]\To \Bup[t']\colon \Bup\to \Bupp$ is given for every $G\in\GG$ by the natural transformation $\Bup[m]_G\colon \Bup[t]_G\To \Bup[t']_G\colon \Bup(G)\to \Bupp(G)$ which at every $x_\sbull\in\Bup(G)$ is the morphism $\Bup[m]_{G,x_\sbull}\colon \Bup[t]_G(x_\sbull)\To \Bup[t']_G(x_\sbull)$ in~$\Bupp(G)$ which at level~$H\in\GG/G$ is
\[
(\Bup[m]_{G,x_\sbull})_{H}=m_{H,x_H}\colon t_H(x_H)\to t'_H(x_H).
\]
This assembles into a well-defined 2-functor
\[
\Bup[(-)]\colon \conjfun{\GG}\to \resfun{\GG}.
\]
We shall strengthen this statement in~\Cref{Thm:Bup-Mackey} where we show that $\Bup[(-)]$ defines a functor from~$\conjfun{\GG}$ to~$\mackfun{\GG}$.
\end{Cons}

Let us now turn to the 2-adjunction with the forgetful functor.

\begin{Not}
\label{Not:forget-V}%
Let $V=\forget\colon \resfun{\GG}\hook \conjfun{\GG}$ be the 2-functor induced by the inclusion~$\GGi\hook \GG$. It is faithful (on 2-cells) since $\GGi$ and~$\GG$ have the same objects.
\end{Not}

\begin{Cons}
\label{Cons:Bup-unit}%
Let $\cA\in\resfun{\GG}$ be a restriction 2-functor.
We define a (strict) morphism of $2$-functors
\[
\etaup_{\cA}\colon \cA\to \Bup[(V\!\cA)]
\]
as follows. Let $G\in \GG$. For an object $x\in\cA(G)$, define $\etaup_{\cA,G}(x)_\sbull$ in~$\Bup[(V\!\cA)](G)$ at every level $H\in\GG/G$ by
\[
(\etaup_{\cA,G}(x))_H=\stH^*(x)
\]
where $\stH\colon H\to G$ is the structure morphism of~$H$,
and for every local equivalence $s\colon H\to K$ in~$\GG/G$, define $\etaup_{\cA,G}(x)_s$ to be $\st{s}^*\colon \st{H}^*\isoto s^*\st{K}^*$ evaluated at~$x$, for~$\st{s}\colon \stH\isoTo \stK s$ the structure 2-cell of the morphism~$s$.
In colloquial terms, we create out of an object in~$\cA(G)$ all the levelwise objects~$x_H$ by using the restrictions that exist since~$\cA$ is an actual restriction 2-functor.
The functor $\etaup_{\cA,G}$ is defined similarly on morphisms: $(\etaup_{\cA,G}(f))_H=\stH^*(f)$.
\end{Cons}

\begin{Prop}
\label{Prop:Bup-unit}
The above~\Cref{Cons:Bup-unit} yields a strictly 2-natural transformation of restriction $2$-functors
\[
\etaup_{\cA}\colon \cA\to \Bup[(V\!\cA)].
\]
\end{Prop}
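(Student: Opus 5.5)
We have to check three things: that $\etaup_{\cA,G}$ is a well-defined additive functor $\cA(G)\to\Bup[(V\!\cA)](G)$, that these functors commute strictly with restriction $j^*$, and that they commute strictly with the 2-cell actions $\alpha^*$. All the data come from the 2-functoriality of $\cA$ on $\GG$, so each check should reduce to an identity of 2-functors. We use throughout that $\cA$ is a strict 2-functor (\Cref{Rec:2-functors}), so that $(ts)^*=s^*t^*$ and whiskered 2-cells compose strictly.

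\textbf{Objects of $\Bup[(V\!\cA)](G)$.} Fix $x\in\cA(G)$ and set $x_H=\stH^*(x)$ and $x_s=(\st{s}^*)_x$. We verify the three conditions of \Cref{Cons:Bup-category}.
\begin{enumerate}[\rm(1)]
\item For $H_1\sqcup H_2$ over $G$, the structure morphism restricts to $\st{H_i}=\st{H_1\sqcup H_2}\circ\incl_{H_i}$ with identity structure 2-cell. Hence $x_{\incl_{H_i}}$ is an identity and condition~(1) holds tautologically. The equality $x_{\id_H}=\id$ holds because $\st{\id_H}=\id$.
\item For composable $s\colon H\to K$ and $t\colon K\to L$, the structure 2-cell of $ts$ is $(\st{t}s)\circ\st{s}$. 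Applying the strict 2-functor $\cA$ gives $\st{ts}^*=s^*(\st{t}^*)\circ\st{s}^*$, which is exactly the triangle in condition~(2).
\item For a 2-cell $\alpha\colon s\To t$ in $\GG/G$, compatibility with structure cells reads $(\stK\alpha)\circ\st{s}=\st{t}$. Applying $\cA$ gives $\alpha^*_{x_K}\circ x_s=x_t$.
\end{enumerate}
Note that we never used that $s$ is a local equivalence: $\cA$ has restriction along all of $\GG$, so these identities hold a fortiori.

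\textbf{Morphisms, functoriality and additivity.} For $f\colon x\to y$, the square \eqref{eq:Bup-morphisms} is the naturality of $\st{s}^*$ applied to $f$. Functoriality and additivity of $\etaup_{\cA,G}$ are levelwise consequences of the functoriality and additivity of each $\stH^*$.

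\textbf{Strict compatibility with restriction.} Let $j\colon G'\to G$. By \Cref{Cons:Bup-restriction}, at level $H'\in\GG/G'$ we have $(j^*\etaup_{\cA,G}(x))_{H'}=\st{j_!H'}^*(x)=(j\st{H'})^*x=\st{H'}^*j^*x$, which is $(\etaup_{\cA,G'}(j^*x))_{H'}$. On coherence isomorphisms, $\st{j_!s'}=j\st{s'}$ gives $(j\st{s'})^*_x=(\st{s'}^*)_{j^*x}$. The same holds on morphisms. Therefore $j^*\circ\etaup_{\cA,G}=\etaup_{\cA,G'}\circ j^*$ strictly, and we may take the compatibility isomorphisms $(\etaup_{\cA})_j$ to be identities.

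\textbf{Compatibility with 2-cells.} For strict 2-naturality we must also check that $\etaup_{\cA}$ intertwines $\alpha^*$ on $\cA$ and on $\Bup[(V\!\cA)]$ for a 2-cell $\alpha\colon j\To k$. By \Cref{Cons:Bup-alpha*}, the component at level $H'$ is $x_{(\alpha_!)_{H'}}$. Here $(\alpha_!)_{H'}$ has underlying morphism $\id_{H'}$ and structure 2-cell $\alpha\st{H'}$, so this component equals $((\alpha\st{H'})^*)_x=\st{H'}^*(\alpha^*_x)$. This is precisely $\etaup_{\cA,G'}(\alpha^*_x)$ at level $H'$.

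I expect this last step, keeping track of the whiskering conventions (for instance in \Cref{Cons:(j/-)=>(k/-)}), to be the only place needing care. Everything else is a direct translation of strict 2-functoriality of $\cA$.
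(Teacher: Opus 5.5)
Your proof is correct and follows the same route as the paper, whose entire argument is your restriction step: at every level $H'\in\GG/G'$ both $j^*\circ\etaup_{\cA,G}$ and $\etaup_{\cA,G'}\circ j^*$ give $(j\stHp)^*(x)=\stHp^*(j^*x)$, so the compatibility isomorphisms can be taken to be identities. Your checks of conditions (1)--(3) of \Cref{Cons:Bup-category}, of the morphism squares, and of the 2-cell compatibility via \Cref{Cons:Bup-alpha*} correctly supply the details that the paper covers only with ``the result follows easily''.
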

\begin{proof}
For every $j\colon G'\to G$ in~$\GG$, the image in~$\Bup[(V\cA)](G')$ of an object~$x\in\cA(G)$ by both functors~$j^*\circ\etaup_{\cA,G}$ and~$\etaup_{\cA,G'}\circ j^*$ is equal at level~$H'\in\GG/G'$ to the object $(\st{j_!H'})^*(x)=(j\stHp)^*(x)=\stHp^*(j^*(x))$ in~$\cA(H')$. The result follows easily.
\end{proof}

\begin{Cons}
\label{Cons:Bup-counit}%
Let $\cB$ be a conjugation $2$-functor. We define a morphism of conjugation 2-functors~$\epsup_{\cB}\colon V(\Bup)\to \cB$
as follows. Let $G\in\GG$.
For an object $x_\sbull\in V(\Bup)(G)$, define in~$\cB(G)$
\[
\epsup_{\cB,G}(x_\sbull):=x_G.
\]
On a morphism $f_\sbull\colon x_\sbull\to y_\sbull$ in $V(\Bup)(G)$, we define similarly
\[
\epsup_{\cB,G}(f)=f_G.
\]
In telegraphic style, $\epsup_{\cB,G}$ is evaluation at the object~$G$ of~$\GG/G$ (with $\st{G}=\id_G$).
\end{Cons}
\begin{Prop}
\label{Prop:Bup-counit}
The above~\Cref{Cons:Bup-counit} defines a morphism of conjugation $2$-functors
\[
\epsup_{\cB}\colon V(\Bup)\to \cB
\]
where for every $s\colon G'\xto{\approx} G$ in~$\GGi$ the isomorphism
$\epsup_{\cB,s}\colon s^*\circ \epsup_{\cB,G}\isoTo \epsup_{\cB,G'}\circ s^*$
of functors $V(\Bup)(G) \to\cB(G')$ is provided on every object~$x_\sbull$ by the inverse of the coherence isomorphism $x_s$.
\end{Prop}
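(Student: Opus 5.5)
The plan is to check directly that evaluation at the top level defines a pseudo-natural transformation $V(\Bup)\to\cB$ of 2-functors on~$\GGi^\op$. First, for each $G$, the assignment $\epsup_{\cB,G}\colon x_\sbull\mapsto x_G$, $f_\sbull\mapsto f_G$ is clearly a functor, since composition and identities in~$\Bup(G)$ are levelwise (\Cref{Cons:Bup-category}). It is additive because biproducts are levelwise (\Cref{Prop:Bup-category}).

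Second, I construct the compatibility isomorphisms. Fix a local equivalence $s\colon G'\apprto G$. By \Cref{Cons:Bup-restriction}, $s^*(x_\sbull)$ at level $G'\in\GG/G'$ (with $\st{G'}=\id_{G'}$) is $x_{s_!G'}$, where $s_!G'=(G'\xto{s}G)$. On the other hand, $s$ itself, with identity structure 2-cell, is a morphism $s_!G'\to G$ in~$\GG/G$ whose underlying morphism is a local equivalence. The object $x_\sbull$ therefore provides $x_s\colon x_{s_!G'}\isoto s^*(x_G)$ in~$\cB(G')$. I set $(\epsup_{\cB,s})_{x_\sbull}:=x_s\inv\colon s^*(x_G)\isoto x_{s_!G'}$. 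Naturality in $x_\sbull$ is exactly the square~\eqref{eq:Bup-morphisms} for this $s$, inverted.

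Third, I verify the pseudo-naturality axioms of~\cite[A.1.15]{BalmerDellAmbrogio20}.
\begin{enumerate}[\rm(1)]
\item \emph{Identities.} $\epsup_{\cB,\id_G}=\id$, because $x_{\id}=\id$ by condition~\eqref{it:x-trivial}.
\item \emph{Composition.} For $G''\xto{r}G'\xto{s}G$, the isomorphism $\epsup_{\cB,sr}$ must equal $r^*(\epsup_{\cB,s})$ followed by $\epsup_{\cB,r}$ evaluated at $s^*x_\sbull$. The latter is $(s^*x)_r=x_{s_!r}$, where $s_!r\colon (sr)_!G''\to s_!G'$. Since $s\circ s_!r=sr$ as morphisms in~$\GG/G$ (with identity structure 2-cells), this identity is precisely the cocycle triangle~\eqref{it:x-ts} applied to $s_!r$ and~$s$, after inverting.
\item \emph{2-cells.} For $\alpha\colon s\isoTo t$ between local equivalences $G'\to G$, compatibility requires $\alpha^*$ on $\Bup$ (\Cref{Cons:Bup-alpha*}), namely $x_{(\alpha_!)_{G'}}$, to match $\cB(\alpha)_{x_G}$ through $x_s$ and~$x_t$. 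Here I would use that $t\circ(\alpha_!)_{G'}$ and $s$ are related by the 2-cell $\alpha$ in~$\GG/G$, and combine conditions~\eqref{it:x-ts} and~\eqref{it:x-alpha}.
\end{enumerate}

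The main obstacle is the bookkeeping in the 2-cell axiom~(3). One must identify carefully the structure 2-cells: $(\alpha_!)_{G'}=\id_{G'}$ carries structure 2-cell~$\alpha$, while $s$ and $t$ as morphisms to~$G$ carry identity structure cells. Only then can one see that $\alpha$ really is a 2-cell $s\To t\circ(\alpha_!)_{G'}$ in~$\GG/G$, so that~\eqref{it:x-alpha} applies to it after using~\eqref{it:x-ts} to rewrite $x_{t\circ(\alpha_!)_{G'}}=(\alpha_!)^*_{G'}(x_t)\circ x_{(\alpha_!)_{G'}}=x_t\circ x_{(\alpha_!)_{G'}}$. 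The remaining checks are formal.
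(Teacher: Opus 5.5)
Your proposal is correct and takes the same direct-verification route as the paper. The paper only checks that $x_s\inv$ goes from $s^*(x_G)$ to $x_{s_!G'}$ and leaves the remaining compatibilities to the reader; your naturality check via \eqref{eq:Bup-morphisms}, the unit and composition axioms via conditions \eqref{it:x-trivial} and \eqref{it:x-ts}, and the 2-cell axiom via the 2-cell $\alpha\colon s\To t\circ(\alpha_!)_{G'}$ in $\GG/G$ correctly supply exactly those omitted details.
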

\begin{proof}
With the above notation, $x_s\colon x_{G'}\isoto s^*(x_G)$ is an isomorphism in~$\cB(G')$ and its inverse indeed goes
from~$s^*\circ \epsup_{\cB,G}(x_\sbull)=s^*(x_G)$ to~$\epsup_{\cB,G'}\circ s^*(x_\sbull)=(s^*x_\sbull)_{G'}=x_{s_!G'}=x_{G'}$. Further compatibilities are left to the reader.
\end{proof}

\begin{Prop}
\label{Prop:Bup-unit-counit}%
For every conjugation 2-functor $\cB\in\conjfun{\GG}$ and every restriction 2-functor $\cA\in\resfun{\GG}$, we have equalities of pseudo-natural transformations
\[
\Bup[(\epsup_{\cB})]\circ \etaup_{\Bup}= \Id_{\Bup}
\qquadtext{and}
\epsup_{V\!\cA}\circ V(\etaup_{\cA})= \Id_{V\!\cA}.
\]
\end{Prop}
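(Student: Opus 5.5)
We verify both triangle identities directly, level by level, using the explicit \Cref{Cons:Bup-unit,Cons:Bup-counit,Cons:Bup-transfos}. The key observation is that the unit $\etaup$ is built from restrictions along structure morphisms, the counit $\epsup$ is evaluation at the terminal level $G=(G\xto{\id}G)$, and every object $H\in\GG/G$ can be seen as the terminal level of $\GG/H$ via $\st{H}$, i.e.\ $H=(\stH)_!(H\xto{\id}H)$.

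\textbf{First identity.} Let $G\in\GG$ and $x_\sbull\in\Bup(G)$. By \Cref{Cons:Bup-unit} applied to the restriction 2-functor $\cA=\Bup$, the object $\etaup_{\Bup,G}(x_\sbull)\in\Bup[(V\Bup)](G)$ has at level $H\in\GG/G$ the value $\stH^*(x_\sbull)\in\Bup(H)$. By \Cref{Cons:Bup-restriction}, this object satisfies $(\stH^*x_\sbull)_{L}=x_{(\stH)_!L}$ for $L\in\GG/H$.

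We then apply $\Bup[(\epsup_{\cB})]$ levelwise, as in \Cref{Cons:Bup-transfos}. This evaluates at the terminal level $H\xto{\id}H$ of $\GG/H$. Since $(\stH)_!(H,\id_H)=(H,\stH)$, we get exactly $x_H$.

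The coherence isomorphisms must also be compared. For a local equivalence $s\colon H\to K$ in $\GG/G$, the composite produces $(\epsup_{s})^{-1}_{\ldots}\circ\epsup_H(\st{s}^*)$, following \Cref{Cons:Bup-transfos}. Here $\st{s}^*$ is the 2-functoriality of $\Bup$ from \Cref{Cons:Bup-alpha*}, which is given levelwise by $x_{(\alpha_!)}$. By \Cref{Prop:Bup-counit}, $\epsup_{s}$ is the inverse of a coherence isomorphism $(\stK^*x)_s$. Unwinding, both pieces are coherence isomorphisms of $x_\sbull$ along morphisms in $\GG/G$ whose composite is $s$: one with underlying map $\id_H$ and structure cell $\st{s}$, the other with underlying map $s$. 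Conditions \eqref{it:x-ts} and \eqref{it:x-alpha} of \Cref{Cons:Bup-category} then show that the composite equals $x_s$. Morphisms are handled identically, since everything is levelwise. Finally, both transformations are strict, so it remains only to observe that the compatibility isomorphisms agree (both are identities).

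\textbf{Second identity.} Let $x\in\cA(G)$. Then $\etaup_{\cA,G}(x)$ has level-$G$ value $\id_G^*(x)=x$, so $\epsup_{V\cA,G}$ returns $x$, and the same holds on morphisms. For the pseudo-naturality data along a local equivalence $s\colon G'\to G$, we compose the strict $V(\etaup_\cA)$ with $\epsup_{V\cA,s}=(\etaup(x))_{s}^{-1}$. This equals $(\st{s}^*)^{-1}$ for the structure cell of $s\colon (G',s)\to(G,\id)$, which is the identity cell, so the result is the identity. Hence the composite is $\Id_{V\cA}$ as a pseudo-natural transformation.

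\textbf{Main obstacle.} The only delicate point is the bookkeeping of coherence isomorphisms in the first identity: one must identify $\Bup[(\epsup)]$ on $s$, built from $\epsup_s$ and the 2-cell action of \Cref{Cons:Bup-alpha*}, with $x_s$. This requires the factorization of $s$ in $\GG/G$ as (identity on objects with structure cell $\st{s}$) followed by $s$ viewed over $K$, together with the cocycle and 2-cell conditions \eqref{it:x-ts}, \eqref{it:x-alpha}. We also tacitly use the identification $(jk)_!=j_!k_!$ from \Cref{Rem:j*-GG}.
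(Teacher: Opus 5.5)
Your proposal is correct and follows the paper's own proof. For the first identity you evaluate levelwise using $(\stH)_!(H\xto{\id}H)=(H\xto{\stH}G)$, and for the second you evaluate at level $G$ to get $\id_G^*x=x$. You also check the coherence isomorphisms, which the paper leaves to the reader. There your factorization of $s$ in $\GG/G$ as $(\id_H,\st{s})$ followed by $(\stK)_!s$ is right, and condition~\eqref{it:x-ts} together with $\id_H^*=\id$ already suffices, so condition~\eqref{it:x-alpha} is not needed.
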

\begin{proof}
Let $G\in\GG$. The functor $\etaup_{\Bup,G}\colon \Bup(G)\to\Bup[(V\Bup)](G)$ maps an object~$x_\sbull$ to~$\big(\stH^{*,\Bup}(x_\sbull)\big)_{H\in\GG/G}$ where $\stH^{*,\Bup}\colon \Bup(G)\to \Bup(H)$ is restriction in~$\Bup$ with respect to~$\stH\colon H\to G$ as in~\Cref{Cons:Bup-restriction}. The functor $\Bup[(\epsup_{\cB})]_G\colon \Bup[(V\Bup)](G)\to\Bup(G)$ applies $\epsup_{\cB}$ in each level~$H$, that is, evaluation at~$\id_H\in\GG/H$; hence it sends our $\big(\stH^{*,\Bup}(x_\sbull)\big)_{H\in\GG/G}$ to
\[
\Big(\big(\stH^{*,\Bup}(x_\sbull)\big)_H\Big)_{H\in\GG/G}=
\big((x_\sbull)_{\stH!H}\big)_{H\in\GG/G}=
(x_H)_{H\in\GG/G}=x_\sbull
\]
since ${\stH}_!H=(H\xto{\stH\circ\id_H}G)$ is just~$H\in\GG/G$. We leave the $x_s$ to the reader.
This gives the first equality.

For the second one, the functor $V(\etaup_{\cA})_G\colon \cA(G)\to \Aup(G)$ sends an object~$x\in\cA(G)$ to $(\stH^*x)_H$. The functor
$\epsup_{V\!\cA,G}\colon \Aup(G)\to \cA(G)$ evaluates at~$G$ which maps~$(\stH^*x)_H$ to~$\id_G^*x=x$. This yields the second equality.

We leave the agreement of compatibility isomorphisms to the reader.
\end{proof}

\begin{Rem} \label{Rem:Bup-almost!}%
We have a situation dual to that of \Cref{Rem:Alp-almost!}: The counit~$\epsup_{\cB}$ of \Cref{Cons:Bup-counit} is strictly 2-natural in~$\cB$ but the unit~$\etaup_{\cA}\colon\cA\to\Bup[(V\!\cA)]$ of \Cref{Cons:Bup-unit} is only pseudo-natural in~$\cA$.
Indeed, let $t\colon\cA\to\cA'$ be a morphism of restriction $2$-functors.
There is an invertible modification
\[
\etaup_t\colon \Bup[V(t)]\circ\etaup_{\cA} \isoTo \etaup_{\cA'}\circ t.
\]
As usual, we leave further details to the reader.
\end{Rem}
\begin{Thm}
\label{Thm:Bup-biadjunction}%
We have a biadjunction between the 2-category of restriction 2-functors and that of conjugation 2-functors on~$\GG$
\[
\xymatrix@M=.5em{
\resfun{\GG} \ar@{->}@/_1em/[d]_-{V=\forget} \ar@{}[d]|-{\adj}
\\
\conjfun{\GG} \ar@/_1em/[u]_-{\Bup[(-)]}
}
\]
with unit~$\etaup$ and counit~$\epsup$ as in~\Cref{Cons:Bup-unit,Cons:Bup-counit}.
\end{Thm}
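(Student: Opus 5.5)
The plan mirrors the proof of \Cref{Thm:Alp-biadjunction}, with the roles of unit and counit swapped, and then appeals to \Cref{Rec:biadjunction}. First, \Cref{Cons:Bup-transfos} together with \Cref{Prop:Bup-2-functor} gives a 2-functor $\Bup[(-)]\colon \conjfun{\GG}\to \resfun{\GG}$. The forgetful $V$ of \Cref{Not:forget-V} is obviously a 2-functor. The two remaining ingredients are the unit and the counit.

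\begin{itemize}
\item By \Cref{Prop:Bup-unit}, each $\etaup_{\cA}\colon \cA\to \Bup[(V\!\cA)]$ is a morphism of restriction 2-functors. By \Cref{Rem:Bup-almost!}, these components are pseudo-natural in~$\cA$, via the invertible modifications $\etaup_t$.
\item By \Cref{Prop:Bup-counit}, each $\epsup_{\cB}\colon V(\Bup)\to \cB$ is a morphism of conjugation 2-functors. It is \emph{strictly} 2-natural in~$\cB$. Indeed, $\epsup$ is evaluation at the terminal level $(G,\id_G)\in\GG/G$, while $\Bup[t]$ applies $t$ levelwise. Hence $\epsup_{\cB'}\circ V(\Bup[t])$ and $t\circ \epsup_{\cB}$ agree on the nose, including their compatibility isomorphisms.
\end{itemize}

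Before using $\etaup_t$, I would make it explicit. For $t\colon \cA\to\cA'$, $x\in\cA(G)$ and $H\in\GG/G$, the level-$H$ component compares $t_H(\stH^*x)$ with $\stH^*t_G(x)$. It is given by the inverse of $(t_{\stH})_x$. Two checks are needed:
\begin{itemize}
\item it is a morphism in $\Bup[(V\!\cA')](G)$, i.e.\ compatible with the coherence isomorphisms $\st{s}^*$ as in~\eqref{eq:Bup-morphisms};
\item it satisfies the modification axioms.
\end{itemize}
Both follow from the axioms of the pseudo-natural transformation~$t$ in \Cref{Rec:2-functors}: compatibility of the $t_u$ with composition of 1-cells and with 2-cells.

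Next I invoke \Cref{Prop:Bup-unit-counit}, which gives both triangle identities as \emph{equalities} of pseudo-natural transformations. Following \Cref{Rec:biadjunction}, I take the modifications $\alpha$ and $\beta$ to be identities. Since one of the unit and counit (here the counit $\epsup$) is strictly 2-natural, the swallowtail equations are automatic. This yields the pseudo-natural equivalence~\eqref{eq:biadjunction}
\[
\resfun{\GG}(\cA,\Bup)\simeq\conjfun{\GG}(V\!\cA,\cB),
\]
given by $s\mapsto \epsup_{\cB}\circ V(s)$ and $t\mapsto \Bup[t]\circ\etaup_{\cA}$.

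The main obstacle is the leftover compatibility checks in \Cref{Prop:Bup-unit-counit}. There, the triangle identities were verified on objects and morphisms, but the compatibility isomorphisms were left to the reader. For the first identity, the compatibility isomorphism of $\Bup[(\epsup_{\cB})]\circ\etaup_{\Bup}$ at $j\colon G'\to G$ is built from two pieces:
\begin{itemize}
\item the inverse coherence isomorphisms $x_s^{-1}$ coming from $\epsup$;
\item the identities coming from the strict $\etaup$.
\end{itemize}
One must check that this composite is the identity. The reduction is that for the relevant local equivalences $s=(\alpha_!)$ or $s=\id$ in $\GG/H$, condition~\eqref{it:x-trivial} of \Cref{Cons:Bup-category} ($x_{\id}=\id$) applies, together with condition~\eqref{it:x-alpha}. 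I would first check this at the level of objects $H'\in\GG/G'$, using that ${\stH}_!H=H$ strictly. I would then handle the levelwise coherence isomorphisms $x_s$ using condition~\eqref{it:x-ts}. The second identity is immediate, since $\etaup_{\cA}$ is strict and $\epsup$ evaluates at $\id_G$, where $\st{\id_G}^*=\id$.
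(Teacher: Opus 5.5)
Your proposal is correct and follows the paper's proof essentially verbatim. Both use the 2-functor from \Cref{Cons:Bup-transfos}, the pseudo-natural unit from \Cref{Prop:Bup-unit} and \Cref{Rem:Bup-almost!}, the strictly 2-natural counit from \Cref{Prop:Bup-counit}, and the strict triangle identities of \Cref{Prop:Bup-unit-counit}, assembled via \Cref{Rec:biadjunction} with identity triangle modifications. One small slip in your extra bookkeeping: $\Bup[t]$ is strict by \Cref{Cons:Bup-transfos}, so the compatibility isomorphisms of $\Bup[(\epsup_{\cB})]\circ\etaup_{\Bup}$ at $j$ are identities. The inverses $x_s^{-1}$ coming from $\epsup$ enter only the levelwise coherence isomorphisms of the image object, and there condition~(2) of \Cref{Cons:Bup-category} recombines them into $x_s$.
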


\begin{proof} By~\Cref{Cons:Bup-transfos}, our $\Bup[(-)]\colon\conjfun{\GG}\to\resfun{\GG}$ is a $2$-functor, while $V\colon\resfun{\GG}\to\conjfun{\GG}$ is the forgetful $2$-functor.
By~\Cref{Prop:Bup-unit}, for every restriction $2$-functor~$\cA$ we have a morphism $\etaup_{\cA}\colon \cA\to\Bup[(V\!\cA)]$.
These morphisms are pseudo-natural in~$\cA$ as explained in~\Cref{Rem:Bup-almost!}.
By~\Cref{Prop:Bup-counit}, for every conjugation $2$-functor~$\cB$ we have a morphism $\epsup_{\cB}\colon V(\Bup)\to\cB$.
These morphisms are strictly $2$-natural in~$\cB$.
The triangle equalities hold by~\Cref{Prop:Bup-unit-counit}.
The corresponding invertible triangle modifications may be chosen to be identity modifications.
With this choice, their coherence conditions are automatic.
\end{proof}

We can now clarify \Cref{Thm:Bup-UP-intro} of the Introduction:
\begin{Thm}
\label{Thm:Bup-UP}%
Let $\cB\colon \GGi^{\op}\to \ADD$ be a conjugation 2-functor and let $\cA$ be a restriction 2-functor.
We have an equivalence of categories between the category of morphisms of conjugation 2-functors $t\colon \cA\to \cB$
(and modifications) and the category of morphisms of restriction 2-functors $s\colon \cA\to\Bup$
(and modifications), given by
\[
t\mapsto \tilde{t}:=\Bupt\circ \etaup_{\cA}
\]
with inverse equivalence given by
\[
s\mapsto \epsup_{\cB} \circ V(s).
\]
\end{Thm}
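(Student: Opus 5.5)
This is the hom-category form of the biadjunction in \Cref{Thm:Bup-biadjunction}, exactly as \Cref{Thm:Alp-UP} followed from \Cref{Thm:Alp-biadjunction}. I will simply specialize the general equivalence~\eqref{eq:biadjunction} of \Cref{Rec:biadjunction} to $\Phi=V$, $\Psi=\Bup[(-)]$, $X=\cA$ and $Y=\cB$. This gives $\conjfun{\GG}(V\!\cA,\cB)\simeq\resfun{\GG}(\cA,\Bup)$. The functors realizing it are the standard ones built from unit and counit, namely $t\mapsto \Bup[t]\circ\etaup_{\cA}$ and $s\mapsto \epsup_{\cB}\circ V(s)$. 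On modifications they act by whiskering: $m\mapsto \Bup[m]\,\etaup_{\cA}$ and $n\mapsto \epsup_{\cB}\,V(n)$.

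Concretely, I verify that the two composites are isomorphic to the identities.

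For the first composite, take $s\colon\cA\to\Bup$. Then $\Bup[(\epsup_{\cB}\circ V(s))]\circ\etaup_{\cA}=\Bup[(\epsup_{\cB})]\circ\Bup[V(s)]\circ\etaup_{\cA}$, since $\Bup[(-)]$ is a strict 2-functor by \Cref{Cons:Bup-transfos}. The pseudo-naturality modification $\etaup_s$ of \Cref{Rem:Bup-almost!} rewrites $\Bup[V(s)]\circ\etaup_{\cA}\cong\etaup_{\Bup}\circ s$. The first triangle equality of \Cref{Prop:Bup-unit-counit} then gives $\Bup[(\epsup_{\cB})]\circ\etaup_{\Bup}\circ s=s$.

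For the second composite, take $t\colon V\!\cA\to\cB$. We have $\epsup_{\cB}\circ V(\Bup[t])\circ V(\etaup_{\cA})$. Strict 2-naturality of $\epsup$ in~$\cB$ (\Cref{Rem:Bup-almost!}) gives $\epsup_{\cB}\circ V(\Bup[t])=t\circ\epsup_{V\!\cA}$. The second triangle equality of \Cref{Prop:Bup-unit-counit} then yields $t\circ\epsup_{V\!\cA}\circ V(\etaup_{\cA})=t$, on the nose.

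The only step needing care is the naturality of the first isomorphism $s\cong\widetilde{\epsup_{\cB}\circ V(s)}$ in modifications $s\To s'$. This follows from the modification axiom for $\etaup_{(-)}$, i.e.\ its pseudo-naturality in~$\cA$, which the paper leaves to the reader in \Cref{Rem:Bup-almost!}. With these two natural isomorphisms in hand, the two functors are mutually inverse equivalences; the swallowtail coherence is automatic, as noted in \Cref{Rec:biadjunction}.
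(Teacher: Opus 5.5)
Your argument is correct and follows the same route as the paper, which deduces the statement directly from the biadjunction of \Cref{Thm:Bup-biadjunction} via \Cref{Rec:biadjunction}. You simply unpack the standard hom-category equivalence, using the strict triangle equalities of \Cref{Prop:Bup-unit-counit}, the strict 2-naturality of $\epsup$ and the pseudo-naturality of $\etaup$. The swallowtail coherence you invoke at the end is not actually needed to get an equivalence at fixed $\cA$ and $\cB$.
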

\begin{proof}
This is a direct consequence of \Cref{Thm:Bup-biadjunction}. See \Cref{Rec:biadjunction}.
\end{proof}

\begin{Rem}
\label{Rem:Bup-not-THAT-adjoint}%
In~\Cref{Thm:Bup-UP}, even if~$\cB$ is a restriction 2-functor, the transformation~$\epsup_{\cB}\colon\Bup\to\cB$
need not commute with restrictions and, even if~$\cA$ is Mackey, the transformation
$\tilde{t}\colon \cA\to \Bup$ need not preserve inductions.
\end{Rem}

%
\section{The 2-functor $\Bup$ is Mackey}
\label{sec:Bup-Mackey}%
%

In this section, $\cB$ is a conjugation 2-functor on~$\GG$, that is, an additive 2-functor
\[
\cB\colon \GGi^\op\to\ADD.
\]
We want to explain~\Cref{Thm:Bup-Mackey-intro}, \ie why the restriction 2-functor $\Bup$ of~\Cref{sec:Bup} is a Mackey 2-functor.
We recall from~\Cref{Prop:leq-adjoints} that any conjugation 2-functor already has induction along local equivalences.
The induction construction for $\Bup$ will involve morphisms
$p\colon X\to Y$ which are not local equivalences, so ordinary induction~$p_*$ is not
available on all connected components of~$X$.
We shall induce only from the components of~$X$ on which $p$ is a local equivalence, that is the $\approx$-locus~$X^{p\approx}$ of~\Cref{Def:approx}. Let us do a preparation in this direction.

\begin{Cons}
\label{Cons:j-approx}%
Let $j\colon G'\into G$ in~$\GG$ be a \emph{faithful} morphism in~$\GG$. (See~\Cref{Rem:approx-faithful}.)
For every $H\in \GG/G$ over~$G$, we define $j^\approx H:=(j^*H)^{(\pr_2){\approx}}$ as the part of the pullback~$j^*H=G'\times_G H$ on which $\pr_2\colon G'\times_G H\to H$ is a local equivalence (\Cref{Def:approx}). By construction, it comes with a local equivalence that we denote~$\najH$ (in reference to~\Cref{Exa:folding})
\[
\najH:=(\pr_2)^\approx\colon j^\approx H\to H.
\]
Since $j^\approx H$ is a subgroupoid of~$j^*H\in\GG/G'$, it is also an object over~$G'$ with the restricted structure morphism, that is, $\st{j^\approx H}=(\pr_1)\restr{j^\approx H}\colon j^\approx H\to G'$.

Let now $s\colon H\apprto K$ be a morphism in~$\GG/G$ with $s$ a local equivalence in~$\GG$.
By~\Cref{Lem:pb-Mackey}, the pull-back $j^*s\colon j^*H\to j^*K$ is a local equivalence such that $\pr_2\,j^*s=s\,\pr_2$. Thus, by~\Cref{Lem:approx-Mackey}, the morphism $j^*s$ restricts to a local equivalence $(j^*H)^{\pr_2\approx}\to (j^*K)^{\pr_2\approx}$, that we denote~$j^\approx s\colon j^\approx H\to j^\approx K$. We also record from the same~\Cref{Lem:approx-Mackey} that the following commutative square is Mackey:
\begin{equation}
\label{eq:aux-approx}%
\vcenter{\xymatrix@C=1em@R=1em{
& j^\approx H \ar[rd]^-{\najH} \ar[ld]_-{j^\approx s} \ar@{}[dd]|{=}
\\
j^\approx K \ar[rd]_-{\najK}
&& H \ar[ld]^-{s}
\\
& K.
}}
\end{equation}
\end{Cons}
\begin{Exa}
For~$H$ connected, $j^\approx H$ is essentially a coproduct of copies of~$H$ and $\najH$ is a folding. See~\Cref{Lem:leq-folding}.
Conceptually, $j^\approx H$ picks up those components of the pullback~$j^*H$ which are equivalent to~$H$. If we think of $G'\into G$ as an inclusion of a subgroup, we are picking the $G$-conjugates of~$H$ inside~$G'$.
\end{Exa}

\begin{Rem}
\label{Rem:najH}%
Summarizing~\Cref{Cons:j-approx}, we have the following diagram in~$\GG$:
\begin{equation}
\vcenter{\xymatrix@C=3em@R=2em{
& j^\approx H \ar@{_(->}[d]|-{\incl} \ar@/^2em/[rdd]^-{\najH=\pr_2\restr{j^\approx H}}_(.6){\appr\!} \ar@/_2em/[ldd]_-{\st{j^\approx H}=\pr_1\restr{j^\approx H}} \ar@{}[rdd]|(.25){=} \ar@{}[ldd]|(.25){=}
\\
& j^*H =G'\times_G H \ar[ld]|-{\st{j^*H}=\pr_1} \ar[rd]_-{\pr_2} \ar@{}[dd]|-{\isocell{\gamma_{j/\stH}}}
&
\\
G' \ar@{ >->}[rd]_-{j}
&& H \ar[ld]^-{\stH} &
\\
& G.\!\! &
}}
\end{equation}
In particular, $\najH$ can be viewed as a morphism over~$G$, that is a morphism in~$\GG/G$ between $j_!j^\approx H$ (with structure morphism~$j\,\pr_1\restr{j^\approx H}$) and $H$. The structure 2-cell of~$\najH$ is the restriction of the 2-cell of the isocomma:~$\st{\najH}=(\gamma_{j/\stH})\restr{j^\approx H}$.
\end{Rem}
\begin{Cons}
\label{Cons:Bup-induction}%
Let $j\colon G'\into G$ be a faithful morphism in~$\GG$.
We define a functor
\[
j_*\colon \Bup(G')\to \Bup(G).
\]
Let $x'_\sbull$ be an object in~$\Bup(G')$. For every $H\in\GG/G$ we define in~$\cB(H)$ the object
\[
(j_*x'_\sbull)_H:=(\najH)_*(x'_{j^\approx H})
\]
using~\Cref{Cons:j-approx}, where $x'_{j^\approx H}\in\cB(j^\approx H)$ is provided by~$x'_\sbull$ since~$j^\approx H$ lives over~$G'$ and where $(\najH)_*$ is induction (\Cref{Prop:leq-adjoints}) along the local equivalence~$\najH$.
Similarly for every $s\colon H\to K$ in~$\GG/G$ such that $s$ is a local equivalence in~$\GG$, we define the coherence isomorphism $(j_*x'_\sbull)_{s}$ associated to~$s$ to be the push-forward $(\najH)_*(x'_{j^\approx s})$ of the one for~$x'$, suitably corrected by base change, namely
\[
\xymatrix@C=1em@R=1em{
(j_*x'_\sbull)_H \ar@{=}[d] \ar[rrrrr]^-{\displaystyle(j_*x'_{\sbull})_{s}}_-{:=}
&&&&& s^*\big((j_*x'_\sbull)_K\big) \ar@{=}[d]
\\
(\najH)_*(x'_{j^\approx H}) \ar[rrrr]^-{\displaystyle(\najH)_*(x'_{j^\approx s})}
&&&& (\najH)_*(j^\approx s)^*(x'_{j^\approx K}) \ar@{}[r]|-{\cong}
& s^*(\najK)_*(x'_{j^\approx K})
}
\]
where the base-change isomorphism $(\najH)_*(j^\approx s)^*\cong s^*(\najK)_*$ in the second row comes from~\Cref{Prop:leq-base-change}\,\eqref{it:leq-bc-2} on the Mackey square~\eqref{eq:aux-approx}.

We define~$j_*$ on morphisms $f'_\sbull\colon x'_\sbull\to y'_\sbull$ in~$\Bup(G')$ via the similar
\[
(j_*f'_\sbull)_H:=(\najH)_*(f'_{j^\approx H}).
\]

All coherence conditions follow from functoriality of the isocomma construction and the compatibility of the base-change isomorphisms.
They are left to the reader.
\end{Cons}

\begin{Prop}\label{Prop:Bup-induction-functor}
For every faithful $j\colon G'\into G$, the above~\Cref{Cons:Bup-induction} defines an additive functor~$j_*\colon \Bup(G')\to \Bup(G)$.
\end{Prop}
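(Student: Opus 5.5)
The plan is to check, in order, that $j_*x'_\sbull$ is an object of~$\Bup(G)$, that $j_*f'_\sbull$ is a morphism, that $j_*$ is a functor, and that it is additive. Everything is levelwise, so the work reduces to properties of folding induction $(\najH)_*$ from \Cref{Prop:leq-adjoints} and of the base-change isomorphisms of \Cref{Prop:leq-base-change}\,\eqref{it:leq-bc-2}.

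\textbf{Objects.} For $x'_\sbull\in\Bup(G')$ I would verify the three conditions of \Cref{Cons:Bup-category}.

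Condition~\eqref{it:x-trivial}, additivity, is the first check. The isocomma $j^*(H_1\sqcup H_2)$ is $j^*H_1\sqcup j^*H_2$, and the $\approx$-locus of $\pr_2$ decomposes accordingly. Hence $j^\approx(H_1\sqcup H_2)=j^\approx H_1\sqcup j^\approx H_2$, and $\naj{H_1\sqcup H_2}=\naj{H_1}\sqcup\naj{H_2}$. Folding induction along a coproduct of local equivalences is the coproduct of the inductions. For $s=\id_H$, one has $j^\approx\id_H=\id$, and the rectified base-change isomorphism for an identity square is the identity, as in \Cref{Rem:rectified-leq}.

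Condition~\eqref{it:x-ts}, the cocycle condition, comes next. First I would observe that $j^\approx(ts)=j^\approx t\circ j^\approx s$. This holds because $j^*$ is (pseudo)functorial and the restrictions to $\approx$-loci compose: all maps in sight are faithful, so \Cref{Rem:approx-faithful} applies. The Mackey square~\eqref{eq:aux-approx} for $ts$ is the vertical pasting of those for $s$ and $t$. The needed triangle then follows from three ingredients:
\begin{enumerate}[\rm(i)]
\item the triangle for $x'$ at $j^\approx s$ and $j^\approx t$, pushed forward by $(\najH)_*$;
\item the standard compatibility of mates with pasting of squares, namely that the base-change isomorphism of a pasted square is the composite of the base-change isomorphisms;
\item naturality of the base-change isomorphism.
\end{enumerate}

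Condition~\eqref{it:x-alpha} follows the same pattern. A 2-cell $\alpha\colon s\isoTo t$ over~$G$ induces $j^*\alpha$, which restricts to $j^\approx\alpha\colon j^\approx s\isoTo j^\approx t$ over~$G'$. I would then combine condition~\eqref{it:x-alpha} for $x'$ with the compatibility of mates with 2-cells.

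\textbf{Morphisms and functoriality.} For $f'_\sbull$, the square~\eqref{eq:Bup-morphisms} at $s$ is obtained in two steps. First apply $(\najH)_*$ to the square for $f'$ at $j^\approx s$. Then use naturality of the base-change isomorphism $(\najH)_*(j^\approx s)^*\cong s^*(\najK)_*$ in its argument. Functoriality is immediate, since composition in $\Bup$ is levelwise and $(\najH)_*$ is a functor.

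\textbf{Additivity.} This holds because each $(\najH)_*$ is additive (it is a biproduct), and addition in $\Bup(G')$ is levelwise by \Cref{Prop:Bup-category}.

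\textbf{Main obstacle.} I expect the cocycle condition~\eqref{it:x-ts} to be the hardest step. It needs care in identifying $j^\approx(ts)$ with $j^\approx t\circ j^\approx s$ up to the tacit canonical isomorphisms of \Cref{Rem:j*-GG}. It also needs the statement that the folding base-change isomorphisms are compatible with pasting of Mackey squares. To handle the latter, I would first reduce via \Cref{Lem:leq-folding} to foldings, where all base-change maps are explicit permutations and identifications of biproducts, and check the pasting compatibility directly there.
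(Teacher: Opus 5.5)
Your proposal is correct and follows the paper's approach. The paper proves additivity exactly as you do, from additivity of each $(\najH)_*$ with levelwise addition, and leaves the coherence conditions to the reader as consequences of ``functoriality of the isocomma construction and the compatibility of the base-change isomorphisms''; your steps (i)--(iii) and your 2-cell argument spell out precisely that, and two small points simplify it: for fixed $j$ one has $j^*(ts)=j^*t\circ j^*s$ on the nose, so \Cref{Rem:j*-GG} (which concerns varying the base) is not involved, and compatibility of mates with pasting is a formal fact, so no reduction to foldings is needed.
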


\begin{proof}
Additivity follows from the additivity of $(\najH)_*$.
\end{proof}

\begin{Prop}\label{Prop:Bup-adjunction}
For every faithful $j\colon G'\into G$, the functor $j_*\colon \Bup(G')\to \Bup(G)$ is a special Frobenius two-sided adjoint of the restriction~$j^*\colon \Bup(G)\to \Bup(G')$.
\end{Prop}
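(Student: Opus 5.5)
The plan is to write down the four (co)units of~\eqref{eq:4-units} levelwise, using the folding (co)units of~\Cref{Prop:leq-adjoints} for the local equivalences~$\najH\colon j^\approx H\apprto H$. Then we reduce the triangle identities and the special Frobenius property to the corresponding properties of folding induction in~$\cB$ (arranged as in~\Cref{Rem:rectified-leq}), together with one combinatorial fact about $j^\approx$ applied to objects of the form~$j_!H'$.

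\textbf{The key groupoid fact.} For $H'\in\GG/G'$ consider $j^*j_!H'=G'\times_G H'$. By~\Cref{Rem:eta-eps-j!j*}, the unit $\eta_{H'}\colon H'\to j^*j_!H'$ is fully faithful, since $j$ is faithful, and $\pr_2\eta_{H'}=\id_{H'}$. Hence $\eta_{H'}$ identifies $H'$ with a union of components of $j^\approx j_!H'$; in particular it lands in the $\approx$-locus of~$\pr_2$. Let $C(H')$ denote the complement, so that
\[
j^\approx j_!H' = \eta_{H'}(H')\sqcup C(H')\quad\text{in }\GG/G'.
\]
By additivity of~$\cB$, this splits $(j^*j_*x')_{H'}=(\naj{j_!H'})_*(x'_{j^\approx j_!H'})$ as $x'_{H'}\oplus(\text{the contribution of }C(H'))$. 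To see this, use $x'_{\eta_{H'}}$, where $\eta_{H'}$ is a local equivalence onto its image, with structure cell~$\id$.

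\textbf{Unit and counit for $j^*\adj j_*$.} Define $\reps_{x'}\colon j^*j_*x'\to x'$ at level~$H'$ as the projection onto the summand $x'_{H'}$. Define $\reta_x\colon x\to j_*j^*x$ at level $H\in\GG/G$ as
\[
x_H\xto{\reta^{(\najH)}}(\najH)_*\najH^*x_H\xto{(\najH)_*(x_{\najH})^{-1}}(\najH)_*x_{j_!j^\approx H}=(j_*j^*x)_H,
\]
using the coherence isomorphism $x_{\najH}$ for the morphism $\najH\colon j_!j^\approx H\to H$ in~$\GG/G$ of~\Cref{Rem:najH}. For $j_*\adj j^*$, take $\leta_{x'}$ to be the inclusion of the summand and $\leps_x$ to be the analogous composite built from $\leps^{(\najH)}$.

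\textbf{Verifications and the main obstacle.} Naturality with respect to morphisms $f_\sbull$ is immediate, since everything is levelwise and natural. Compatibility with the coherence isomorphisms $x_s$ uses the base-change square~\eqref{eq:aux-approx} and~\Cref{Prop:leq-base-change}. The special Frobenius property $\reps\circ\leta=\id$ holds by construction, because inclusion followed by projection is the identity. One triangle identity, $\reps_{j^*x}\circ j^*(\reta_x)=\id$, evaluated at level~$H'$, reduces to the unit-counit relation for the folding along~$\naj{j_!H'}$ restricted to the summand $\eta_{H'}(H')$. On that summand, $\naj{j_!H'}\circ\eta_{H'}=\id_{H'}$ and $x_{\id}=\id$. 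The other triangle identity, $j_*(\reps)\circ\reta_{j_*x'}=\id$, is the main obstacle. It requires identifying $j^\approx j_! j^\approx H$ and checking that the projection $\reps$, pushed forward along~$\najH$, undoes the folding unit. I would handle it through the analogue of the equality $(j^*\eps)\circ(\eta j^*)=\id$ from~\Cref{Cons:j!j*-GG/}, restricted to $\approx$-loci via~\Cref{Lem:approx-Mackey}. Using~\Cref{Lem:leq-folding}, I would reduce to the case where $H$ is connected and $\najH$ is a folding $\nabla^{(n)}$. In that case everything becomes an explicit matrix computation with diagonals and biproducts, as in~\Cref{Exa:trace-sum}. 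The pair $\leta,\leps$ is handled symmetrically.
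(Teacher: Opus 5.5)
Your proposal is correct and is essentially the paper's proof. Your four (co)units coincide with the paper's levelwise folding (co)units: your summand inclusion and projection for $\leta$ and $\reps$ is what the paper writes as $\eta_{H'}^*$ applied to $\leta^{(\najHp)}$ and $\reps^{(\najHp)}$, using $\eta_{H'}^*(\najHp)^*=\id$. That device spares the paper from splitting $j^\approx j_!H'$; if you do split, note that the summand is the union of the components met by $\eta_{H'}$, not its literal image. As in the paper, you reduce the triangle identities and the special Frobenius property to those of the folding adjunctions. Your outline of the relations involving $\najH$, via $j^\approx(\najH)\circ\eta_{j^\approx H}=\id$ and a folding computation, is more explicit than the paper, which leaves those relations to the reader.
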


\begin{proof}
We need the four (co)units of~\eqref{eq:4-units-j} and for that we need to compose the functors $j^*$ and~$j_*$ of~\Cref{Cons:Bup-restriction,Cons:Bup-induction}.
Let $x=x_\sbull\in\Bup(G)$ then $j_*j^*(x)\in\cB(H)$ at level $H\in \GG/G$ is equal to the following object of~$\cB(H)$
\begin{equation}
\label{eq:aux-j_*j^*}%
(j_*j^*x)_{H}=(\najH)_*\big((j^*x)_{j^\approx H}\big)= (\najH)_*\big(x_{j_!j^\approx H}\big)\cong (\najH)_*(\najH)^*(x_H)
\end{equation}
where the final isomorphism is the coherence isomorphism for~$x_\sbull$ at the local equivalence~$\najH\colon j_!j^\approx H\apprto H$ in~$\GG/G$, as explained in~\Cref{Rem:najH}.

The term $\nabla_*\nabla^*$ appearing in~\eqref{eq:aux-j_*j^*}
suggests using the special Frobenius two-sided adjunction for the local
equivalence $\nabla=\najH$ (\Cref{Prop:leq-adjoints}), which is available even
though the $2$-functor~$\cB$ is only a conjugation $2$-functor.
We can make a similar $\nabla^*\nabla_*$ appear for the other composite $j^*j_*$ but it is more hidden.

Let $x'=x'_\sbull\in\Bup(G')$ then $j^*j_*(x')\in\cB(H')$ at level $H'\in \GG/G'$ is equal to the following object of~$\cB(H')$
\begin{equation}
\label{eq:aux-j^*j_*-temp}%
(j^*j_*x')_{H'}=(j_*x')_{j_!H'}=(\najHp)_*(x'_{j^\approx j_!H'}).
\end{equation}
(It is tempting, but \emph{false}, to invoke the local equivalence $\najHp\colon j^\approx j_!H'\apprto j_!H'$ to replace the final object as we did after~\eqref{eq:aux-j_*j^*}. Unfortunately, this $\najHp$ need not be a morphism over~$G'$, hence $x'_\sbull$ does not carry a corresponding coherence isomorphism.)
We still modify~\eqref{eq:aux-j^*j_*-temp} to make a $(\najHp)_*(\najHp)^*$ appear, as suggested after~\eqref{eq:aux-j_*j^*}.
For this, we recall $\eta_{H'}=<\stHp,\id_{H'},\id>\colon H'\to j^*j_!H'$ as in~\eqref{eq:j!j*-eta}. We know that $\eta_{H'}$ is fully faithful (\Cref{Rem:eta-eps-j!j*})  and satisfies $\pr_2\eta_{H'}=\id_{H'}$. Hence it lands in~$j^\approx j_!H'$ and defines a section $\eta_{H'}\colon H'\to j^\approx j_!H'$ of~$\najHp$ in~$\GGi$. Consequently $(\eta_{H'})^*(\najHp)^*=\id_{\cB(H')}$ and we can make the $\nabla^*$ appear in~\eqref{eq:aux-j^*j_*-temp} as follows
\begin{equation}
\label{eq:aux-j^*j_*}%
(j^*j_*x')_{H'}=\id(j^*j_*x')_{H'}=(\eta_{H'})^*(\najHp)^*(\najHp)_*(x'_{j^\approx j_!H'}).
\end{equation}

In view of~\eqref{eq:aux-j_*j^*} and~\eqref{eq:aux-j^*j_*}, we can now invoke the (co)units of~$\nabla_*\adj \nabla^*\adj \nabla_*$ (in~$\cB$!) for $\nabla\in\{\najH,\najHp\}$ to define $(\leta_{x'})_{H'},(\leps_{x})_{H},(\reta_{x})_{H}, (\reps_{x'})_{H'}$ in~$\Bup$. The left unit $\leta=\leta^{(j)}\colon \Id_{\Bup(G')}\To j^*j_*$ is given on~$x'\in\Bup(G')$ at level~$H'\in\GG/G'$ by
\begin{equation}
\label{eq:leta-Bup-x'-H'}%
\vcenter{
\xymatrix@C=8em@R=1em@M=.5em{
x'_{H'} \ar[r]^-{\displaystyle(\leta_{x'})_{H'}}_-{:=} \ar[d]_-{x'_{\eta_{H'}}}^-{\simeq}
& (j^*j_*x')_{H'} \ar@{=}[d]^-{\textrm{(\ref{eq:aux-j^*j_*})}}
\\
\eta_{H'}^*(x'_{j^\approx j_!H'}) \ar[r]^-{\displaystyle\leta^{(\najHp\textrm{ in }\cB)}}
& \eta_{H'}^*(\najHp)^*(\najHp)_*(x'_{j^\approx j_!H'})
}}
\end{equation}
where we use that $\eta_{H'}\colon H'\to j^\approx j_!H'$ is a local equivalence to invoke the corresponding coherence isomorphism for~$x'_\sbull$ on the left-hand vertical side.
In full detail, the morphism in the second row reads $\eta_{H'}^*\big((\leta^{(\najHp)})_{x'_{j^\approx j_!H'}}\big)$.
The left counit $\leps=\leps^{(j)}\colon j_*j^*\To \Id_{\Bup(G)}$ is given on~$x\in\Bup(G)$ at level~$H\in\GG/G$ by
\begin{equation}
\label{eq:leps-Bup-x-H}%
\vcenter{
\xymatrix@C=6em@R=1em@M=.5em{
(j_*j^*x)_{H} \ar[r]^-{\displaystyle(\leps_{x})_{H}}_-{:=} \ar[d]_-{\cong}^-{\textrm{(\ref{eq:aux-j_*j^*})}}
& x_{H} \ar@{=}[d]
\\
(\najH)_*(\najH)^*(x_H) \ar[r]^-{\displaystyle\leps^{(\najH\textrm{ in }\cB)}}
& x_H\,.\!\!
}}
\end{equation}
The right unit $\reta=\reta^{(j)}\colon \Id_{\Bup(G)}\To j_*j^*$ is given on~$x\in\Bup(G)$ at level~$H\in\GG/G$ by
\begin{equation}
\label{eq:reta-Bup-x-H}%
\vcenter{
\xymatrix@C=6em@R=1em@M=.5em{
x_{H} \ar[r]^-{\displaystyle(\reta_{x})_{H}}_-{:=} \ar@{=}[d]
& (j_*j^*x)_{H} \ar[d]_-{\cong}^-{\textrm{(\ref{eq:aux-j_*j^*})}}
\\
x_H \ar[r]^-{\displaystyle\reta^{(\najH\textrm{ in }\cB)}}
& (\najH)_*(\najH)^*(x_H)\,.\!\!
}}
\end{equation}
And finally the right counit $\reps=\reps^{(j)}\colon j^*j_*\To \Id_{\Bup(G')}$ is given on~$x'\in\Bup(G')$ at level~$H'\in\GG/G'$ by
\begin{equation}
\label{eq:reps-Bup-x'-H'}%
\vcenter{
\xymatrix@C=8em@R=1em@M=.5em{
(j^*j_*x')_{H'} \ar[r]^-{\displaystyle(\reps_{x'})_{H'}}_-{:=} \ar@{=}[d]^-{\textrm{(\ref{eq:aux-j^*j_*})}}
& x'_{H'} \ar[d]^-{x'_{\eta_{H'}}}_-{\simeq}
\\
\eta_{H'}^*(\najHp)^*(\najHp)_*(x'_{j^\approx j_!H'}) \ar[r]^-{\displaystyle\reps^{(\najHp\textrm{ in }\cB)}}
& \eta_{H'}^*(x'_{j^\approx j_!H'})
}}
\end{equation}
where the right-hand vertical isomorphism is the same as in~\eqref{eq:leta-Bup-x'-H'}.
Again, the morphism in the second row can be expanded to $\eta_{H'}^*\big((\reps^{(\najHp)})_{x'_{j^\approx j_!H'}}\big)$.

Note right away that juxtaposing~\eqref{eq:leta-Bup-x'-H'} and
\eqref{eq:reps-Bup-x'-H'} gives the special Frobenius property
$\reps\circ\leta=\id$, since
$(\najHp)_*\adj(\najHp)^*\adj(\najHp)_*$ is special Frobenius.

It remains to verify the unit-counit relations.
We spell out one of them:
\[
(j^*(\leps_x))\circ \leta_{j^*x}=\id_{j^*x}
\]
in~$\Bup(G')$, for every $x\in\Bup(G)$. At level~$H'\in\GG/G'$ we need to prove
\begin{equation}\label{eq:aux-unit-counit}%
(\leps_x)_{j_!H'}\circ (\leta_{j^*x})_{H'}=\id_{x_{j_!H'}}.
\end{equation}
Unpacking \eqref{eq:leta-Bup-x'-H'} on~$x'=j^*x$ and \eqref{eq:leps-Bup-x-H} at level~$H=j_!H'$ we get respectively the top row and the right column of the following diagram in~$\cB(H')$ where we abbreviate $\nabla$ for~$\najHp$
\[
\xymatrix@C=1em@R=2em{
x_{j_!H'} \ar[rr]^-{x_{j_!\eta_{H'}}}_-{\simeq} \ar@{=}[rrd]
&& \eta_{H'}^*(x_{j_!j^\approx j_!H'}) \ar[rrr]^-{\displaystyle\leta^{(\nabla)}} \ar[d]_-{\cong}^-{x_{\nabla}}
&&&
\nabla_*(x_{j_!j^\approx j_!H'}) \ar@{=}[r] \ar[d]_-{\cong}^-{x_{\nabla}}
& (j_*j^*x)_{j_!H'} \ar[d]_-{\cong}^-{x_{\nabla}}
\\
&& \kern-1em \eta_{H'}^*\nabla^*(x_{j_!H'}) \ar[rrr]^-{\eta_{H'}^*\leta^{(\nabla)}\nabla^*} \ar@{=}[rrrd]^(.6){\displaystyle(\adj)}
&&& \eta_{H'}^*\nabla^*\nabla_*\nabla^*(x_{j_!H'}) \ar@{=}[r] \ar[d]^-{\eta_{H'}^*\nabla^*(\leps^{(\nabla)})}
& \nabla_*\nabla^*(x_{j_!H'}) \ar[d]^-{\displaystyle\leps^{(\nabla)}}
\\
&&&&& \eta_{H'}^*\nabla^*(x_{j_!H'}) \ar@{=}[r]
& x_{j_!H'}
}
\]
The isomorphisms between first and second row are given by the coherence isomorphism $x_s$ for $s=\najHp$ as in~\eqref{eq:aux-j_*j^*}.
The reader will verify that this diagram commutes. The critical triangle is the one marked~$(\adj)$ which is the image of the unit-counit relation for the $\nabla_*\adj \nabla^*$ adjunction at the object~$x_{j_!H'}\in\cB(j_!H')$ under the functor~$\eta_{H'}^*\colon \cB(j_!H')\to \cB(H')$, all of which only depend on~$\cB$ being a conjugation 2-functor.
The outer commutativity of the above diagram in~$\cB(H')$ gives~\eqref{eq:aux-unit-counit}. This proves the first unit-counit relation.

The reader will verify that the other three unit-counit relations also reduce to unit-counit relations for~$\nabla_*\adj \nabla^*\adj \nabla_*$ where $\nabla=\najHp$ as above (in the proof of $(\reps j^*)\circ(j^*\reta)=\id_{j^*}$) or with $\nabla=\najH$ (for the other two relations).
\end{proof}

\begin{Prop}[Base-change for $\Bup$]
\label{Prop:Bup-base-change}%
Let
\[
\xymatrix@C=2em@R=.5em{
& L \ar@{ >->}[rd]^{j} \ar@{ ->}[ld]_{v} \ar@{}[dd]|{\isocell{\alpha}} & \\
H \ar@{ >->}[rd]_i &  & K \ar@{ ->}[ld]^u \\
& G &
}
\]
be a Mackey square in $\GG$, with $i$ and $j$ faithful. Then the left mate of~$\alpha^*$ and the right mate of~$(\alpha^{-1})^*$
\[
\alpha_!\colon j_*v^*\isoto u^*i_*
\qquad\text{and}\qquad
(\alpha\inv)_*\colon u^*i_* \isoto j_*v^*
\]
are inverse isomorphisms between functors from~$\Bup(H)$ to~$\Bup(K)$.
\end{Prop}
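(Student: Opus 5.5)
The plan is to compute both composites levelwise, construct an explicit candidate isomorphism between them from the Mackey property of the square, and then show that the two mates agree with this candidate and with its inverse.

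\textbf{Levelwise formulas.} Fix $x_\sbull\in\Bup(H)$ and a level $T\in\GG/K$. By \Cref{Cons:Bup-restriction,Cons:Bup-induction},
\[
(u^*i_*x)_T=(i_*x)_{u_!T}=(\naj[i]{u_!T})_*\big(x_{i^\approx u_!T}\big),
\]
where $i^\approx u_!T\subseteq H\times_G T$. Likewise,
\[
(j_*v^*x)_T=(\naj{T})_*\big((v^*x)_{j^\approx T}\big)=(\naj{T})_*\big(x_{v_!j^\approx T}\big),
\]
where $j^\approx T\subseteq L\times_K T$.

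\textbf{Comparing the groupoid parts.} This is the analogue of the equivalence $e_X$ in \Cref{Prop:Alp-base-change}. Pasting the isocomma $(j/\st{T})$ with the Mackey square $(\alpha)$ and applying \Cref{Lem:Mackey-square-arith} yields an equivalence
\[
e_T=\langle v\pr_1,\pr_2,\,\cdots\rangle\colon L\times_K T\isoto H\times_G T .
\]
It commutes strictly with the second projections to $T$ and lies over $H$ via $v\pr_1$ (up to the obvious structure 2-cell). An equivalence compatible with $\pr_2$ preserves the loci where $\pr_2$ is fully faithful, so by \Cref{Lem:approx-Mackey}, or directly by \Cref{Rem:local-equiv-iso}, $e_T$ restricts to an equivalence
\[
e_T^\approx\colon v_!j^\approx T\isoto i^\approx u_!T
\]
in $\GG/H$, satisfying $\naj[i]{u_!T}\circ e_T^\approx=\naj{T}$. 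Being an equivalence, $e_T^\approx$ is a local equivalence, so $x_\sbull$ supplies a coherence isomorphism $x_{e_T^\approx}\colon x_{v_!j^\approx T}\isoto (e_T^\approx)^*x_{i^\approx u_!T}$.

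\textbf{Candidate isomorphism.} Using the rectified choice that $(e^\approx_T)_*$ is inverse to $(e^\approx_T)^*$ (\Cref{Rem:rectified-leq}), together with pseudofunctoriality of folding induction, I define
\[
\phi_T\colon (\naj{T})_*\,x_{v_!j^\approx T}\xto{(\naj{T})_*(x_{e_T^\approx})}(\naj{T})_*(e^\approx_T)^*x_{i^\approx u_!T}\cong(\naj[i]{u_!T})_*\,x_{i^\approx u_!T}.
\]
I will then check that $\phi_\sbull$ is a morphism in $\Bup(K)$, i.e.\ compatible with the coherence isomorphisms of \Cref{Cons:Bup-induction} for every local equivalence $s\colon T\to T'$. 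This uses condition~\eqref{it:x-ts} of \Cref{Cons:Bup-category} for $x_\sbull$ and the compatibility of folding base-change (\Cref{Prop:leq-base-change}) with the equivalences $e$. Naturality in $x_\sbull$ is immediate, since everything is built from the data of $x_\sbull$ and natural maps.

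\textbf{Identifying the mates.} I will compute $\alpha_!=\leps^{(j)}\cast\alpha^*\cast\leta^{(i)}$ levelwise by inserting the explicit (co)units \eqref{eq:leta-Bup-x'-H'} and \eqref{eq:leps-Bup-x-H} and the 2-cell action of \Cref{Cons:Bup-alpha*}. At level $T$, every ingredient is a folding (co)unit in $\cB$, a coherence isomorphism $x_{(\cdot)}$, or a base-change isomorphism from \Cref{Prop:leq-base-change}. Conditions~\eqref{it:x-ts} and \eqref{it:x-alpha} should collapse the chain of coherence isomorphisms to $x_{e_T^\approx}$. The folding units and counits should then cancel by the unit–counit relations for $\nabla_*\adj\nabla^*$ and by the special Frobenius property, leaving $\phi_T^{-1}$. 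The same computation with \eqref{eq:reta-Bup-x-H} and \eqref{eq:reps-Bup-x'-H'} should give $(\alpha\inv)_*=\phi$. Since $\phi$ is invertible, the two mates are inverse isomorphisms.

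I expect the main obstacle to be this last computation. The units in \eqref{eq:leta-Bup-x'-H'} pass through the sections $\eta_{H'}$ of $\naj{j_!H'}$, so one must track how the composite of $\eta$, $(\alpha/-)$ and the counit projection identifies, inside $H\times_G T$, with $e_T^\approx$ up to a 2-cell. My first step would be to reduce via additivity, \Cref{Lem:leq-folding} and \Cref{Exa:Mackey-square}, to the case where $T$ is connected and $G,H,K$ are groups. In that case all $\approx$-loci become explicit coproducts of copies of $T$ indexed by double cosets, the foldings become biproducts, and the verification becomes a bookkeeping of matrix components, as in \Cref{Exa:trace-sum}.
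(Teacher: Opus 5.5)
Your proposal follows the paper's proof essentially step for step. The levelwise formulas, the equivalence $e_T=\langle v\pr_1,\pr_2,\gamma_{j/\st{T}}\cast\alpha\rangle$ restricted to the $\approx$-loci, the coherence isomorphism $x_{e_T^\approx}$, and the resulting chain $\phi_T$ are exactly what the paper writes, and the paper likewise leaves the identification with the mates to the reader. One bookkeeping slip needs fixing: since $\phi_T$ goes from $(j_*v^*x)_T$ to $(u^*i_*x)_T$, it is $\alpha_!$ that should equal $\phi$ and $(\alpha\inv)_*$ that should equal $\phi^{-1}$, not the other way round. Also, your separate check that $\phi_\sbull$ is a morphism of $\Bup(K)$ becomes automatic once $\phi$ is identified with a mate.
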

\begin{proof}
For $x\in\Bup(H)$, we compute the two composites $j_*v^*(x)$ and~$u^*i_*(x)$ in~$\Bup(K)$. At level~$Y\in\GG/K$ we have in~$\cB(Y)$
\begin{equation}
\label{eq:aux-Bup-BC-1}%
(j_*v^*x)_Y \equalby{\ref{Cons:Bup-induction}} (\najY)_*\big((v^*x)_{j^\approx Y}\big) \equalby{\ref{Cons:Bup-restriction}} (\najY)_*\big(x_{v_!j^\approx Y}\big)
\end{equation}
where $j^\approx Y$ is the part of $j^*Y=L\times_K Y$ on which $\pr_2\colon L\times_K Y\to Y$ is a local equivalence, the latter being called $\najY=(\pr_2)\restr{j^\approx Y}\colon j^\approx Y\apprto Y$ (see~\Cref{Cons:j-approx}). On the other hand, still in~$\cB(Y)$, we have
\begin{equation}
\label{eq:aux-Bup-BC-2}%
(u^*i_*x)_Y \equalby{\ref{Cons:Bup-restriction}} (i_*x)_{u_!Y} \equalby{\ref{Cons:Bup-induction}} (\naiuY)_*(x_{i^\approx u_!Y})
\end{equation}
where $i^\approx(u_!Y)$ is the part of $i^*(u_!Y)=H\times_G Y$ on which $\pr_2\colon H\times_G Y\to Y$ is a local equivalence, called $\naiuY=(\pr_2)\restr{i^\approx u_!Y}\colon i^\approx u_!Y\apprto Y$.
To compare~\eqref{eq:aux-Bup-BC-1} and~\eqref{eq:aux-Bup-BC-2}, we need to compare the objects~$j^\approx Y$ and $i^\approx u_!Y$ in~$\GG$.
They appear in the following commutative diagram:
\begin{equation}
\label{eq:aux-Bup-BC-big}%
\vcenter{\xymatrix@C=2em@R=1.5em@M=.5em{
&&& \green{i^\approx u_!Y} \ar@[ForestGreen]@{_(->}[ld]_-{\green{\incl}} \ar@[ForestGreen]@/^2.5em/[ddd]^-{\green{\naiuY}}
\\
&& H\times_G Y \ar@/_2em/[llddd]_-{\pr_1} \ar@/^1em/[rdd]_-{\pr_2}|(.35){\hole} \ar@{<-}[d]^-{\cong}_-{e_Y}
& \green{j^\approx Y} \ar@[ForestGreen]@{_(->}[ld]_(.4){\green{\incl}} \ar@[ForestGreen][dd]^-{\green{\najY}} \ar@[ForestGreen][u]^-{\green{e_Y^\approx}}_-{\green{\cong}}
\\
&& L\times_K Y \ar[ld]_-{\pr_1} \ar[rd]_-{\pr_2} \ar@{}[dd]|{\isocell{\gamma_{j/\stY}}}
& \\
& L \ar[rd]_j \ar[ld]_-{v} \ar@{}[dd]|-{\isocell{\alpha}}
&& Y \ar[ld]^{\stY}  \ar@/^2em/[lldd]^(.3){\st{u_!Y}=u\stY}
\\
\kern1em H \kern1em \ar[rd]_-{i}
&& K\ar[ld]^-{u}
&
\\
& G &
}}
\end{equation}
Ignoring the green part ($j^\approx Y$ and~$i^\approx u_!Y$) for a moment, the above (black part of the) diagram is essentially the same as~\eqref{eq:aux-Alp-BC-big}; its outer square is the isocomma $H\times_G Y=(i/u\stY)$ and its central part consists of the given Mackey square and the isocomma square for $L\times_K Y=(j/\stY)$. Hence we have an equivalence
\[
e_Y=
\left\langle
v\pr_1,\ \pr_2,\
\gamma_{j/\stY}\cast\alpha
\right\rangle\colon L\times_K Y\isoto H\times_G Y
\]
as shown in~\eqref{eq:aux-Bup-BC-big}. Since this equivalence is compatible with the projections to~$Y$, namely $\pr_2\circ e_Y=\pr_2$, it restricts to a local equivalence on the $\approx$-loci of~$\pr_2$, \ie it induces the (green) equivalence $e_Y^\approx:=e_Y\restr{j^\approx Y}$ at the top right of~\eqref{eq:aux-Bup-BC-big}. We have
\begin{equation}
\label{eq:aux-Bup-BC-3}%
\naiuY\circ e_Y^\approx=\najY\colon j^\approx Y\to Y
\end{equation}
by construction. Furthermore, we also see in~\eqref{eq:aux-Bup-BC-big} that $e_Y^\approx\colon j^\approx Y\to i^\approx u_!Y$ is compatible with the structure morphisms of those two objects when seen above~$H$. Since technically $j^\approx Y$ lives over~$L$ and since we use $v\colon L\to H$ to see it over~$H$, we are more precisely getting an equivalence in~$\GG/H$ as follows:
\[
e_Y^\approx\colon v_!j^\approx Y\isoto i^\approx u_!Y.
\]
But then the object~$x=x_\sbull\in\Bup(H)$ has an associated coherence isomorphism
\begin{equation}
\label{eq:aux-Bup-BC-4}%
x_{e_Y^\approx}\colon x_{v_!j^\approx Y}\isoto (e_Y^\approx)^* x_{i^\approx u_!Y}
\end{equation}
in~$\cB(v_!j^\approx Y)$.
We can now compare~$(j_*v^*x)_Y$ and~$(u^*i_*x)_Y$:
\begin{align*}
(j_*v^*x)_Y
& \underset{\textrm{{(\ref{eq:aux-Bup-BC-1})}}}{=}
(\najY)_*\big(x_{v_!j^\approx Y}\big)
\underset{\textrm{(\ref{eq:aux-Bup-BC-4})}}{\cong}
(\najY)_*(e_Y^\approx)^* (x_{i^\approx u_!Y})
\\
& \underset{\textrm{(\ref{eq:aux-Bup-BC-3})}}{\cong}
(\naiuY)_*(e_Y^\approx)_*(e_Y^\approx)^*(x_{i^\approx u_!Y})
\cong
(\naiuY)_*(x_{i^\approx u_!Y})
\underset{\textrm{{(\ref{eq:aux-Bup-BC-2})}}}{=}
(u^*i_*x)_Y
\end{align*}
where the middle isomorphism in the second row holds since $e_Y^\approx$ is an equivalence and therefore $(e_Y^\approx)^*$ is an equivalence whose inverse is also its adjoint~$(e_Y^\approx)_*$.
The reader will verify that the above isomorphism $(j_*v^*x)_Y\isoto (u^*i_*x)_Y$ is indeed the mate of $\alpha_!$, on~$x$, at level~$Y$. Since this is an isomorphism for all~$Y$, and for all~$x$, we get the wanted isomorphism $\alpha_!\colon j_*v^*\isoto u^*i_*$.
One further verifies that the inverse is~$(\alpha\inv)_*$. We leave this verification to the reader.
\end{proof}

We now allow the conjugation 2-functor~$\cB\in\conjfun{\GG}$, which was fixed so far in this section, to vary.
The following spells out the equality~\eqref{eq:fake-Mackey}.
\begin{Lem}
\label{Lem:leq-transfo-induction}
Let $t\colon \cB\to\cB'$ be a morphism of conjugation $2$-functors, and let $s\colon K\to H$ be a local equivalence in~$\GG$. Then we have natural isomorphisms
\[
(t_s)_*\colon t_H\,s_*^{\cB}\isoTo s_*^{\cB'}\,t_K\,,
\qquadtext{and}
(t_s\inv)_!\colon s_*^{\cB'}\,t_K\isoTo t_H\,s_*^{\cB}
\]
of functors $\cB(K)\to \cB'(H)$, obtained as the mates of the restriction-compatibility isomorphisms $t_s\colon s^*t_H\isoto t_K s^*$
(and its inverse) with respect to the folding 2-sided adjoints~$s_*\dashv s^*\adj s_*$ in the conjugation 2-functors~$\cB$ and~$\cB'$ (\Cref{Prop:leq-adjoints}).
\end{Lem}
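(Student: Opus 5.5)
The plan is to reduce, via \Cref{Lem:leq-folding}, to the case of a folding, and then observe that an additive functor preserves biproducts. The mates $(t_s)_*$ and $(t_s\inv)_!$ are defined by the usual formulas with the folding (co)units of \Cref{Prop:leq-adjoints}:
\[
(t_s)_*=\reps^{(s)}\cast t_s\inv\cast\reta^{(s)}
\qquadtext{and}
(t_s\inv)_!=\leps^{(s)}\cast t_s\inv\cast\leta^{(s)},
\]
with the relevant whiskerings. Naturality in the object of~$\cB(K)$ is then automatic. What has to be shown is invertibility, together with the fact that the two mates are mutually inverse, as the parallel with~\Mack{3} suggests.

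First I would reduce to connected~$H$. By additivity of~$\cB$ and~$\cB'$, and since $t_H$ is compatible with the decompositions $\cB(H)\simeq\oplus_{L\in\pi_0(H)}\cB(L)$ through the isomorphisms~$t_{\incl_L}$, it suffices to treat each component. The same additivity also lets me assume that $K=\bigsqcup^n H$ via an equivalence followed by the folding~$\nabla^{(n)}_H$. For an equivalence~$e$, the induction $e_*$ is the chosen inverse of~$e^*$ (\Cref{Rem:rectified-leq}), so the mate $(t_e)_*$ is simply a conjugate of~$t_e\inv$ and is therefore invertible. Mates are compatible with composition of adjunctions, so I only need to treat the folding case $s=\nabla^{(n)}_H$.

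In the folding case, $s^*$ becomes the diagonal $\Delta\colon\cB(H)\to\cB(H)^n$ under additivity, and $s_*=\bigoplus$ with the obvious (co)units. The isomorphism~$t_s$ is then identified with the identity $\Delta t_H=(t_H)^n\Delta$, corrected by the compatibility isomorphisms~$t_{\incl_i}$. The mate $(t_s)_*$ is then the canonical comparison
\[
t_H(c_1\oplus\cdots\oplus c_n)\to t_H(c_1)\oplus\cdots\oplus t_H(c_n),
\]
whose components are $t_H$ applied to the projections. This map is invertible because $t_H$ is additive and so preserves finite biproducts. Its inverse is the map assembled from $t_H$ of the inclusions, and that map is exactly what the formula for $(t_s\inv)_!$ gives. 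This establishes both claims.

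The main obstacle is bookkeeping: I must check that, under the additivity equivalences, $t_s$ really corresponds to the identity up to the coherence isomorphisms~$t_{\incl_i}$. This uses the pseudo-naturality axioms of~$t$ applied to the factorization $\incl_i=$ the $i$-th component of~$K$, together with $s\circ\incl_i=\id_H$. I expect this verification to be routine, and I would leave it to the reader, as the paper does elsewhere.
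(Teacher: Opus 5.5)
Your proposal is correct and follows the paper's route: reduce via \Cref{Lem:leq-folding} to a folding $\nabla^{(n)}$, where the two mates become the canonical comparisons between $t_H(c_1\oplus\cdots\oplus c_n)$ and $t_H(c_1)\oplus\cdots\oplus t_H(c_n)$, and these are mutually inverse because $t_H$ is additive. One small slip: the right mate should be $(t_s)_*=\reps^{(s)}\cast t_s\cast\reta^{(s)}$, with $t_s$ rather than $t_s\inv$ (compare \Cref{Rec:special-Frobenius}), and the same correction applies to your equivalence case; this does not affect your folding computation, where $t_s$ is the identity up to the $t_{\incl_i}$ anyway.
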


\begin{proof}
Using~\Cref{Lem:leq-folding}, we reduce to the case where $s=\nabla^{(n)}\colon H^{\sqcup n}\to H$ is a folding as in~\Cref{Exa:folding}.
In that case, the result follows from additivity of~$t_H$. Details are left to the reader.
\end{proof}

\begin{Thm}
\label{Thm:Bup-Mackey}%
Let $\cB\colon \GGi^\op\to \ADD$ be a conjugation $2$-functor.
Then the restriction 2-functor $\Bup \colon \GG^\op\to \ADD$ of~\Cref{sec:Bup} is a Mackey $2$-functor.

Together with~\Cref{Cons:Bup-transfos} we obtain a well-defined 2-functor
\[
\Bup[(-)]\colon \conjfun{\GG}\too \mackfun{\GG}
\]
meaning that for every morphism~$t\colon \cB\to \cB'$ of conjugation 2-functors, the associated morphism of restriction 2-functors~$\Bupt\colon \Bup\to \Bupp$ is compatible with induction.
\end{Thm}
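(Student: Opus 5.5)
The proof has two parts. First we check that $\Bup$ satisfies the axioms of \Cref{Def:2-Mackey}. Then we check that each $\Bupt$ preserves induction. For the first part, \Mack{1} is the additivity already recorded in \Cref{Prop:Bup-2-functor}, and \Mack{2} is \Cref{Prop:Bup-adjunction}. For faithful $j$, this gives the special Frobenius two-sided adjoint $j_*$ of \Cref{Cons:Bup-induction}. \Mack{3} is the base-change \Cref{Prop:Bup-base-change}. As in \Cref{Rem:rectify-Mackey}, we then invoke the Rectification Theorem of~\cite{BalmerDellAmbrogio20} to arrange the harmless extra properties. This shows $\Bup\in\mackfun{\GG}$.

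For the second part, let $t\colon\cB\to\cB'$ be a morphism in $\conjfun{\GG}$ and $j\colon G'\into G$ faithful. By \Cref{Cons:Bup-transfos}, $\Bupt$ is strictly compatible with restriction, so $(\Bupt)_j=\id$. Its right mate is
\[
(\Bupt_j)_*=\reps^{(j)}\cast\reta^{(j)}\colon \Bupt_G\circ j_*\To j_*\circ\Bupt_{G'}.
\]
We compute it levelwise using the explicit (co)units \eqref{eq:reta-Bup-x-H} and \eqref{eq:reps-Bup-x'-H'}. At $x'\in\Bup(G')$ and level $H\in\GG/G$, the source is $t_H\big((\najH)_*(x'_{j^\approx H})\big)$. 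The target is $(\najH)'_*\big(t_{j^\approx H}(x'_{j^\approx H})\big)$, with induction taken in~$\cB'$.

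Unwinding the mate, we expect the component to be the folding-induction comparison $(t_{\najH})_*$ of \Cref{Lem:leq-transfo-induction}, evaluated at $x'_{j^\approx H}$. That lemma says this map is an isomorphism, since $\najH$ is a local equivalence. Hence $(\Bupt_j)_*$ is invertible at every level, so $\Bupt$ is a morphism of Mackey 2-functors in the sense of \Cref{Rec:Mack(G)}.

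Modifications need no further check, because $\mackfun{\GG}$ is 2-full in $\resfun{\GG}$. Likewise, 2-functoriality of $\cB\mapsto\Bup$ is inherited from \Cref{Cons:Bup-transfos}.

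The main obstacle is the identification of the mate at level $H$. The units and counits of $j_*\adj j^*\adj j_*$ in $\Bup$ are themselves built from the coherence isomorphisms $x_{\najH}$ and $x'_{\eta_{H'}}$ together with the $\cB$-level folding (co)units, and the structure isomorphism of $\Bupt_G(j_*x')$ involves $t_s$ twisted by base-change isomorphisms. So we must check that, after cancelling the coherence isomorphisms via the triangle identities for $\nabla_*\adj\nabla^*$, what remains is exactly $\reps^{(\nabla)}\cast t_{\nabla}\cast\reta^{(\nabla)}$ in $\cB$, $\cB'$.

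I would first reduce to a folding $\nabla^{(n)}$ using \Cref{Lem:leq-folding} and additivity, as in the proof of \Cref{Lem:leq-transfo-induction}. In that case everything becomes biproducts, and $t_H$ commutes with $\bigoplus$ by additivity, so the mate is visibly invertible. In fact it is enough to show that some composite of isomorphisms is the mate, since invertibility is all we need.
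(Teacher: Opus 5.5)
Your proposal is correct and follows the paper's proof. The Mackey axioms come from the additivity, two-sided adjunction and base-change results already established for $\Bup$. Compatibility of $\Bup[t]$ with induction comes from identifying, level by level, the right mate of the identity comparison with the folding comparison $(t_{\najH})_*$, which is invertible by \Cref{Lem:leq-transfo-induction}. The paper also simply asserts that level-wise identification of the mate, so your suggested reduction to foldings would only add detail the paper omits.
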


\begin{proof}
Let us verify the Mackey axioms for the restriction 2-functor~$\Bup$.
Additivity~\Mack{1} is already part of $\Bup$ being a restriction 2-functor (\Cref{Prop:Bup-2-functor}).
The two-sided special Frobenius adjunction~\Mack{2} is
\Cref{Prop:Bup-adjunction}. Base-change~\Mack{3} is exactly~\Cref{Prop:Bup-base-change}.
Therefore $\Bup$ is Mackey.

We now explain the functoriality in~$\cB$. Let $t\colon \cB\to\cB'$
be a transformation of conjugation $2$-functors. It yields a strict morphism of
restriction 2-functors $\Bupt\colon \Bup\to\Bupp$ by~\Cref{Cons:Bup-transfos}.

We claim that this is a morphism of Mackey $2$-functors, \ie that it is compatible with induction as in~\Cref{Rec:Mack(G)}.
For every faithful~$j\colon G'\into G$, the restriction-comparison isomorphism of the induced transformation~$\Bupt\colon\Bup\to\Bupp$ is \[ (\Bupt)_j=\id\colon j^*\Bupt_G\isoTo\Bupt_{G'}j^*. \] Its right mate is obtained by composing with the unit and counit of $j^*\adj j_*$: \[ \Bupt_Gj_* \xrightarrow{\ \reta^{(j)}\ } j_*j^*\Bupt_Gj_* \xrightarrow{\ j_*(\Bupt)_j j_*\ } j_*\Bupt_{G'}j^*j_* \xrightarrow{\ j_*\Bupt_{G'}\reps^{(j)}\ } j_*\Bupt_{G'}. \] At every level~$H\in\GG/G$, this mate identifies with \[ (t_{\najH})_*\colon t_H(\najH)_*^{\cB} \isoTo (\najH)_*^{\cB'}t_{j^\approx H}, \] where $\najH\colon j^\approx H\apprto H$ is the local equivalence of \Cref{Cons:j-approx}. This is an isomorphism by \Cref{Lem:leq-transfo-induction}. Hence the mate of~$(\Bupt)_j$ is an isomorphism, so~$\Bupt$ is compatible with induction.
\end{proof}

%
\section{The mark transformation}
\label{sec:mark}
%

In this section we compare the left and right mackeyfications of a restriction 2-functor~$\cA$. From~\Cref{Thm:Alp-UP,Thm:Bup-UP} we have
\begin{equation}
\label{eq:Aup-A-Alp}%
\xymatrix@C=5em{
\Aup \ar[r]^-{\displaystyle\epsup_{\cA}}
& \cA \ar[r]^-{\displaystyle\etalp_{\cA}}
& \Alp.
}
\end{equation}
Writing the forgetful functors $U\colon \mackfun{\GG}\hook \resfun{\GG}$ and~$V\colon \resfun{\GG}\hook\conjfun{\GG}$, the right-hand~$\Alp$ in~\eqref{eq:Aup-A-Alp} is the restriction 2-functor~$U(\Alp)$  and the left-hand $\Aup$ is the left mackeyfication of~$\cA$ viewed as a conjugation 2-functor, that is, $\Aup:=\Bup[(V\!\cA)]$. In the same vein, $\epsup_{\cA}$ is truly $\epsup_{V\!\cA}$.

Let us also recall the morphism of restriction 2-functors $\etaup_{\cA}\colon \cA\to \Bup[(V\!\cA)]=\Aup$ of~\Cref{Cons:Bup-unit}, namely the unit of the $(V\adj\Bup[(-)])$-adjunction.
Applying~\Cref{Thm:Alp-UP} to $t=\etaup_{\cA}$ gives a morphism of Mackey $2$-functors $\widehat{\etaup_{\cA}}\colon \Alp\to\Aup$ such that the following equality of morphisms of restriction 2-functors holds
\begin{equation}
\label{eq:mark-ancestor}%
U(\widehat{\etaup_{\cA}})\circ \etalp_{\cA}=\etaup_{\cA}
\end{equation}
where $U\colon \mackfun{\GG}\hook\resfun{\GG}$ is the other forgetful functor.
We trust the reader can restore~$U$ and~$V$ as needed, and we suppress them from now on, unless their mention helps cognition.

\begin{Def}
\label{Def:mark}%
Let $\cA\colon \GG^\op\to \ADD$ be a restriction $2$-functor.
The \emph{mark transformation} of~$\cA$ is the above morphism of Mackey 2-functors
\[
\mu_{\cA}:=\widehat{\etaup_{\cA}}\colon \Alp\to\Aup
\]
given by the explicit formula in~\Cref{Thm:Alp-UP}:
\[
\mu_{\cA} = \epslp_{(\Aup)}\circ \Alp[(\etaup_{\cA})]
\]
where $\epslp_{(\Aup)}\colon \Alp[(\Aup)]\to \Aup$ is the counit of the $(\Alp[(-)]\adj U)$-adjunction at the Mackey 2-functor~$\Aup$, as described in~\Cref{Cons:Alp-counit} and $\Alp[(\etaup_{\cA})]$ is the image of~$\etaup_{\cA}$ under~$\Alp[(-)]\colon \resfun{\GG}\to \mackfun{\GG}$ as in~\Cref{Cons:Alp-transfos}.
It is characterized by the equality~\eqref{eq:mark-ancestor} in~$\resfun{\GG}$, that is, forgetting~$U$ and~$V$,
\begin{equation}
\label{eq:mark}%
\mu_{\cA}\circ \etalp_{\cA}=\etaup_{\cA}.
\end{equation}
\end{Def}

\begin{Rem}
\label{Rem:mark-all}%
Completing~\eqref{eq:Aup-A-Alp} with~$\mu_{\cA}$ and~$\etaup_{\cA}$, we get a diagram of pseudo-natural transformations with varying degrees of naturality (`mack', `res', and `conj' indicate morphisms of Mackey, restriction, and conjugation 2-functors respectively):
\begin{equation}
\label{eq:mark-all-morphisms}%
\vcenter{
\xymatrix@C=5em@R=5em{
& \cA \ar@/^2em/[rd]^-{\etalp_{\cA}}_(.45){\scriptstyle\textrm{(res)}\!\!\!} \ar@/_2em/[ld]_-{\etaup_{\cA}}^(.45){\!\!\!\scriptstyle\textrm{(res)}}
\\
\Aup \ar@/_2em/[ru]^-{\epsup_{\cA}\!\!}_(.55){\!\!\!\scriptstyle\textrm{(conj)}}
&& \Alp \ar@/^2em/[ll]^-{\mu_{\cA}}_-{\scriptstyle\textrm{(mack)}}
}}
\end{equation}
By~\eqref{eq:mark}, the outer triangle in~\eqref{eq:mark-all-morphisms} commutes.
The left-hand composite in~\eqref{eq:mark-all-morphisms} $\epsup_{\cA}\circ\etaup_{\cA}\colon \cA\to \Aup\to \cA$ is truly $\epsup_{V\!\cA}\circ V(\etaup_{\cA})\colon V\!\cA\to V(\Bup[(V\!\cA)])\to V\!\cA$, hence equals the identity, by the unit-counit relation of~\Cref{Prop:Bup-unit-counit}. Therefore
\begin{equation}
\label{eq:for-later}%
\epsup_{\cA}\circ \mu_{\cA}\circ \etalp_{\cA}
\equalby{(\ref{eq:mark})}
\epsup_{\cA}\circ \etaup_{\cA}
=\id_{\cA}
\end{equation}
as morphisms of conjugation 2-functors, \ie $\epsup_{V\!\cA}\circ VU(\mu_{\cA})\circ V(\etalp_{\cA}) = \id_{V\!\cA}$.
We return to the composite $\epsup_{\cA}\circ \mu_{\cA}\colon \Alp\to \cA$ in~\Cref{Rem:mark-other-side}.
\end{Rem}

We want to give an explicit formula for~$\mu_{\cA,G}\colon \Alp(G)\to \Aup(G)$ for every $G\in \GG$.
Recall~\Cref{Cons:Alp-category} for~$\Alp(G)$ and~\Cref{Cons:Bup-category} for~$\Aup(G)$.

\begin{Thm}
\label{Thm:mark-explicit}%
Let $G\in\GG$. Let $(H,x)\in\Alp(G)$ be an object. Then $\mu_{\cA,G}(H,x)$ in~$\Aup(G)$ is given at level~$K\in\GG/G$ by the following object of~$\cA(K)$
\[
\big(\mu_{\cA,G}(H,x)\big)_K =
(p_2)_* (p_1)^*(x)
\]
where $p_1=\pr_1\restr{\stH^\approx K}$ and $p_2=\pr_2\restr{\stH^\approx K}$ are the restrictions of the two projections $\pr_1$ and~$\pr_2$ of the
isocomma square defining~$H\times_G K=(\stH/\stK)$
\begin{equation}
\label{eq:mark-explicit}%
\vcenter{
\xymatrix@C=1em@R=.5em{
& H\times_G K \ar[ld]_-{\pr_1} \ar[rd]^-{\pr_2}
\ar@{}[dd]|{\isocell{\gamma}}
\\
H \ar[rd]_-{\stH}
&& K \ar[ld]^-{\stK}
\\
& G
}}
\end{equation}
on $\stH^\approx K$, the part of~$H\times_G K$ where $\pr_2$ is fully faithful (\Cref{Cons:j-approx}).
The functor~$(p_2)_*$ in~\eqref{eq:mark-explicit} is `folding induction' as in \Cref{Prop:leq-adjoints}.
\end{Thm}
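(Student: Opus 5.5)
We unfold the defining formula $\mu_{\cA}=\epslp_{(\Aup)}\circ\Alp[(\etaup_{\cA})]$ of \Cref{Def:mark} on the object $(H,x)$, and then evaluate the resulting object of~$\Aup(G)$ level by level. First, by \Cref{Cons:Alp-transfos}, $\Alp[(\etaup_{\cA})]_G$ sends $(H,x)$ to $(H,\etaup_{\cA,H}(x))$, where $\etaup_{\cA,H}(x)\in\Aup(H)$ is, by \Cref{Cons:Bup-unit}, the family $(\st{Y}^*x)_{Y\in\GG/H}$. Next, by \Cref{Cons:Alp-counit}, the counit $\epslp_{(\Aup),G}$ sends this to $(\stH)_*\big(\etaup_{\cA,H}(x)\big)$, where $(\stH)_*\colon\Aup(H)\to\Aup(G)$ is the induction in the Mackey 2-functor $\Aup$ along the faithful $\stH\colon H\into G$. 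Hence
\[
\mu_{\cA,G}(H,x)=(\stH)_*\big(\etaup_{\cA,H}(x)\big)\quad\text{in }\Aup(G).
\]

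Now we evaluate at a level $K\in\GG/G$ using \Cref{Cons:Bup-induction} with $j=\stH$, $G'=H$. That construction gives
\[
\big((\stH)_*x'_\sbull\big)_K=(\naj[\stH]{K})_*\big(x'_{\stH^\approx K}\big),
\]
where $\stH^\approx K\subseteq H\times_G K$ is the $\approx$-locus of $\pr_2$, the map $\naj[\stH]{K}=\pr_2\restr{\stH^\approx K}=p_2$ is the relevant local equivalence, and $\stH^\approx K$ lies over $H$ via $\pr_1\restr{\stH^\approx K}=p_1$. Plugging in $x'_\sbull=\etaup_{\cA,H}(x)$, the value at the level $\stH^\approx K\in\GG/H$ is $(\st{\stH^\approx K})^*(x)=p_1^*(x)$. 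This yields $(p_2)_*(p_1)^*(x)$, as claimed. Here $(p_2)_*$ is the folding induction in $\cA$ (viewed as the conjugation 2-functor $V\!\cA$) from \Cref{Prop:leq-adjoints}. This matches the statement, since ``fully faithful on components'' is exactly the definition of $\approx$-locus (\Cref{Def:approx}).

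The main obstacle is bookkeeping rather than any real difficulty: checking that no hidden correction isomorphisms change the object. We rely on the rectified choices, $\id_*=\id$ and strictness of $\etaup_{\cA}$ (\Cref{Prop:Bup-unit}), and on the treatment of $\GG/-$ pseudofunctoriality as strict (\Cref{Rem:j*-GG}). With these, the identification holds on the nose; in the worst case it holds up to the canonical coherence isomorphisms of~$\Aup$, which is still the claimed formula up to canonical isomorphism. One should also note which structure morphism $\stH^\approx K$ carries over $H$ in \Cref{Cons:j-approx}: it is the restriction of $\pr_1$, so no extra 2-cell twist enters $p_1^*(x)$.
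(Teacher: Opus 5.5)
Your proposal is correct and follows the paper's proof essentially step for step. You unpack $\mu_{\cA}=\epslp_{(\Aup)}\circ\Alp[(\etaup_{\cA})]$ to get $(\stH)_*\big(\etaup_{\cA,H}(x)\big)$. You then evaluate the induction of \Cref{Cons:Bup-induction} for $j=\stH$ at level~$K$, identifying $\naj[\stH]{K}=p_2$ and the structure morphism of $\stH^\approx K$ over~$H$ as~$p_1$, so that $\big(\etaup_{\cA,H}(x)\big)_{\stH^\approx K}=p_1^*(x)$; the paper likewise makes these identifications on the nose, without additional coherence bookkeeping.
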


\begin{proof}
In the formula $\mu_{\cA} = \epslp_{(\Aup)}\circ \Alp[(\etaup_{\cA})]$ of~\Cref{Def:mark}, we can unpack $\epslp_{(\Aup)}$ from~\Cref{Cons:Alp-counit} and $\Alp[(-)]$ from~\Cref{Cons:Alp-transfos}. This gives
\begin{equation}
\label{eq:aux-mark-1}%
\mu_{\cA,G}(H,x)
=
(\stH)_*\big(\etaup_{\cA,H}(x)\big)
\end{equation}
as an object of $\Aup(G)$, where $(\stH)_*$ is induction in the Mackey 2-functor~$\Aup$.
Unpacking $j_*$ of~\Cref{Cons:Bup-induction} for~$j=\stH$, we have at level~$K\in\GG/G$
\begin{equation}
\label{eq:aux-mark-2}%
\big(\mu_{\cA,G}(H,x)\big)_K
\equalby{(\ref{eq:aux-mark-1})}\Big((\stH)_*\big(\etaup_{\cA,H}(x)\big)\Big)_K
=(\naj[\stH]{K})_*\big((\etaup_{\cA,H}(x))_{\stH^\approx K}\big).
\end{equation}
Unpacking $\najH$ of~\Cref{Cons:j-approx} for $j=\stH$, we see that the local equivalence $\naj[\stH]{K}$ is exactly the morphism~$p_2$ of the statement.
The $\approx$-locus $\stH^\approx K$, which is a subobject of $H\times_G K$, is viewed over~$H$ by restricting~$\pr_1\colon H\times_G K\to H$. In other words, as an object of~$\GG/H$, our $\stH^\approx K$ has structure morphism $\st{\stH^\approx K}=\pr_1\circ \incl_{\stH^\approx K}=p_1$ the morphism~$p_1$ of the statement.
Finally, unpacking $\etaup_{\cA}$ from~\Cref{Cons:Bup-unit} we have by the above discussion
\[
(\etaup_{\cA,H}(x))_{\stH^\approx K}=\st{\stH^\approx K}^*(x)=p_1^*(x).
\]
Plugging this in~\eqref{eq:aux-mark-2} and replacing $\naj[\stH]{K}$ by~$p_2$ we get
\[
\big(\mu_{\cA,G}(H,x)\big)_K = (p_2)_*(p_1^*(x))
\]
as announced.
\end{proof}

\begin{Rem}
\label{Rem:mark-other-side}%
We have introduced and computed the mark transformation~$\mu_{\cA}$ from the perspective of right mackeyfication. We can also approach it from the other side. Recall that in~\Cref{Rem:mark-all} we left one composite of the diagram~\eqref{eq:mark-all-morphisms} in the air, namely $\epsup_{\cA}\circ \mu_{\cA}\colon \Alp\to \cA$. We proved in~\eqref{eq:for-later} that it is a retraction of~$\etalp_{\cA}$, as morphisms of conjugation 2-functors.
Actually, it is equal to the other retraction of~$\etalp_{\cA}$ that we have in store, namely the~$\sigma_{\cA}$ of~\Cref{Rem:etalp-split}.
Indeed, for every $(H,x)$ in~$\Alp(G)$, let us unpack the explicit formula in~\Cref{Thm:mark-explicit} for~$K=G$, using the identification $H\times_G G\cong H$ under which~$\pr_2=\stH\colon H\into G$ and therefore $\stH^\approx K=(H\times_G K)^{\pr_2\approx}=H^{\stH\approx}=H^\approx$ in the notation of~\Cref{Rem:etalp-split}; the two morphisms $p_1$ and~$p_2$ of~\Cref{Thm:mark-explicit} become $p_1=(\pr_1)\restr{H^\approx}=\incl_{H^\approx}\colon H^\approx\hook H$ and
$p_2=(\pr_2)\restr{H^\approx}=\stH^\approx$.
Therefore we obtain in~$\cA(G)$
\[
\epsup_{V\!\cA,G}\circ \mu_{\cA,G}(H,x)
\equalby{\ref{Cons:Bup-counit}}
\big(\mu_{\cA,G}(H,x)\big)_G
\equalby{\ref{Thm:mark-explicit}}
(\stH^\approx)_*(\incl_{H^\approx}^*(x))
\equalby{\ref{eq:sigma}}
\sigma_{\cA}(H,x).
\]
Thus we have a natural identification of morphisms of conjugation 2-functors
\begin{equation}
\label{eq:mark-other}%
\epsup_{\cA}\circ\mu_{\cA}=\sigma_{\cA}.
\end{equation}
This equation gives the alternative description of the mark transformation~$\mu_{\cA}$, by applying~\Cref{Thm:Bup-UP} to~$t=\sigma_{\cA}$. We must have $U(\mu_{\cA})\cong\widetilde{\sigma_{\cA}}$,
which characterizes $\mu_{\cA}$ as a morphism of restriction 2-functors, hence as a morphism of Mackey 2-functors since the latter are morphisms of restriction 2-functors with additional properties.
Again, uniqueness is up to unique modification.
Unpacking the explicit description of~$\tilde{t}$ in~\Cref{Thm:Bup-UP} applied to~$t=\sigma_{\cA}$, we have
\begin{equation}
\label{eq:mark-alternative}%
\mu_{\cA}\cong\Bup[(\sigma_{\cA})]\circ \etaup_{(\Alp)}.
\end{equation}
where $\Bup[(-)]$ is as in~\Cref{Cons:Bup-transfos} and~$\etaup$ is the unit of~\Cref{Cons:Bup-unit}.
\end{Rem}

\begin{Cor}
\label{Cor:mark-via-sigma}%
Let $G\in\GG$ and let $(H,x)\in\Alp(G)$. For every object
$\stK\colon K\to G$ of $\GG/G$, consider the restriction
$\stK^*(H,x)\in\Alp(K)$. Then, with $\sigma_{\cA}$ as in
\Cref{Rem:etalp-split}, we have a canonical isomorphism in $\cA(K)$
\[
\big(\mu_{\cA,G}(H,x)\big)_K
\cong
\sigma_{\cA,K}\big(\stK^*(H,x)\big).
\]
\end{Cor}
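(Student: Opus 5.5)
Two routes suggest themselves. The first uses the restriction structure of~$\Aup$; the second is a direct check from the explicit formulas. I would present the first and use the second as a sanity check.

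\emph{Conceptual route.} Since $\mu_{\cA}$ is a morphism of restriction (indeed Mackey) 2-functors, it comes with a compatibility isomorphism $(\mu_{\cA})_{\stK}\colon \stK^*\circ\mu_{\cA,G}\isoTo \mu_{\cA,K}\circ\stK^*$ of functors $\Alp(G)\to\Aup(K)$. Evaluating at $(H,x)$ gives $\stK^*(\mu_{\cA,G}(H,x))\cong\mu_{\cA,K}(\stK^*(H,x))$ in~$\Aup(K)$. Next I apply the counit $\epsup_{\cA,K}$, which by \Cref{Cons:Bup-counit} is evaluation at the terminal level $\id_K\in\GG/K$.

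On the left, by \Cref{Cons:Bup-restriction}, $\big(\stK^*(\mu_{\cA,G}(H,x))\big)_{\id_K}=\big(\mu_{\cA,G}(H,x)\big)_{{\stK}_!K}$. Since ${\stK}_!(K\xto{\id}K)$ is exactly $K\in\GG/G$, this is $\big(\mu_{\cA,G}(H,x)\big)_K$. On the right, $\epsup_{\cA,K}\circ\mu_{\cA,K}=\sigma_{\cA,K}$ by~\eqref{eq:mark-other} applied to the object $K$. Combining the two gives the claimed isomorphism in~$\cA(K)$, namely the $\id_K$-component of $(\mu_{\cA})_{\stK}$ evaluated at $(H,x)$.

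\emph{Direct check.} As a cross-check on canonicity, I would also compute both sides from the explicit formulas. By \Cref{Thm:mark-explicit}, the left side is $(p_2)_*p_1^*(x)$ over the $\approx$-locus of $\pr_2\colon H\times_G K\to K$. For the right side, \Cref{Cons:Alp-restriction} gives $\stK^*(H,x)=(K\times_G H,\pr_2^*x)$, whose structure morphism over $K$ is $\pr_1$. Then~\eqref{eq:sigma} gives the folding induction along $\pr_1$, restricted to its $\approx$-locus, of the restriction of $\pr_2^*x$. The swap isomorphism $K\times_G H\cong H\times_G K$ exchanges the projections and inverts $\gamma$, so it identifies the two loci and the two formulas. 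The only adjusting isomorphism is $\gamma^*$ on the $x$-part.

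\emph{Main obstacle.} In the conceptual route, the one point needing care is that \eqref{eq:mark-other} is stated as an identification of conjugation morphisms and must be used at the object $K$ rather than at $G$. The conceptual route never uses $\sigma$'s compatibility with restriction, which is fortunate, because $\sigma$ is not compatible with restriction. In the direct check, the obstacle is keeping track of the canonical isomorphism $K\times_G H\cong H\times_G K$ and checking it matches the $\approx$-loci. That matching holds because it commutes with the relevant projections, and being a local equivalence is invariant under equivalence.
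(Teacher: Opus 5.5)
Your conceptual route is correct and is essentially the paper's proof. The paper unpacks $\mu_{\cA}\cong\Bup[(\sigma_{\cA})]\circ\etaup_{(\Alp)}$ from \eqref{eq:mark-alternative} at level $K$. That isomorphism, obtained from \eqref{eq:mark-other} via \Cref{Thm:Bup-UP}, is built from exactly the two ingredients you use directly: the restriction-compatibility of $\mu_{\cA}$ along $\stK$, and evaluation at the top level $\id_K$.

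Your direct check via \Cref{Thm:mark-explicit} also works. In fact the swap $K\times_G H\cong H\times_G K$ commutes strictly with the projections, so no $\gamma^*$ adjustment on the $x$-part is needed.
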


\begin{proof}
Unpack~\eqref{eq:mark-alternative} and~\Cref{Cons:Bup-unit} for~$G$, on~$(H,x)$, at level~$K$.
\end{proof}

%
\section{Examples}
\label{sec:examples}%
%

In this final section, we want to describe $\Alp$ and~$\Aup$ in the case of a constant restriction 2-functor~$\cA$.
Let us set up the notation.

%
\subsection{The constant restriction functor}\
\label{ssec:cst-A}%

%
\begin{Not}
\label{Not:Ak}%
\label{Not:cst-A}%
Let $\cA_0\in\ADD$ be a fixed additive category.
Define a 2-functor from groups (as a full 2-subcategory of~$\gpd$) by setting for every finite group~$G$
\[
\cA(G)=\cA_0
\]
independently of~$G$, with all restrictions $u^*=\cA(u)$ and all conjugation transformations~$\alpha^*=\cA(\alpha)$ equal to identity.
By~\Cref{Rem:additive=>groups}, we can equivalently think of~$\cA$ as a restriction 2-functor on the 2-category~$\gpd$ of finite groupoids by setting
\[
\cA(G)=\cA_0^{\pi_0(G)}=\bigoplus_{\pi_0(G)}\cA_0
\]
the coproduct of as many copies of our constant category~$\cA_0$ as there are connected components in the groupoid~$G$. For every $u\colon H\to G$, the restriction functor $u^*\colon\cA(G)\to \cA(H)$ is restriction along the map~$\pi_0(u)\colon \pi_0(H)\to \pi_0(G)$. All 2-cells in~$\gpd$ yield the identity transformation on~$\cA(-)$.
Of course we can restrict~$\cA$ to $\gpdf$ if we care only about faithful morphisms. We shall do this in the case of~$\Aup$.

For instance, let $\kk$ be a commutative ring (\eg a field) and $\cA_0=\kk\ffree$ the category of finitely generated free~$\kk$-modules.
Applying the above construction gives a restriction 2-functor that we shall denote~$\Ak$, namely
\[
\Ak(G)=(\kk\ffree)^{\pi_0(G)}=\bigoplus_{\pi_0(G)}(\kk\ffree).
\]
for every groupoid~$G$.
\end{Not}

\begin{Rem}
The reader could be excused for thinking that constant 2-functors as above are Mackey 2-functors but they are not. Indeed, all restrictions do admit 2-sided adjoints very much as in the proof of~\Cref{Prop:leq-adjoints}: After all, every $u^*$ is made of zeros and identities and $u_*$ is just a `folding'. However, these adjoints do not satisfy the Mackey formulas of~\Mack{3}. For instance, for the Mackey square of~\Cref{Exa:Mackey-square} associated to subgroups~$i\colon H\hook G$ and~$u\colon K\hook G$, the composite $u^*i_*\colon \cA(H)\to\cA(K)$ via~$\cA(G)$ is the identity but the composite $j_*v^*$ via~$\cA(H\times_G K)$ is the direct sum of $|\KGH|$ copies of the identity.
So it is legitimate to submit~$\cA$ to left and right mackeyfication.
\end{Rem}

%
\subsection{The $\kk$-linear Burnside category}\
\label{ssec:Burnside}%

We recall a classical construction going back to early days of Mackey functors.

\begin{Rec}
\label{Rec:Burnside-cat}%
Let $G$ be a finite group. The \emph{Burnside category} $\Burn[\bbZ](G)$ has objects the finite $G$-sets $X$ and morphisms defined as follows. For every pair of finite $G$-sets~$X$ and~$Y$, consider all possible spans $X\lto W\to Y$ of finite $G$-sets. Keeping~$X$ and~$Y$ fixed, there are obvious notions of isomorphism of spans and of sum ($\amalg$) of spans from~$X$ to~$Y$, performed on the middle part. The set of isomorphism classes of spans from~$X$ to~$Y$ is an abelian monoid under~$\amalg$. Its group-completion (Grothendieck group) is
\[
K_0\big(\{X\lto W\to Y\},\amalg\big)=:\Hom_{\Burn[\bbZ](G)}(X,Y)
\]
used as the Hom-group from~$X$ to~$Y$ in~$\Burn[\bbZ](G)$.
Composition is done via the cartesian product of $G$-sets: $[X\lto W\to Y]_{\simeq}$ followed by~$[Y\lto V\to Z]_{\simeq}$ compose to $[X\lto W\times_Y V\to Z]_{\simeq}$ with the usual legs. This passes to~$K_0$.

For our ring~$\kk$, the \emph{$\kk$-linear Burnside category} $\Burn(G)$ has the same objects as above (finite $G$-sets) and $\kk$-linearly extended Hom-groups
\[
\Hom_{\Burn(G)}(X,Y):=\kk\otimes_\bbZ \Hom_{\Burn[\bbZ](G)}(X,Y),
\]
with $\kk$-linearly extended composition.
This construction~$G\mapsto \Burn(G)$ is actually part of a Mackey 2-functor on~$\gpd$, as explained in \cite[Section~7.2]{BalmerDellAmbrogio20}.
\end{Rec}

\begin{Rec}
\label{Rec:transporter}%
Let $G$ be a group. The connection between $G$-sets, as in the Burnside category construction, and groupoids over~$G$ in~$\gpd$, as in the right mackeyfication~\Cref{Cons:Alp-category}, is given by the \emph{transporter groupoid}.
For every finite $G$-set~$X$ the transporter groupoid $G\ltimes X$ has objects the set~$X$ and morphisms $x\to y$ given by $\SET{g\in G}{g x=y}\subseteq G$. Composition is multiplication in~$G$.
See \cite[Definition~B.0.6]{BalmerDellAmbrogio20}.
We can view~$G\ltimes X$ as an object in~$\gpdf/G$ for it comes with a faithful morphism~$\st{G\ltimes X}\colon G\ltimes X\into G$ mapping~$x\in X$ to the unique object of~$G$ and every $g$ to~$g$. This turns $G$-maps~$f\colon X\to Y$ into morphism~$G\ltimes f\colon G\ltimes X\to G\ltimes Y$ given by $f$ on objects and the identity on morphisms; this defines a morphism in~$\GG/G$ with structure 2-cell~$\st{G\ltimes f}=\id$. As explained in \cite[Proposition B.0.9]{BalmerDellAmbrogio20} this construction yields a biequivalence
\[
G\ltimes-\colon G\sset\isoto \gpdf/G
\]
where the source 1-category is viewed as a 2-category with only identity 2-cells. In particular, it induces an equivalence of ordinary categories on the 1-truncations
\[
G\ltimes-\colon G\sset\isoto \tau_1(\gpdf/G)
\]
where the 1-category $\tau_1(\gpdf/G)$ has the same objects as~$\gpdf/G$ and morphisms the \emph{isomorphism classes} of morphisms in~$\gpdf/G$ (\ie modulo invertible 2-cells).

This transporter-groupoid construction can be extended to spans of finite $G$-sets, sending~$(X\xlto{w_1} W\xto{w_2} Y)$ to the object~$(G\ltimes W,G\ltimes w_1,G\ltimes w_2)$ in the 2-category~$\SpanfG(G\ltimes X\,,\,G\ltimes Y)$.
\end{Rec}

\begin{Thm}
\label{Thm:Alp-cst}%
Let $G$ be a finite group, viewed in~$\gpd$ (\Cref{Exa:gps}) and consider the right Mackeyfication~$\Alpk$ of the constant restriction 2-functor~$\Ak$ of~\Cref{Not:Ak}.
The transporter groupoid $G\ltimes-$ yields a well-defined equivalence
\[
\theta_G\colon \Burn(G)\isoto \Alpk(G)
\]
mapping a $G$-set~$X$ to~$(G\ltimes X,\ukk)$ where $\ukk$ is~$(\kk,\ldots,\kk)\in \!\!\!\bigoplus\limits_{\pi_0(G\ltimes X)}\!\!\kk\ffree=\Ak(G\ltimes X)$.
\end{Thm}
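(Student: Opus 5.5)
We construct $\theta_G$ on morphisms, check it is a well-defined $\kk$-linear functor, and then prove it is fully faithful and essentially surjective.

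\textbf{Definition on morphisms.} A span $X\xlto{w_1}W\xto{w_2}Y$ of finite $G$-sets is sent to
\[
[G\ltimes W,\,G\ltimes w_1,\,G\ltimes w_2;\,\id_{\ukk}].
\]
This makes sense because $\Ak$ is constant, so $(G\ltimes w_i)^*\ukk=\ukk$ in $\Ak(G\ltimes W)$.

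\textbf{Functoriality and linearity.} Isomorphic spans give strongly $\appr$-equivalent representatives. The sum $\amalg$ of spans goes to the addition~\eqref{eq:Alp-addition}. So the assignment passes to $K_0$ and extends $\kk$-linearly. For composition, the transporter groupoid of $W\times_Y V$ maps to the isocomma $(G\ltimes W)\times_{G\ltimes Y}(G\ltimes V)$ by an equivalence over~$G$, so we may use that square as the Mackey square in~\eqref{eq:alp-composition}. Identities are clearly preserved.

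\textbf{Essential surjectivity.} Take $(H,x)\in\Alpk(G)$.
\begin{enumerate}
\item By \Cref{Rec:transporter}, $H\simeq G\ltimes X$ in $\gpdf/G$ for some $G$-set~$X$.
\item Split $H$ into components $H_i$. On each, $x$ is a free module $\kk^{n_i}$, so $x\cong \ukk^{\oplus n_i}$ componentwise.
\item By \Cref{Rem:Alp-add}, $(H_i,\ukk^{\oplus n_i})\cong (H_i,\ukk)^{\oplus n_i}$.
\end{enumerate}
Hence $(H,x)\cong \theta_G(X')$ for the $G$-set $X'=\coprod_i (X_i)^{\sqcup n_i}$, where $X_i$ is the orbit corresponding to $H_i$. (If $\kk$-free modules are only stably free this still holds, because we use only free modules.)

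\textbf{Full faithfulness: reduction.} This is the main step. Fix $G$-sets $X$ and $Y$ and put $H=G\ltimes X$, $K=G\ltimes Y$. A morphism representative $(P,f)$ has $P\simeq G\ltimes W$ for a span $X\lto W\to Y$, by the span version of \Cref{Rec:transporter}. Its $\cA$-part $f$ is a tuple of matrices, one $\lambda_c\in\kk$ for each component $c\in\pi_0(P)$, i.e.\ each orbit of~$W$. Using~\eqref{eq:Alp-addition} and~\eqref{eq:Alp-special-addition} we split $P$ into components and decompose scalars. This gives
\[
[P,f]=\sum_c \lambda_c\,[P_c,\id].
\]
So $\theta_G$ is surjective on Homs.

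\textbf{Full faithfulness: injectivity.} Define a $\kk$-linear inverse
\[
\Hom_{\Alpk(G)}\to \Hom_{\Burn(G)},\qquad (P,f)\mapsto \sum_{c\in\pi_0(P)} \lambda_c\,[W_c],
\]
where $W_c$ is the transitive span given by component~$c$. The crux is that this is invariant under one-step $\appr$-equivalence. Let $s\colon P\apprto P'$ be a local equivalence. By \Cref{Lem:leq-folding}, $s$ is, up to equivalence, a disjoint union of foldings; each component $c$ of $P$ maps by an equivalence over~$G$, compatible with the wings, onto a component $s(c)$ of~$P'$. So $W_c\cong W'_{s(c)}$ as spans. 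By \Cref{Exa:trace-sum}, the trace sums the scalars over each fibre: $\lambda'_{c'}=\sum_{s(c)=c'}\lambda_c$. The two formal sums therefore agree in $\Burn(G)$, and this inverse kills all the relations.

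\textbf{Expected obstacle.} The delicate point is that equivalences of spans in $\SpanfG$, including their wing cells~$\sigma_i$, must correspond exactly to isomorphisms of spans of $G$-sets. One has to check that the structure 2-cells do not create extra identifications. I would handle this through the biequivalence of \cite[Proposition~B.0.9]{BalmerDellAmbrogio20}, applied to spans, i.e.\ to objects over $G\ltimes(X\times Y)$.
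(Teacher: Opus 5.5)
Your proposal is correct and follows essentially the same route as the paper. In both, $\theta_G$ is defined on spans via transporter groupoids with identity $\cA$-part, essential surjectivity is reduced by additivity to objects $(H,\kk)$ with $H$ connected, and full faithfulness comes from an inverse on Hom-sets that sends a connected span with scalar $\lambda$ to $\lambda$ times the corresponding span of $G$-sets. The key point is the same too: because $\Ak$ is constant, the wing cells act trivially, so traces along local equivalences are fibrewise sums of scalars; the paper phrases this as reducing, by additivity, to the case where $s$ is an equivalence.
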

\begin{proof}
Let us abbreviate $\cA_0=\kk\ffree$.
For every $H\in\gpd$, the object $\ukk=\ukk(H)$ in $\Ak(H)=\cA_0^{\pi_0(H)}$ which is~$\kk$ in every spot (as in the statement for~$H=G\ltimes X$) has the amusing property that for all~$u\colon H\to K$ we have $u^*(\ukk)=\ukk$, or in expanded notation $u^*(\ukk(K))=\ukk(H)$.

Therefore the functor~$\theta_G$ can simply be defined on spans~$S=(X\xlto{w_1} W\xto{w_2} Y)$ as $[P,p_1,p_2;\id_{\ukk}]$, where the span $P\in\SpanfG(G\ltimes X,G\ltimes Y)$ is~$G\ltimes W$, with the wings~$p_i=G\ltimes w_i$ for $i=1,2$, as in~\Cref{Rec:transporter}, and where $\id_{\ukk}$ means the identity of~$\ukk=p_1^*(\ukk)=p_2^*(\ukk)$ in~$\Ak(P)$. One verifies that this is a well-defined functor, using that $G\ltimes-$ turns the cartesian squares of $G$-sets to Mackey squares (\cite[Remark~B.0.5]{BalmerDellAmbrogio20}).
We leave to the reader to verify that $\theta_G$ is $\kk$-linear, and in particular additive.

To see that $\theta_G\colon \Burn(G)\to \Alpk(G)$ is essentially surjective, it suffices to observe (\Cref{Rem:Alp-add}) that for every $(H,x)$ with $x\in\Ak(H)$, if $H=H_1\sqcup \cdots \sqcup H_r$ and $x=(x_1,\ldots,x_r)$ in~$\Ak(H)=\Ak(H_1)\oplus \cdots \oplus\Ak(H_r)$ then $(H,x)\cong\oplus_{i=1}^{r} (H_i,x_i)$ in~$\Alpk(G)$. So we can assume that $H$ is connected. Then if~$x\cong\kk^n$ we have $(H,x)\cong(H,\kk)^{\oplus n}$ so we can assume that $x=\kk$. But then $(H,\kk)=\theta_G(X)$ for any $G$-set~$X$ such that $G\ltimes X\simeq H$, for instance $X=G/\Aut_{H}(a)$ for any object~$a\in H$.

Therefore it suffices to show that $\theta_G$ is fully faithful, \ie that for every pair of $G$-sets~$X$ and~$Y$, with transporter groupoids~$H:=G\ltimes X$ and~$K:=G\ltimes Y$, the functor $\theta_G$ induces a bijection
\begin{equation}
\label{eq:aux-theta}%
\Hom_{\Burn(G)}(X,Y)\isoto \Hom_{\Alpk(G)}\big((H,\ukk),(K,\ukk)\big).
\end{equation}
By additivity again, we can assume that $X$ and~$Y$ are orbits, so $H$ and~$K$ are connected, and we can write~$\kk$ instead of~$\ukk$ as $\pi_0(H)=\pi_0(K)=\ast$.

The inverse of~\eqref{eq:aux-theta} is relatively easy to construct. A morphism representative as in~\Cref{Cons:Alp-category} from~$(H,\kk)$ to~$(K,\kk)$ is given by a span~$P\in\SpanfG(H,K)$ and a morphism~$f\colon p_1^*\kk\to p_2^*\kk$. As the latter is~$\ukk\in\Ak(P)$, the morphism~$f$ is a collection of scalars~$(f_C)_{C\in\pi_0(P)}$ indexed by the connected components of~$P$. By additivity, we can assume~$P$ to be connected, so~$f=f_P\in\kk$ is a scalar~$\lambda(f)$.
Using~\Cref{Rec:transporter}, we can assume up to replacing~$P$ by equivalence, that $P=G\ltimes W$ and~$p_i=G\ltimes w_i$ for a span~$X\xlto{w_1} W\xto{w_2} Y$ of~$G$-sets. We then send~$(P,f)$ to~$\lambda(f)\cdot[X\lto W\to Y]$. Changing $P$ up to strong equivalence, or changing the choice of~$W,w_1,w_2$ gives the same isomorphism class of span. Let us check that this construction is well-defined up to $\approx$-equivalence of~\Cref{Cons:Alp-category}. Using additivity again, we reduce to the case where $s\colon P\to P'$ is an equivalence, in which case $s^*=\id$ and~$s_*=\id$ as well. The critical place where one uses that $\Ak$ is constant is when we `adjust'~$f\colon p_1^*\kk\to p_2^*\kk$ before computing its trace with respect to~$s$, as explained in~\eqref{eq:f-adjusted}. Since (in the notation of~\eqref{eq:f-adjusted}) we have $\sigma_i^*=\id$, where $\sigma_i$ are the wing 2-cells of~$s=(s,\sigma_1,\sigma_2)$, we get indeed that $\tr_s(f)=\lambda(f)$. So the relation~$\tr_s(f)=f'$ forces the scalars $\lambda(f)$ and~$\lambda(f')$ to be the same.
This discussion yields a well-defined map backwards from~\eqref{eq:aux-theta}
\[
\Hom_{\Alpk(G)}\big((H,\ukk),(K,\ukk)\big)\to \Hom_{\Burn(G)}(X,Y).
\]
It is now easy to verify, using additivity, that the latter is an inverse of~\eqref{eq:aux-theta}.
\end{proof}

\begin{Rem}
The reader can verify that the equivalence of~\Cref{Thm:Alp-cst} upgrades to an equivalence of Mackey 2-functors $\theta\colon\Burn\isoto \Alpk$. One can actually exploit the fact that~$\Burn$ is a Mackey 2-functor to construct the inverse of~$\theta$ more abstractly than in the above proof. Indeed, there is a ($\kk$-linear) morphism of restriction $2$-functors on~$\gpd$
\[
\beta\colon \Ak\to \Burn
\]
characterized by the fact that for $G$ a finite group~$\beta_G(\kk)=G/G$.
(In this argument it might be better to only use finite groups, as in \cite[Section~4.3]{BalmerDellAmbrogio20}.)
By our~\Cref{Thm:Alp-UP}, the morphism~$\beta\colon \Ak\to \Burn$ induces a morphism of Mackey 2-functors~$\hat\beta\colon \Alpk\to \Burn$. It is essentially characterized by the fact that for every subgroup~$H\le G$ the functor~$\hat\beta_G\colon \Alpk(G)\to \Burn(G)$ maps the object~$(H,\kk)$ to~$G/H$. This uses that $(\incl_H)_*(H/H)=G/H$ in the Mackey 2-functor~$\Burn$.

The reader will verify that this morphism $\hat{\beta}\colon \Alpk\to \Burn$ is an inverse of the above morphism~$\theta$ in~$\mackfun{\gpdf}$.
\end{Rem}

\begin{Rem}
\Cref{Thm:Alp-cst} gives~\Cref{Thm:Burnside-intro}\,\eqref{it:Burnside-lp} in the Introduction.
\end{Rem}

%
\subsection{Left Mackeyfication of constant 2-functors}\
\label{ssec:cst^+}%

%
\begin{Not}
\label{Not:GG/G-approx}%
Recall the 2-comma category~$\GG/G$ of~\Cref{Def:GG/G}. We denote by~$\GGisoconn/G$ the 2-subcategory with objects $(H\to G)$ that are indecomposable, meaning that~$H$ is non-empty indecomposable in~$\GG$ (that is, \emph{connected} in the underlying~$\gpd$) and whose 1-morphisms $s\colon H\isoto K$ are equivalences in~$\GG/G$, which is the same as asking the underlying $s\colon H\to K$ to be an equivalence in~$\GG$, or a local equivalence (\Cref{Def:loc-equiv}) for that matter. The 2-cells stay as in~$\GG/G$.
As for any 2-category, this $\GGisoconn/G$ admits a 1-truncated (ordinary) category
\[
\tau_1(\GGisoconn/G)
\]
as discussed in~\Cref{Rec:transporter}; see~\cite[Notation~A.1.14]{BalmerDellAmbrogio20}.
\end{Not}
\begin{Exa}
\label{Exa:Weyl-as-endomorphisms}%
Continuing on \Cref{Exa:end(H)-in-gpdf/G}, for $\GG=\gpdf$, let $G$ be a group and~$H$ a subgroup.
The object~$H=(H\xinto{\incl}G)$ is connected, so it is an object of~$\GGisoconn/G$. Its automorphism group in~$\tau_1(\GGisoconn/G)$ is the Weyl group of~$H$:
\[
N_G(H)/H\isoto \End_{\tau_1(\GGisoconn/G)}(H)
\]
where the isomorphism sends a class~$[g]\in N_G(H)/H$ to~$c_g\colon H\apprto H$ with structure 2-cell~$\st{c_g}=\gamma_g$.
This is immediate from \Cref{Exa:end(H)-in-gpdf/G}.
\end{Exa}

\begin{Thm}
\label{Thm:Bup-cst}%
Let $\cB_0\in\ADD$ be a fixed additive category and~$\cB\in\conjfun{\GG}$ be the associated constant conjugation 2-functor~$\cB(G)=\cB_0^{\pi_0(G)}$ as in~\Cref{Not:cst-A}.
Let $G\in\GG$. Then with~\Cref{Not:GG/G-approx} we have an equivalence
\[
\Bup(G)\cong \Fun\big(\tau_1(\GGisoconn/G),\cB_0\big)
\]
between $\Bup(G)$ and the category of $\cB_0$-valued functors on the category $\tau_1(\GGisoconn/G)$. It is given by sending~$x_\sbull\in\Bup(G)$ to the functor that maps $H\in\GGisoconn/G$ to $x_H\in\cB_0$ and every morphism $[s]_{\simeq}\colon H\to K$ in~$\tau_1(\GGisoconn/G)$ to~$x_s$.
\end{Thm}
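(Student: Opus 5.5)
The plan is to reduce $\Bup(G)$ to its values on connected objects, where constancy of $\cB$ turns the coherence data into a plain functor. Since $\cB(H)=\cB_0^{\pi_0(H)}$ and every restriction $s^*$ along a local equivalence is restriction along $\pi_0(s)$, for connected $H$ we have $\cB(H)=\cB_0$. For a local equivalence $s\colon H\to K$ between connected objects, $s^*=\id_{\cB_0}$, and for any 2-cell $\alpha$ the transformation $\alpha^*$ is the identity. I define the functor $\Phi\colon\Bup(G)\to\Fun(\tau_1(\GGisoconn/G),\cB_0)$ by $\Phi(x_\sbull)(H)=x_H$ and $\Phi(x_\sbull)([s])=x_s\colon x_H\to s^*x_K=x_K$. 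By condition~\eqref{it:x-alpha} of \Cref{Cons:Bup-category} with $\alpha^*=\id$, $x_s$ depends only on the 2-isomorphism class $[s]$. Conditions~\eqref{it:x-ts} and~\eqref{it:x-trivial} give functoriality. On morphisms, $\Phi(f_\sbull)=(f_H)_H$, and \eqref{eq:Bup-morphisms} says exactly that this is a natural transformation.

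One point needs care. The functor sends $[s]\colon H\to K$ to $x_s\colon x_H\to x_K$, so it is covariant. But composition $x_{ts}=s^*(x_t)\circ x_s=x_t\circ x_s$ is also covariant because $s^*=\id$, so there is no variance issue. Note also that every $x_s$ is invertible, which is consistent because every morphism of $\tau_1(\GGisoconn/G)$ is invertible.

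Full faithfulness reduces to additivity. A morphism $f_\sbull$ in $\Bup(G)$ is determined by its connected levels: for a general $H$, condition~\eqref{it:x-trivial} identifies $x_H$ with $(x_{H_i})_i$ via the coherence isomorphisms $x_{\incl_{H_i}}$ along the inclusions of components, which are local equivalences over $G$. Condition~\eqref{eq:Bup-morphisms} for these inclusions then forces $f_H=(f_{H_i})_i$. Conversely, given a natural family $(f_H)$ on connected $H$, I extend it to all $H$ by this formula. I then check \eqref{eq:Bup-morphisms} for an arbitrary local equivalence $s\colon H\to K$ by restricting to a component $H_i$ of $H$. That component maps via $s$ to a component $K_j$ of $K$ by an equivalence $s_i$, and compatibility for $s$ on $H_i$ reduces to the naturality square for $[s_i]$ together with compatibilities for inclusions. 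The empty groupoid gives $0$, consistent with $\cB(\varnothing)=0$.

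Essential surjectivity is the main obstacle. Given a functor $F$, I first fix, for every $H\in\GG/G$, its decomposition into components $H=\sqcup_i H_i$, which is canonical in $\gpd$. I set $x_H:=(F(H_i))_i\in\cB_0^{\pi_0(H)}$. For a local equivalence $s\colon H\to K$, I use \Cref{Lem:leq-folding}: each component $H_i$ maps by an equivalence $s_i=s\restr{H_i}$ onto a component $K_{j(i)}$, and this is an equivalence in $\GG/G$ with structure 2-cell restricted from $\st{s}$. Then $s^*x_K=(F(K_{j(i)}))_i$, and I set $x_s:=(F([s_i]))_i$. Next I must verify the three conditions. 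Condition~\eqref{it:x-ts} follows since restricting $ts$ to $H_i$ gives $t_{j(i)}\circ s_i$ in $\GG/G$ and $F$ is a functor. Condition~\eqref{it:x-alpha} follows since 2-isomorphic $s,t$ restrict to 2-isomorphic $s_i,t_i$, so $[s_i]=[t_i]$. Condition~\eqref{it:x-trivial} holds because an inclusion of components restricts to identities on components, and $F(\id)=\id$. By construction $\Phi(x_\sbull)=F$ on connected $H$ (strictly, when $H$ is connected its only component is itself), which gives essential surjectivity and completes the equivalence. The delicate step is bookkeeping the structure 2-cells when restricting $s$ to components, ensuring each $s_i$ really defines a morphism of $\GGisoconn/G$ whose class is independent of choices.
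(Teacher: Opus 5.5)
Your proposal is correct and follows the paper's own route. You use the additivity condition to reduce to connected levels, where constancy of $\cB$ makes every $s^*$ and $\alpha^*$ an identity, so that conditions \eqref{it:x-ts} and \eqref{it:x-alpha} say exactly that $H\mapsto x_H$, $[s]\mapsto x_s$ is a functor on $\tau_1(\GGisoconn/G)$, and the morphism condition~\eqref{eq:Bup-morphisms} becomes naturality. You spell out more than the paper does, namely the componentwise reconstruction of $x_\sbull$ from a functor and the full-faithfulness check, but these are just the details behind the paper's ``trimming of redundancies''.
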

\begin{proof}
This equivalence is almost an equality, or perhaps more intuitively a `trimming of redundancies'.
Consider the very definition of~$\Bup(G)$ in~\Cref{Cons:Bup-category}.
The additivity Condition~\eqref{it:x-trivial} tells us that an object~$x_\sbull$ of~$\Bup(G)$ is
characterized by the data of~$x_H$ with $H$ indecomposable and of~$x_s$ for $s\colon H\apprto K$ in~$\GGisoconn/G$, that is, between indecomposable objects. But a local equivalence~$s$ between indecomposable objects gives the identity on~$\pi_0$ and therefore the induced functor $s^*\colon \cB(K)=\cB_0\to \cB(H)=\cB_0$ is the identity of~$\cB_0$. It follows that Condition~\eqref{it:x-ts} becomes simply $x_{t\circ s}=x_t\circ x_s$. Similarly, for every 2-cell $\alpha\colon s\isoTo t\colon H\to K$ in~$\GGisoconn/G$, the transformation $\cB(\alpha)=\alpha^*\colon \Id_{\cB_0}\To \Id_{\cB_0}$ is the identity since~$\cB$ is constant. It follows from Condition~\eqref{it:x-alpha} that isomorphic 1-cells~$s,t\colon H\to K$ in~$\GGisoconn/G$ induce the same isomorphism~$x_s=x_t$. In short, $x_\sbull$ is just a functor from the category $\tau_1(\GGisoconn/G)$ to~$\cB_0$. It is clear that morphisms $f_\sbull$ correspond simply to natural transformations, since again~$s^*$ disappears from~\eqref{eq:Bup-morphisms}.
\end{proof}
\begin{Cor}
\label{Cor:Bup-cst}%
Let $\cB\in\conjfun{\gpdf}$ be the constant conjugation 2-functor associated to a fixed additive category~$\cB_0\in\ADD$ as in~\Cref{Not:cst-A}, on the (2,1)-category $\GG=\gpdf$ of finite groupoids with faithful morphisms.
Let $G$ be a finite group. Choose representatives $H_1,\ldots,H_r$ of conjugacy classes of subgroups of~$G$ and let $W_i=N_G(H_i)/H_i$ be the corresponding Weyl group in~$G$, for~$i=1,\ldots,r$.
Then we have an equivalence
\[
\Bup(G)\cong \bigoplus_{i=1}^{r} \Fun(W_i,\cB_0)
\]
where $\Fun(W_i,\cB_0)$ is the category of representations of~$W_i$ in~$\cB_0$ (\Cref{Exa:gps}).
\end{Cor}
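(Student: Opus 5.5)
The plan is to apply \Cref{Thm:Bup-cst} with $\GG=\gpdf$, which gives $\Bup(G)\cong\Fun\big(\tau_1(\GGisoconn/G),\cB_0\big)$. What remains is to identify the ordinary category $\tau_1(\GGisoconn/G)$ up to equivalence. Since $\Fun(-,\cB_0)$ sends equivalences of small categories to equivalences, and sends finite coproducts of categories to products, it is enough to show that
\[
\tau_1(\GGisoconn/G)\simeq \coprod_{i=1}^r W_i .
\]
Here each Weyl group $W_i$ is viewed as a one-object category. The final step is then the identification $\prod_i\Fun(W_i,\cB_0)=\bigoplus_i\Fun(W_i,\cB_0)$ in $\ADD$, which holds because the sum is finite.

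To describe $\tau_1(\GGisoconn/G)$, I will use the transporter-groupoid biequivalence $G\ltimes-\colon G\sset\isoto\gpdf/G$ of \Cref{Rec:transporter}. Under it, a connected object $H\into G$ corresponds to a transitive $G$-set, hence to $G/L$ for some subgroup $L$. Moreover, $G\ltimes(G/L)\simeq L$ in $\gpdf/G$, via the inclusion of the full subgroupoid on the object $L\in G/L$, with identity structure 2-cell. Equivalences in $\gpdf/G$ correspond to $G$-isomorphisms. So every object of $\GGisoconn/G$ is isomorphic in $\tau_1$ to some subgroup $L\le G$ with its inclusion, and two subgroups are isomorphic there exactly when $G/L\cong G/L'$, i.e.\ when they are conjugate. (A conjugation $c_g\colon L\isoto{}^gL$ with structure 2-cell $\gamma_g$ gives such an isomorphism directly, as in \Cref{Exa:end(H)-in-gpdf/G}.) Hence $H_1,\dots,H_r$, each with its inclusion, form a skeleton. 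No morphisms exist between distinct $H_i$, so the skeleton is the disjoint union of the one-object categories $\End_{\tau_1(\GGisoconn/G)}(H_i)$.

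Finally, \Cref{Exa:Weyl-as-endomorphisms} identifies $\End_{\tau_1(\GGisoconn/G)}(H_i)\cong N_G(H_i)/H_i=W_i$. Because every endomorphism of $H_i$ in $\gpdf/G$ is an automorphism (\Cref{Exa:end(H)-in-gpdf/G}), all of these morphisms lie in $\GGisoconn/G$. The main point needing care is the argument that every connected object over $G$ is equivalent to a subgroup, together with the check that the resulting skeleton really is a full subcategory. I will handle this through the biequivalence of \Cref{Rec:transporter} instead of by direct computation. Composing these equivalences gives the stated equivalence, with $x_\sbull\mapsto (x_{H_i}$ with its $W_i$-action via $x_{c_g})_i$.
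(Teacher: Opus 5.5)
Your proposal is correct and follows the paper's own route. You apply \Cref{Thm:Bup-cst}, show that the chosen subgroups $H_1,\ldots,H_r$ (with their inclusions) give an equivalence $\coprod_i W_i\simeq\tau_1(\GGisoconn/G)$ via \Cref{Exa:Weyl-as-endomorphisms,Exa:end(H)-in-gpdf/G}, and then split $\Fun(\coprod_i W_i,\cB_0)\cong\bigoplus_i\Fun(W_i,\cB_0)$; the only minor difference is that you justify ``every connected object over~$G$ is equivalent to some $H_i$'' through the transporter biequivalence of \Cref{Rec:transporter}, while the paper simply cites faithfulness of morphisms in $\gpdf$.
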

\begin{proof}
View each $W_i$ as a one-object groupoid and let~$\cW:=W_1\sqcup \cdots \sqcup W_r$, that is, the category with $\{1,\ldots,r\}$ as objects and $\End_{\cW}(i)=W_i$ and~$\Mor_{\cW}(i,j)=\varnothing$ for~$i\neq j$. Define a functor
\begin{equation}
\label{eq:aux-H-tau}%
\omega\colon\cW\to \tau_1(\GGisoconn/G)
\end{equation}
by sending $i$ to~$(H_i\xinto{\incl}G)$ and, $[g]\in W_i=N_G(H_i)/H_i$, to $[(c_g,\gamma_g)]$, using
\[
W_i\isoto \End_{\tau_1(\GGisoconn/G)}(H_i)
\]
as in~\Cref{Exa:Weyl-as-endomorphisms}.
Moreover, every indecomposable object over~$G$ is equivalent to one of the chosen subgroups~$H_i$, since $\GG=\gpdf$ has only faithful morphisms. There are no morphisms between distinct representatives~$H_i$ and~$H_j$ in~$\tau_1(\GGisoconn/G)$ as we saw in~\Cref{Exa:end(H)-in-gpdf/G}. Hence~$\omega$ is an equivalence.

This equivalence~$\omega$ in~\eqref{eq:aux-H-tau} induces by restriction an equivalence on (ordinary) functor categories $\Fun(\tau_{1}(\GGisoconn/G),\cB_0)\cong\Fun(\cW,\cB_0)$ and the result now follows from~\Cref{Thm:Bup-cst} and the explicit description
\[
\Fun(\cW,\cB_0)\cong \bigoplus_{i=1}^r \Fun(W_i,\cB_0).
\]
This gives the result.
\end{proof}
\begin{Rem}
\label{Rem:k-mod}%
In the Introduction, we assumed that $\kk$ is a field for simplicity.
Then \Cref{Thm:Burnside-intro}\,\eqref{it:Burnside-up} is~\Cref{Cor:Bup-cst} applied to~$\cB_0=\kk\ffree$.
For a general commutative ring~$\kk$, the notation $\kk G\mmod$ should be read as the category of finitely generated $\kk G$-modules whose underlying $\kk$-module is free (of finite rank).
\end{Rem}

\subsection{The mark transformation for constant input}\
\label{ssec:cst-mark}%

We finally identify the mark transformation of~\Cref{sec:mark}
\[
\mu_{\Ak}\colon \Alpk\to \Aupk
\]
for the restriction 2-functor~$\Ak$ of~\Cref{Not:Ak}, that is, $\Ak(G)=(\kk\ffree)^{\pi_0(G)}$.
We set~$\GG=\gpdf$ for the end of this section.
We assume that $\kk$ is a field, or use \Cref{Rem:k-mod}.

\begin{Thm}
\label{Thm:mark-Burnside}%
Let $G$ be a finite group and
$H_1,\ldots,H_r$ a complete set of representatives of conjugacy classes of subgroups of~$G$.
Under the equivalences of~\Cref{Thm:Alp-cst,Cor:Bup-cst} the mark
transformation
\[
\mu_{\Ak,G}\colon \Alpk(G)\to\Aupk(G)
\]
sends a finite $G$-set $X$ in~$\Burn(G)$ to the tuple in~$\oplus_{i=1}^r \kk(N_G(H_i)/H_i)\mmod$ given by
\[
\big(\kk(X^{H_i})\big)_{i=1,\ldots,r}
\]
where $\kk(X^H)$ is regarded as a permutation $\kk\big(N_G(H)/H\big)$-module for every~$H\le G$.
\end{Thm}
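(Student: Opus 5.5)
The plan is to combine the explicit formula of \Cref{Thm:mark-explicit} with the equivalences $\theta_G$ of \Cref{Thm:Alp-cst} and $\omega$ of \Cref{Cor:Bup-cst}. Take a finite $G$-set $X$. By \Cref{Thm:Alp-cst} it corresponds to $(H,\ukk)\in\Alpk(G)$ with $H=G\ltimes X$ and structure morphism $\stH\colon G\ltimes X\into G$. By \Cref{Thm:mark-explicit}, at each level $K\in\gpdf/G$ we have $\big(\mu(H,\ukk)\big)_K=(p_2)_*p_1^*(\ukk)=(p_2)_*(\ukk)$, because restrictions preserve $\ukk$. Under \Cref{Cor:Bup-cst} only the levels $K=H_i$ (with $\stK$ the inclusion) matter, together with the action of $W_i$ through the coherence isomorphisms $x_{c_g}$.

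The first step is to identify the isocomma $(G\ltimes X)\times_G H_i$. Using the transporter description (\cite[Remark~B.0.5]{BalmerDellAmbrogio20}), it is equivalent to $H_i\ltimes \Res^G_{H_i}X$ over $H_i$, with $\pr_2$ the canonical faithful functor. The components of $H_i\ltimes X$ are the $H_i$-orbits of $X$, and the stabilizer of $x$ in $H_i$ is $\mathrm{Stab}_{H_i}(x)$. So $\pr_2$ is fully faithful on a component exactly when this stabilizer is all of $H_i$, that is, when $x\in X^{H_i}$ (such orbits are singletons).

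It follows that $\stH^\approx H_i\simeq \bigsqcup_{x\in X^{H_i}}H_i$ and that $p_2$ is the folding $\nabla^{(n)}$ with $n=|X^{H_i}|$. Since folding induction is the biproduct (\Cref{Prop:leq-adjoints}), we get $(p_2)_*\ukk=\kk^{X^{H_i}}=\kk(X^{H_i})$, canonically indexed by $X^{H_i}$.

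The main obstacle is the second step: checking that the $W_i$-action is the permutation action. For $g\in N_G(H_i)$, the coherence isomorphism along $c_g$ (with structure 2-cell $\gamma_g$) is built in \Cref{Cons:Bup-induction} from the base-change isomorphism of \Cref{Prop:leq-base-change} for the Mackey square \eqref{eq:aux-approx}, with $s=c_g$. I will compute the induced morphism $j^\approx c_g$ on $\approx$-loci explicitly, where an object $(x,\sbull,g')$ of the isocomma is sent via the 2-cell to $(x,\sbull,g g')$. After the identification above, this map sends the copy indexed by $x\in X^{H_i}$ to the copy indexed by $gx$ (or by $g^{-1}x$, depending on conventions; I will fix the convention so that the action is a left action). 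Because $\Ak$ is constant, all $\cA(\alpha)$ are identities. Hence the base-change isomorphism on $\bigoplus\kk$ is just the permutation of summands, and $x_{c_g}$ is the permutation matrix of $x\mapsto gx$ on $X^{H_i}$.

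Finally, morphisms do not need to be treated, since the statement only concerns objects. Additivity in $X$, via \Cref{Rem:Alp-add}, is compatible with the formula, so it also suffices to check the case of orbits $X=G/L$ if that makes the identification cleaner.
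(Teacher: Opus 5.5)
Your proposal is correct and follows essentially the same route as the paper. Both apply \Cref{Thm:mark-explicit} and identify $(G\ltimes X)\times_G H_i$ with $H_i\ltimes X$, so that the $\approx$-locus is $\bigsqcup_{X^{H_i}}H_i$ and folding induction gives $\kk(X^{H_i})$; both then follow $c_g$ for $g\in N_G(H_i)$ and see it permute the components via $x\mapsto gx$. Your explicit tracking of the coherence isomorphism through the base-change isomorphism for~\eqref{eq:aux-approx}, which reduces to reindexing summands because $\Ak$ is constant, just spells out what the paper states more informally.
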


\begin{proof}
Let $H\le G$ be a subgroup and $W=N_G(H)/H$ be its Weyl group in~$G$. Consider the functor
\[
\omega\colon W\to \tau_1(\GGisoconn/G)
\]
sending the only object~$\sbull$ of~$W$ to~$H=(H\xinto{\incl} G)\in\GG/G$ and every morphism~$[g]\in W$ to the isomorphism class of the equivalence~$c_g\colon H\isoto H$ as in~\Cref{Exa:Weyl-as-endomorphisms}, that is, with structure 2-cell~$\st{c_g}=\gamma_g$.
We are claiming that the following diagram commutes

\[
\xymatrix@C=4em{
\Burn(G) \ar[r]^-{\textrm{Thm.\,\ref{Thm:Alp-cst}}}_-{\simeq} \ar@/_1em/@{-->}[rrd]_-{X\mapsto \kk(X^H)}
& \Alpk(G) \ar[r]^-{\mu_{\Ak}}_-{}
& \Aupk(G) \ar[r]_-{\simeq}^-{\textrm{Thm.\,\ref{Thm:Bup-cst}}}
& \Fun\big(\tau_1(\GGisoconn/G),\kk\ffree\big) \ar[d]^-{\omega^*}
\\
&& \kk(W)\mmod \ar@{=}[r]^-{\sim}
& \Fun(W,\kk\ffree)\,.\!
}
\]
(Indeed, we claim this for each $H=H_i$ in the statement.)

Under the equivalence $\theta_G$ of~\Cref{Thm:Alp-cst}, a finite $G$-set $X$ is sent to the object
\[
(G\ltimes X,\ukk)\in\Alpk(G),
\]
where $\ukk\in\Ak(G\ltimes X)$ denotes the constant object with value $\kk$ on every connected
component of~$G\ltimes X$. We want to compute its image in~$\Aupk(G)$ under~$\mu_{\Ak}$.

At level~$H\in\GG/G$,~\Cref{Thm:mark-explicit} tells us that $\mu_{\Ak,G}(G\ltimes X,\ukk)$ is
\[
\big(\mu_{\Ak,G}(G\ltimes X,\ukk)\big)_H = (p_2)_*p_1^*(\ukk),
\]
where $p_1$ and $p_2$ are the restrictions of~$\pr_1$ and~$\pr_2$ in the isocomma
\[
\xymatrix@C=1em@R=1em{
& (G\ltimes X)\times_G H\ar@{ >->}[ld]_-{\pr_1} \ar@{ >->}[rd]^-{\pr_2} \ar@{}[dd]|-{\isocell{\gamma}}
\\
G\ltimes X \ar@{ >->}[rd]_-{\st{G\ltimes X}}
&& H \ar@{ >->}[ld]^-{\incl}
\\
& G
}
\]
on the part where $\pr_2$ is fully faithful, denoted~$\st{G\ltimes X}^\approx H$ in~\Cref{Cons:j-approx}. Note that all morphisms above are faithful.
One easily gets an equivalence
\[
H\ltimes X\isoto (G\ltimes X)\times_G H
\]
(where the left-hand~$X$ is of course $X$ with action restricted to~$H$) and the above isocomma is equivalent to the following commutative Mackey square
\begin{equation}
\label{eq:aux-markk}%
\vcenter{\xymatrix@C=1em@R=1em{
& H\ltimes X \ar@{ >->}[ld]_-{\incl\ltimes \id_X\ } \ar@{ >->}[rd]^-{\st{H\ltimes X}} \ar@{}[dd]|-{=}
\\
G\ltimes X \ar@{ >->}[rd]_-{\st{G\ltimes X}}
&& H \ar@{ >->}[ld]^-{\incl}
\\
& G
}}
\end{equation}
By~\Cref{Rec:transporter} the transporter groupoid~$H\ltimes X$ has structure morphism over~$H$ sending every object of~$X$ to~$\sbull$. Therefore, this morphism is fully faithful exactly on the subgroupoid of~$H\ltimes X$ spanned by the objects~$X^H\subseteq X$. Every object~$a\in X^H$ has automorphism group~$H$ in~$H\ltimes X$ and therefore we can identify $\st{G\ltimes X}^\approx H$ with $\sqcup_{X^H}H$ and describe
\[
p_1^*(\ukk)=\ukk\in \Ak(\sqcup_{X^H}H)=\bigoplus_{X^H}\kk\ffree.
\]
Since $H$ has a single connected component, the local equivalence $p_2\colon \st{G\ltimes X}^\approx=\sqcup_{X^H}H\too H$ induces a folding push-forward $(p_2)_*$ that sends the constant~$\ukk$ to
\[
(p_2)_*(\ukk)=\oplus_{X^H}\kk.
\]
This is indeed the correct vector space~$\kk(X^H)$, namely the free vector space on the set~$X^H$, and we still need to trace the action of~$W=N_G(H)/H$.

Now let $g\in N_G(H)$. It defines an automorphism~$c_g\colon H\isoto H$ in~$\GG/G$ that we should follow in the above construction, and in particular in the key Mackey square~\eqref{eq:aux-markk}. The automorphism~$c_g$ induces an automorphism~$c_g\ltimes(g\cdot -)\colon H\ltimes X\isoto H\ltimes X$ which sends an object~$a\in X$ to~$ga$ and a morphism $h\colon a\to ha$ to $g h g^{-1}\colon ga\to g(ha)$. Since $g\in N_G(H)$, this restricts to an automorphism~$\st{G\ltimes X}^\approx H\isoto\st{G\ltimes X}^\approx H=\sqcup_{X^H}H$ that is given on objects~$X^H\to X^H$ by multiplication by~$g$. This yields an automorphism of
\[
p_2\colon \sqcup_{X^H}H \xto{\nabla} H
\]
that shuffles the connected components of~$\sqcup_{X^H}H$ via the action of~$g$ on~$X^H$. On~$(p_2)_*\colon \Ak(\sqcup_{X^H}H)=\oplus_{X^H}\kk\ffree\xto{\oplus} \kk\ffree=\Ak(H)$ this gives the automorphism~$(g\cdot)_*$ on~$(p_2)_*\big((x_a)_{a\in X^H})=\oplus_{a}x_a$ that shuffles the indices. In particular on~$x=\ukk=(\kk,\kk,\ldots,\kk)$ this gives the action of~$g$ on the free $\kk$-module~$\kk(X^H)$ that shuffles the basis~$X^H$. That is exactly the action as permutation module.
\end{proof}

\begin{Rem}
\label{Exa:mark-noniso}%
In view of~\Cref{Thm:mark-Burnside}, the mark transformation is not an equivalence in general. It is not essentially surjective, even up to direct summands. Indeed, it suffices to take $\kk$ a field of positive characteristic~$p>0$, for which most $\kk G$-modules are not $p$-permutation (trivial source) modules; this applies to any group of order divisible by~$p$ except the cyclic group of order~2 when~$p=2$.
\end{Rem}



\end{document}